\newtheorem{thm}{Theorem}[section]
\newtheorem{lem}[thm]{Lemma}
\newtheorem{prop}[thm]{Proposition}
\newtheorem{cor}[thm]{Corollary}
\theoremstyle{definition}
\newtheorem{defn}[thm]{Definition}
\newtheorem{rem}[thm]{Remark}
\newtheorem{exam}[thm]{Example}
\newcommand{\bC}{{\mathbb{C}}}
\newcommand{\bE}{{\mathbb{E}}}
\newcommand{\bM}{{\mathbb{M}}}
\newcommand{\bN}{{\mathbb{N}}}
\newcommand{\bR}{{\mathbb{R}}}
\newcommand{\A}{{\mathcal{A}}}
\newcommand{\B}{{\mathcal{B}}}
\newcommand{\C}{{\mathcal{C}}}
\newcommand{\D}{{\mathcal{D}}}
\newcommand{\E}{{\mathcal{E}}}
\newcommand{\I}{{\mathcal{I}}}
\newcommand{\K}{{\mathcal{K}}}
\renewcommand{\L}{{\mathcal{L}}}
\newcommand{\N}{{\mathcal{N}}}
\newcommand{\R}{{\mathcal{R}}}
\newcommand{\ep}{\varepsilon}
\newcommand{\qand}{\quad\text{and}\quad}
\newcommand{\qqand}{\qquad\text{and}\qquad}
\newcommand{\alg}{\mathrm{alg}}
\newcommand{\Cf}{\mathrm{Cf}}
\newcommand{\bReta}{\mathrm{bReta}}
\newcommand{\bB}{\mathrm{b}\mathbb{B}}
\newcommand{\lat}{\mathrm{lat}}
\newcommand{\capp}{\mathrm{cap}}
\newcommand{\freestar}{\framebox[7pt]{$\star$}}
\tikzset{Box/.style={very thick, rounded corners}}
\tikzset{marked/.style={star, star point height = .75mm, star points =5, fill=black,minimum size=2mm, inner sep=0mm} }
\tikzset{verythickline/.style = {line width=7pt}}
\tikzset{thickline/.style = {line width=5pt}}
\tikzset{medthick/.style = {line width=3pt}}
\tikzset{med/.style = {line width=2pt}}
\tikzset{count/.style = {fill=white,circle,draw,thin, inner sep=2pt}}
\tikzset{rcount/.style = {fill=white,rectangle,draw,thin,inner sep=2pt, rounded corners}}
\tikzset{cpr/.style = {draw,fill=white,rectangle,thin, rounded corners}}
\definecolor{ggreen}{HTML}{00BB33}
\begin{document}

\nocite{*}

\title[Bi-Boolean independence for pairs of algebras]{Bi-Boolean independence for pairs of algebras}

\author{Yinzheng Gu and Paul Skoufranis}

\address{Department of Mathematics and Statistics, Queen's University, Jeffery Hall, Kingston, Ontario, K7L 3N6, Canada}
\email{gu.y@queensu.ca}

\address{Department of Mathematics and Statistics, York University, 4700 Keele Street, Toronto, Ontario, M3J 1P3, Canada}
\email{pskoufra@yorku.ca}

\date{\today}
\subjclass[2010]{Primary 46L53; Secondary 46L54.}
\keywords{bi-Boolean independence, B-$(\ell, r)$-cumulants, bi-Boolean partial transforms.}
\thanks{The work of Yinzheng Gu was partially supported by CIMI (Centre International de Math\'{e}matiques et d'Informatique) Excellence program, ANR-11-LABX-0040-CIMI within the program ANR-11-IDEX-0002-02, while visiting the Institute of Mathematics of Toulouse. He would like to thank the institute for the generous hospitality and Serban Belinschi for his constant support and valuable advice when this research was conducted.}

\begin{abstract}
In this paper, the notion of bi-Boolean independence for non-unital pairs of algebras is introduced thereby extending the notion of Boolean independence to pairs of algebras. The notion of B-$(\ell, r)$-cumulants is defined via a bi-Boolean moment-cumulant formula over the lattice of bi-interval partitions, and it is demonstrated that bi-Boolean independence is equivalent to the vanishing of mixed B-$(\ell, r)$-cumulants. Furthermore, some of the simplest bi-Boolean convolutions are considered, and a bi-Boolean partial $\eta$-transform is constructed for the study of limit theorems and infinite divisibility with respect to the additive bi-Boolean convolution. In particular, a bi-Boolean L\'{e}vy-Hin\v{c}in formula is derived in perfect analogy with the bi-free case, and some Bercovici-Pata type bijections are provided. Additional topics considered include the additive bi-Fermi convolution, some relations between the $(\ell, r)$- and B-$(\ell, r)$-cumulants, and bi-Boolean independence in an amalgamated setting.

The last section of this paper also includes an errata that will be published with this copy of the paper.
\end{abstract}

\maketitle

\section{Introduction}

By the classification work \cites{S1997, M2002} there are only three symmetric notions of universal independences: classical independence, free independence, and Boolean independence \cite{SW1997}.  From a combinatorial point of view, the main difference between these notions of independence is the lattice of partitions used to describe the moment-cumulant formula.  The entire lattice of partitions is required to study classical independence whereas one restricts to the non-crossing partitions and further restricts to the interval partitions to study free independence and Boolean independence respectively.  

Recently in \cite{V2014}, Voiculescu introduced bi-free probability as a generalization of free probability to enable the study of non-commutative left and right actions of algebras on a reduced free product space simultaneously.  To study bi-free probability via moment-cumulant formula, permutations of the lattice of non-crossing partitions, known as bi-non-crossing partitions, are used.  Since its inception, the theory has attracted a lot of attention and quickly developed by the substantial work of various authors. For a summary of the current stage of bi-free probability, we refer to the survey \cite{V2016-2} by Voiculescu himself and the references therein.

These notions of independence implement the use of a single state.  By adding an additional state, extensions may be obtained.  For example, the notion of conditionally free independence \cites{BLS1996, BS1991} includes both free independence and Boolean independence as special cases by choosing the states appropriately.  In our previous paper \cite{GS2016}, the notion of conditionally bi-free independence was introduced as an extension of bi-free independence to the two-state setting and as an extension of conditionally free independence to pairs of algebras.  By selecting pairs of states in the same manner as one does to obtain Boolean independence from conditional free independence, a notion of Boolean independence for pairs of algebras, called bi-Boolean independence, is obtained as a specific instance of conditional bi-free independence.  The main purpose of this paper is to study bi-Boolean independence as an avenue for better understanding bi-free independence and as bi-Boolean independence is an intriguing case of conditional bi-free independence.

Excluding this introduction, this paper has seven sections organized as follows. In Section \ref{sec:prelims}, several notions of non-commutative independences are recalled; namely free, Boolean, c-free, bi-free, and c-bi-free independences, with an emphasis on the combinatorial aspects and moment-cumulant formulae.

In Section \ref{sec:biboolean}, the notion of bi-Boolean independence for pairs of algebras is introduced as a rule of calculating mixed moments and is related to c-bi-free independence. The lattice of bi-interval partitions is introduced, from which the family of B-$(\ell, r)$-cumulants can be defined. It is then demonstrated that B-$(\ell, r)$-cumulants have the required vanishing property and thus can be used to characterize bi-Boolean independence.

In Section \ref{sec:transforms}, some of the simplest bi-Boolean convolutions are considered. In particular, a bi-Boolean partial $\eta$-transform is constructed (which linearizes the additive bi-Boolean convolution) and a functional equation is derived relating it to the Cauchy transform. On the other hand, a slight curiosity arises when multiplication is involved as one needs to decide whether to use the usual multiplication or the opposite multiplication on the right variables. As it is not clear which multiplication is preferred, both convolutions will be studied, and it is demonstrated how the reduced bi-Boolean partial $\eta$-transform can be used to obtain the convolved distributions. However, it is not known whether there is a corresponding transform with the multiplicative property.

In Section \ref{sec:limitthms}, various limit theorems with respect to the additive bi-Boolean convolution are considered. Due to the fact that the additive bi-Boolean convolution is really just a special case of the additive c-bi-free convolution as studied in \cite{GS2016}*{Section 6}, only the statements are presented for illustration purposes. In particular, infinite divisibility is addressed and a bi-Boolean L\'{e}vy-Hin\v{c}in formula is derived which, together with the work of \cite{HW2016}, naturally leads to a two-dimensional Bercovici-Pata bijection. Although every probability measure on $\bR$ is infinitely divisible with respect to the additive Boolean convolution, the same is not true for probability measures on $\bR^2$ with respect to the additive bi-Boolean convolution.

In Section \ref{sec:bifermi}, another special case of the additive c-bi-free convolution, called the additive bi-Fermi convolution, is considered as the two-dimensional version of the notion of additive Fermi convolution introduced in \cite{O2002}. It is shown that the extension from Fermi convolution to bi-Fermi convolution is completely analogous to the extension from Boolean convolution to bi-Boolean convolution from both combinatorial and analytic viewpoints. In particular, the two notions (i.e., bi-Boolean and bi-Fermi) coincide under special circumstances.

In Section \ref{sec:bifreeness}, the (general) bi-free $\R$-transform and bi-Boolean $\eta$-transform are studied in an algebraic framework for two-faced families of non-commutative random variables. The coefficients of these transforms are, by definition, $(\ell, r)$- and B-$(\ell, r)$-cumulants of such families, and an explicit formula is derived relating these coefficients. Moreover, two bijections are defined and studied analogous to the results in \cite{BN2008-1}, and a special property of one of these bijections is proved. Consequently, another multiplicative bi-free convolution is re-considered where the usual multiplication is used on the left variables and the opposite multiplication is used on the right variables.

Finally, in Section \ref{sec:op-valued}, the notion of bi-Boolean independence is extended to an amalgamated setting. Operator-valued bi-Boolean cumulants are defined and shown to linearize operator-valued bi-Boolean independence as expected. Moreover, an operator-valued bi-Boolean partial $\eta$-transform is constructed, which is a function of two variables, and a functional equation relating it to the moment series is presented.

\section{Preliminaries}\label{sec:prelims}

In this section, we briefly review several notions of independence that are relevant to this paper. The main purpose is to develop notations that will be used later, and we shall only discuss the combinatorial aspects of the theories; namely the corresponding cumulants and moment-cumulant formulae.

\subsection{Free, Boolean, and c-free independences}

We begin with free and Boolean independences.

\begin{defn}
Let $(\A, \varphi)$ be a non-commutative probability space.
\begin{enumerate}[$\qquad(1)$]
\item A family $\{\A_k\}_{k \in K}$ of unital subalgebras of $\A$ is said to be \textit{freely independent} with respect to $\varphi$ if
\[\varphi(a_1\cdots a_n) = 0\]
whenever $a_j \in \A_{k_j}$, $k_j \in K$, $k_1 \neq \cdots \neq k_n$, and $\varphi(a_j) = 0$ for all $1 \leq j \leq n$.

\item A family $\{\A_k\}_{k \in K}$ of subalgebras of $\A$ is said to be \textit{Boolean independent} with respect to $\varphi$ if
\[\varphi(a_1\cdots a_n) = \varphi(a_1)\cdots\varphi(a_n)\]
whenever $a_j \in \mathcal{A}_{k_j}$, $k_j \in K$, and $k_1 \neq \cdots \neq k_n$.
\end{enumerate}
\end{defn}

Note that if $\A_1$ and $\A_2$ are two Boolean independent unital subalgebras in $(\A, \varphi)$, then for all $a \in \A_1$ and $n \geq 1$
\[\varphi(a^n) = \varphi(a1a1\cdots a1) = \varphi(a)\varphi(1)\varphi(a)\varphi(1)\cdots\varphi(a)\varphi(1) = \varphi(a)^n.\]
Similarly $\varphi(b^n) = \varphi(b)^n$ for all $b \in \A_2$. In this case, the independence is rather trivial, and hence we restrict to non-unital subalgebras. For the same reason, the notion of bi-Boolean independence will be defined for non-unital pairs of algebras.

For the combinatorial approach, the main idea is to express moments in terms of other quantities, called cumulants, by summing over certain partitions.

\begin{defn}
Let $\pi = \{V_1, \dots, V_r\}$ be a partition of $\{1, \dots, n\}$.
\begin{enumerate}[$\qquad(1)$]
\item The partition $\pi$ is said to be \textit{crossing} if there exist $a < b < c < d$ such that $a, c \in V_i$, $b, d \in V_j$, and $V_i \neq V_j$. If $\pi$ is not crossing, then it is said to be \textit{non-crossing}. The set of non-crossing partitions of $\{1, \dots, n\}$ is denoted by $\N\C(n)$.

\item A block $V_i$ of $\pi$ is said to be an \textit{interval} if $V_i$ is of the form $V_i = \{k, k + 1, \dots, k + \ell\}$ for some $k \geq 1$ and $0 \leq \ell \leq n - k$. The partition $\pi$ is said to be an \textit{interval partition} if every block of $\pi$ is an interval. The set of interval partitions of $\{1, \dots, n\}$ is denoted by $\I(n)$.
\end{enumerate}
\end{defn}

If $a_1, \dots, a_n$ are random variables in a non-commutative probability space $(\A, \varphi)$ and $V = \{v_1 < \cdots < v_s\}$ is a block of some partition of $\{1, \dots, n\}$, then we set $(a_1, \dots, a_n)|_V := (a_{v_1}, \dots, a_{v_s})$. The free and Boolean cumulants are recursively defined as follows.

\begin{defn}[\cites{SW1997, S1994}]
Let $(\A, \varphi)$ be a non-commutative probability space. The families of \textit{free cumulants} and \textit{Boolean cumulants} with respect to $\varphi$ are respectively the families of multilinear functionals
\[\{\kappa_n: \mathcal{A}^n \to \mathbb{C}\}_{n \geq 1} \qand \{B_n: \mathcal{A}^n \to \mathbb{C}\}_{n \geq 1}\]
uniquely determined by the requirements that for all $n \geq 1$ and $a_1, \dots, a_n \in \mathcal{A}$
\[\varphi(a_1\cdots a_n) = \sum_{\pi \in \N\C(n)}\kappa_\pi(a_1, \dots, a_n)\qqand \varphi(a_1\cdots a_n) = \sum_{\pi \in \I(n)}B_\pi(a_1, \dots, a_n)\]
where
\[
\kappa_\pi(a_1, \dots, a_n) =\prod_{V \in \pi}\kappa_{|V|}((a_1, \dots, a_n)|_V) \qqand B_\pi(a_1, \dots, a_n) = \prod_{V \in \pi}B_{|V|}((a_1, \dots, a_n)|_V).
\]
\end{defn}

Given a partition $\pi = \{V_1, \dots, V_r\}$ of $\{1, \dots, n\}$ with blocks $V_i = \{v_{i, 1} < \cdots < v_{i, s_i}\}$, set
\[\varphi_\pi(a_1, \dots, a_n) := \prod_{i = 1}^r\varphi(a_{v_{i, 1}}\cdots a_{v_{i, s_i}}).\]
For $\pi_1 \in \N\C(n)$ and $\pi_2 \in \I(n)$, we obtain via M\"{o}bius inversions (see \cite{S1994, SW1997}) that
\begin{align*}
\kappa_{\pi_1}(a_1, \dots, a_n) &= \sum_{\substack{\sigma_1 \in \N\C(n)\\\sigma_1 \leq \pi_1}}\varphi_{\sigma_1}(a_1, \dots, a_n)\mu_{\N\C}(\sigma_1, \pi_1) \\
B_{\pi_2}(a_1, \dots, a_n) &= \sum_{\substack{\sigma_2 \in \I(n)\\\sigma_2 \leq \pi_2}}\varphi_{\sigma_2}(a_1, \dots, a_n)\mu_{\I}(\sigma_2, \pi_2),
\end{align*}
where $\sigma \leq \pi$ denotes $\sigma$ is a refinement of $\pi$, $\mu_{\N\C}$ and $\mu_\I$ denote the M\"{o}bius functions on the lattices $\N\C$ and $\I$ of all non-crossing partitions and all interval partitions respectively.

One of the main advantages for switching from moments to cumulants is that the corresponding independence is much easier to describe on the level of cumulants. More specifically, a family $\{\A_k\}_{k \in K}$ of unital (respectively non-unital) subalgebras in a non-commutative probability space $(\A, \varphi)$ is freely (respectively Boolean) independent with respect to $\varphi$ if and only if $\kappa_n(a_1, \dots, a_n) = 0$ (respectively $B_n(a_1, \dots, a_n) = 0$) whenever $n \geq 2$, $a_j \in \A_{k_j}$, and there exist $i$ and $j$ such that $k_i \neq k_j$ (see \cites{S1994, SW1997}).

Consider now the framework where $\A$ is a unital algebra equipped with two unital linear functionals $\varphi$ and $\psi$. In this setting, Bo\.{z}ejko, Leinert, and Speicher \cites{BLS1996, BS1991} introduced the notion of c-free independence as follows.

\begin{defn}
Let $(\A, \varphi, \psi)$ be a two-state non-commutative probability space. A family $\{\mathcal{A}_k\}_{k \in K}$ of unital subalgebras of $\mathcal{A}$ is said to be \textit{conditionally freely independent} (\textit{c-free} for short) with respect to $(\varphi, \psi)$ if
\begin{enumerate}[$\qquad(1)$]
\item $\psi(a_1\cdots a_n) = 0$,

\item $\varphi(a_1\cdots a_n) = \varphi(a_1)\cdots\varphi(a_n)$
\end{enumerate}
whenever $a_j \in \A_{k_j}$, $k_j \in K$, $k_1 \neq \cdots \neq k_n$, and $\psi(a_j) = 0$ for all $1 \leq j \leq n$.
\end{defn}

Note that c-free independence with respect to $(\varphi, \psi)$ automatically implies free independence with respect to $\psi$, and hence implies the vanishing of mixed free cumulants. To characterize c-free independence in terms of cumulants, another family is required.

\begin{defn}
Given $\pi \in \N\C(n)$, a block $V$ of $\pi$ is said to be \textit{inner} if there exists another block $W$ of $\pi$ and $a, b \in W$ such that $a < v < b$ for some (hence all) $v \in V$. A block of $\pi$ is said to be \textit{outer} if it is not inner.
\end{defn}

\begin{defn}[\cites{BLS1996, BS1991}]
Let $(\A, \varphi, \psi)$ be a two-state non-commutative probability space. The family of \textit{c-free cumulants} with respect to $(\varphi, \psi)$ is the family of multilinear functionals $\{\K_n: \A^n \to \bC\}_{n \geq 1}$ uniquely determined by the requirement that for all $n \geq 1$ and $a_1, \dots, a_n \in \mathcal{A}$
\begin{align*}
\varphi(a_1\cdots a_n) &= \sum_{\pi \in \N\C(n)}\K_\pi(a_1, \dots, a_n)
\end{align*}
where
\[
\K_\pi(a_1, \dots, a_n) = \left(\prod_{\substack{V \in \pi\\V\,\mathrm{inner}}}\kappa_{|V|}((a_1, \dots, a_n)|_V)\right)\left(\prod_{\substack{V \in \pi\\V\,\mathrm{outer}}}\mathcal{K}_{|V|}((a_1, \dots, a_n)|_V)\right)
\]
and $\left\{\kappa_n: \A^n \to \bC\right\}_{n \geq 1}$ denotes the family of free cumulants with respect to $\psi$.
\end{defn}

The notion of c-free independence can now be described as the vanishing of mixed free and c-free cumulants (see \cites{BLS1996}).

\subsection{Bi-free and c-bi-free independences}

Bi-free independence was introduced by Voiculescu \cite{V2014} as a generalization of free independence for pairs of algebras, where a \textit{pair of algebra} in a non-commutative probability space $(\A, \psi)$ is an ordered pair $(\A_\ell, \A_r)$ of subalgebras of $\A$. We call $\A_\ell$ the \textit{left algebra} and $\A_r$ the \textit{right algebra} of the pair $(\A_\ell, \A_r)$. In \cite{GS2016}, we introduced the notion of c-bi-free independence for pairs of algebras in the setting of a two-state non-commutative probability space $(\A, \varphi, \psi)$ such that c-bi-free independence reduces to bi-free independence when $\varphi = \psi$ and reduces to c-free independence when only left or only right algebras are considered. For the theoretical definitions of bi-free and c-bi-free independences in terms of actions on a reduced free product space, we refer to \cite{V2014}*{Definition 2.6} and \cite{GS2016}*{Definition 3.4} respectively. To describe the mixed moments of a family $\{(\A_{k, \ell}, \A_{k, r})\}_{k \in K}$ of pairs of algebras and the corresponding cumulants, we introduce the following definitions and notations.

Throughout the rest, a map $\chi: \{1, \dots, n\} \to \{\ell, r\}$ is used to designate whether the $j^{\mathrm{th}}$ random variable in a sequence of $n$ random variables is a left variable (when $\chi(j) = \ell$) or a right variable (when $\chi(j) = r$), and a map $\omega: \{1, \dots, n\} \to K$ is used to designate the index in $K$ of the $j^{\mathrm{th}}$ random variable. Given a map $\chi: \{1, \dots, n\} \to \{\ell, r\}$ with $\chi^{-1}(\{\ell\}) = \{i_1 < \cdots < i_p\}$ and $\chi^{-1}(\{r\}) = \{i_{p + 1} > \cdots > i_n\}$, define a permutation $s_\chi$ on $\{1, \dots, n\}$ by $s_\chi(j) = i_j$ for $1 \leq j \leq n$, and define a total order $\prec_\chi$ on $\{1, \dots, n\}$ by $i_1 \prec_\chi \cdots \prec_\chi i_n$. A subset $V \subset \{1, \dots, n\}$ is said to be a \textit{$\chi$-interval} if it is an interval with respect to $\prec_\chi$. In addition, we define $\min_{\prec_\chi}(V)$ and $\max_{\prec_\chi}(V)$ to be the minimal and maximal elements of $V$ with respect to $\prec_\chi$ respectively.

\begin{defn}
A partition $\pi$ of $\{1, \dots, n\}$ is said to be \textit{bi-non-crossing} (with respect to $\chi$) if $s_\chi^{-1}\cdot\pi \in \N\C(n)$; that is, the partition formed by applying $s_\chi^{-1}$ to the blocks of $\pi$ is a non-crossing partition. Equivalently, $\pi$ is a non-crossing partition with respect to $\prec_\chi$. The set of bi-non-crossing partitions with respect to $\chi$ is denoted by $\B\N\C(\chi)$, which is a lattice and the minimal and maximal elements (with respect to $\leq$) are denoted by $0_\chi$ and $1_\chi$ respectively. 
\end{defn}

Bi-non-crossing partitions corresponding to a given map $\chi: \{1, \dots, n\} \to \{\ell, r\}$ can be represented diagrammatically by placing $n$ nodes labelled $1$ to $n$ on two vertical dashed lines from top to bottom in increasing order with node $j$ on the left or right depending on whether $\chi(j) = \ell$ or $\chi(j) = r$, and connecting the nodes which are in the same block using only horizontal and vertical lines in a non-crossing way. Moreover, given a bi-non-crossing partition $\pi \in \B\N\C(\chi)$, the vertical segment of a block $V$ of $\pi$ will be referred to as the \textit{spine} of $V$ and the horizontal segments connecting the nodes to the spine of $V$ will be referred to as the \textit{ribs} of $V$. To introduce the corresponding cumulants, we need the following analogues of inner and outer blocks.

\begin{defn}
Given $\chi: \{1, \dots, n\} \to \{\ell, r\}$ and $\pi \in \B\N\C(\chi)$, a block $V$ of $\pi$ is said to be \textit{interior} if there exists another block $W$ of $\pi$ such that $\min_{\prec_\chi}(W) \prec_\chi \min_{\prec_\chi}(V)$ and $\max_{\prec_\chi}(V) \prec_\chi \max_{\prec_\chi}(W)$. A block of $\pi$ is said to be \textit{exterior} if it is not interior.
\end{defn}

\begin{defn}[\cites{MN2015, GS2016}]
Let $(\A, \varphi, \psi)$ be a two-state non-commutative probability space. The families of \textit{$(\ell, r)$-cumulants} and \textit{c-$(\ell, r)$-cumulants} with respect to $\psi$ and $(\varphi, \psi)$ are respectively the families of multilinear functionals
\[\{\kappa_\chi: \A^n \to \bC\}_{n \geq 1, \chi: \{1, \dots, n\} \to \{\ell, r\}} \qand \{\K_\chi: \A^n \to \bC\}_{n \geq 1, \chi: \{1, \dots, n\} \to \{\ell, r\}}\]
uniquely determined by the requirements that for all $n \geq 1$, $\chi: \{1, \dots, n\} \to \{\ell, r\}$, and $a_1, \dots, a_n \in \A$
\[\psi(a_1\cdots a_n) = \sum_{\pi \in \B\N\C(\chi)}\kappa_\pi(a_1, \dots, a_n) \qqand \varphi(a_1\cdots a_n)=\sum_{\pi \in \B\N\C(\chi)}\K_\pi(a_1, \dots, a_n)
\]
where
\begin{align*}
\kappa_\pi(a_1, \dots, a_n) &= \prod_{V \in \pi}\kappa_{\chi|_V}((a_1, \dots, a_n)|_V) \qqand  \\
\K_\pi(a_1, \dots, a_n) &= \left(\prod_{\substack{V \in \pi\\V\,\mathrm{interior}}}\kappa_{\chi|_V}((a_1, \dots, a_n)|_V)\right)\left(\prod_{\substack{V \in \pi\\V\,\mathrm{exterior}}}\K_{\chi|_V}((a_1, \dots, a_n)|_V)\right).
\end{align*}
\end{defn}

Since $\B\N\C(\chi)$ inherits a special lattice structure from the set of all partitions of $\{1, \dots, n\}$, for $\pi \in \B\N\C(\chi)$ we obtain via M\"{o}bius inversion (see \cite{CNS2015-1}) that
\[\kappa_\pi(a_1, \dots, a_n) = \sum_{\substack{\sigma \in \B\N\C(\chi)\\\sigma \leq \pi}}\psi_\sigma(a_1, \dots, a_n)\mu_{\B\N\C}(\sigma, \pi),\]
where $\mu_{\B\N\C}$ denotes the M\"{o}bius function on the lattice $\B\N\C$ of all bi-non-crossing partitions. Due to similar lattice structures, we have that $\mu_{\B\N\C}(\sigma, \pi) = \mu_{\N\C}(s_\chi^{-1}\cdot\sigma, s_\chi^{-1}\cdot\pi)$ for $\sigma, \pi \in \B\N\C(\chi)$.

To obtain a moment formula and vanishing characterization for bi-free and c-bi-free independences, the following sets of shaded diagrams and terminology are required.

\begin{defn}
Let $n \geq 1$, $\chi: \{1, \dots, n\} \to \{\ell, r\}$, and $\omega: \{1, \dots, n\} \to K$ be given. Assign a shade to each $k \in K$.
\begin{enumerate}[$\qquad(1)$]
\item The set $\L\R(\chi, \omega)$ of \textit{shaded $\L\R$-diagrams} is recursively constructed as follows.
\begin{itemize}
\item For $n = 1$, $\L\R(\chi, \omega)$ consists of two vertical dashed lines with a single node shaded $\omega(1)$ on the left or right depending on whether $\chi(1) = \ell$ or $\chi(1) = r$. Then either this node remains isolated or a rib and spine shaded $\omega(1)$ are drawn connecting to the top of the diagram.

\item For $n \geq 2$, let $\chi_0 = \chi|_{\{2, \dots, n\}}$ and $\omega_0 = \omega|_{\{2, \dots, n\}}$. Each diagram $D \in \L\R(\chi_0, \omega_0)$ extends to two diagrams in $\L\R(\chi, \omega)$ via the following process: Add to the top of $D$ a node shaded $\omega(1)$ on the side corresponding to $\chi(1)$ and extend all spines of $D$ to the top. If at least one spine was extended and the spine nearest to the new node is shaded $\omega(1)$, then connect the nearest spine to the node with a rib and choose to either extend the spine to the top or not. Otherwise leave the new node isolated, or connect the new node with a rib to a new spine shaded $\omega(1)$ to the top.
\end{itemize}
For $0 \leq k \leq n$, let $\L\R_k(\chi, \omega)$ denote the subset of $\L\R(\chi, \omega)$ with exactly $k$ spines reaching the top.

\item For $0 \leq m \leq n$, let $\L\R_m^{\lat\capp}(\chi, \omega)$ denote the set of all diagrams with $m$ strings reaching the top that can be obtained from some diagram $D \in \L\R_k(\chi, \omega)$ for some $k\geq m$ by removing potions of spines in $D$ either between ribs or between a rib and the top of the diagram. For $D' \in \L\R_m^{\lat\capp}(\chi, \omega)$ and $D \in \L\R_k(\chi, \omega)$, $D'$ is said to be a \textit{lateral capping of} $D$, denoted $D \geq_{\lat\capp} D'$, if $D' = D$ or $D'$ can be obtained from $D$ by removing potions of spines. Moreover, let
\[\L\R^{\lat\capp}(\chi, \omega) = \bigcup_{m = 0}^n\L\R_m^{\lat\capp}(\chi, \omega).\]

\item For $D \in \L\R_m^{\lat\capp}(\chi, \omega)$, let $|D| = (\text{number of blocks of } D) + m$.
\end{enumerate}
\end{defn}

Finally, if $a_1, \dots, a_n$ are random variables in a two-state non-commutative probability space $(\A, \varphi, \psi)$, and $D \in \L\R^{\lat\capp}(\chi, \omega)$ with blocks $V_1, \dots, V_p$ whose spines do not reach the top and $W_1, \dots, W_q$ whose spines reach the top, writing $V_i = \{v_{i, 1} < \cdots < v_{i, s_i}\}$ and $W_j = \{w_{j, 1} < \cdots < w_{j, t_j}\}$ we set
\[\varphi_D(a_1, \dots, a_n) := \prod_{i = 1}^p\psi(a_{v_{i, 1}}\cdots a_{v_{i, s_i}})\prod_{j = 1}^q\varphi(a_{w_{j, 1}}\cdots a_{w_{j, t_j}}).\]

Under the above notation, the following moment type characterization and vanishing of mixed cumulants characterization were established in \cite{GS2016}*{Theorems 4.1 and 4.8}. Note that in equation \eqref{psi-Moment} below, the notation $\sigma \leq \omega$ denotes $\sigma$ is a refinement of the partition induced by $\omega$ with blocks $\{\omega^{-1}(\{k\})\}_{k \in K}$.

\begin{thm}\label{CBFMoments}
A family $\{(A_{k, \ell}, A_{k, r})\}_{k \in K}$ of pairs of algebras in a two-state non-commutative probability space $(\A, \varphi, \psi)$ is c-bi-free with respect to $(\varphi, \psi)$ if and only if
\begin{equation}\label{psi-Moment}
\psi(a_1\cdots a_n) = \sum_{\pi \in \B\N\C(\chi)}\left(\sum_{\substack{\sigma \in \B\N\C(\chi)\\\pi \leq \sigma \leq \omega}}\mu_{\B\N\C}(\pi, \sigma)\right)\psi_\pi(a_1, \dots, a_n)
\end{equation}
and
\begin{equation}\label{phi-Moment}
\varphi(a_1\cdots a_n) = \sum_{D \in \L\R^{\lat\capp}(\chi, \omega)}\left(\sum_{\substack{D' \in \L\R(\chi, \omega)\\D' \geq_{\lat\capp} D}}(-1)^{|D| - |D'|}\right)\varphi_D(a_1, \dots, a_n)
\end{equation}
for all $n \geq 1$, $\chi: \{1, \dots, n\} \to \{\ell, r\}$, $\omega: \{1, \dots, n\} \to K$, and $a_1, \dots, a_n \in \A$ with $a_j \in A_{\omega(j), \chi(j)}$.

Equivalently, for all $n \geq 2$, $\chi: \{1, \dots, n\} \to \{\ell, r\}$, $\omega: \{1, \dots, n\} \to K$, and $a_j$ as above, we have that
\[\kappa_\chi(a_1, \dots, a_n) = \K_\chi(a_1, \dots, a_n) = 0\]
whenever $\omega$ is not constant.
\end{thm}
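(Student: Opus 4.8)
The plan is to recover this result along the lines of \cite{GS2016}, by computing the mixed moments of a c-bi-free family directly on a reduced free product of pointed spaces and then passing to cumulants via M\"obius inversion. Since c-bi-freeness is, by definition, a statement about such a representation (and is determined by the joint distribution it produces), the ``only if'' direction amounts to evaluating $\psi(a_1\cdots a_n)$ and $\varphi(a_1\cdots a_n)$ in that model and checking that the answers agree with the right-hand sides of \eqref{psi-Moment} and \eqref{phi-Moment}; the ``if'' direction then follows by comparing an arbitrary family satisfying those identities with an abstract c-bi-free family having the same individual distributions.

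For the $\psi$-side, I would write each $a_j = \mathring{a}_j + \psi(a_j)1$ with $\mathring{a}_j$ centered, expand $\psi(a_1\cdots a_n)$, and track which reduced words survive when the left operators act at the distinguished position and the right operators act on the opposite side. The surviving configurations are indexed precisely by the bi-non-crossing partitions $\pi \in \B\N\C(\chi)$ whose blocks lie in single $\omega$-classes, i.e.\ $\pi \leq \omega$, yielding $\psi(a_1\cdots a_n) = \sum_{\pi \leq \omega}\kappa_\pi(a_1,\dots,a_n)$; substituting $\kappa_\pi = \sum_{\sigma \leq \pi}\psi_\sigma\,\mu_{\B\N\C}(\sigma,\pi)$ and interchanging the two sums gives \eqref{psi-Moment}. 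For the $\varphi$-side the functional $\varphi$ is not the vector state of the free product but is assembled from the individual $\varphi_k$, so the same expansion now distinguishes exterior blocks (``seen'' by $\varphi$, corresponding to spines reaching the top of a shaded $\L\R$-diagram), which contribute through the c-free data and the $\K_{\chi|_V}$, from interior blocks, which contribute through the $\kappa_{\chi|_V}$; the failure of $\varphi$ to factor across the free-product structure produces exactly the corrections recorded by lateral cappings, so that $\varphi(a_1\cdots a_n) = \sum_{D \in \L\R(\chi,\omega)}\K_D(a_1,\dots,a_n)$, and re-expanding each $\K_D$ over $\L\R^{\lat\capp}(\chi,\omega)$ gives \eqref{phi-Moment}.

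For the equivalence with vanishing cumulants, I would apply M\"obius inversion on $\B\N\C(\chi)$ to \eqref{psi-Moment}, using $\mu_{\B\N\C}(\sigma,\pi) = \mu_{\N\C}(s_\chi^{-1}\cdot\sigma, s_\chi^{-1}\cdot\pi)$ to reduce to the classical non-crossing computation, to conclude that \eqref{psi-Moment} is equivalent to $\kappa_\chi(a_1,\dots,a_n) = 0$ whenever $\omega$ is non-constant; an analogous inversion over the lateral-capping poset $\L\R^{\lat\capp}(\chi,\omega)$, organized by the recursive construction of $\L\R(\chi,\omega)$ so as to separate exterior from interior contributions, shows that \eqref{phi-Moment} is equivalent to $\K_\chi(a_1,\dots,a_n) = 0$ whenever $\omega$ is non-constant. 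Finally, for the ``if'' direction: a family satisfying \eqref{psi-Moment} and \eqref{phi-Moment} has the same mixed moments as a c-bi-free family with matching individual distributions (produced by the reduced-free-product construction), hence the same joint distribution, hence is itself c-bi-free.

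The main obstacle is the $\varphi$-side throughout: both the operatorial computation of $\varphi(a_1\cdots a_n)$ and the sign bookkeeping $(-1)^{|D| - |D'|}$ over $\L\R^{\lat\capp}(\chi,\omega)$ are delicate, since one must show that the cancellations forced by the non-multiplicativity of $\varphi$ on reduced words match exactly the lateral cappings of shaded diagrams. By contrast the $\psi$-side is a routine bi-non-crossing M\"obius inversion, essentially reducible to the $\N\C$ lattice via $s_\chi$; it is the shaded-diagram machinery, and in particular verifying that exterior blocks interact with interior ones precisely as the formula predicts, that carries the real weight, and this is where I would expect to follow the arguments of \cite{GS2016}*{Theorems 4.1 and 4.8} most closely.
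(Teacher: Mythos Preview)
Your proposal is reasonable, but note that the paper does not actually prove this theorem: it is stated in the preliminaries section as a result ``established in \cite{GS2016}*{Theorems 4.1 and 4.8}'' and simply cited without proof. Your outline is essentially a sketch of the approach taken in that reference, as you yourself acknowledge in your final paragraph, so there is nothing to compare against in the present paper.
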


\section{Bi-Boolean independence}\label{sec:biboolean}

In this section, the notions of bi-Boolean independence and B-$(\ell, r)$-cumulants are introduced.

\subsection{Definition and connection with c-bi-free independence}

Unlike bi-free and c-bi-free independences, the definition of bi-Boolean independence is given as a (more straightforward) rule of calculating mixed moments. We begin with the following definition of a special partition.

\begin{defn}\label{Partition}
Given $n \geq 1$, $\chi: \{1, \dots, n\} \to \{\ell, r\}$, and $\omega: \{1, \dots, n\} \to K$, let $\pi_{\omega, \chi}$ be the unique partition of $\{1, \dots, n\}$ with ordered blocks $V_1, \dots, V_m$ such that each $V_k$ is a $\chi$-interval, $\max_{\prec_\chi}(V_k) \prec_\chi \min_{\prec_\chi}(V_{k + 1})$, $\omega$ is constant on each $V_k$, and $\omega(V_k) \neq \omega(V_{k + 1})$ for all $1 \leq k \leq m - 1$.  That is, $\pi_{\omega, \chi}$ is the unique maximal partition $\pi$ such that $\pi \leq \{\omega^{-1}(k)\}_{k \in K}$ and each block of $\pi$ is a $\chi$-interval.
\end{defn}

As an example, suppose $K = \{k_1, k_2\}$, $n = 8$, $(\chi(1), \dots, \chi(8)) = (\ell, r, r, \ell, r, \ell, \ell, r)$, and $(\omega(1), \dots, \omega(8)) = (k_1, k_1, k_2, k_1, k_2, k_2, k_1, k_1)$. Then $\pi_{\omega, \chi} = \{ \{1, 4\}, \{6\},  \{7, 8\},  \{3, 5\}, \{2\}\}$. This can be easily seen via the following diagram induced by $\chi$ and $\omega$ where solid lines and $\bullet$ denote the shade of $k_1$, and dotted lines and $\circ$ denote the shade of $k_2$:
\begin{align*}
\begin{tikzpicture}[baseline]
\draw[thick, dashed, black] (0,4) -- (0,-.5) -- (3.5,-.5) -- (3.5,4);
\node[left] at (0,3.5) {1};
\draw[black,fill=black] (0,3.5) circle (0.05);
\node[right] at (3.5,3) {2};
\draw[black,fill=black] (3.5,3) circle (0.05);
\node[right] at (3.5,2.5) {3};
\draw[black, fill=white] (3.5,2.5) circle (0.05);
\node[left] at (0,2) {4};
\draw[black,fill=black] (0,2) circle (0.05);
\node[right] at (3.5,1.5) {5};
\draw[black, fill=white] (3.5,1.5) circle (0.05);
\node[left] at (0,1) {6};
\draw[black, fill=white] (0,1) circle (0.05);
\node[left] at (0,0.5) {7};
\draw[black,fill=black] (0,0.5) circle (0.05);
\node[right] at (3.5,0) {8};
\draw[black,fill=black] (3.5,0) circle (0.05);
\draw[thick, black] (0,3.5) -- (1.5,3.5) -- (1.5,2) -- (0,2);
\draw[thick, black] (0,0.5) -- (1.5,0.5) -- (1.5,0) -- (3.5,0);
\draw[thick, dotted, black] (3.45,2.5) -- (2.5,2.5) -- (2.5,1.5) -- (3.45,1.5);
\end{tikzpicture}
\end{align*}

\begin{defn}
Let $(\A, \varphi)$ be a non-commutative probability space.  A family $\{(\A_{k, \ell}, \A_{k, r})\}_{k \in K}$ of  pairs of non-unital algebras in $(\A, \varphi)$ is said to be \textit{bi-Boolean independent} with respect to $\varphi$ if for all $n \geq 1$, $\chi: \{1, \dots, n\} \to \{\ell, r\}$, $\omega: \{1, \dots, n\} \to K$, and $a_1, \dots, a_n \in \mathcal{A}$ with $a_j \in \A_{\omega(j), \chi(j)}$, we have that
\[\varphi(a_1\cdots a_n) = \varphi_{\pi_{\omega, \chi}}(a_1, \dots, a_n),\]
where $\pi_{\omega, \chi}$ is the partition of $\{1, \dots, n\}$ induced by $\chi$ and $\omega$ as described in Definition \ref{Partition}.
\end{defn}

\begin{rem}
Let $\{(\A_{k, \ell}, \A_{k, r})\}_{k \in K}$ be a bi-Boolean independent family in $(\A, \varphi)$, let $n \geq 1$, $\chi: \{1, \dots, n\} \to \{\ell, r\}$, $\omega: \{1, \dots, n\} \to K$, and $a_1, \dots, a_n \in \A$ with $a_j \in \A_{\omega(j), \chi(j)}$.
\begin{enumerate}[$\qquad(1)$]
\item If $\chi$ is constant and $\omega(k) \neq  \omega(k+1)$ for all $k$, then $\pi_{\omega, \chi} = \{\{1\}, \dots, \{n\}\}$ so that $\varphi(a_1\cdots a_n) = \varphi(a_1)\cdots\varphi(a_n)$. Therefore, the family $\{\A_{k, \ell}\}_{k \in K}$ (respectively $\{\A_{k, r}\}_{k \in K}$) of left algebras (respectively right algebras) is Boolean independent with respect to $\varphi$.

\item Suppose $n$ is even and that $\chi$ and $\omega$ are alternating; that is $\chi^{-1}(\{\ell\}) = \{1, 3, \dots, n - 1\}$ and 
\[
\omega(1) = \omega(3) = \cdots = \omega(n - 1) \neq \omega(2) = \omega(4) = \cdots = \omega(n).
\]
Then $\pi_{\omega, \chi} = \{\{1, 3, \dots, n - 1\}, \{2, 4, \dots, n\}\}$ so that 
\[
\varphi(a_1\cdots a_n) = \varphi(a_1a_3\cdots a_{n - 1})\varphi(a_2a_4\cdots a_n).
\]
Therefore, a left algebra $\A_{k_1, \ell}$ and a right algebra $\A_{k_2, r}$ are classically independent with respect to $\varphi$ (in the sense of \cite{V2014}*{Proposition 2.16}) whenever $k_1 \neq k_2$.
\end{enumerate}
\end{rem}

Bi-Boolean independence is a specific instance of c-bi-free independence as illustrated below.

\begin{prop}\label{BiBandCBF}
Let $\{(\A_{k, r}, \A_{k, r})\}_{k \in K}$ be a family of pairs of non-unital algebras in a non-commutative probability space $(\A, \varphi)$.  Then there exists a two-state non-commutative probability space $(\A', \varphi', \psi')$ such that $\A'$ contains $\A_0 = \ast_{k \in K} (\A_{k, r} \ast \A_{k, r})$, $\psi'|_{\A_0} \equiv 0$, and, if $\pi : \A_0 \to \A$ is the inclusion of $\A_0$ into $\A$, then $\varphi'|_{\A_0} = \varphi \circ \pi$.  Moreover $\{(\A_{k, r}, \A_{k, r})\}_{k \in K}$ is bi-Boolean independent with respect to $\varphi$ if and only if $\{(\A_{k, r}, \A_{k, r})\}_{k \in K}$ is c-bi-free with respect to $(\varphi', \psi')$.
\end{prop}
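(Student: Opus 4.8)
The plan is to realize $(\A',\varphi',\psi')$ by the usual unitization trick and then to detect c-bi-freeness through the vanishing of mixed $(\ell,r)$- and c-$(\ell,r)$-cumulants. Concretely, let $\A_0=\ast_{k\in K}(\A_{k,\ell}\ast\A_{k,r})$ be the free product of these non-unital algebras, let $\A'=\bC 1\oplus\A_0$ be its unitization, and let $\pi\colon\A_0\to\A$ be the canonical homomorphism determined by the inclusions of the $\A_{k,\ell}$ and $\A_{k,r}$, so that $\pi$ restricts to the inclusion on each factor; we identify each $\A_{k,\ell}$ and $\A_{k,r}$ with its canonical copy inside $\A_0\subseteq\A'$. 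Define unital linear functionals by $\psi'(\lambda 1+b)=\lambda$ and $\varphi'(\lambda 1+b)=\lambda+\varphi(\pi(b))$ for $b\in\A_0$; then $\psi'|_{\A_0}\equiv 0$ and $\varphi'|_{\A_0}=\varphi\circ\pi$, and it remains to prove the stated equivalence for the copies inside $\A'$.

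First I would record two reductions. Since $\psi'$ vanishes on $\A_0$, one has $\psi'(b_1\cdots b_n)=0$ for every $n\ge 1$ and $b_1,\dots,b_n\in\A_0$, so M\"{o}bius inversion over $\B\N\C(\chi)$ forces $\kappa_\chi(b_1,\dots,b_n)=0$ for all $\chi$; in particular the $\psi'$-half of the cumulant criterion in Theorem~\ref{CBFMoments} holds automatically and only the c-$(\ell,r)$-cumulants $\K_\chi$ need be examined. Next, one checks the combinatorial fact that a partition $\pi\in\B\N\C(\chi)$ has all of its blocks exterior if and only if each block of $\pi$ is a $\prec_\chi$-interval: were some block not a $\prec_\chi$-interval, using that $\pi$ is non-crossing with respect to $\prec_\chi$ shows that the block containing an intermediate index is squeezed strictly inside it, hence interior; conversely disjoint $\prec_\chi$-intervals cannot nest. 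Since the factors of $\K_\pi$ over interior blocks are $\kappa$'s, which vanish on $\A_0$, the moment--cumulant formula for $\K_\chi$ collapses, for all $b_j\in\A_0$, to
\begin{equation*}
\varphi'(b_1\cdots b_n)=\sum_{\substack{\pi\in\B\N\C(\chi)\\ \text{every block of }\pi\text{ is a }\prec_\chi\text{-interval}}}\ \prod_{V\in\pi}\K_{\chi|_V}\bigl((b_1,\dots,b_n)|_V\bigr).\tag{$\star$}
\end{equation*}
Finally, by the maximality characterization of $\pi_{\omega,\chi}$ in Definition~\ref{Partition}, a partition into $\prec_\chi$-intervals refines $\pi_{\omega,\chi}$ exactly when $\omega$ is constant on each of its blocks, and every such partition arises uniquely by choosing, for each block $W$ of $\pi_{\omega,\chi}$, a partition of $W$ into $\prec_\chi$-intervals; applying $(\star)$ inside each $W$ and multiplying yields
\begin{equation*}
\sum_{\substack{\pi\le\pi_{\omega,\chi}\\ \text{every block of }\pi\text{ is a }\prec_\chi\text{-interval}}}\ \prod_{V\in\pi}\K_{\chi|_V}\bigl((b_1,\dots,b_n)|_V\bigr)=\prod_{W\in\pi_{\omega,\chi}}\varphi'\bigl((b_1,\dots,b_n)|_W\bigr).\tag{$\star\star$}
\end{equation*}

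With $(\star)$ and $(\star\star)$ in hand the equivalence is short. Suppose first that the copies are c-bi-free with respect to $(\varphi',\psi')$; by Theorem~\ref{CBFMoments} every mixed $\K_\chi$ vanishes, so in $(\star)$ any term whose partition has a block on which $\omega$ is non-constant dies, leaving only the partitions $\pi\le\pi_{\omega,\chi}$, and by $(\star\star)$ the right-hand side becomes $\prod_{W\in\pi_{\omega,\chi}}\varphi'((a_1,\dots,a_n)|_W)$, which via $\varphi'|_{\A_0}=\varphi\circ\pi$ equals $\varphi_{\pi_{\omega,\chi}}(a_1,\dots,a_n)$; this is precisely bi-Boolean independence. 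Conversely, if the copies are bi-Boolean independent, then lifting and comparing with $(\star)$ and $(\star\star)$ shows that the sum of $\prod_{V\in\pi}\K_{\chi|_V}((a_1,\dots,a_n)|_V)$ over all partitions $\pi$ into $\prec_\chi$-intervals with $\pi\not\le\pi_{\omega,\chi}$ is zero. Inducting on $n$: every such $\pi$ other than $1_\chi$ has a proper block $V$ on which $\omega$ is non-constant, so $\K_{\chi|_V}((a_1,\dots,a_n)|_V)=0$ by the inductive hypothesis and that term drops out, leaving only $\K_\chi(a_1,\dots,a_n)$, which therefore vanishes whenever $\omega$ is non-constant. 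Together with the vanishing of mixed $\kappa_\chi$ noted above, Theorem~\ref{CBFMoments} then gives c-bi-freeness. The step I expect to demand the most care is the combinatorial input: the exterior-block characterization and the block-by-block bookkeeping behind $(\star\star)$; once these are in place, the construction of $(\A',\varphi',\psi')$ and the final equivalence are routine.
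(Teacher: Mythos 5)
Your construction and your verification are both correct, but you verify the equivalence by a genuinely different route than the paper. The construction differs only cosmetically: the paper unitizes each $\A_{k,\ell}$, $\A_{k,r}$ and takes $\nu=\ast_{k\in K}\delta_k$ on $\ast_{k\in K}(\widetilde{\A_{k,\ell}}\ast\widetilde{\A_{k,r}})$, while you take the non-unital free product $\A_0$ and unitize once, with $\psi'$ the canonical delta functional; both make $\psi'$ vanish on every word from the algebras. The real divergence is in how Theorem~\ref{CBFMoments} is used. The paper invokes its moment form \eqref{phi-Moment}: since every block whose spine does not reach the top carries a $\psi'$-factor equal to zero, only the unique diagram in $\L\R^{\lat\capp}(\chi,\omega)$ with all spines reaching the top survives, its coefficient is $1$, and its value is $\varphi_{\pi_{\omega,\chi}}(a_1,\dots,a_n)$, so the equivalence drops out in a few lines at the cost of the LR-diagram machinery. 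You instead use the cumulant-vanishing form: you note the $\kappa_\chi$ vanish automatically, prove that a bi-non-crossing partition has only exterior blocks exactly when it is a bi-interval partition (your argument for this is correct), collapse the $\K$-moment-cumulant formula to $(\star)$, establish the factorization $(\star\star)$ via the block-by-block bijection (valid because each block of $\pi_{\omega,\chi}$ is itself a $\chi$-interval), and run an induction for the converse. This is longer but elementary, and in effect it folds into this proposition the content the paper develops separately: your $(\star)$ is the identification $\widetilde{\K}_\chi=B_\chi$ from the paper's first proof of Theorem~\ref{VanishingEquiv}, and your converse induction mirrors its second proof. What the paper's route buys is brevity given \eqref{phi-Moment}; what yours buys is independence from the shaded-diagram formalism, at the price of redoing combinatorics the paper reuses later anyway.
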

\begin{proof}
Let $\{(\A_{k, \ell}, \A_{k, r})\}_{k \in K}$ be a family of non-unital pairs of algebras.  For each $k \in K$ let $\mu_k: \A_{k, \ell} * \A_{k, r} \to \bC$ be the linear functional induced by $\varphi$. For $k \in K$, let $\widetilde{\A_{k, \ell}} := \bC 1 \oplus \A_{k, \ell}$ and $\widetilde{\A_{k, r}} := \bC 1 \oplus \A_{k, r}$ be the unitizations of $\A_{k, \ell}$ and $\A_{k, r}$ respectively.
Via the identification $\widetilde{\A_{k, \ell}} * \widetilde{\A_{k, r}} \cong \widetilde{\A_{k, \ell} * \A_{k, r}}$,  let $\widetilde{\mu}_k$ be the unique unital linear extension of $\mu_k$ to $\widetilde{\A_{k, \ell}} * \widetilde{\A_{k, r}}$, and let $\delta_k: \widetilde{\A_{k, \ell}} * \widetilde{\A_{k, r}} \to \bC$ be the delta state (that is, $\delta_k(\alpha 1 + a) = \alpha$). Furthermore, let $\mu, \nu: *_{k \in K}(\widetilde{\A_{k, \ell}} * \widetilde{\A_{k, r}}) \to \bC$ be the unique unital linear functionals such that $\nu = \ast_{k \in K} \delta_k$, and $\mu$ is the unique unital linear extension of $\varphi \circ \pi$ to $*_{k \in K}(\widetilde{\A_{k, \ell}} * \widetilde{\A_{k, r}})$.

Notice that $\nu(a_1\cdots a_n) = 0$ for all $n \geq 1$, $\chi: \{1, \dots, n\} \to \{\ell, r\}$, $\omega: \{1, \dots, n\} \to K$ and $a_j \in \A_{\omega(j), \chi(j)}$ by construction.  Hence equation \eqref{psi-Moment} will always be satisfied.  Therefore  $\{(\widetilde{\A_{k, \ell}}, \widetilde{\A_{k, r}})\}_{k \in K}$ is c-bi-free with respect to $(\mu, \nu)$ if and only if  for all $n \geq 1$, $\chi: \{1, \dots, n\} \to \{\ell, r\}$, $\omega: \{1, \dots, n\} \to K$, and $a_j \in \A_{\omega(j), \chi(j)}$ we have that
\[
\mu(a_1\cdots a_n) = \sum_{D \in \L\R^{\lat\capp}(\chi, \omega)}\left(\sum_{\substack{D' \in \L\R(\chi, \omega)\\D' \geq_{\lat\capp} D}}(-1)^{|D| - |D'|}\right)\mu_D(a_1, \dots, a_n).
\]
Note that if $D \in \L\R^{\lat\capp}(\chi, \omega)$ such that there is a block $V$ of $D$ whose spine does not reach the top, then $\nu_{D|_V}((a_1, \dots, a_n)|_V) = 0$ is a factor of $\mu_D(a_1, \dots, a_n)$. Thus the only non-vanishing $\mu_D(a_1, \dots, a_n)$ corresponds to the unique diagram $D \in \L\R^{\lat\capp}(\chi, \omega)$ such that every block of $D$ has a spine reaching the top.  For this $D$, the only $D' \in \L\R(\chi, \omega)$ such that $D' \geq_{\lat\capp} D$ is $D' = D$ so the coefficient of $\mu_D(a_1, \dots, a_n)$ is 1.  It is then elementary to see that
\[
\mu(a_1\cdots a_n) = \mu_D(a_1\cdots a_n) = \mu_{\pi_{\omega, \chi}}(a_1, \dots, a_n).
\]
Hence $\{(\A_{k, r}, \A_{k, r})\}_{k \in K}$ is bi-Boolean independent with respect to $\varphi$ if and only if $\{(\A_{k, r}, \A_{k, r})\}_{k \in K}$ is c-bi-free with respect to $(\varphi', \psi')$.
\end{proof}

\subsection{Combinatorial bi-Boolean independence and B-$(\ell, r)$-cumulants}

One can see by combining Proposition \ref{BiBandCBF} and the c-$(\ell, r)$-cumulants that it is easy to describe bi-Boolean cumulants.

\begin{defn}
Let $n \geq 1$ and $\chi: \{1, \dots, n\} \to \{\ell, r\}$. A partition $\pi \in \B\N\C (\chi)$ is said to be a \textit{bi-interval partition} if every block of $\pi$ is a $\chi$-interval. The set of bi-interval partitions is denoted by $\B\I(\chi)$, which is a sublattice of $\B\N\C(\chi)$ with the same minimal and maximal elements $0_\chi$ and $1_\chi$ respectively.
\end{defn}

\begin{prop}
Let $(\A, \varphi)$ be a non-commutative probability space. There exists a family of multilinear functionals $\{B_\chi: \A^n \to \bC\}_{n \geq 1, \chi: \{1, \dots, n\} \to \{\ell, r\}}$, called the \textit{B-$(\ell, r)$-cumulants}, 
uniquely determined by the requirement that for all $n \geq 1$, $\chi: \{1, \dots, n\} \to \{\ell, r\}$, and $a_1, \dots, a_n \in \A$
\begin{equation}\label{BB-M-C}
\varphi(a_1\cdots a_n) = \sum_{\pi \in \B\I(\chi)}B_\pi(a_1, \dots, a_n)
\end{equation}
where
\[
B_\pi(a_1, \dots, a_n)= \prod_{V \in \pi}B_{\chi|_V}((a_1, \dots, a_n)|_V).
\]
\end{prop}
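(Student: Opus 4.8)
The plan is to define the $B$-$(\ell,r)$-cumulants by the obvious recursion and to extract both existence and uniqueness from the triangularity of \eqref{BB-M-C}, exactly as one does for the classical Boolean, free, and bi-free cumulants. The key structural point is that the full partition $1_\chi$ lies in $\B\I(\chi)$ (the whole set $\{1,\dots,n\}$ is trivially a $\chi$-interval) and is the \emph{only} element of $\B\I(\chi)$ with a single block, while every other $\pi \in \B\I(\chi)$ has at least two blocks and hence all of its blocks $V$ satisfy $|V| < n$. Thus, for fixed $n$ and $\chi$, equation \eqref{BB-M-C} takes the form
\[
\varphi(a_1\cdots a_n) = B_\chi(a_1,\dots,a_n) + \sum_{\substack{\pi \in \B\I(\chi)\\ \pi \neq 1_\chi}} \prod_{V \in \pi} B_{\chi|_V}\big((a_1,\dots,a_n)|_V\big),
\]
in which every cumulant occurring under the sum has order strictly less than $n$.

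First I would induct on $n$. For $n = 1$, $\B\I(\chi)$ consists of the single partition $1_\chi$, so \eqref{BB-M-C} forces $B_\chi(a_1) = \varphi(a_1)$; this exists and is unique. Assuming $B_\tau$ has been uniquely defined for every $\tau \colon \{1,\dots,m\} \to \{\ell,r\}$ with $m < n$, the displayed identity can be solved for $B_\chi(a_1,\dots,a_n)$: set it equal to $\varphi(a_1\cdots a_n)$ minus the sum over $\pi \neq 1_\chi$, every term of which is already determined by the induction hypothesis. This at once shows that any family obeying \eqref{BB-M-C} must be this one (uniqueness) and that this one does obey \eqref{BB-M-C} (existence). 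Multilinearity is carried along in the same induction: $\varphi(a_1\cdots a_n)$ is multilinear in $(a_1,\dots,a_n)$, each summand is a product of multilinear functionals applied to disjoint groups of the arguments and is therefore multilinear in $(a_1,\dots,a_n)$, and a difference of multilinear functionals is multilinear.

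There is no real obstacle in this argument; the only things to check are the two elementary combinatorial facts used above ($1_\chi \in \B\I(\chi)$, and it is the unique single-block bi-interval partition), both immediate from the definitions, together with the routine verification of multilinearity. For completeness one may also record the closed form obtained by M\"{o}bius inversion over the lattice $\B\I(\chi)$: since $\{\sigma \in \B\I(\chi) : \sigma \leq \pi\}$ factors as $\prod_{V \in \pi} \B\I(\chi|_V)$, applying \eqref{BB-M-C} blockwise yields $\varphi_\pi(a_1,\dots,a_n) = \sum_{\sigma \in \B\I(\chi),\, \sigma \leq \pi} B_\sigma(a_1,\dots,a_n)$ for every $\pi \in \B\I(\chi)$, whence
\[
B_\pi(a_1,\dots,a_n) = \sum_{\substack{\sigma \in \B\I(\chi)\\ \sigma \leq \pi}} \varphi_\sigma(a_1,\dots,a_n)\,\mu_{\B\I}(\sigma,\pi),
\]
with $\mu_{\B\I}$ the M\"{o}bius function of $\B\I(\chi)$; the case $\pi = 1_\chi$ recovers $B_\chi$ itself. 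Alternatively, one could deduce the statement from Proposition \ref{BiBandCBF} by computing the c-$(\ell,r)$-cumulants in the auxiliary space $(\A',\varphi',\psi')$, using that the $(\ell,r)$-cumulants associated with $\psi'$ vanish on $\A_0$ and that the bi-non-crossing partitions with no interior block are exactly the bi-interval partitions; but the direct recursion is shorter and self-contained.
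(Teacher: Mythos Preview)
Your proposal is correct and follows essentially the same approach as the paper: define $B_\chi$ recursively by isolating the $\pi = 1_\chi$ term in \eqref{BB-M-C} and solving for it, with the base case $B_\chi(a_1) = \varphi(a_1)$. Your treatment is in fact more detailed than the paper's, which simply writes down the recursion; the M\"{o}bius-inversion closed form and the c-$(\ell,r)$-cumulant alternative you mention appear later in the paper (in the discussion of the lattice $\B\I$ and in the proof of Theorem~\ref{VanishingEquiv}, respectively).
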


\begin{proof}
The family $\{B_\chi: \A^n \to \bC\}_{n \geq 1, \chi: \{1, \dots, n\} \to \{\ell, r\}}$ is defined recursively similar to the way how many other families of cumulants are defined. More specifically, we let $B_{\chi_\ell} = B_{\chi_r} = \varphi$ for both $\chi_\ell: \{1\} \to \{\ell\}$ and $\chi_r: \{1\} \to \{r\}$, and then recursively define
\[B_\chi(a_1, \dots, a_n) = \varphi(a_1\cdots a_n) - \sum_{\substack{\pi \in \B\mathcal{I}(\chi)\\\pi \neq 1_\chi}}\left(\prod_{V \in \pi}B_{\chi|_V}((a_1, \dots, a_n)|_V)\right)\]
for all $n \geq 2$, $\chi: \{1, \dots, n\} \to \{\ell, r\}$, and $a_1, \dots, a_n \in \A$.
\end{proof}

Unsurprisingly the bi-Boolean cumulants characterize bi-Boolean independence.

\begin{thm}\label{VanishingEquiv}
A family $\{(\A_{k, \ell}, \A_{k, r})\}_{k \in K}$ of non-unital pairs of algebras in a non-commutative probability space $(\A, \varphi)$ is bi-Boolean independent with respect to $\varphi$ if and only if for all $n \geq 2$, $\chi: \{1, \dots, n\} \to \{\ell, r\}$, $\omega: \{1, \dots, n\} \to K$, and $a_1, \dots, a_n \in \mathcal{A}$ with $a_j \in \A_{\omega(j), \chi(j)}$, we have that
\[B_\chi(a_1, \dots, a_n) = 0\]
whenever $\omega$ is not constant.
\end{thm}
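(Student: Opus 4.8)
The plan is to prove the two implications separately, exploiting that the moment-cumulant relation \eqref{BB-M-C} is indexed by the lattice $\B\I(\chi)$ and behaves multiplicatively across $\chi$-intervals. Throughout, I would fix $n \geq 2$, $\chi \colon \{1,\dots,n\} \to \{\ell,r\}$, $\omega \colon \{1,\dots,n\} \to K$, and $a_j \in \A_{\omega(j),\chi(j)}$, and reserve the symbol $\rho$ for the partition $\{\omega^{-1}(k)\}_{k \in K}$ induced by $\omega$.

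\emph{Vanishing implies bi-Boolean.} First I would assume all mixed B-$(\ell,r)$-cumulants vanish and compute $\varphi(a_1 \cdots a_n)$ from \eqref{BB-M-C}. The vanishing hypothesis forces $B_\pi(a_1,\dots,a_n) = 0$ for every $\pi \in \B\I(\chi)$ having a block $V$ on which $\omega$ is non-constant, since $B_{\chi|_V}((a_1,\dots,a_n)|_V)$ is then a vanishing factor of $B_\pi$. Hence the sum in \eqref{BB-M-C} collapses to those $\pi \in \B\I(\chi)$ with $\pi \leq \rho$, and since by Definition \ref{Partition} $\pi_{\omega,\chi}$ is the unique maximal such partition, the surviving terms are exactly $\{\pi \in \B\I(\chi) : \pi \leq \pi_{\omega,\chi}\}$. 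Now the key point: $\pi_{\omega,\chi}$ decomposes $\{1,\dots,n\}$ into consecutive $\chi$-intervals $V_1,\dots,V_m$, and a bi-interval partition refining $\pi_{\omega,\chi}$ is nothing but an independent choice of a bi-interval partition of each $V_k$. Because on each $V_k$ the tuple $(a_1,\dots,a_n)|_{V_k}$ again satisfies \eqref{BB-M-C} over $\B\I(\chi|_{V_k})$, summing factorizes and yields $\varphi(a_1\cdots a_n) = \prod_{k=1}^m \varphi\big((a_1,\dots,a_n)|_{V_k}\big) = \varphi_{\pi_{\omega,\chi}}(a_1,\dots,a_n)$, which is exactly the bi-Boolean moment condition.

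\emph{Bi-Boolean implies vanishing.} Conversely, assume $\{(\A_{k,\ell},\A_{k,r})\}_{k\in K}$ is bi-Boolean independent, and argue by induction on $n$ that $B_\chi(a_1,\dots,a_n) = 0$ whenever $\omega$ is non-constant. The base case $n=2$ is a direct computation: $B_\chi(a_1,a_2) = \varphi(a_1 a_2) - \varphi(a_1)\varphi(a_2)$, and for non-constant $\omega$ we have $\pi_{\omega,\chi} = \{\{1\},\{2\}\}$, so $\varphi(a_1a_2) = \varphi(a_1)\varphi(a_2)$ and the cumulant vanishes. For the inductive step, I would use the recursive formula for $B_\chi$, substitute the bi-Boolean moment values $\varphi(a_1\cdots a_n) = \varphi_{\pi_{\omega,\chi}}(a_1,\dots,a_n)$, and apply the first half of the theorem (already proved) as an identity: among partitions $\pi \in \B\I(\chi)$ with $\pi \neq 1_\chi$, the ones whose every block has $\omega$ constant sum to $\varphi_{\pi_{\omega,\chi}}(a_1,\dots,a_n)$ provided $\pi_{\omega,\chi} \neq 1_\chi$ — which holds precisely because $\omega$ is non-constant — and the remaining $\pi$ contribute a block on which $\omega$ is non-constant, hence a smaller-size mixed B-$(\ell,r)$-cumulant that vanishes by the inductive hypothesis. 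Cancelling, one is left with $B_\chi(a_1,\dots,a_n) = \varphi_{\pi_{\omega,\chi}}(a_1,\dots,a_n) - \varphi_{\pi_{\omega,\chi}}(a_1,\dots,a_n) = 0$.

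The main obstacle I anticipate is bookkeeping in the factorization step: one must verify carefully that restricting a bi-interval partition of $\{1,\dots,n\}$ lying below $\pi_{\omega,\chi}$ to a block $V_k$ of $\pi_{\omega,\chi}$ yields precisely an element of $\B\I(\chi|_{V_k})$, and that this correspondence is a bijection onto the product $\prod_k \B\I(\chi|_{V_k})$ — this uses that each $V_k$ is itself a $\chi$-interval so that $\prec_\chi$ restricted to $V_k$ is an order isomorphic to $\prec_{\chi|_{V_k}}$, and that no bi-interval block can straddle two distinct $V_k$'s without creating a crossing or a non-$\chi$-interval block. Once this combinatorial decomposition is pinned down, both directions are short, and the alternative route via Proposition \ref{BiBandCBF} and Theorem \ref{CBFMoments} (identifying B-$(\ell,r)$-cumulants with the exterior-block-only c-$(\ell,r)$-cumulants of the unitized family) gives an independent check.
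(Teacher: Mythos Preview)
Your proposal is correct and matches the paper's second (direct combinatorial) proof in structure: both directions rely on the observation that a bi-interval partition $\pi$ satisfies $\pi \leq \omega$ if and only if $\pi \leq \pi_{\omega,\chi}$, and the converse proceeds by the same induction. The only cosmetic difference is that for ``vanishing $\Rightarrow$ bi-Boolean'' you collapse $\sum_{\pi \leq \pi_{\omega,\chi}} B_\pi$ to $\varphi_{\pi_{\omega,\chi}}$ via the block-wise factorization $\B\I(\chi)|_{\leq \pi_{\omega,\chi}} \cong \prod_k \B\I(\chi|_{V_k})$, whereas the paper does it by M\"obius inversion; your alternative route through Proposition~\ref{BiBandCBF} is exactly the paper's first proof.
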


\begin{proof}
Let $\widetilde{\A} = \bC 1 \oplus \A$, let $\widetilde{\varphi}: \widetilde{\A} \to \bC$ be the unique unital linear extension of $\varphi$ to $\widetilde{\A}$, and let $\widetilde{\psi}: \widetilde{\A} \to \bC$ be the unital linear functional $\widetilde{\psi} = 1_{\bC 1} \oplus 0_\A$.  Note that $(\widetilde{\A}, \widetilde{\varphi}, \widetilde{\psi})$ is a two-state non-commutative probability space containing $\{(\A_{k, \ell}, \A_{k, r})\}_{k \in K}$ via the identification $a \mapsto 0 \oplus a$. Furthermore, let
\[
\{\widetilde{\kappa}_\chi: \widetilde{\A}^n \to \bC\}_{n \geq 1, \chi:\{1, \dots, n\} \to \{\ell, r\}} \qand \{\widetilde{\K}_\chi: \widetilde{\A}^n \to \bC\}_{n \geq 1, \chi: \{1, \dots, n\} \to \{\ell, r\}}
\]
be the families of $(\ell, r)$- and c-$(\ell, r)$-cumulants with respect to $\widetilde{\psi}$ and $(\widetilde{\varphi}, \widetilde{\psi})$ respectively.  As $\widetilde{\psi}|_\A \equiv 0$, $\widetilde{\kappa}_\chi(a_1, \dots, a_n) = 0$ for all $n \geq 1$ and $a_1, \dots, a_n \in \A$. Therefore, we have for all $n \geq 1$, $\chi: \{1, \dots, n\} \to \{\ell, r\}$, $\omega: \{1, \dots, n\} \to K$, and $a_1, \dots, a_n \in \A$ with $a_j \in \A_{\omega(j), \chi(j)}$ that
\begin{align*}
\varphi(a_1\cdots a_n) = \widetilde{\varphi}(a_1\cdots a_n) &= \sum_{\pi \in \B\N\C(\chi)}\left(\prod_{\substack{V \in \pi\\V\,\mathrm{interior}}}0\right)\left(\prod_{\substack{V \in \pi\\V\,\mathrm{exterior}}}\widetilde{\K}_{\chi|_V}((a_1, \dots, a_n)|_V)\right)\\
&= \sum_{\pi \in \B\I(\chi)}\widetilde{\K}_\pi(a_1, \dots, a_n).
\end{align*}
Hence $\widetilde{\K}_\chi(a_1, \dots, a_n) = B_\chi(a_1, \dots, a_n)$  by the uniqueness of the B-$(\ell, r)$-cumulants. By Proposition \ref{BiBandCBF}, $\{(\A_{k, \ell}, \A_{k, r})\}_{k \in K}$ is bi-Boolean independent with respect to $\widetilde{\varphi}$ if and only if it is c-bi-free with respect to $(\widetilde{\varphi}, \widetilde{\psi})$, and the result follows from Theorem \ref{CBFMoments}.
\end{proof}

\subsection{The lattice of bi-interval partitions}

As the proofs for c-bi-freeness in \cite{GS2016} are rather difficult, to conclude this section, we present an independent proof of Theorem \ref{VanishingEquiv}.  In addition, the tools developed will be of great use in subsequent sections.  

The \textit{lattice of bi-interval partitions} is
\[\B\I := \bigcup_{n \geq 1}\bigcup_{\chi: \{1, \dots, n\} \to \{\ell, r\}}\B\I(\chi),\]
where the lattice structure on $\B\I(\chi)$ is induced by the partial order $\leq$ of refinement with minimal and maximal elements $0_\chi$ and $1_\chi$ respectively. Given the lattice $\B\I$, the \textit{incidence algebra on $\B\I$}, denoted $\I\A(\B\I)$, is the set of all functions
\[f: \bigcup_{n \geq 1}\left(\bigcup_{\chi: \{1, \dots, n\} \to \{\ell, r\}}\B\I(\chi) \times \B\I(\chi)\right) \to \bC\]
such that $f(\sigma, \pi) = 0$ if $\sigma \not\leq \pi$ equipped with pointwise addition and a convolution product defined by
\[(f * g)(\sigma, \pi) = \sum_{\substack{\tau \in \B\I(\chi)\\\sigma \leq \tau \leq \pi}}f(\sigma, \tau)g(\tau, \pi)\]
for all $\sigma, \pi \in \B\I(\chi)$ and $f, g \in \I\A(\B\I)$. It is easy to verify that $\I\A(\B\I)$ is an associative algebra.

There are three important functions in $\I\A(\B\I)$: namely the \textit{delta function on $\B\I$}, denoted $\delta_{\B\I}$, the \textit{zeta function on $\B\I$}, denoted $\zeta_{\B\I}$, and the \textit{M\"{o}bius function on $\B\I$}, denoted $\mu_{\B\I}$.   The delta and zeta functions on $\B\I$ are defined by
\[
\delta_{\B\I}(\sigma, \pi) = \begin{cases}
1 &\text{if } \sigma = \pi\\
0 &\text{otherwise}
\end{cases} \qqand \zeta_{\B\I}(\sigma, \pi) = \begin{cases}
1 &\text{if } \sigma \leq \pi\\
0 &\text{otherwise}
\end{cases},
\]
whereas $\mu_{\B\I}$ is recursively defined by
\[
\sum_{\substack{\tau \in \B\I(\chi)\\\sigma \leq \tau \leq \pi}}\mu_{\B\I}(\tau, \pi) = \sum_{\substack{\tau \in \B\I(\chi)\\\sigma \leq \tau \leq \pi}}\mu_{\B\I}(\sigma, \tau) = \begin{cases}
1 &\text{if } \sigma = \pi\\
0 &\text{otherwise}
\end{cases}
\]
for $\sigma, \pi \in \B\I(\chi)$ with $\sigma \leq \pi$. 
Note $\delta_{\B\I}$ is the identity element in $\I\A(\B\I)$ and that $\zeta_{\B\I}$ and $\mu_{\B\I}$ are inverses in $\I\A(\B\I)$.  

Due to similar lattice structures, we note that
\[\mu_{\B\I}(\sigma, \pi) = \mu_\I(s_\chi^{-1}\cdot\sigma, s_\chi^{-1}\cdot\pi) = (-1)^{|\sigma| - |\pi|},\]
where $\mu_\I$ is the M\"{o}bius function on the lattice $\I$ of all interval partitions. In particular, $\mu_{\B\I}$ is multiplicative; that is, if $\sigma, \pi \in \B\I(\chi)$ are such that $\sigma \leq \pi$ and if $V_1, \dots, V_m$ are the blocks of $\pi$, then
\[\mu_{\B\I}(\sigma, \pi) = \mu_{\B\I}(\sigma|_{V_1}, \pi|_{V_1})\cdots\mu_{\B\I}(\sigma|_{V_m}, \pi|_{V_m}).\]

Given $\pi \in \B\I(\chi)$, equation \eqref{BB-M-C} implies
\[\varphi_\pi(a_1, \dots, a_n) = \sum_{\substack{\sigma \in \B\I(\chi)\\\sigma \leq \pi}}B_\sigma(a_1, \dots, a_n).\]
Using the M\"{o}bius function $\mu_{\B\I}$, the above equation can be inverted as
\begin{align*}
\sum_{\substack{\sigma \in \B\I(\chi)\\\sigma \leq \pi}}\varphi_\sigma(a_1, \dots, a_n)\mu_{\B\I}(\sigma, \pi) &= \sum_{\substack{\sigma \in \B\I(\chi)\\\sigma \leq \pi}}\sum_{\substack{\tau \in \B\I(\chi)\\\tau \leq \sigma}}B_\tau(a_1, \dots, a_n)\mu_{\B\I}(\sigma, \pi)\\
&= \sum_{\substack{\tau \in \B\I(\chi)\\\tau \leq \pi}}\left(\sum_{\substack{\sigma \in \B\I(\chi)\\\tau \leq \sigma \leq \pi}}\mu_{\B\I}(\sigma, \pi)\right)B_\tau(a_1, \dots, a_n)\\
&= B_\pi(a_1, \dots, a_n).
\end{align*}

\begin{proof}[Second proof of Theorem \ref{VanishingEquiv}]
Given $n \geq 1$, $\chi: \{1, \dots, n\} \to \{\ell, r\}$, and $\omega: \{1, \dots, n\} \to K$, note that if $\pi \in \B\I(\chi)$ and $\pi \leq \omega$, then $\pi \leq \pi_{\omega, \chi} \in \B\I(\chi)$. Suppose first that $\{(\A_{k, \ell}, \A_{k, r})\}_{k \in K}$ has vanishing mixed bi-Boolean cumulants.  Then
\begin{align*}
\varphi(a_1\cdots a_n) &= \sum_{\sigma \in \B\mathcal{I}(\chi)}B_\sigma(a_1, \dots, a_n) \\
& = \sum_{\substack{\sigma \in \B\mathcal{I}(\chi)\\\sigma \leq \omega}}B_\sigma(a_1, \dots, a_n) \\
&= \sum_{\substack{\sigma \in \B\mathcal{I}(\chi)\\\sigma \leq \pi_{\omega, \chi}}}B_\sigma(a_1, \dots, a_n)\\
&= \sum_{\substack{\sigma \in \B\mathcal{I}(\chi)\\\sigma \leq \pi_{\omega, \chi}}}\sum_{\substack{\pi \in \B\mathcal{I}(\chi)\\\pi \leq \sigma}}\varphi_\pi(a_1, \dots, a_n)\mu_{\B\mathcal{I}}(\pi, \sigma)\\
&= \sum_{\substack{\pi \in \B\mathcal{I}(\chi)\\\pi \leq \pi_{\omega, \chi}}}\left(\sum_{\substack{\sigma \in \B\mathcal{I}(\chi)\\\pi \leq \sigma \leq \pi_{\omega, \chi}}}\mu_{\B\mathcal{I}}(\pi, \sigma)\right)\varphi_\pi(a_1, \dots, a_n)\\
&= \varphi_{\pi_{\omega, \chi}}(a_1, \dots, a_n).
\end{align*}
Hence $\{(\A_{k, \ell}, \A_{k, r})\}_{k \in K}$ is bi-Boolean independent with respect to $\varphi$ by definition.

Conversely, suppose $\{(\A_{k, \ell}, \A_{k, r})\}_{k \in K}$ is bi-Boolean independent with respect to $\varphi$.  Then
\[
\sum_{\sigma \in \B\mathcal{I}(\chi)}B_\sigma(a_1, \dots, a_n) = \varphi(a_1\cdots a_n) = \sum_{\substack{\sigma \in \B\mathcal{I}(\chi)\\\sigma \leq \omega}}B_\sigma(a_1, \dots, a_n)
\]
where the first equality follows from the bi-Boolean moment-cumulant formula and the second equality follows from reversing the above computations. We will proceed inductively to show that $\{(\A_{k, \ell}, \A_{k, r})\}_{k \in K}$ has vanishing mixed B-$(\ell, r)$-cumulants with respect to $\varphi$.  Note the base case $n = 2$ is trivial. For the inductive step, if $\omega: \{1, \dots, n\} \to K$ is not constant notice by the inductive hypothesis that
\begin{align*}
\sum_{\sigma \in \B\mathcal{I}(\chi)}B_\sigma(a_1, \dots, a_n) &= B_\chi(a_1, \dots, a_n) + \sum_{\substack{\sigma \in \B\mathcal{I}(\chi)\\\sigma \neq 1_\chi}}B_\sigma(a_1, \dots, a_n)\\
&= B_\chi(a_1, \dots, a_n) + \sum_{\substack{\sigma \in \B\mathcal{I}(\chi)\\\sigma \leq \omega}}B_\sigma(a_1, \dots, a_n).
\end{align*}
Combining these equations, we have that $B_\chi(a_1, \dots, a_n) = 0$, completing the inductive step.
\end{proof}

\section{Bi-Boolean partial transforms}\label{sec:transforms}

In this section, we study some of the simplest convolutions on bi-Boolean independent pairs.

\subsection{Single-variable convolutions and transforms}

We begin by recalling some notation and single-variable results.

\begin{defn}
Let $a$ be a random variable in a non-commutative probability space $(\A, \varphi)$. For $m \geq 1$, let $B_m(a)$ denote the $m^{\mathrm{th}}$ Boolean cumulant of $a$; that is, in the notation of B-$(\ell, r)$-cumulants, $B_m(a) = B_\chi(a, \dots, a)$ where $\chi: \{1, \dots, m\} \to \{\ell, r\}$ is constant.
\begin{enumerate}[$\qquad(1)$]
\item The \textit{Boolean $\eta$-transform} of $a$ is the power series
\[\eta_a(z) = \sum_{m \geq 1}B_m(a)z^m.\]

\item The \textit{moment series} of $a$ is the power series
\[M_a(z) = 1 + \sum_{m \geq 1}\varphi(a^m)z^m.\]

\item The \textit{Cauchy transform} of $a$ is the power series
\[G_a(z) = \varphi((z - a)^{-1}) = \frac{1}{z} + \sum_{m \geq 1}\frac{\varphi(a^m)}{z^{m + 1}} = \frac{1}{z} M_a\left(\frac{1}{z}\right).
\]
\end{enumerate}
\end{defn}

The Boolean $\eta$-transform linearizes the additive Boolean convolution; that is, if $a_1$ and $a_2$ are Boolean independent, then
\[\eta_{a_1 + a_2}(z) = \eta_{a_1}(z) + \eta_{a_2}(z).\]
Moreover, as shown in \cite{SW1997}*{Proposition 2.1}, we have the following functional equation
\[\eta_a(z) = 1 - \frac{1}{M_a(z)},\]
which allows one to calculate the distribution of the sum $a_1 + a_2$ by first computing $\eta_{a_1 + a_2}(z)$ and then obtaining $M_{a_1 + a_2}(z)$. Note that if we define the \textit{self-energy} of $a$ (see \cite{SW1997}*{Section 2}) by
\[E_a(z) := \sum_{m \geq 1}\frac{B_m(a)}{z^{m - 1}} =z\eta_a(1/z)  = z - \frac{1}{G_a(z)},\]
then the self-energy also linearizes the additive Boolean convolution.
From an analytic point of view, the self-energy is sometimes more suitable for the calculation of the additive Boolean convolution.

If $a_1$ and $a_2$ are Boolean independent random variables in a non-commutative probability space $(\A, \varphi)$, then one may also consider the distribution of the product $a_1a_2$. However, since
\[\varphi((a_1a_2)^m) = \varphi(a_1a_2\cdots a_1a_2) = \varphi(a_1)^m\varphi(a_2)^m\]
for all $m \geq 1$, the distribution is rather uninteresting as only the first moments of $a_1$ and $a_2$ appear. As noted by Franz \cite{F2008}, a more interesting convolution is obtained by assuming $a_1 - 1$ and $a_2 - 1$ are Boolean independent or, equivalently, by considering the distribution of the product $(1 + a_1)(1 + a_2)$. Under this assumption, it was proved in \cite{F2008}*{Theorem 2.2} (see also \cites{B2006, P2009} for different approaches) that the corresponding transform with the multiplicative property is the Boolean $\eta$-transform divided by $z$; that is,
\[\frac{1}{z}\eta_{(1 + a_1)(1 + a_2)}(z) = \frac{1}{z}\eta_{1 + a_1}(z)\cdot\frac{1}{z}\eta_{1 + a_2}(z).\]
We shall also adopt this convention when considering convolutions on bi-Boolean independent pairs.

\subsection{The bi-Boolean partial $\eta$-transform}

We now turn to two-variable transforms. Note that all series below are in commuting variables $z$ and $w$.

\begin{defn}
Let $(a, b)$ be a pair of elements in a non-commutative probability space $(\A, \varphi)$. For $m, n \geq 0$ with $m + n \geq 1$, let
\[B_{m, n}(a, b) = B_{\chi_{m, n}}(\underbrace{a, \dots, a}_{m\,\mathrm{times}}, \underbrace{b, \dots, b}_{n\,\mathrm{times}}),\]
where $\chi_{m, n}: \{1, \dots, m + n\} \to \{\ell, r\}$ is defined by $\chi_{m, n}(k) = \ell$ if $k \leq m$ and $\chi_{m, n}(k) = r$ if $k > m$. Note that $B_{m, 0}(a, b) = B_m(a)$ and $B_{0, n}(a, b) = B_n(b)$.
\begin{enumerate}[$\qquad(1)$]
\item The \textit{bi-Boolean partial $\eta$-transform} of $(a, b)$ is the power series
\[\eta_{(a, b)}(z, w) = \sum_{\substack{m, n \geq 0\\m + n \geq 1}}B_{m, n}(a, b)z^mw^n.\]

\item The \textit{two-variable partial moment series} of $(a, b)$ is the power series
\[M_{(a, b)}(z, w) = 1 + \sum_{\substack{m, n \geq 0\\m + n \geq 1}}\varphi(a^mb^n)z^mw^n.\]

\item The \textit{two-variable Cauchy transform} of $(a, b)$ is the power series
\[G_{(a, b)}(z, w) = \varphi((z - a)^{-1}(w - b)^{-1}) = \frac{1}{zw} + \sum_{\substack{m, n \geq 0\\m + n \geq 1}}\frac{\varphi(a^mw^n)}{z^{m + 1}w^{n + 1}}.\]
\end{enumerate}
\end{defn}

The bi-Boolean partial $\eta$-transform plays a similar role as the Boolean $\eta$-transform when it comes to the additive bi-Boolean convolution. Indeed, if $(a_1, b_1)$ and $(a_2, b_2)$ are bi-Boolean independent two-faced pairs, then by Theorem \ref{VanishingEquiv}
\[\eta_{(a_1 + a_2, b_1 + b_2)}(z, w) = \eta_{(a_1, b_1)}(z, w) + \eta_{(a_2, b_2)}(z, w).\]

We now present a functional equation relating $\eta_{(a, b)}$ to $G_{(a, b)}$.

\begin{thm}\label{PartialEta}
Let $(a, b)$ be a pair of elements in a non-commutative probability space $(\A, \varphi)$. The bi-Boolean partial $\eta$-transform $\eta_{(a, b)}$ of $(a, b)$ is given by
\[\eta_{(a, b)}(z, w) = \eta_{a}(z) + \eta_{b}(w) + \frac{G_{(a, b)}(1/z, 1/w)}{G_{a}(1/z)G_{b}(1/w)} - 1.\]
\end{thm}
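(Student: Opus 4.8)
The plan is to expand the two‑variable moments $\varphi(a^mb^n)$ via the bi‑Boolean moment‑cumulant formula \eqref{BB-M-C} applied to $\chi=\chi_{m,n}$, extract a recursion relating them to the one‑variable moments and the cumulants $B_{p,q}(a,b)$, pass to generating functions, and then translate moment series into Cauchy transforms using the stated change of variables. All identities are understood between formal power series, so no convergence issues arise.

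First I would describe $\B\I(\chi_{m,n})$. For $\chi_{m,n}$ the order $\prec_{\chi}$ is $1\prec_\chi\cdots\prec_\chi m\prec_\chi m+n\prec_\chi m+n-1\prec_\chi\cdots\prec_\chi m+1$, so a bi‑interval partition of $\chi_{m,n}$ is exactly a partition of this ordered sequence into $\prec_\chi$‑consecutive runs. At most one block is \emph{straddling}, i.e.\ meets both $\{1,\dots,m\}$ and $\{m+1,\dots,m+n\}$; such a block exists precisely when $m$ and $m+n$ lie in the same block, in which case it has the form $V=\{k,\dots,m\}\cup\{m+j,\dots,m+n\}$ for some $1\le k\le m$, $1\le j\le n$. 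Since $(a^mb^n)|_V=(a,\dots,a,b,\dots,b)$ with $m-k+1$ copies of $a$ and $n-j+1$ copies of $b$ and $\chi_{m,n}|_V=\chi_{m-k+1,\,n-j+1}$, the cumulant factor of $V$ is $B_{m-k+1,\,n-j+1}(a,b)$; the blocks $\prec_\chi$‑preceding $V$ form an arbitrary interval partition of $\{1,\dots,k-1\}$ (all blocks all‑$\ell$, contributing factors $B_{|W|}(a)$) and those $\prec_\chi$‑following $V$ form an arbitrary interval partition of $\{m+1,\dots,m+j-1\}$ (all blocks all‑$r$). When there is no straddling block, $\pi$ splits as an interval partition of $\{1,\dots,m\}$ together with one of $\{m+1,\dots,m+n\}$. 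Summing over these "side'' interval partitions and using the one‑variable Boolean moment‑cumulant formula — which collapses $\sum_\pi\prod_W B_{|W|}(a)$ to $\varphi(a^{\,\cdot})$, the ordering being irrelevant since all entries coincide — and re‑indexing $p=m-k+1$, $q=n-j+1$ yields
\[
\varphi(a^mb^n)=\varphi(a^m)\varphi(b^n)+\sum_{p=1}^{m}\sum_{q=1}^{n}\varphi(a^{m-p})\,\varphi(b^{n-q})\,B_{p,q}(a,b)
\]
for all $m,n\ge 0$ with $m+n\ge 1$ (the double sum being empty when $m=0$ or $n=0$, consistently with the one‑variable formula).

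Next I would multiply by $z^mw^n$ and sum over all $m,n\ge 0$. The left side gives $M_{(a,b)}(z,w)$, the first term gives $M_a(z)M_b(w)$, and in the second term the substitution $m'=m-p$, $n'=n-q$ factors it as $\big(\sum_{p,q\ge1}B_{p,q}(a,b)z^pw^q\big)M_a(z)M_b(w)$, where $\sum_{p,q\ge1}B_{p,q}(a,b)z^pw^q=\eta_{(a,b)}(z,w)-\eta_a(z)-\eta_b(w)$ by definition of the partial $\eta$‑transform (the $q=0$, resp.\ $p=0$, part being exactly $\eta_a$, resp.\ $\eta_b$). Hence
\[
M_{(a,b)}(z,w)=M_a(z)M_b(w)\big(1+\eta_{(a,b)}(z,w)-\eta_a(z)-\eta_b(w)\big).
\]
Applying the one‑variable functional equation $\eta_a(z)=1-1/M_a(z)$ of \cite{SW1997}*{Proposition 2.1} (so $1-\eta_a(z)=1/M_a(z)$, and likewise for $b$) turns this into $M_{(a,b)}(z,w)/\big(M_a(z)M_b(w)\big)=1+\eta_{(a,b)}(z,w)-\eta_a(z)-\eta_b(w)$. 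Finally, $G_a(z)=\tfrac1z M_a(1/z)$ gives $M_a(z)=\tfrac1z G_a(1/z)$, similarly $M_b(w)=\tfrac1w G_b(1/w)$ and $M_{(a,b)}(z,w)=\tfrac1{zw}G_{(a,b)}(1/z,1/w)$; substituting, the factors $\tfrac1{zw}$ cancel and $M_{(a,b)}/(M_aM_b)=G_{(a,b)}(1/z,1/w)/\big(G_a(1/z)G_b(1/w)\big)$, which is precisely the claimed identity.

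I expect the main obstacle to be the combinatorial bookkeeping of the first step: correctly identifying $\B\I(\chi_{m,n})$ with partitions into $\prec_\chi$‑consecutive runs, pinning down the unique straddling block, and decomposing the remaining blocks with respect to $\prec_\chi$ (which reverses the natural order on the right indices). Once the moment recursion is established, the generating‑function and change‑of‑variable steps are routine.
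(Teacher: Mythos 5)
Your proof is correct and is essentially the paper's own argument: the paper deduces Theorem \ref{PartialEta} from the operator-valued version, Theorem \ref{OpVPartialEta}, whose proof is exactly your decomposition of $\B\I(\chi_{m,n})$ according to the unique block meeting both faces (versus none), followed by the same passage to generating functions and the change of variables to Cauchy transforms. The only cosmetic point is that invoking $\eta_a(z)=1-1/M_a(z)$ is unnecessary, since dividing $M_{(a,b)}=M_aM_b\bigl(1+\eta_{(a,b)}-\eta_a-\eta_b\bigr)$ by $M_aM_b$ already gives what you need.
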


\begin{proof}
This immediately follows from the operator-valued version in Theorem \ref{OpVPartialEta} below. Alternatively, one may appeal to the functional equation for the c-bi-free partial $\R$-transform in \cite{GS2016}*{Corollary 5.7} via Theorem \ref{VanishingEquiv}.
\end{proof}

Utilizing the above idea, i.e., by appealing to the c-bi-free case, \cite{GS2016}*{Lemmata 4.17 and 4.18} imply the following properties of B-$(\ell, r)$-cumulants under certain conditions.

\begin{lem}\label{Swapping}
Let $\chi: \{1, \dots, n\} \to \{\ell, r\}$ be such that $\chi(k_0) = \ell$ and $\chi(k_0 + 1) = r$ for some $k_0 \in \{1, \dots, n - 1\}$, and define $\chi': \{1, \dots n\} \to \{\ell, r\}$ by
\[\chi'(k) = \begin{cases}
r &\text{if } k = k_0\\
\ell &\text{if } k = k_0 + 1\\
\chi(k) &\text{otherwise}
\end{cases}.\]
If $a$ and $b$ are random variables in a non-commutative probability space $(\A, \varphi)$ such that $\varphi(cabc') = \varphi(cbac')$ for all $c, c' \in \A$, then
\[B_\chi(c_1, \dots, c_{k_0 - 1}, a, b, c_{k_0 + 2}, \dots, c_n) = B_{\chi'}(c_1, \dots,c_{k_0 - 1}, b, a, c_{k_0 + 2}, \dots, c_n)\]
for all $c_1, \dots, c_{k_0 - 1}, c_{k_0 + 2}, \dots, c_n \in \A$.
\end{lem}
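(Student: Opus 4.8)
The plan is to prove the lemma directly from the recursive definition of the B-$(\ell, r)$-cumulants via induction on $n$, exactly as one proves the analogous ``swapping'' statements for free or bi-free cumulants. The hypothesis $\varphi(cabc') = \varphi(cbac')$ for all $c, c' \in \A$ says that in any product, an adjacent $a$ immediately followed by $b$ may be transposed without changing the value of $\varphi$. The key observation is that swapping $\chi(k_0) = \ell$ with $\chi(k_0+1) = r$ is exactly the operation of exchanging, in the order $\prec_\chi$, the last left node with the first right node; this induces a bijection $\pi \mapsto \pi'$ from $\B\I(\chi)$ to $\B\I(\chi')$ obtained by carrying the label $k_0$ to position $k_0+1$ and vice versa inside each block. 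Under this bijection the $\chi$-interval condition is preserved, and a block $V \ni k_0$ in $\pi$ either also contains $k_0+1$ (in which case $V$ is unchanged as a set and its restriction of the tuple goes from $(\dots,a,b,\dots)$ to $(\dots,b,a,\dots)$), or $k_0$ and $k_0+1$ lie in distinct blocks (in which case those two blocks simply swap the single entries $a$ and $b$ between them).

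The first step is to set up this bijection carefully and record that it restricts correctly: for a proper sub-block $V \subsetneq \{1,\dots,n\}$ appearing as a block of some $\pi \in \B\I(\chi)$ with $\pi \neq 1_\chi$, the restricted map $\chi|_V \mapsto (\chi|_V)' = \chi'|_{V'}$ is again of the swapping type when $\{k_0, k_0+1\} \not\subseteq V$ with $V$ containing exactly one of them, and the inductive hypothesis applies to $V$ since $|V| < n$; when $V$ contains neither $k_0$ nor $k_0+1$ the restriction is literally unchanged; and when $V \supseteq \{k_0,k_0+1\}$ we again apply the inductive hypothesis (if $|V|<n$) or this is precisely the top-level block. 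The second step is the induction itself. For the base case $n = 2$: $\B\I(\chi)$ has the two elements $0_\chi$ and $1_\chi$, the moment-cumulant formula gives $\varphi(ab) = B_{\chi_\ell}(a)B_{\chi_r}(b) + B_\chi(a,b) = \varphi(a)\varphi(b) + B_\chi(a,b)$, and similarly $\varphi(ba) = \varphi(b)\varphi(a) + B_{\chi'}(b,a)$; since $\varphi(ab) = \varphi(ba)$ by hypothesis (taking $c = c' = 1$, which is legitimate as the identity can be adjoined), we get $B_\chi(a,b) = B_{\chi'}(b,a)$.

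For the inductive step, I would use the recursive formula
\[
B_\chi(c_1,\dots,c_{k_0-1},a,b,c_{k_0+2},\dots,c_n) = \varphi(c_1\cdots c_{k_0-1}\,a\,b\,c_{k_0+2}\cdots c_n) - \sum_{\substack{\pi \in \B\I(\chi)\\ \pi \neq 1_\chi}} \prod_{V \in \pi} B_{\chi|_V}\big((c_1,\dots,a,b,\dots,c_n)|_V\big),
\]
and the analogous formula for $B_{\chi'}(\dots,b,a,\dots)$. The leading moment terms agree by the hypothesis. For the sum, apply the bijection $\pi \mapsto \pi'$; for each $\pi \neq 1_\chi$ every block $V$ has $|V| < n$, so by the inductive hypothesis (in its three cases above) each factor $B_{\chi|_V}\big((\dots,a,b,\dots)|_V\big)$ equals the corresponding factor $B_{\chi'|_{V'}}\big((\dots,b,a,\dots)|_{V'}\big)$, hence the products match term by term and the two sums are equal. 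Subtracting gives the claim. The main obstacle — really the only place requiring care — is the bookkeeping in the second step: verifying that the bijection $\pi \mapsto \pi'$ genuinely maps $\B\I(\chi)$ onto $\B\I(\chi')$ preserving the ``$\neq 1$'' condition and that, block by block, the inductive hypothesis is always applicable (in particular that when $k_0, k_0+1$ land in separate blocks the swap amounts to interchanging isolated one-element data between two blocks, which is covered by the trivial case of the statement with the roles of $a$ and $b$ at singleton positions). Once this is pinned down the computation is routine.
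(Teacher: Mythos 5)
Your argument is correct, but it proceeds quite differently from the paper: the paper gives no combinatorial proof of Lemma \ref{Swapping} at all, instead deducing it from the c-bi-free analogues (\cite{GS2016}*{Lemmata 4.17 and 4.18}) via the identification, made in the proof of Theorem \ref{VanishingEquiv}, of the B-$(\ell,r)$-cumulants with the c-$(\ell,r)$-cumulants $\widetilde{\K}_\chi$ of the two-state space $(\widetilde{\A},\widetilde{\varphi},\widetilde{\psi})$ with $\widetilde{\psi}|_{\A}\equiv 0$. Your induction over $\B\I(\chi)$ is self-contained and more elementary, at the cost of the bookkeeping you flag, whereas the paper's route is a one-line citation that imports the heavier c-bi-free machinery. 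The bookkeeping does go through, though one remark in your set-up is inaccurate: passing from $\chi$ to $\chi'$ is not in general the exchange of the last left node with the first right node in the order $\prec_\chi$ (there may be left indices above $k_0+1$ and right indices below $k_0$). What actually makes your relabelling $\tau=(k_0\;k_0{+}1)$ work is that $k_0$ and $k_0+1$ are consecutive integers, so the sorted left and right lists for $\chi'$ are those for $\chi$ with $k_0$ and $k_0+1$ interchanged in place, i.e.\ $s_{\chi'}=\tau\circ s_\chi$; hence $s_{\chi'}^{-1}\cdot(\tau\cdot\pi)=s_\chi^{-1}\cdot\pi$ and $\pi\mapsto\tau\cdot\pi$ is a bijection from $\B\I(\chi)$ onto $\B\I(\chi')$ preserving block counts and sending $1_\chi$ to $1_{\chi'}$. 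With that in hand your two cases are exactly right: if $k_0$ and $k_0+1$ lie in one block, the inductive hypothesis applies to that block (which is proper since $\pi\neq 1_\chi$); if they lie in different blocks, the corresponding restricted cumulants are literally identical (same entries, same restricted $\chi$-values, same order), so the sums over $\pi\neq 1_\chi$ match term by term, while the moment terms agree by the hypothesis applied with $c=c_1\cdots c_{k_0-1}$ and $c'=c_{k_0+2}\cdots c_n$ (empty products replaced by $1$, which is available since $(\A,\varphi)$ is unital).
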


\begin{lem}\label{Changing}
Let $\chi: \{1, \dots, n\} \to \{\ell, r\}$ be such that $\chi(n) = \ell$, and define $\chi': \{1, \dots, n\} \to \{\ell, r\}$ by
\[\chi'(k) = \begin{cases}
r &\text{if } k = n\\
\chi(k) &\text{otherwise}
\end{cases}.\]
If $a$ and $b$ are random variables in a non-commutative probability space $(\A, \varphi)$ such that $\varphi(ca) = \varphi(cb)$ for all $c \in \A$, then
\[B_\chi(c_1, \dots, c_{n - 1}, a) = B_{\chi'}(c_1, \dots, c_{n - 1}, b)\]
for all $c_1, \dots, c_{n - 1} \in \A$.
\end{lem}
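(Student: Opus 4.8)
The plan is to mimic the proof strategy of Lemma \ref{Swapping}, namely to pass from the B-$(\ell,r)$-cumulants to moments via the moment-cumulant formula \eqref{BB-M-C}, rewrite everything in terms of the M\"obius-inverted expression $B_\pi(a_1,\dots,a_n) = \sum_{\sigma \leq \pi}\varphi_\sigma(a_1,\dots,a_n)\mu_{\B\I}(\sigma,\pi)$ established just before the second proof of Theorem \ref{VanishingEquiv}, and then check that the hypothesis $\varphi(ca) = \varphi(cb)$ for all $c \in \A$ makes the two sides agree term by term. Concretely, write $\vec{c} = (c_1,\dots,c_{n-1})$ and apply the inversion formula with $\pi = 1_\chi$ (respectively $\pi = 1_{\chi'}$), so that
\[
B_\chi(c_1,\dots,c_{n-1},a) = \sum_{\sigma \in \B\I(\chi)}\varphi_\sigma(c_1,\dots,c_{n-1},a)\,\mu_{\B\I}(\sigma,1_\chi),
\qquad
B_{\chi'}(c_1,\dots,c_{n-1},b) = \sum_{\sigma' \in \B\I(\chi')}\varphi_{\sigma'}(c_1,\dots,c_{n-1},b)\,\mu_{\B\I}(\sigma',1_{\chi'}).
\]
The first step is to set up a bijection between $\B\I(\chi)$ and $\B\I(\chi')$: since $\chi$ and $\chi'$ differ only in the label of the last position $n$, and in both cases $n$ is $\prec_\chi$-maximal among the left nodes (for $\chi$) or $\prec_{\chi'}$-maximal among the right nodes (for $\chi'$), the node $n$ is the last node in the relevant column in both diagrams; the natural identification of partitions (keeping the same blocks as subsets of $\{1,\dots,n\}$) carries $\chi$-intervals to $\chi'$-intervals, preserves $\leq$, and preserves the M\"obius values since $\mu_{\B\I}(\sigma,1) = (-1)^{|\sigma|-1}$ depends only on the number of blocks. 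So it remains to compare $\varphi_\sigma(\vec c, a)$ with $\varphi_{\sigma'}(\vec c, b)$ for corresponding $\sigma \leftrightarrow \sigma'$.

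The second step is the key observation: in $\varphi_\sigma(\vec c, a)$ the element $a$ appears in exactly one block $V$ of $\sigma$, and because $V$ is a $\chi$-interval containing the $\prec_\chi$-maximal left node $n$, the element $a$ is the \emph{last} entry of the tuple $(a_1,\dots,a_n)|_V$; hence the corresponding factor is $\varphi(a_{v_1}\cdots a_{v_{s-1}}a) = \varphi(c\,a)$ where $c := a_{v_1}\cdots a_{v_{s-1}}$ is a product of entries among $c_1,\dots,c_{n-1}$. The same reasoning in $\varphi_{\sigma'}(\vec c, b)$ (using that $n$ is $\prec_{\chi'}$-maximal among the right nodes, so again $b$ is the last entry of its block's tuple) gives the factor $\varphi(c\,b)$ with the \emph{same} $c$, since all other blocks — and the positions of $c_1,\dots,c_{n-1}$ within them — are unchanged. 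By the hypothesis $\varphi(ca) = \varphi(cb)$, these factors coincide, and all remaining factors of $\varphi_\sigma(\vec c,a)$ and $\varphi_{\sigma'}(\vec c,b)$ are literally identical. Therefore $\varphi_\sigma(\vec c,a) = \varphi_{\sigma'}(\vec c,b)$ for every matched pair, and summing against the equal M\"obius coefficients yields $B_\chi(\vec c, a) = B_{\chi'}(\vec c, b)$.

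The main obstacle — really the only point requiring care — is verifying cleanly that $n$ occupies the ``last slot'' of its block in \emph{both} orderings $\prec_\chi$ and $\prec_{\chi'}$, and consequently that $a$ (resp.\ $b$) sits at the right end of the word $\varphi(\cdots)$ indexed by that block, so that the hypothesis $\varphi(ca)=\varphi(cb)$ is exactly what is needed rather than the two-sided condition of Lemma \ref{Swapping}. This follows because $\prec_\chi$ orders all left nodes before all right nodes and orders left nodes in increasing natural order; since $n$ is the largest natural number, it is the $\prec_\chi$-largest left node and hence $\prec_\chi$-maximal in any $\chi$-interval it belongs to, and symmetrically for $\prec_{\chi'}$ among right nodes. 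Everything else is the routine bookkeeping of the incidence algebra on $\B\I$, which is available from the discussion preceding the second proof of Theorem \ref{VanishingEquiv}.
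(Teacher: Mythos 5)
Your overall strategy is sound, and it is a genuinely different route from the paper's: the paper does not prove Lemma \ref{Changing} directly, but deduces it (together with Lemma \ref{Swapping}) from the c-bi-free results \cite{GS2016}*{Lemmata 4.17 and 4.18}, using the identification of the B-$(\ell,r)$-cumulants with c-$(\ell,r)$-cumulants in the two-state space $(\widetilde{\A}, \widetilde{\varphi}, \widetilde{\psi})$ constructed in the proof of Theorem \ref{VanishingEquiv}. Your direct M\"{o}bius-inversion argument over $\B\I$ is self-contained and in the spirit of the second proof of Theorem \ref{VanishingEquiv}; what it buys is independence from the external c-bi-free machinery, at the cost of doing the combinatorial bookkeeping by hand.

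That bookkeeping is where you slip, precisely at the point you flag as ``the only point requiring care.'' First, under $\chi'$ the node $n$ is the $\prec_{\chi'}$-\emph{minimal} right node, not the maximal one: $\prec_{\chi'}$ runs down the left column and then \emph{up} the right column, and $n$, being the largest index, sits at the bottom of the right column. What is true, and what your bijection actually needs, is that $\prec_\chi$ and $\prec_{\chi'}$ coincide as total orders on $\{1, \dots, n\}$: in both cases the order reads (left nodes below $n$ in increasing order), then $n$, then (right nodes below $n$ in decreasing order), so $n$ occupies the same slot, $\chi$-intervals and $\chi'$-intervals are the same subsets, and $\B\I(\chi) = \B\I(\chi')$ with the same refinement order and M\"{o}bius values. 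Second, your claim that $n$ is $\prec_\chi$-maximal in any $\chi$-interval containing it is false in general: such a block may contain right nodes, which are $\prec_\chi$-larger than $n$ (take $\chi = (\ell, r, \ell)$ and the block $\{2, 3\}$). Fortunately this claim is not what you need: by the definition of $\varphi_\sigma$ the entries of each block are multiplied in increasing \emph{natural} order, and $n$ is the largest index, so $a$ (resp.\ $b$) is automatically the rightmost factor of the word attached to its block. The hypothesis $\varphi(ca) = \varphi(cb)$ for all $c \in \A$ (applied with $c = 1$ when that block is the singleton $\{n\}$) then equates the two differing factors, and all other factors are literally identical. With these two corrections your proof is complete.
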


As a particularly important consequence of Lemma \ref{Swapping}, if $(a, b)$ is a commuting pair in $(\A, \varphi)$, then
\[B_\chi(c_{\chi(1)}, \dots, c_{\chi(m + n)}) = B_{m, n}(a, b)\]
for all $\chi: \{1, \dots, m + n\} \to \{\ell, r\}$ such that $|\chi^{-1}(\{\ell\})| = m$ where $c_\ell = a$ and $c_r = b$. Hence, the bi-Boolean partial $\eta$-transform (as well as the two-variable partial moment series and the two-variable Cauchy transform) completely determines the joint distribution of a commuting pair.

Another immediate consequence of Lemmata \ref{Swapping} and \ref{Changing} is the following bi-Boolean analogue of \cite{CNS2015-2}*{Theorem 10.2.1}.

\begin{thm}
If $\{(\A_{k, \ell}, \A_{k, r})\}_{k \in K}$ is a family of pairs of non-unital algebras in a non-commutative probability space $(\A, \varphi)$ such that
\begin{enumerate}[$\qquad(1)$]
\item $\A_{m, \ell}$ and $\A_{n, r}$ commute in distribution for all $m, n \in K$, and

\item for every $b \in \A_{k, r}$, there exists an $a \in \A_{k, \ell}$ such that $\varphi(ca) = \varphi(cb)$ for all $c \in \A$,
\end{enumerate}
then $\{(\A_{k, \ell}, \A_{k, r})\}_{k \in K}$ is bi-Boolean independent with respect to $\varphi$ if and only if $\{\A_{k, \ell}\}_{k \in K}$ is Boolean independent with respect to $\varphi$. Therefore, if $\{\A_{k, \ell}\}_{k \in K}$ is Boolean independent with respect to $\varphi$, then so is $\{\A_{k, r}\}_{k \in K}$.
\end{thm}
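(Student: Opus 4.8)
The plan is to pass to the cumulant side via Theorem \ref{VanishingEquiv} and then use Lemmas \ref{Swapping} and \ref{Changing} to turn every mixed B-$(\ell,r)$-cumulant into a mixed ordinary Boolean cumulant, which vanishes once the left algebras are Boolean independent. The two easy implications are immediate: if $\{(\A_{k,\ell},\A_{k,r})\}_{k\in K}$ is bi-Boolean independent, then specializing $\chi$ to be constantly $\ell$ (respectively constantly $r$) in the definition gives $\varphi(a_1\cdots a_n)=\varphi(a_1)\cdots\varphi(a_n)$ whenever $a_j\in\A_{\omega(j),\ell}$ (respectively $a_j\in\A_{\omega(j),r}$) and $\omega(j)\neq\omega(j+1)$ for all $j$, which is precisely the content of the Remark following the definition of bi-Boolean independence; hence $\{\A_{k,\ell}\}_{k\in K}$ and $\{\A_{k,r}\}_{k\in K}$ are each Boolean independent. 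In particular, once the main equivalence is established, Boolean independence of $\{\A_{k,\ell}\}_{k\in K}$ forces $\{(\A_{k,\ell},\A_{k,r})\}_{k\in K}$ to be bi-Boolean independent and hence $\{\A_{k,r}\}_{k\in K}$ to be Boolean independent, giving the last sentence of the theorem.

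For the substantive implication, assume $\{\A_{k,\ell}\}_{k\in K}$ is Boolean independent. By Theorem \ref{VanishingEquiv} it suffices to prove $B_\chi(a_1,\dots,a_n)=0$ for every $n\geq2$, every $\chi\colon\{1,\dots,n\}\to\{\ell,r\}$, every non-constant $\omega\colon\{1,\dots,n\}\to K$, and every $a_j\in\A_{\omega(j),\chi(j)}$. I would induct on $q=|\chi^{-1}(\{r\})|$. If $q=0$, then $\chi$ is constantly $\ell$, so $\B\I(\chi)$ coincides with the ordinary interval-partition lattice $\I(n)$ and the bi-Boolean moment-cumulant formula \eqref{BB-M-C} reduces to the classical Boolean one; by uniqueness, $B_\chi(a_1,\dots,a_n)$ is then the ordinary Boolean cumulant of elements of the left algebras, which vanishes because $\{\A_{k,\ell}\}_{k\in K}$ is Boolean independent and $\omega$ is not constant.

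For $q\geq1$, set $j_0=\max\chi^{-1}(\{r\})$, so the entries at positions $j_0+1,\dots,n$ are all left variables. Condition (1) provides exactly the hypothesis $\varphi(cabc')=\varphi(cbac')$ of Lemma \ref{Swapping} whenever $a$ is a left variable and $b$ a right variable, so applying that lemma $n-j_0$ times slides $a_{j_0}$ to the last slot and rewrites $B_\chi(a_1,\dots,a_n)$ as $B_{\chi''}(a_1,\dots,a_{j_0-1},a_{j_0+1},\dots,a_n,a_{j_0})$ for a $\chi''$ that still has $q$ right variables but with $\chi''(n)=r$. Next, condition (2) applied to $a_{j_0}\in\A_{\omega(j_0),r}$ produces $a'\in\A_{\omega(j_0),\ell}$ with $\varphi(ca')=\varphi(ca_{j_0})$ for all $c\in\A$, and Lemma \ref{Changing} then replaces the last entry $a_{j_0}$ by $a'$ and $\chi''$ by some $\chi'''$ with $\chi'''(n)=\ell$. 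The resulting cumulant has only $q-1$ right variables, its entries lie in the prescribed algebras, and its index map takes the same multiset of values in $K$ as $\omega$ and is therefore still non-constant, so the inductive hypothesis finishes the argument.

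The only place that needs real care is the bookkeeping just described: one must check that the chain of swaps together with the single substitution never turns the index map into a constant. This holds because a swap only permutes the entries, preserving $\{\omega(1),\dots,\omega(n)\}$ as a multiset, and the substitution exchanges $a_{j_0}\in\A_{\omega(j_0),r}$ for an element of $\A_{\omega(j_0),\ell}$ carrying the same index $\omega(j_0)$. Beyond this, the proof is a direct invocation of Lemmas \ref{Swapping} and \ref{Changing}, Theorem \ref{VanishingEquiv}, and the identification of constant-$\chi$ B-$(\ell,r)$-cumulants with ordinary Boolean cumulants, consistent with the theorem being advertised as an immediate consequence of those two lemmas.
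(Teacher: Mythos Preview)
Your proposal is correct and is exactly the argument the paper has in mind: the paper does not give a detailed proof but simply declares the theorem an ``immediate consequence of Lemmata \ref{Swapping} and \ref{Changing}'', and your induction on $q=|\chi^{-1}(\{r\})|$ is precisely the natural way to make that sentence rigorous. The only cosmetic remark is that each application of Lemma \ref{Swapping} is invoked with the roles of $\chi$ and $\chi'$ reversed (since you start from a pattern $(r,\ell)$ rather than $(\ell,r)$), but since the lemma states an equality and condition (1) supplies the commutation hypothesis symmetrically, this is harmless.
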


\begin{rem}\label{PartialSelfEnergy}
It is sometimes more convenient to consider the \textit{partial self-energy} of a pair $(a, b)$ which is the function $E_{(a, b)}$ defined by $E_{(a, b)}(z, w) = \eta_{(a, b)}\left(1/z, 1/w\right)$.  Note the partial self-energy also linearizes the additive bi-Boolean convolution (see \cite{GS2016}*{Remark 6.30}). Using the equations $z\eta_{a}(1/z) = E_{a}(z)$ and $w\eta_{b}(1/w) = E_{b}(w)$, we have that
\[E_{(a, b)}(z, w) = \frac{1}{z}E_{a}(z) + \frac{1}{w}E_{b}(w) + \frac{G_{(a, b)}(z, w)}{G_{a}(z)G_{b}(w)} - 1.\]
\end{rem}

\subsection{Properties of B-$(\ell, r)$-cumulants}

In order to study other types of convolution on bi-Boolean independent pairs, we need to discuss some properties of B-$(\ell, r)$-cumulants. The first property deals with the situation when at least one of the arguments is a scalar. Unlike in many other non-commutative probability theories where the corresponding cumulant simply vanishes, the bi-Boolean cumulants need not vanish. The following extends \cite{P2009}*{Proposition 3.3} to the bi-Boolean setting. Note that for $\chi: \{1, \dots, n\} \to \{\ell, r\}$ and $j \in \{1, \dots, n\}$, we denote by $\chi|_{\setminus j}$ the restriction of $\chi$ to $\{1, \dots, n\} \setminus \{j\}$.

\begin{prop}\label{ScalarEntry}
Let $(\A, \varphi)$ be a non-commutative probability space. If $n \geq 2$, $a_1, \dots, a_n \in \A$, $\chi: \{1, \dots, n\} \to \{\ell, r\}$, and $a_j = 1$ for some $1 \leq j \leq n$, then
\[B_\chi(a_1, \dots, a_n) = \begin{cases}
0 &\text{if } j \in \{\min_{\prec_\chi}(\{1, \dots, n\}), \max_{\prec_\chi}(\{1, \dots, n\})\}\\
B_{\chi|_{\setminus j}}(a_1, \dots, a_{j - 1}, a_{j + 1}, \dots, a_n) &\text{otherwise}
\end{cases}.\]
\end{prop}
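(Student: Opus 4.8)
The plan is to argue by strong induction on $n$, using the recursive characterisation of the B-$(\ell,r)$-cumulants,
\[
B_\chi(a_1, \dots, a_n) = \varphi(a_1 \cdots a_n) - \sum_{\substack{\pi \in \B\I(\chi)\\ \pi \neq 1_\chi}} B_\pi(a_1, \dots, a_n),
\]
together with the trivial fact that $\varphi(a_1\cdots a_n) = \varphi(a_1\cdots a_{j-1}a_{j+1}\cdots a_n)$ because $a_j = 1$. By \eqref{BB-M-C} applied to the $(n-1)$-tuple $(a_1, \dots, a_{j-1}, a_{j+1}, \dots, a_n)$ with colour map $\chi|_{\setminus j}$ (identified as usual with a map on $\{1, \dots, n-1\}$), this rewrites $B_\chi(a_1, \dots, a_n)$ as $\sum_{\pi' \in \B\I(\chi|_{\setminus j})} B_{\pi'}(a_1, \dots, a_{j-1}, a_{j+1}, \dots, a_n) - \sum_{\pi \neq 1_\chi} B_\pi(a_1, \dots, a_n)$. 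The base case $n = 2$ is immediate: $B_\chi(a_1, a_2) = \varphi(a_1a_2) - \varphi(a_1)\varphi(a_2) = 0$ when $a_1 = 1$ or $a_2 = 1$ (using $\varphi(1) = 1$), and both points of $\{1, 2\}$ are $\prec_\chi$-endpoints.

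For the inductive step I would dissect the sum over $\pi \neq 1_\chi$ according to the block $V$ of $\pi$ containing $j$. Since $|V| < n$ and $\prec_{\chi|_V}$ is the restriction of $\prec_\chi$, the inductive hypothesis applies to the factor $B_{\chi|_V}((a_1, \dots, a_n)|_V)$, giving: $\varphi(1) = 1$ when $V = \{j\}$; $B_{\chi|_{V\setminus\{j\}}}((a_1, \dots, a_n)|_{V\setminus\{j\}})$ when $j$ is $\prec_\chi$-interior in $V$; and $0$ when $|V| \geq 2$ and $j \in \{\min_{\prec_\chi}(V), \max_{\prec_\chi}(V)\}$. In the first case of the proposition, where $j \in \{\min_{\prec_\chi}(\{1,\dots,n\}), \max_{\prec_\chi}(\{1,\dots,n\})\}$, any block $V \ni j$ is a $\chi$-interval containing the global $\prec_\chi$-extremum, hence $j$ is a $\prec_\chi$-extremum of $V$; so $B_\pi(a_1, \dots, a_n) = 0$ unless $V = \{j\}$, and $\pi \mapsto \pi \setminus \{\{j\}\}$ is a bijection of $\{\pi \in \B\I(\chi) : \pi \neq 1_\chi,\ V = \{j\}\}$ onto $\B\I(\chi|_{\setminus j})$ (re-inserting $j$ at the extreme end of $\prec_\chi$ cannot spoil the $\chi$-interval property of the other blocks), with $B_\pi(a_1, \dots, a_n) = B_{\pi\setminus\{\{j\}\}}(a_1, \dots, a_{j-1}, a_{j+1}, \dots, a_n)$. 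Summing, $\sum_{\pi\neq 1_\chi}B_\pi(a_1, \dots, a_n) = \varphi(a_1\cdots a_{j-1}a_{j+1}\cdots a_n) = \varphi(a_1\cdots a_n)$, so $B_\chi(a_1, \dots, a_n) = 0$.

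In the second case, with $j$ $\prec_\chi$-interior in $\{1, \dots, n\}$, let $j^-$ and $j^+$ be the $\prec_\chi$-predecessor and successor of $j$. By the above, the only $\pi \neq 1_\chi$ contributing a nonzero term are (i) those whose block of $j$ is $\{j\}$ --- these force $j^- \not\sim_{\pi} j^+$ (if $j^-$ and $j^+$ shared a block, that block, being a $\chi$-interval, would have to contain $j$), and $\pi \mapsto \pi \setminus \{\{j\}\}$ is a bijection onto $\{\pi' \in \B\I(\chi|_{\setminus j}) : j^- \not\sim_{\pi'} j^+\}$ --- and (ii) those whose block $V \ni j$ has $|V| \geq 2$ with $j$ $\prec_\chi$-interior in $V$ --- these force $j^-, j^+ \in V$, and $\pi \mapsto (\pi \setminus \{V\}) \cup \{V \setminus \{j\}\}$ is a bijection onto $\{\pi' \in \B\I(\chi|_{\setminus j}) : j^- \sim_{\pi'} j^+,\ \pi' \neq 1_{\chi|_{\setminus j}}\}$, the fibre over $1_{\chi|_{\setminus j}}$ being $\{1_\chi\}$, which is excluded. (Here $i \sim_{\pi'} i'$ means $i, i'$ lie in a common block of $\pi'$.) Both bijections satisfy $B_\pi(a_1, \dots, a_n) = B_{\pi'}(a_1, \dots, a_{j-1}, a_{j+1}, \dots, a_n)$, and since the condition $j^- \not\sim_{\pi'} j^+$ already rules out $\pi' = 1_{\chi|_{\setminus j}}$, the families (i) and (ii) jointly index exactly $\B\I(\chi|_{\setminus j}) \setminus \{1_{\chi|_{\setminus j}}\}$. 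Hence $\sum_{\pi\neq 1_\chi} B_\pi(a_1, \dots, a_n) = \sum_{\pi'\neq 1_{\chi|_{\setminus j}}} B_{\pi'}(a_1, \dots, a_{j-1}, a_{j+1}, \dots, a_n)$, and substituting into the rewritten formula leaves $B_\chi(a_1, \dots, a_n) = \sum_{\pi' \in \B\I(\chi|_{\setminus j})} B_{\pi'}(a_1, \dots, a_{j-1}, a_{j+1}, \dots, a_n) - \sum_{\pi'\neq 1_{\chi|_{\setminus j}}} B_{\pi'}(a_1, \dots, a_{j-1}, a_{j+1}, \dots, a_n) = B_{\chi|_{\setminus j}}(a_1, \dots, a_{j-1}, a_{j+1}, \dots, a_n)$.

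The step I expect to be most delicate is the combinatorics of the second case: checking that ``delete $j$'' and ``insert $j$'' are genuinely inverse bijections respecting the $\chi$-interval condition --- which is precisely what produces the split according to whether $j^- \sim_{\pi'} j^+$ --- and verifying that the two induced families of partitions cover $\B\I(\chi|_{\setminus j}) \setminus \{1_{\chi|_{\setminus j}}\}$ exactly once. One could instead derive the proposition from the c-bi-free results of \cite{GS2016} through Proposition \ref{BiBandCBF} and Theorem \ref{VanishingEquiv}, but the direct combinatorial argument above seems more transparent.
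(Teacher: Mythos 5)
Your argument is correct and is essentially the paper's own proof: the same induction on $n$ through the defining recursion $B_\chi = \varphi - \sum_{\pi \neq 1_\chi} B_\pi$ over $\B\I(\chi)$, with the endpoint case established first and then invoked to annihilate every block in which $j$ sits as a $\prec_\chi$-extremum. The only difference is bookkeeping: you organize the inductive step by the block containing $j$ and spell out the insertion/deletion bijections (split according to whether $j^-$ and $j^+$ lie in a common block), whereas the paper splits the sum according to the block containing $\min_{\prec_\chi}(\{1,\dots,n\})$ and quotes the inductive hypothesis more tersely; the substance is the same.
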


\begin{proof}
First suppose that $j = \min_{\prec_\chi}(\{1, \dots, n\})$.  To proceed by induction on $n$ note the base case $n = 2$ is trivial as $B_\chi(a_1, a_2) = \varphi(a_1a_2) - \varphi(a_1)\varphi(a_2)$ for all $\chi: \{1, 2\} \to \{\ell, r\}$. For the inductive step, notice by the inductive hypothesis that
\begin{align*}
\varphi(a_1\cdots a_{j - 1}1a_{j + 1}\cdots a_n) &= B_\chi(a_1, \dots, a_{j - 1}, 1, a_{j + 1}, \dots, a_n) + \sum_{\substack{\pi \in \B\I(\chi)\\\pi \neq 1_\chi}}B_\pi(a_1, \dots, a_{j - 1}, 1, a_{j + 1}, \dots, a_n)\\
&= B_\chi(a_1, \dots, a_{j - 1}, 1, a_{j + 1}, \dots, a_n) + \sum_{\substack{\pi \in \B\I(\chi)\\\{j\} \in \pi}}B_\pi(a_1, \dots, a_{j - 1}, 1, a_{j + 1}, \dots, a_n)\\
&= B_\chi(a_1, \dots, a_{j - 1}, 1, a_{j + 1}, \dots, a_n) + \sum_{\pi \in \B\I(\chi|_{\setminus j})}B_\pi(a_1, \dots, a_{j - 1}, a_{j + 1}, \dots, a_n)\\
&= B_\chi(a_1, \dots, a_{j - 1}, 1, a_{j + 1}, \dots, a_n) + \varphi(a_1\cdots a_{j - 1}a_{j + 1}\cdots a_n),
\end{align*}
from which the assertion follows.   The proof for the case $j = \max_{\prec_\chi}(\{1, \dots, n\})$ is similar.

Suppose $j \not\in \{\min_{\prec_\chi}(\{1, \dots, n\}), \max_{\prec_\chi}(\{1, \dots, n\})\}$.  Again we will proceed by induction.  For the base case $n = 3$, note either $j=2$ or $j=3$. We will assume that $j = 3$ as the case $j = 2$ is similar.  In this case, $\{\chi(1), \chi(2)\} = \{\ell, r\}$ so by the above work we have that
\[\varphi(a_1a_21) = \varphi(a_1a_2a_3) = \sum_{\pi \in \B\I(\chi)} B_\pi(a_1, a_2, 1) =  B_\chi(a_1, a_2, 1) + \varphi(a_1)\varphi(a_2).\]
On the other hand, we have that
\[\varphi(a_1a_2) = B_{\chi|_{\setminus 3}}(a_1, a_2) + \varphi(a_1)\varphi(a_2),\]
and thus $B_\chi(a_1, a_2, 1) = B_{\chi|_{\setminus 3}}(a_1, a_2)$. 

For the inductive step, let $V_\pi$ be the block of $\pi$ containing $\min_{\prec_\chi}(\{1, \dots, n\})$, we have that
\begin{align*}
&\varphi(a_1\cdots a_{j - 1}1a_{j + 1}\cdots a_n)\\
 &= \varphi(a_1\cdots a_n)\\
&= \sum_{\substack{\pi \in \B\I(\chi)\\j \not\in V_\pi}}B_\pi(a_1, \dots, a_n) + \sum_{\substack{\pi \in \B\I(\chi)\\j = \max_{\prec_\chi}(V_\pi)}}B_\pi(a_1, \dots, a_n) + \sum_{\substack{\pi \in \B\I(\chi)\\j \in V_\pi\\j \neq \max_{\prec_\chi}(V_\pi)}}B_\pi(a_1, \dots, a_n).
\end{align*}
By the $\{\min_{\prec_\chi}(\{1, \dots, n\}), \max_{\prec_\chi}(\{1, \dots, n\})\}$ case the second sum vanishes.  By the inductive hypothesis the first and third sums add up to
\[B_\chi(a_1, \dots, a_{j - 1}, 1, a_{j + 1}, \dots, a_n) + \sum_{\substack{\pi \in \B\I(\chi|_{\setminus j})\\\pi \neq 1_{\chi|_{\setminus j}}}}B_\pi(a_1, \dots, a_{j - 1}, a_{j + 1}, \dots, a_n).\]
On the other hand, we have that
\[\varphi(a_1\cdots a_{j - 1}a_{j + 1}\cdots a_n) = B_{\chi|_{\setminus j}}(a_1, \dots, a_{j - 1}, a_{j + 1}, \dots, a_n) + \sum_{\substack{\pi \in \B\I(\chi|_{\setminus j})\\\pi \neq 1_{\chi|_{\setminus j}}}}B_\pi(a_1, \dots, a_{j - 1}, a_{j + 1}, \dots, a_n),\]
from which the assertion follows.
\end{proof}

\begin{cor}\label{OnePlus}
Given a pair $(a, b)$ in a non-commutative probability space $(\A, \varphi)$, for all $m, n \geq 1$ we have that
\[B_{m, n}(1 + a, b) = \sum_{i = 0}^{m - 1}\binom{m - 1}{i}B_{i + 1, n}(a, b).\]
The obvious analogue for $B_{m, n}(a, 1 + b)$ also holds.
\end{cor}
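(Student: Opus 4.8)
The plan is to expand by multilinearity and then apply Proposition~\ref{ScalarEntry} repeatedly to delete the scalar entries. Throughout, write $\chi = \chi_{m,n}$, and recall that the order $\prec_\chi$ on $\{1, \dots, m+n\}$ satisfies $1 \prec_\chi 2 \prec_\chi \cdots \prec_\chi m \prec_\chi m+n \prec_\chi \cdots \prec_\chi m+1$; in particular its $\prec_\chi$-minimum is the slot $1$ and its $\prec_\chi$-maximum is the slot $m+1$. Since $B_\chi$ is multilinear in each of its first $m$ slots, I would first write
\[B_{m,n}(1+a,b) \;=\; \sum_{S \subseteq \{1, \dots, m\}} B_\chi\bigl(c^S_1, \dots, c^S_m, b, \dots, b\bigr), \qquad c^S_k = \begin{cases} a & \text{if } k \in S,\\ 1 & \text{if } k \notin S. \end{cases}\]

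Next I would analyze each summand with Proposition~\ref{ScalarEntry}, which applies because $m + n \ge 2$. If $1 \notin S$, the first slot holds the scalar $1$, which occupies the $\prec_\chi$-minimum, so that summand vanishes. If $1 \in S$, then every scalar entry of the summand lies in a slot $j \in \{2, \dots, m\}$, which is neither the $\prec_\chi$-minimum (that is slot $1$) nor the $\prec_\chi$-maximum (that is slot $m+1$); moreover this remains true after any number of such slots have been removed, since slot $1$ still holds $a$ and slot $m+1$ still holds $b$, and the running length never drops below $2$. Hence the ``otherwise'' clause of Proposition~\ref{ScalarEntry} lets me delete these scalar slots one at a time. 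After all deletions I am left with the B-$(\ell,r)$-cumulant taken with respect to the restriction of $\chi$ to $S \cup \{m+1, \dots, m+n\}$ evaluated at $(a,\dots,a,b,\dots,b)$ with $|S|$ copies of $a$ and $n$ of $b$; relabeling this index set order-preservingly onto $\{1, \dots, |S|+n\}$ turns the restricted map into $\chi_{|S|, n}$, so the summand equals $B_{|S|, n}(a,b)$.

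Finally I would collect terms: a set $S$ with $1 \in S$ and $|S| = i+1$ contributes $B_{i+1, n}(a, b)$, and the number of such $S$ is $\binom{m-1}{i}$ (choose the remaining $i$ elements from $\{2, \dots, m\}$), which yields $B_{m,n}(1+a,b) = \sum_{i=0}^{m-1}\binom{m-1}{i}B_{i+1,n}(a,b)$. The analogue for $B_{m,n}(a, 1+b)$ is proved identically, expanding over subsets of the $n$ right slots: the slot $m+1$ is the $\prec_\chi$-maximum, so a term with $1$ in that slot vanishes, the remaining right slots are interior and get deleted, and one is left with $\binom{n-1}{j}$ copies of $B_{m, j+1}(a,b)$. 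There is no genuine obstacle here; the only thing requiring care is the bookkeeping, namely verifying that after each deletion the restricted index map is, up to the canonical relabeling, of the form $\chi_{|S|, n}$, and that the hypothesis $n \ge 2$ of Proposition~\ref{ScalarEntry} holds at every intermediate step.
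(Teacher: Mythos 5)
Your proof is correct and follows essentially the same route as the paper: expand $B_{m,n}(1+a,b)$ by multilinearity over the left slots, kill the terms whose first slot (the $\prec_{\chi}$-minimum) is $1$ via Proposition \ref{ScalarEntry}, delete the remaining interior scalar slots, and count subsets to get the binomial coefficients. The paper states this more tersely, while you spell out the iteration of deletions and the min/max bookkeeping, but the argument is the same.
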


\begin{proof}
By the multilinearity of B-$(\ell, r)$-cumulants, we have that
\[B_{m, n}(1 + a, b) = \sum_{\substack{c_j \in \{1, a\}\\1 \leq j \leq m}}B_{m, n}(c_1, \dots, c_m, \underbrace{b, \dots, b}_{n\,\mathrm{times}}),\]
and the assertion immediately follows from Proposition \ref{ScalarEntry} as $c_1$ must be $a$ and there are $m - 1$ positions remaining to have $a$ as the argument.
\end{proof}

Next, we investigate B-$(\ell, r)$-cumulants with products among their arguments. Remark that although it might be possible to find a general formula for arbitrary products by applying the product formula for c-$(\ell, r)$-cumulants in \cite{GS2016}*{Theorem 4.22}, the following result is sufficient for our purposes.

\begin{lem}\label{ProductEntry}
Let $(a_1, b_1)$ and $(a_2, b_2)$ be bi-Boolean independent two-faced pairs in a non-commutative probability space $(\A, \varphi)$. The following properties of B-$(\ell, r)$-cumulants hold.
\begin{enumerate}[$\qquad(1)$]
\item For all $m, n \geq 0$, $0 \leq k \leq m$, $c_i \in \{a_1, a_2, a_1a_2\}$, and $d_j \in \{b_1, b_2, b_1b_2, b_2b_1\}$,
\begin{align*}
&B_{m + 1, n}(c_1, \dots, c_k, a_1a_2, c_{k + 1}, \dots, c_m, d_1, \dots, d_n)\\
&= B_{k + 1}(c_1, \dots, c_k, a_1)B_{m - k + 1, n}(a_2, c_{k + 1}, \dots, c_m, d_1, \dots, d_n).
\end{align*}

\item For all $m, n \geq 0$, $0 \leq k \leq m$, $c_i \in \{a_1, a_2, a_1a_2\}$, and $d_j \in \{b_1, b_2, b_1b_2, b_2b_1\}$,
\begin{align*}
&B_{m + 2, n}(c_1, \dots, c_k, a_1a_2, a_1a_2, c_{k + 1}, \dots, c_m, d_1, \dots, d_n)\\
&= B_{m + 2, n}(c_1, \dots, c_k, a_2, a_1a_2, c_{k + 1}, \dots, c_m, d_1, \dots, d_n)\\
&= B_{m + 2, n}(c_1, \dots, c_k, a_1a_2, a_1, c_{k + 1}, \dots, c_m, d_1, \dots, d_n) = 0.
\end{align*}
\end{enumerate}
\end{lem}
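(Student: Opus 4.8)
\emph{Overview of the approach.}
The plan is to derive both parts from the bi-Boolean moment--cumulant formula \eqref{BB-M-C}, M\"{o}bius inversion over $\B\I$ (recall $\mu_{\B\I}(\sigma,\pi)=(-1)^{|\sigma|-|\pi|}$), and one recurring \emph{moment factorization}: under the hypotheses of the lemma, for all $c_1,\dots,c_k,c_{k+1},\dots,c_m$ that are products of elements of $\A_{1,\ell}\cup\A_{2,\ell}$ and all $d_1,\dots,d_n$ that are products of elements of $\A_{1,r}\cup\A_{2,r}$,
\[
\varphi\big(c_1\cdots c_k\,a_1a_2\,c_{k+1}\cdots c_m\,d_1\cdots d_n\big)=\varphi\big(c_1\cdots c_k\,a_1\big)\,\varphi\big(a_2\,c_{k+1}\cdots c_m\,d_1\cdots d_n\big).
\]
To prove this I would expand the left-hand side into a word in the four generators $a_1,a_2,b_1,b_2$, apply the moment--cumulant formula together with Theorem \ref{VanishingEquiv} (so that only bi-interval partitions each of whose blocks is constant on the $\omega$-structure survive), and observe that in every such partition the letter $a_1\in\A_{1,\ell}$ and the immediately following letter $a_2\in\A_{2,\ell}$ --- being $\prec_\chi$-consecutive, since both are left and in adjacent positions --- must lie in distinct blocks, so, as every block is a $\chi$-interval, the partition has a ``cut'' there. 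The two portions of the word on either side of this cut are, in $\prec_\chi$-order, exactly the generator-words of $c_1\cdots c_k\,a_1$ and of $a_2\,c_{k+1}\cdots c_m\,d_1\cdots d_n$ (using that all left letters precede all right letters in $\prec_\chi$, the latter in reverse), so summing the surviving cumulants over all such partitions splits as a product of two sums, each of which equals the corresponding moment by a further application of \eqref{BB-M-C} and Theorem \ref{VanishingEquiv}.

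\emph{Part (1).}
Put $\vec x=(c_1,\dots,c_k,a_1a_2,c_{k+1},\dots,c_m,d_1,\dots,d_n)$ and write, by M\"{o}bius inversion of \eqref{BB-M-C}, $B_{m+1,n}(\vec x)=\sum_{\sigma\in\B\I(\chi)}(-1)^{|\sigma|-1}\varphi_\sigma(\vec x)$. In every term the block $V$ of $\sigma$ containing the slot of $a_1a_2$ again has the shape to which the moment factorization applies, so factoring $\varphi(\vec x|_V)$ replaces $\sigma$ by a bi-interval partition $\tau$ of the split tuple $\widehat{\vec x}=(c_1,\dots,c_k,a_1,a_2,c_{k+1},\dots,c_m,d_1,\dots,d_n)$ in which $a_1$ and $a_2$ lie in distinct blocks; this $\sigma\mapsto\tau$ is a bijection of $\B\I(\chi)$ onto $\{\tau\in\B\I(\widehat\chi):a_1,a_2\text{ in distinct blocks}\}$ with $|\tau|=|\sigma|+1$, $\varphi_\sigma(\vec x)=\varphi_\tau(\widehat{\vec x})$, and $(-1)^{|\sigma|-1}=(-1)^{|\tau|}$. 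On the other hand, the same inversion gives $B_{k+1}(c_1,\dots,c_k,a_1)\,B_{m-k+1,n}(a_2,c_{k+1},\dots,c_m,d_1,\dots,d_n)=\sum_{\alpha,\beta}(-1)^{|\alpha|+|\beta|-2}\varphi_\alpha\varphi_\beta$, and $(\alpha,\beta)\mapsto\alpha\sqcup\beta$ is a bijection onto the bi-interval partitions of $\widehat{\vec x}$ no block of which crosses the boundary between the two halves; since $a_1$ and $a_2$ are $\prec_{\widehat\chi}$-consecutive, this is again exactly $\{\tau:a_1,a_2\text{ in distinct blocks}\}$, with $|\alpha|+|\beta|=|\tau|$ and $\varphi_\alpha\varphi_\beta=\varphi_\tau(\widehat{\vec x})$. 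The two sums then agree term by term, proving Part (1).

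\emph{Part (2).}
Here I would use Part (1) to resolve the distinguished products. Applying Part (1) to the first $a_1a_2$ in $B_{m+2,n}(c_1,\dots,c_k,a_1a_2,a_1a_2,c_{k+1},\dots,c_m,d_1,\dots,d_n)$ and then to the $a_1a_2$ in the resulting right factor isolates $B_2(a_2,a_1)=\varphi(a_2a_1)-\varphi(a_2)\varphi(a_1)$, which is $0$ because $\A_{1,\ell}$ and $\A_{2,\ell}$ are Boolean independent (as noted in the remark right after the definition of bi-Boolean independence); hence that cumulant vanishes. Applying Part (1) to the single $a_1a_2$ in the other two expressions isolates, respectively, the pure-left cumulant $B_{k+2}(c_1,\dots,c_k,a_2,a_1)$ and the cumulant $B_{m-k+2,n}(a_2,a_1,c_{k+1},\dots,c_m,d_1,\dots,d_n)$, and both vanish by the following auxiliary fact, which I would prove by the same M\"{o}bius computation as in Part (1): if $a_2\in\A_{2,\ell}$ and $a_1\in\A_{1,\ell}$ occupy two $\prec_\chi$-consecutive left slots which are either the first two slots or the last two slots of a pure-left cumulant, then the corresponding B-$(\ell,r)$-cumulant is $0$ --- indeed one of $a_2,a_1$ is then the $\prec_\chi$-extremum, it factors out of every moment appearing in the inversion (a special case of the moment factorization), and pairing each $\sigma$ with the partition obtained by detaching the other's slot from that extremal block produces terms with opposite signs, so the total is $0$.

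\emph{Main obstacle and alternatives.}
The step I expect to be the main obstacle is the moment factorization: the ``forced cut'' idea is transparent, but making it precise requires careful bookkeeping of positions under $\prec_\chi$ --- in particular the reversal on the right letters --- and of the way a split of $a_1a_2$ shifts indices; the two partition bijections in Part (1) are conceptually simple but need the same care. One could instead bypass all of this by specializing the product formula for c-$(\ell,r)$-cumulants (\cite{GS2016}*{Theorem 4.22}) through Proposition \ref{BiBandCBF}, noting that every interior block there carries a vanishing $(\ell,r)$-cumulant in the relevant two-state space; the argument sketched above, however, is self-contained.
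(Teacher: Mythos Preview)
Your argument is correct, and it takes a genuinely different route from the paper's.

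For Part~(1) the paper proceeds by induction on $m$: at the inductive step it expands $\varphi(c_1\cdots c_k a_1a_2c_{k+1}\cdots c_md_1\cdots d_n)$ via the moment factorization, rewrites each side using the moment--cumulant formula, and then matches the lower-order terms on the \emph{cumulant} level by a bijection $(\pi_1,\pi_2)\mapsto\pi$ (merge the last block of $\pi_1$ with the first block of $\pi_2$ and collapse the two positions of $a_1,a_2$ into the single slot of $a_1a_2$), invoking the induction hypothesis to identify $B_{\pi_1}B_{\pi_2}$ with $B_\pi$. You instead avoid induction entirely by working at the \emph{moment} level: apply M\"obius inversion once to each side, use the moment factorization block-by-block, and compare the two sums via the bijection $\sigma\leftrightarrow\tau$ (splitting the block containing $a_1a_2$). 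Your bijection is essentially the inverse of the paper's but applied to moments rather than cumulants; the trade-off is that you must verify the moment factorization holds for each block $V$ of each $\sigma$, whereas the paper only needs it for the full moment. For Part~(2) the paper iterates Part~(1) to strip every $a_1a_2$ from the $c_i$'s and then appeals to Theorem~\ref{VanishingEquiv} for the resulting mixed pure-generator cumulant; your sign-pairing at the $\prec_\chi$-extremum is a neat direct alternative that sidesteps this iteration.

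One small imprecision: you state the auxiliary vanishing fact for ``the first two slots or the last two slots of a pure-left cumulant'', but for the third identity in Part~(2) the factor you need to kill is $B_{m-k+2,n}(a_2,a_1,c_{k+1},\dots,c_m,d_1,\dots,d_n)$, which is not pure-left when $n\geq 1$. Your proof of the fact, however, does not use pure-leftness---it only uses that $a_2$ sits at the $\prec_\chi$-minimum (which it does, since position~$1$ is left) so that the toggle involution applies---so you should simply adjust the phrasing of the claim to cover this case.
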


\begin{proof}
For assertion $(1)$, we proceed by induction on $m$. If $m = 0$, then
\begin{align*}
B_{1, n}(a_1a_2, d_1, \dots, d_n) &= \varphi(a_1a_2d_1\cdots d_n) - \sum_{\substack{\pi \in \B\I(\chi_{1, n})\\\pi \neq 1_{\chi_{1, n}}}}B_\pi(a_1a_2, d_1, \dots, d_n)\\
&= \varphi(a_1a_2d_1\cdots d_n) - \sum_{\substack{\pi \in \B\I(\chi_{1, n})\\\pi \neq 1_{\chi_{1, n}}}}\sum_{\substack{\sigma \in \B\I(\chi_{1, n})\\\sigma \leq \pi}}\varphi_\sigma(a_1a_2, d_1, \dots, d_n)\mu_{\B\I}(\sigma, \pi).
\end{align*}
By the definition of bi-Boolean independence, we have that
\[\varphi_\sigma(a_1a_2, d_1, \dots, d_n) = \varphi(a_1)\varphi_\sigma(a_2, d_1, \dots, d_n)\]
for all $\sigma \in \B\I(\chi_{1, n})$.  Hence
\begin{align*}
B_{1, n}(a_1a_2, d_1, \dots, d_n) &= \varphi(a_1)\left(\varphi(a_2d_1\cdots d_n) - \sum_{\substack{\pi \in \B\I(\chi_{1, n})\\\pi \neq 1_{\chi_{1, n}}}}\sum_{\substack{\sigma \in \B\I(\chi_{1, n})\\\sigma \leq \pi}}\varphi_\sigma(a_2, d_1, \dots, d_n)\mu_{\B\I}(\sigma, \pi)\right)\\
&= B_1(a_1)B_{1, n}(a_2, d_1, \dots, d_n).
\end{align*}

For the inductive step, notice
\begin{align*}
&B_{m + 1, n}(c_1, \dots, c_k, a_1a_2, c_{k + 1}, \dots, c_m, d_1, \dots, d_n)\\
&= \varphi(c_1\cdots c_ka_1a_2c_{k + 1}\cdots c_md_1\cdots d_n) - \sum_{\substack{\pi \in \B\I(\chi_{m + 1, n})\\\pi \neq 1_{\chi_{m + 1, n}}}}B_\pi(c_1, \dots, c_k, a_1a_2, c_{k + 1}, \dots, c_m, d_1, \dots, d_n)\\
&= \varphi(c_1\cdots c_ka_1)\varphi(a_2c_{k + 1}\cdots c_md_1\cdots d_n) - \sum_{\substack{\pi \in \B\I(\chi_{m + 1, n})\\\pi \neq 1_{\chi_{m + 1, n}}}}B_\pi(c_1, \dots, c_k, a_1a_2, c_{k + 1}, \dots, c_m, d_1, \dots, d_n)\\
&= \sum_{\pi_1 \in \B\I(\chi_{k + 1, 0})}B_{\pi_1}(c_1, \dots, c_k, a_1)\sum_{\pi_2 \in \B\I(\chi_{m - k + 1, n})}B_{\pi_2}(a_2, c_{k + 1}, \dots, c_m, d_1, \dots, d_n)\\
&\quad -  \sum_{\substack{\pi \in \B\I(\chi_{m + 1, n})\\\pi \neq 1_{\chi_{m + 1, n}}}}B_\pi(c_1, \dots, c_k, a_1a_2, c_{k + 1}, \dots, c_m, d_1, \dots, d_n).
\end{align*}
For every pair $(\pi_1, \pi_2)$ where $\pi_1 \in \B\I(\chi_{k + 1, 0})$ and $\pi_2 \in \B\I(\chi_{m - k + 1, n})$ with $(\pi_1, \pi_2) \neq (1_{\chi_{k + 1, 0}}, 1_{\chi_{m - k + 1, n}})$ there is a unique $\pi \in \B\I(\chi_{m + 1, n})$ obtained by taking $\pi_1 \cup \pi_1$, merging the block of $\pi_1$ containing $\max_{\prec_\chi}(\pi_1)$ with the block of $\pi_2$ containing $\min_{\prec_\chi}(\pi_2)$, and identifying $\max_{\prec_\chi}(\pi_1)$ and $\min_{\prec_\chi}(\pi_2)$ as the same element. This map is a bijection and, by the induction hypothesis, we have that
\[B_{\pi_1}(c_1, \dots, c_k, a_1)B_{\pi_2}(a_2, c_{k + 1}, \dots, c_m, d_1, \dots, d_n) = B_\pi(c_1, \dots, c_k, a_1a_2, c_{k + 1}, \dots, c_m, d_1, \dots, d_n)\]
under this bijection. Hence, the only remaining term is
\[B_{k + 1}(c_1, \dots, c_k, a_1)B_{m - k + 1, n}(a_2, c_{k + 1}, \dots, c_m, d_1, \dots, d_n)\]
after cancellation thereby yielding (1).

For assertion $(2)$, we have that
\begin{align*}
&B_{m + 2, n}(c_1, \dots, c_k, a_1a_2, a_1a_2, c_{k + 1}, \dots, c_m, d_1, \dots, d_n)\\
&= B_{k + 1}(c_1, \dots, c_k, a_1)B_2(a_2, a_1)B_{m - k + 1, n}(a_2, c_{k + 1}, \dots, c_m, d_1, \dots, d_n)\\
&= 0
\end{align*}
by assertion $(1)$ and the vanishing of mixed B-$(\ell, r)$-cumulants. The other two statements can be proved analogously by noting that regardless of what $c_1, \dots, c_k$ are, there will be a B-$(\ell, r)$-cumulant as a factor which contains both $a_1$ and $a_2$, and all other arguments are either $a_1$ or $a_2$, i.e., $a_1a_2$ is not one of the arguments of this factor.
\end{proof}

In addition, the following results hold by similar arguments.

\begin{lem}\label{ProductEntry2}
Let $(a_1, b_1)$ and $(a_2, b_2)$ be bi-Boolean independent  pairs in a non-commutative probability space $(\A, \varphi)$. The following properties of B-$(\ell, r)$-cumulants hold.
\begin{enumerate}[$\qquad(1)$]
\item For all $m, n \geq 0$, $0 \leq k \leq n$, $c_i \in \{a_1, a_2, a_1a_2, a_2a_1\}$, and $d_j \in \{b_1, b_2, b_2b_1\}$,
\begin{align*}
&B_{m, n + 1}(c_1, \dots, c_m, d_1, \dots, d_k, b_2b_1, d_{k + 1}, \dots, d_n)\\
&= B_{m, n - k + 1}(c_1, \dots, c_m, b_1, d_{k + 1}, \dots, d_n)B_{k + 1}(d_1, \dots, d_k, b_2).
\end{align*}

\item For all $m, n \geq 0$, $0 \leq k \leq n$, $c_i \in \{a_1, a_2, a_1a_2, a_2a_1\}$, and $d_j \in \{b_1, b_2, b_2b_1\}$,
\begin{align*}
&B_{m, n + 2}(c_1, \dots, c_m, d_1, \dots, d_k, b_2b_1, b_2b_1, d_{k + 1}, \dots, d_n)\\
&= B_{m, n + 2}(c_1, \dots, c_m, d_1, \dots, d_k, b_1, b_2b_1, d_{k + 1}, \dots, d_n)\\
&= B_{m, n + 2}(c_1, \dots, c_m, d_1, \dots, d_k, b_2b_1, b_2, d_{k + 1}, \dots, d_n) = 0.
\end{align*}
\end{enumerate}
\end{lem}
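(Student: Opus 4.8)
The plan is to prove both parts of Lemma \ref{ProductEntry2} exactly as the corresponding parts of Lemma \ref{ProductEntry} were proved, with the roles of the left and right variables interchanged. For assertion $(1)$ I would argue by induction on $n$, the number of passenger right variables, the base case $n = 0$ (which forces $k = 0$) being handled by a direct computation, reading the right variables in the order $\prec_\chi$, exactly as in the $m = 0$ base case of Lemma \ref{ProductEntry}$(1)$.

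The crucial input for the inductive step is the moment factorization
\[\varphi(c_1\cdots c_md_1\cdots d_kb_2b_1d_{k + 1}\cdots d_n) = \varphi(c_1\cdots c_mb_1d_{k + 1}\cdots d_n)\,\varphi(d_1\cdots d_kb_2),\]
the analogue of the splitting $\varphi(c_1 \cdots c_k a_1)\varphi(a_2 \cdots)$ used for Lemma \ref{ProductEntry}$(1)$. It holds because, inside the distinguished subword $b_2b_1$, the nodes $b_2 \in \A_{2, r}$ and $b_1 \in \A_{1, r}$ are consecutive with respect to $\prec_\chi$; since $\prec_\chi$ is \emph{reversed} on the right variables, it is exactly the product written $b_2b_1$ (and not $b_1b_2$) for which this happens, which is why the lemma is stated with that order. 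As $b_1$ and $b_2$ carry distinct indices, the cut between them meets no block of $\pi_{\omega, \chi}$, so $\varphi = \varphi_{\pi_{\omega, \chi}}$ factors along that cut, and reading each of the two resulting pieces back in word order gives the two displayed moments. From here one proceeds verbatim as in Lemma \ref{ProductEntry}$(1)$: expand the two moments by the bi-Boolean moment-cumulant formula \eqref{BB-M-C} and the left-hand side by the recursive definition of $B_\chi$, establish the bijection from $\B\I(\chi_{m, n - k + 1}) \times \B\I(\chi_{0, k + 1})$ onto the partitions in $\B\I(\chi_{m, n + 1})$ whose $b_2b_1$-block is split — obtained by identifying the $\prec_\chi$-largest node of the first factor (namely $b_1$) with the $\prec_\chi$-smallest node of the second factor (namely $b_2$) and merging their blocks — and invoke the inductive hypothesis blockwise so that all but one term cancels, leaving $B_{m, n - k + 1}(c_1, \dots, c_m, b_1, d_{k + 1}, \dots, d_n)B_{k + 1}(d_1, \dots, d_k, b_2)$.

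For assertion $(2)$ I would, in each of the three cases, apply part $(1)$ of this lemma (together with part $(1)$ of Lemma \ref{ProductEntry} whenever a passenger $c_i$ or $d_j$ is itself a product) repeatedly until every argument is a single element of one of the algebras $\A_{1, \ell}, \A_{1, r}, \A_{2, \ell}, \A_{2, r}$. A short inspection then shows that one of the resulting B-$(\ell, r)$-cumulants has both $b_1 \in \A_{1, r}$ and $b_2 \in \A_{2, r}$ among its arguments — for the $b_2b_1, b_2b_1$ case this is immediate, yielding a factor $B_2(b_1, b_2)$ — so that cumulant vanishes by Theorem \ref{VanishingEquiv} and the whole product is zero. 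Alternatively one may appeal to the product formula for c-$(\ell, r)$-cumulants in \cite{GS2016}*{Theorem 4.22} via Theorem \ref{VanishingEquiv}.

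The main obstacle is purely bookkeeping: one must verify carefully that the reversal of $\prec_\chi$ on the right genuinely turns $b_2b_1$ (and no other order) into a cut separating all blocks of $\pi_{\omega, \chi}$, that the partition-merging map is a bijection onto exactly the bi-interval partitions whose $b_2b_1$-block is split, and — for $(2)$ — that after all applications of $(1)$ the pair $b_1, b_2$ always ends up inside a common surviving factor, which is the content of the ``regardless of what $c_1, \dots, c_k$ are'' remark, so that the vanishing of mixed B-$(\ell, r)$-cumulants applies verbatim.
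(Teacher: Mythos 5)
Your proposal is correct and is essentially the paper's own argument: the paper establishes this lemma only by remarking that it ``holds by similar arguments'' to Lemma \ref{ProductEntry}, i.e.\ precisely the left/right mirror you describe (factorize the moment at the cut between $b_1$ and $b_2$, which are $\prec_\chi$-adjacent exactly because the product is $b_2b_1$, run the same bi-interval bijection/cancellation induction for $(1)$, and obtain $(2)$ from $(1)$ together with the vanishing of mixed B-$(\ell,r)$-cumulants). One small caveat in your resolution step for $(2)$: a passenger $c_i = a_2a_1$ is not covered by Lemma \ref{ProductEntry}$(1)$ as stated, so in the case where such a passenger survives in the factor containing both $b_1$ and $b_2$ (e.g.\ $B_{1,2}(a_2a_1, b_1, b_2)$ arising from the third cumulant in $(2)$) you should invoke the index-swapped analogue of Lemma \ref{ProductEntry}$(1)$, which holds verbatim since bi-Boolean independence is symmetric in the two pairs --- a detail the paper's ``similar arguments'' likewise leaves implicit.
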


\subsection{The reduced bi-Boolean partial $\eta$-transform}

Recall that if $(a, b)$ is a commuting two-faced pair, then the bi-Boolean partial $\eta$-transform $\eta_{(a, b)}$ uniquely determines the joint distribution of $(a, b)$. Equivalently, if the marginal distributions of $a$ and $b$ are known, then it suffices to compute the sum $\sum_{m, n \geq 1}B_{m, n}(a, b)z^mw^n$ in order to obtain the joint distribution of $(a, b)$.

\begin{defn}
Let $(a, b)$ be a two-faced pair in a non-commutative probability space $(\A, \varphi)$. The \textit{reduced bi-Boolean partial $\eta$-transform} of $(a, b)$ is defined by
\[\widetilde{\eta}_{(a, b)}(z, w) = \sum_{m, n \geq 1}B_{m, n}(a, b)z^mw^n = \eta_{(a, b)}(z, w) - \eta_{a}(z) - \eta_{b}(w).\]
\end{defn}

For all of the convolutions considered below, the marginal distributions of the resulting pair may be obtained using the single-variable transforms mentioned above. Therefore, it remains to find the formulae for the reduced bi-Boolean partial $\eta$-transforms in terms of the individual ones. Quite surprisingly, the single-variable transforms also appear.

Let $(a_1, b_1)$ and $(a_2, b_2)$ be bi-Boolean independent pairs. We begin by computing the bi-Boolean partial $\eta$-transform of the pair $((1 + a_1)(1 + a_2), b_1 + b_2)$; that is, multiplication on the left variables and addition on the right variables. From the single-variable results, the marginal distributions of $(1 + a_1)(1 + a_2)$ and $b_1 + b_2$ may be obtained from the marginal distributions of the two-faced pairs $(1 + a_1, b_1)$ and $(1 + a_2, b_2)$ as
\[\frac{1}{z}\eta_{(1 + a_1)(1 + a_2)}(z) = \frac{1}{z}\eta_{1 + a_1}(z)\cdot\frac{1}{z}\eta_{1 + a_2}(z) \qand \eta_{b_1 + b_2}(w) = \eta_{b_1}(w) + \eta_{b_2}(w).\]
The following result provides a formula for the reduced bi-Boolean partial $\eta$-transform.

\begin{thm}\label{T-Transform}
If $(a_1, b_1)$ and $(a_2, b_2)$ are bi-Boolean independent pairs in a non-commutative probability space $(\A, \varphi)$, then 
\[\widetilde{\eta}_{((1 + a_1)(1 + a_2), b_1 + b_2)}(z, w) = \widetilde{\eta}_{(1 + a_1, b_1)}(z, w) + \frac{1}{z}\eta_{1 + a_1}(z)\cdot\widetilde{\eta}_{(1 + a_2, b_2)}(z, w)\]
and 
\[\widetilde{\eta}_{(a_1 + a_2, (1 + b_2)(1 + b_1))}(z, w) = \widetilde{\eta}_{(a_2, 1 + b_2)}(z, w) + \frac{1}{w}\eta_{1 + b_2}(w)\cdot\widetilde{\eta}_{(a_1, 1 + b_1)}(z, w).\]
\end{thm}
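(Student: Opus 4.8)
The plan is to prove the first identity by expanding the relevant B-$(\ell,r)$-cumulants and collecting terms; the second identity then follows from the completely analogous computation in which Lemma~\ref{ProductEntry2} replaces Lemma~\ref{ProductEntry} (the reversed order $(1+b_2)(1+b_1)$ rather than $(1+b_1)(1+b_2)$ being exactly what makes the $b_2b_1$-products of Lemma~\ref{ProductEntry2} usable, and the resulting interchange $1\leftrightarrow 2$ in the statement being forced by the fact that $\prec_\chi$ reads the right variables in reverse). One could alternatively deduce both identities from the c-bi-free picture used in the proof of Theorem~\ref{VanishingEquiv} together with the product formula for c-$(\ell,r)$-cumulants from \cite{GS2016}*{Theorem 4.22}, but the direct argument is shorter to describe.

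Comparing coefficients of $z^m w^n$ with $m,n\geq 1$, the first identity is equivalent to
\[
B_{m,n}\bigl((1+a_1)(1+a_2),b_1+b_2\bigr) = B_{m,n}(1+a_1,b_1) + \sum_{q=1}^{m}B_{m-q+1}(1+a_1)\,B_{q,n}(1+a_2,b_2).
\]
First I would expand the left-hand side by multilinearity, writing $(1+a_1)(1+a_2)=1+a_1+a_2+a_1a_2$ in each of the $m$ left slots and $b_1+b_2$ in each of the $n$ right slots, so that the left-hand side becomes a sum of terms $B_{\chi_{m,n}}(c_1,\dots,c_m,d_1,\dots,d_n)$ with $c_i\in\{1,a_1,a_2,a_1a_2\}$ and $d_j\in\{b_1,b_2\}$. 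Then I would simplify each such term: Proposition~\ref{ScalarEntry} deletes every scalar entry $c_i=1$ in an interior slot and kills the term outright when $c_1=1$, since position $1$ is the $\prec_\chi$-minimum (and, because $n\geq 1$, no left slot is the $\prec_\chi$-maximum). After this reduction I would apply Lemma~\ref{ProductEntry}(1) to split each $a_1a_2$, and use the vanishing of mixed B-$(\ell,r)$-cumulants (Theorem~\ref{VanishingEquiv}) together with Lemma~\ref{ProductEntry}(2) to conclude that a term survives only if it has one of three rigid shapes: (A) all surviving $c_i$ equal $a_1$ and all $d_j$ equal $b_1$; (B) all surviving $c_i$ equal $a_2$ and all $d_j$ equal $b_2$; or (C) the surviving left word is $a_1,\dots,a_1,\,a_1a_2,\,a_2,\dots,a_2$ with all $d_j$ equal $b_2$.

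The final step is to collect the three families. Family (A) contributes $\sum_{k=0}^{m-1}\binom{m-1}{k}B_{k+1,n}(a_1,b_1)=B_{m,n}(1+a_1,b_1)$ by Corollary~\ref{OnePlus}. For a term in family (C) with $u$ surviving copies of $a_1$ before the $a_1a_2$-junction and $t$ surviving copies of $a_2$ after it, Lemma~\ref{ProductEntry}(1) gives the value $B_{u+1}(a_1)\,B_{t+1,n}(a_2,b_2)$; resumming over the positions of the deleted scalars (a nested binomial sum), invoking Corollary~\ref{OnePlus} for the $a_2$-part and its single-variable analogue $B_{k}(1+a_1)=\sum_{i=0}^{k-2}\binom{k-2}{i}B_{i+2}(a_1)$ for $k\geq 2$ (the $\chi$-constant case of Proposition~\ref{ScalarEntry}, cf. \cite{P2009}*{Proposition 3.3}) for the $a_1$-part, turns the junction terms with junction at a position $>1$ into $\sum_{q=1}^{m-1}B_{m-q+1}(1+a_1)B_{q,n}(1+a_2,b_2)$, while family (B) together with the junction terms whose junction is at position $1$ assemble into the boundary term $B_1(1+a_1)B_{m,n}(1+a_2,b_2)$. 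Adding these up yields the displayed identity, and hence the theorem.

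I expect the main obstacle to be the bookkeeping in this last step: correctly identifying the rigid surviving shape of the cumulants (in particular that there is at most one $a_1a_2$-junction, that every left entry before it equals $a_1$ and every left entry after it equals $a_2$, and that the presence of any $a_1a_2$ forces every $d_j=b_2$), and then matching the two nested binomial sums against the Cauchy product $\frac1z\eta_{1+a_1}(z)\cdot\widetilde\eta_{(1+a_2,b_2)}(z,w)$ — especially keeping straight the boundary contribution $q=m$, which only closes up once family (B) is combined with the junction-at-position-$1$ subfamily of (C). Every individual reduction is routine once Proposition~\ref{ScalarEntry}, Lemmata~\ref{ProductEntry} and~\ref{ProductEntry2}, and Corollary~\ref{OnePlus} are in hand.
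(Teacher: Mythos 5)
Your proposal is correct and follows essentially the same route as the paper: expand the coefficient $B_{m,n}(1+a_1+a_2+a_1a_2,\,b_1+b_2)$ by multilinearity, remove scalar entries via Proposition~\ref{ScalarEntry} (equivalently Corollary~\ref{OnePlus}), use Lemma~\ref{ProductEntry} and the vanishing of mixed B-$(\ell,r)$-cumulants to isolate the rigid surviving shapes, and resum with the binomial identity $\sum_{k}\binom{k}{a}\binom{N-k}{b}=\binom{N+1}{a+b+1}$, with the second identity obtained by substituting Lemma~\ref{ProductEntry2}. The only difference is organizational: the paper shows both sides equal the intermediate quantity $B_{m,n}(1+a_1,b_1)+B_{m,n}(1+a_1+a_2+a_1a_2,b_2)$, whereas you match the surviving families directly against the Cauchy product, which is the same computation.
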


\begin{proof}
To begin, note for all $m, n \geq 1$ that the coefficient of $z^mw^n$ in $\widetilde{\eta}_{((1 + a_1)(1 + a_2), b_1 + b_2)}(z, w)$ is given by
\[B_{m, n}(1 + a_1 + a_2 + a_1a_2, b_1 + b_2) = \sum_{k = 0}^{m - 1}\binom{m - 1}{k}B_{k + 1, n}(a_1 + a_2 + a_1a_2, b_1 + b_2).\]
For $k \geq 0$ and $n \geq 1$, we have by Lemma \ref{ProductEntry} and by bi-Boolean independence that
\begin{align*}
&B_{k + 1, n}(a_1 + a_2 + a_1a_2, b_1 + b_2)\\
&= \sum_{c_i \in \{a_1, a_2\}}B_{k + 1, n}(c_1, \dots, c_{k + 1}, b_1 + b_2, \dots, b_1 + b_2)  \\
& \quad + \sum_{\substack{c_i \in \{a_1, a_2, a_1a_2\}\\\text{at least one }c_i = a_1a_2}}B_{k + 1, n}(c_1, \dots, c_{k + 1}, b_1 + b_2, \dots, b_1 + b_2)\\
&= B_{k + 1, n}(a_1, b_1) + B_{k + 1, n}(a_2, b_2) + \sum_{p = 0}^kB_{k + 1, n}(\underbrace{a_1, \dots, a_1}_{p\,\mathrm{times}}, a_1a_2, \underbrace{a_2, \dots, a_2}_{k - p\,\mathrm{times}}, b_1 + b_2, \dots, b_1 + b_2)\\
&= B_{k + 1, n}(a_1, b_1) + B_{k + 1, n}(a_2, b_2) + \sum_{p = 0}^kB_{p + 1}(a_1)B_{k - p + 1, n}(a_2, b_1 + b_2)\\
&= B_{k + 1, n}(a_1, b_1) + B_{k + 1, n}(a_2, b_2) + \sum_{p = 0}^kB_{p + 1}(a_1)B_{k - p + 1, n}(a_2, b_2)\\
&= B_{k + 1, n}(a_1, b_1) + B_{k + 1, n}(a_2, b_2) \sum_{p = 0}^kB_{k + 1, n}(\underbrace{a_1, \dots, a_1}_{p\,\mathrm{times}}, a_1a_2, \underbrace{a_2, \dots, a_2}_{k - p\,\mathrm{times}}, b_1 + b_2, \dots, b_2)\\
&= B_{k + 1, n}(a_1, b_1) + B_{k + 1, n}(a_1 + a_2 + a_1a_2, b_2).
\end{align*}
Hence the coefficient of $z^mw^n$ in $\widetilde{\eta}_{((1 + a_1)(1 + a_2), b_1 + b_2)}(z, w)$ is
\begin{align*}
B_{m, n}(1 + a_1 + a_2 + a_1a_2, b_1 + b_2) &= \sum_{k = 0}^{m - 1}\binom{m - 1}{k}(B_{k + 1, n}(a_1, b_1) + B_{k + 1, n}(a_1 + a_2 + a_1a_2, b_2))\\
&= B_{m, n}(1 + a_1, b_1) + B_{m, n}(1 + a_1 + a_2 + a_1a_2, b_2).
\end{align*}

Now consider $\widetilde{\eta}_{(1 + a_1, b_1)}(z, w) + \frac{1}{z}\eta_{1 + a_1}(z)\cdot\widetilde{\eta}_{(1 + a_2, b_2)}(z, w)$.  Let $\alpha_{m, n}$ denote the coefficient of $z^mw^n$ in $\frac{1}{z}\eta_{1 + a_1}(z)\cdot\widetilde{\eta}_{(1 + a_2, b_2)}(z, w)$.  Notice for all $m, n \geq 1$ that
\begin{align*}
\alpha_{m, n} &= \sum_{k = 0}^{m - 1}B_{k + 1}(1 + a_1)B_{m - k, n}(1 + a_2, b_2)\\
&= B_1(1 + a_1)B_{m, n}(1 + a_2, b_2) + \sum_{k = 1}^{m - 1}B_{k + 1}(1 + a_1)B_{m - k, n}(1 + a_2, b_2)\\
&= (1 + B_1(a_1))B_{m, n}(1 + a_2, b_2) + \sum_{k = 1}^{m - 1}\left(\sum_{p = 0}^{k - 1}\binom{k - 1}{p}B_{p + 2}(a_1)\right)\left(\sum_{q = 0}^{m - k - 1}\binom{m - k - 1}{q}B_{q + 1, n}(a_2, b_2)\right)
\end{align*}
by \cite{P2009}*{Corollary 3.4} and Corollary \ref{OnePlus}. Note the sum on the right-hand side is
\[S := \sum_{\substack{2 \leq s \leq m\\1 \leq t \leq m - 1\\3 \leq s + t \leq m + 1}}\beta_{s, t}B_s(a_1)B_{t, n}(a_2, b_2),\]
where
\[
\beta_{s, t} = \sum_{k = s - 1}^{m - t}\binom{k - 1}{s - 2}\binom{m - k - 1}{t - 1} = \sum_{k = s - 2}^{(m - 2) - (t - 1)}\binom{k}{s - 2}\binom{m - 2 - k}{t - 1} = \binom{m - 1}{s + t - 2}
\]
by the identity
\[\sum_{k = a}^{N - b}\binom{k}{a}\binom{N - k}{b} = \binom{N + 1}{a + b + 1}.\]
Therefore $\alpha_{m, n} = B_{m, n}(1 + a_2, b_2) + B_1(a_1)B_{m, n}(1 + a_2, b_2) + S$.

Notice
\[S = \sum_{k = 1}^{m - 1}\binom{m - 1}{k}\sum_{p = 0}^{k - 1}B_{p + 2}(a_1)B_{k - p, n}(a_2, b_2).\]
Therefore, by Lemma \ref{ProductEntry} and Theorem \ref{VanishingEquiv},
\begin{align*}
S &= \sum_{k = 1}^{m - 1}\binom{m - 1}{k}\sum_{p = 0}^{k - 1}B_{k + 1, n}(a_1, \underbrace{a_1, \dots, a_1}_{p\,\mathrm{times}}, a_1a_2, \underbrace{a_2, \dots, a_2}_{k - p - 1\,\mathrm{times}}, b_2, \dots, b_2)\\
&= \sum_{k = 1}^{m - 1}\binom{m - 1}{k}\sum_{c_i \in \{a_1, a_2, a_1a_2\}}B_{k + 1, n}(a_1, c_1, \dots, c_k, b_2, \dots, b_2)\\
&= \sum_{k = 1}^{m - 1}\binom{m - 1}{k}B_{k + 1, n}(a_1, \underbrace{a_1 + a_2 + a_1a_2, \dots, a_1 + a_2 + a_1a_2}_{k\,\mathrm{times}}, b_2, \dots, b_2)\\
&= B_{m, n}(1 + a_1, \underbrace{1 + a_1 + a_2 + a_1a_2, \dots, 1 + a_1 + a_2 + a_1a_2}_{m - 1\,\mathrm{times}}, b_2, \dots, b_2).
\end{align*}
As
\[B_{m, n}(a_2, \underbrace{1 + a_1 + a_2 + a_1a_2, \dots, 1 + a_1 + a_2 + a_1a_2}_{m - 1\,\mathrm{times}}, b_2, \dots, b_2) = B_{m, n}(1 + a_2, b_2)\]
and
\[B_{m, n}(a_1a_2, \underbrace{1 + a_1 + a_2 + a_1a_2, \dots, 1 + a_1 + a_2 + a_1a_2}_{m - 1\,\mathrm{times}}, b_2, \dots, b_2) = B_1(a_1)B_{m, n}(1 + a_2, b_2),\]
We obtain that $\alpha_{m, n} = B_{m, n}(1 + a_1 + a_2 + a_1a_2, b_2)$.  Hence, as the coefficient of $z^mw^n$ in $\widetilde{\eta}_{(1 + a_1, b_1)}(z, w)$ is $B_{m, n}(1 + a_1, b_1)$, the first result follows.

Note the proof of the second assertion follow by using Lemma \ref{ProductEntry2} instead of Lemma \ref{ProductEntry}.
\end{proof}

Note that if we want to consider multiplication on the right variables instead, then there is a choice of whether the product $(1 + b_1)(1 + b_2)$ or the product $(1 + b_2)(1 + b_1)$ is preferred as it is not clear whether to use the usual multiplication or the opposite multiplication. This choice was irrelevant to the above transforms due to the commutativity of addition and the ability to interchange pairs.  Thus Theorems \ref{S-Transform} and \ref{S-Transform2} will analyze the transforms for these two possibilities.  

\begin{thm}\label{S-Transform}
If $(a_1, b_1)$ and $(a_2, b_2)$ are bi-Boolean independent pairs in a non-commutative probability space $(\A, \varphi)$, then
\[\widetilde{\eta}_{((1 + a_1)(1 + a_2), (1 + b_2)(1 + b_1))}(z, w) = \frac{1}{z}\eta_{1 + a_1}(z)\cdot\widetilde{\eta}_{(1 + a_2, 1 + b_2)}(z, w) + \frac{1}{w}\eta_{1 + b_2}(w)\cdot\widetilde{\eta}_{(1 + a_1, 1 + b_1)}(z, w).\]
\end{thm}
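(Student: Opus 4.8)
The plan is to follow the template of the proof of Theorem \ref{T-Transform}, now with \emph{both} Lemma \ref{ProductEntry} and Lemma \ref{ProductEntry2} in play, since multiplication occurs on both variables. Fix $m, n \geq 1$; the coefficient of $z^m w^n$ in $\widetilde{\eta}_{((1 + a_1)(1 + a_2), (1 + b_2)(1 + b_1))}(z, w)$ is $B_{m, n}((1 + a_1)(1 + a_2), (1 + b_2)(1 + b_1))$. Writing $(1 + a_1)(1 + a_2) = 1 + (a_1 + a_2 + a_1a_2)$ and $(1 + b_2)(1 + b_1) = 1 + (b_1 + b_2 + b_2b_1)$, I would first apply Corollary \ref{OnePlus} together with its evident right-variable analogue (which follows from Proposition \ref{ScalarEntry} just as Corollary \ref{OnePlus} does, a scalar entry forcing vanishing only at the $\prec_\chi$-minimal or $\prec_\chi$-maximal position and being deletable elsewhere) to reduce to
\[
B_{m, n}((1 + a_1)(1 + a_2), (1 + b_2)(1 + b_1)) = \sum_{k = 0}^{m - 1}\sum_{l = 0}^{n - 1}\binom{m - 1}{k}\binom{n - 1}{l}B_{k + 1, l + 1}(a_1 + a_2 + a_1a_2,\ b_1 + b_2 + b_2b_1).
\]

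The crux is to evaluate $B_{k + 1, l + 1}(a_1 + a_2 + a_1a_2, b_1 + b_2 + b_2b_1)$ for $k, l \geq 0$. Expanding by multilinearity produces B-$(\ell, r)$-cumulants with left arguments in $\{a_1, a_2, a_1a_2\}$ and right arguments in $\{b_1, b_2, b_2b_1\}$; I would peel each $a_1a_2$ via Lemma \ref{ProductEntry}, peel each $b_2b_1$ via Lemma \ref{ProductEntry2}, and annihilate any cumulant mixing the two indices by Theorem \ref{VanishingEquiv}. The expected outcome is that a summand survives only when its arguments are: all $a_1$ on the left and all $b_1$ on the right; all $a_2$ on the left and all $b_2$ on the right; a single $a_1a_2$ on the left, preceded by $a_1$'s and followed by $a_2$'s, with all $b_2$ on the right; or a single $b_2b_1$ on the right, preceded by $b_2$'s and followed by $b_1$'s, with all $a_1$ on the left. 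The key point is that no summand carrying a product on both sides survives: peeling the $a_1a_2$ leaves a string of $a_2$'s on the left, and peeling the $b_2b_1$ from what remains produces a factor with $a_2$'s on the left and a $b_1$ on the right, a vanishing mixed cumulant. Using Lemma \ref{ProductEntry}(1) and Lemma \ref{ProductEntry2}(1) to evaluate the four surviving families, this should yield
\[
B_{k + 1, l + 1}(a_1 + a_2 + a_1a_2, b_1 + b_2 + b_2b_1) = B_{k + 1, l + 1}(a_1, b_1) + B_{k + 1, l + 1}(a_2, b_2) + \sum_{p = 0}^{k}B_{p + 1}(a_1)B_{k - p + 1, l + 1}(a_2, b_2) + \sum_{q = 0}^{l}B_{q + 1}(b_2)B_{k + 1, l - q + 1}(a_1, b_1).
\]

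Substituting this back and splitting the quadruple sum into four pieces, the first two resum (via Corollary \ref{OnePlus} once more) to $B_{m, n}(1 + a_1, 1 + b_1)$ and $B_{m, n}(1 + a_2, 1 + b_2)$. For the remaining two I would argue exactly as in the proof of Theorem \ref{T-Transform}: using $B_{k + 1}(1 + a) = \sum_{p = 0}^{k - 1}\binom{k - 1}{p}B_{p + 2}(a)$ for $k \geq 1$ from \cite{P2009}*{Corollary 3.4}, expanding $B_{m - k, n}(1 + a_2, 1 + b_2)$ and $B_{m, n - l}(1 + a_1, 1 + b_1)$ by Corollary \ref{OnePlus}, and invoking the identity $\sum_{k = a}^{N - b}\binom{k}{a}\binom{N - k}{b} = \binom{N + 1}{a + b + 1}$, one checks that $B_{m, n}(1 + a_2, 1 + b_2)$ plus the $\sum_p$ piece equals the coefficient of $z^m w^n$ in $\frac{1}{z}\eta_{1 + a_1}(z)\cdot\widetilde{\eta}_{(1 + a_2, 1 + b_2)}(z, w)$, while $B_{m, n}(1 + a_1, 1 + b_1)$ plus the $\sum_q$ piece equals the coefficient of $z^m w^n$ in $\frac{1}{w}\eta_{1 + b_2}(w)\cdot\widetilde{\eta}_{(1 + a_1, 1 + b_1)}(z, w)$; adding these two coefficient identities proves the theorem. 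The hard part will be the middle step: correctly classifying which of the many multilinear summands survive once products appear on both sides, and in particular establishing the vanishing of every summand that is a product on both the left and the right. The binomial bookkeeping at the end is lengthy but routine, being a symmetrized version of what is already done for Theorem \ref{T-Transform}.
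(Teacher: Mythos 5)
Your proposal is correct and follows essentially the same route as the paper's proof: expand via Corollary \ref{OnePlus} (and its right-hand analogue), evaluate $B_{k+1,\ell+1}(a_1+a_2+a_1a_2,\,b_1+b_2+b_2b_1)$ by peeling products with Lemmata \ref{ProductEntry} and \ref{ProductEntry2} and killing mixed cumulants — in particular the crucial vanishing of every summand carrying a product on both sides, which is exactly the paper's $T_2=0$ step — and then resum with the T-Transform-style binomial bookkeeping to identify the two coefficients on the right-hand side. Your explicit four-family formula for $B_{k+1,\ell+1}$ is just the fully expanded form of the paper's regrouping into $B_{k+1,\ell+1}(a_1,\,b_1+b_2+b_2b_1)+B_{k+1,\ell+1}(a_1+a_2+a_1a_2,\,b_2)$, so the two arguments coincide in substance.
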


\begin{proof}
For $m, n \geq 1$, the coefficient of $z^mw^n$ in $\widetilde{\eta}_{((1 + a_1)(1 + a_2), (1 + b_2)(1 + b_1))}(z, w)$ is given by
\[
B_{m, n}(1 + a_1 + a_2 + a_1a_2, 1 + b_1 + b_2 + b_2b_1) = \sum_{k = 0}^{m - 1}\sum_{\ell = 0}^{n - 1}\binom{m - 1}{k}\binom{n - 1}{\ell}B_{k + 1, \ell + 1}(a_1 + a_2 + a_1a_2, b_1 + b_2 + b_2b_1).\]
For $k, \ell \geq 0$, notice that
\begin{align*}
&B_{k + 1, \ell + 1}(a_1 + a_2 + a_1a_2, b_1 + b_2 + b_2b_1)\\
&= \sum_{c_i \in \{a_1, a_2\}}B_{k + 1, \ell + 1}(c_1, \dots, c_{k + 1}, b_1 + b_2 + b_2b_1, \dots, b_1 + b_2 + b_2b_1)\\
&\quad + \sum_{\substack{c_i \in \{a_1, a_2, a_1a_2\}\\\text{at least one }c_i = a_1a_2}}B_{k + 1, \ell + 1}(c_1, \dots, c_{k + 1}, b_1 + b_2 + b_2b_1, \dots, b_1 + b_2 + b_2b_1)\\
&:= S_1 + S_2.
\end{align*}
Using the vanishing of mixed bi-Boolean cumulants together with Lemmata \ref{ProductEntry} and \ref{ProductEntry2}, it is possible to calculate $S_1$ and $S_2$.  Indeed
\begin{align*}
S_1 &= \sum_{c_i \in \{a_1, a_2\}}\sum_{d_j \in \{b_1, b_2\}}B_{k + 1, \ell + 1}(c_1, \dots, c_{k + 1}, d_1, \dots, d_{\ell + 1})\\
&\quad + \sum_{c_i \in \{a_1, a_2\}}\sum_{\substack{d_j \in \{b_1, b_2, b_2b_1\}\\\text{at least one }d_j = b_2b_1}}B_{k + 1, \ell + 1}(c_1, \dots, c_{k + 1}, d_1, \dots, d_{\ell + 1})\\
&= B_{k + 1, \ell + 1}(a_1, b_1) + B_{k + 1, \ell + 1}(a_2, b_2) + \sum_{c_i \in \{a_1, a_2\}}\sum_{p = 0}^\ell B_{k + 1, \ell + 1}(c_1, \dots, c_{k + 1}, \underbrace{b_2, \dots, b_2}_{p\,\mathrm{times}}, b_2b_1, \underbrace{b_1, \dots, b_1}_{\ell - p\,\mathrm{times}})\\
&= B_{k + 1, \ell + 1}(a_1, b_1) + B_{k + 1, \ell + 1}(a_2, b_2) + \sum_{c_i \in \{a_1, a_2\}}\sum_{p = 0}^\ell B_{k + 1, \ell - p + 1}(c_1, \dots, c_{k + 1}, b_1, \dots, b_1)B_{p + 1}(b_2)\\
&= B_{k + 1, \ell + 1}(a_1, b_1) + B_{k + 1, \ell + 1}(a_2, b_2) + \sum_{p = 0}^\ell B_{k + 1, \ell - p + 1}(a_1, b_1)B_{p + 1}(b_2)\\
&= B_{k + 1, \ell + 1}(a_1, b_1) + B_{k + 1, \ell + 1}(a_2, b_2) + \sum_{p = 0}^\ell B_{k + 1, \ell + 1}(a_1, \dots, a_1, \underbrace{b_2, \dots, b_2}_{p\,\mathrm{times}}, b_2b_1, \underbrace{b_1, \dots, b_1}_{\ell - p\,\mathrm{times}})\\
&= B_{k + 1, \ell + 1}(a_1, b_1) + B_{k + 1, \ell + 1}(a_2, b_2) + \sum_{c_i \in \{a_1, a_2\}}\sum_{\substack{d_j \in \{b_1, b_2, b_2b_1\}\\\text{at least one }d_j = b_2b_1}}B_{k + 1, \ell + 1}(a_1, \dots, a_1, d_1, \dots, d_{\ell + 1}) \\
&= B_{k + 1, \ell + 1}(a_2, b_2) + B_{k + 1, \ell + 1}(a_1, b_1 + b_2 + b_2b_1).
\end{align*}
Furthermore notice
\begin{align*}
S_2 &= \sum_{\substack{c_i \in \{a_1, a_2, a_1a_2\}\\\text{at least one }c_i = a_1a_2}}\sum_{d_j \in \{b_1, b_2\}}B_{k + 1, \ell + 1}(c_1, \dots, c_{k + 1}, d_1, \dots, d_{\ell + 1})\\
&\quad + \sum_{\substack{c_i \in \{a_1, a_2, a_1a_2\}\\\text{at least one }c_i = a_1a_2}}\sum_{\substack{d_j \in \{b_1, b_2, b_2b_1\}\\\text{at least one }d_j = b_2b_1}}B_{k + 1, \ell + 1}(c_1, \dots, c_{k + 1}, d_1, \dots, d_{\ell + 1})\\
&:= T_1 + T_2.
\end{align*}
Further computations yield
\begin{align*}
T_1 &= \sum_{d_j \in \{b_1, b_2\}}\sum_{p = 0}^kB_{k + 1, \ell + 1}(\underbrace{a_1, \dots, a_1}_{p\,\mathrm{times}}, a_1a_2, \underbrace{a_2, \dots, a_2}_{k - p\,\mathrm{times}}, d_1, \dots, d_{\ell + 1})\\
&= \sum_{d_j \in \{b_1, b_2\}}\sum_{p = 0}^kB_{p + 1}(a_1)B_{k - p + 1, \ell + 1}(a_2, \dots, a_2, d_1, \dots, d_{\ell + 1})\\
&= \sum_{p = 0}^kB_{p + 1}(a_1)B_{k - p + 1, \ell + 1}(a_2, b_2)\\
&= \sum_{d_j \in \{b_1, b_2\}}\sum_{p = 0}^kB_{k + 1, \ell + 1}(\underbrace{a_1, \dots, a_1}_{p\,\mathrm{times}}, a_1a_2, \underbrace{a_2, \dots, a_2}_{k - p\,\mathrm{times}}, b_2, \dots, b_2)\\
&= B_{k + 1, \ell + 1}(a_1 + a_2 + a_1a_2, b_2) - B_{k + 1, \ell + 1}(a_2, b_2),
\end{align*}
and
\begin{align*}
T_2 &= \sum_{\substack{d_j \in \{b_1, b_2, b_2b_1\}\\\text{at least one }d_j = b_2b_1}}\sum_{p = 0}^kB_{k + 1, \ell + 1}(\underbrace{a_1, \dots, a_1}_{p\,\mathrm{times}}, a_1a_2, \underbrace{a_2, \dots, a_2}_{k - p\,\mathrm{times}}, d_1, \dots, d_{\ell + 1})\\
&= \sum_{p = 0}^k\sum_{q = 0}^\ell B_{k + 1, \ell + 1}(\underbrace{a_1, \dots, a_1}_{p\,\mathrm{times}}, a_1a_2, \underbrace{a_2, \dots, a_2}_{k - p\,\mathrm{times}}, \underbrace{b_2, \dots, b_2}_{q\,\mathrm{times}}, b_2b_1, \underbrace{b_1, \dots, b_1}_{\ell - q\,\mathrm{times}})\\
&= \sum_{p = 0}^k\sum_{q = 0}^\ell B_{p + 1}(a_1)B_{k - p + 1, \ell - q + 1}(a_2, b_1)B_{q + 1}(b_2)\\
&= 0.
\end{align*}
Therefore, we have that
\begin{align*}
&B_{k + 1, \ell + 1}(a_1 + a_2 + a_1a_2, b_1 + b_2 + b_2b_1)\\
&= S_1 + T_1 + T_2\\
&= B_{k + 1, \ell + 1}(a_2, b_2) + B_{k + 1, \ell + 1}(a_1, b_1 + b_2 + b_2b_1) + B_{k + 1, \ell + 1}(a_1 + a_2 + a_1a_2, b_2) - B_{k + 1, \ell + 1}(a_2, b_2)\\
&= B_{k + 1, \ell + 1}(a_1, b_1 + b_2 + b_2b_1) + B_{k + 1, \ell + 1}(a_1 + a_2 + a_1a_2, b_2).
\end{align*}
Hence
\begin{align*}
&B_{m, n}(1 + a_1 + a_2 + a_1a_2, 1 + b_1 + b_2 + b_2b_1)\\
&= \sum_{k = 0}^{m - 1}\sum_{\ell = 0}^{n - 1}\binom{m - 1}{k}\binom{n - 1}{\ell}(B_{k + 1, \ell + 1}(a_1, b_1 + b_2 + b_2b_1) + B_{k + 1, \ell + 1}(a_1 + a_2 + a_1a_2, b_2))\\
&= B_{m, n}(1 + a_1, 1 + b_1 + b_2 + b_2b_1) + B_{m, n}(1 + a_1 + a_2 + a_1a_2, 1 + b_2).
\end{align*}

Next, for $m, n \geq 1$, let $\alpha_{m, n}$ denote the coefficient of $z^mw^n$ in $\frac{1}{z}\eta_{1 + a_1}(z)\cdot\widetilde{\eta}_{(1 + a_2, 1 + b_2)}(z, w)$. Then $\alpha_{m, n}$ is given by
\begin{align*}
\alpha_{m, n} &= \sum_{k = 0}^{m - 1}B_{k + 1}(1 + a_1)B_{m - k, n}(1 + a_2, 1 + b_2).
\end{align*}
By an identical proof to that used in Theorem \ref{T-Transform} (where, in the proof one replaces $b_2$ with $1 + b_2)$, one obtains that
\[\alpha_{m, n} = B_{m, n}(1 + a_1 + a_2 + a_1a_2, 1 + b_2).\]
Similarly the coefficient of $z^mw^n$ in $\frac{1}{w}\eta_{1 + b_2}(w)\cdot\widetilde{\eta}_{(1 + a_1, 1 + b_1)}(z, w)$ is equal to $B_{m, n}(1 + a_1, 1 + b_1 + b_2 + b_2b_1)$ for all $m, n \geq 1$. Hence the result follows.
\end{proof}

\begin{thm}\label{S-Transform2}
If $(a_1, b_1)$ and $(a_2, b_2)$ are bi-Boolean independent pairs in a non-commutative probability space $(\A, \varphi)$, then 
\[\widetilde{\eta}_{((1 + a_1)(1 + a_2), (1 + b_1)(1 + b_2))}(z, w) = \widetilde{\eta}_{(1 + a_1, 1 + b_1)}(z, w) + \frac{1}{zw}\eta_{1 + a_1}(z)\cdot\eta_{1 + b_1}(w)\cdot\widetilde{\eta}_{(1 + a_2, 1 + b_2)}(z, w).\]
\end{thm}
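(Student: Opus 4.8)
The plan is to compare the coefficients of $z^mw^n$ on the two sides, following the strategy of the proofs of Theorems~\ref{T-Transform} and \ref{S-Transform}.

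First I would extract coefficients. Writing $(1+a_1)(1+a_2)=1+(a_1+a_2+a_1a_2)$ and $(1+b_1)(1+b_2)=1+(b_1+b_2+b_1b_2)$ and applying Corollary~\ref{OnePlus} in both variables, the coefficient of $z^mw^n$ in $\widetilde{\eta}_{((1+a_1)(1+a_2),(1+b_1)(1+b_2))}$ is
\[
\sum_{k=0}^{m-1}\sum_{\ell=0}^{n-1}\binom{m-1}{k}\binom{n-1}{\ell}B_{k+1,\ell+1}(a_1+a_2+a_1a_2,\ b_1+b_2+b_1b_2).
\]
On the other side, the coefficient of $z^mw^n$ in $\widetilde{\eta}_{(1+a_1,1+b_1)}$ is $B_{m,n}(1+a_1,1+b_1)$, and reading off the Cauchy product shows that the coefficient of $z^mw^n$ in $\frac{1}{zw}\eta_{1+a_1}(z)\,\eta_{1+b_1}(w)\,\widetilde{\eta}_{(1+a_2,1+b_2)}(z,w)$ equals
\[
\alpha_{m,n}:=\sum_{i=0}^{m-1}\sum_{j=0}^{n-1}B_{i+1}(1+a_1)\,B_{j+1}(1+b_1)\,B_{m-i,n-j}(1+a_2,1+b_2).
\]
Thus it suffices to prove $B_{m,n}((1+a_1)(1+a_2),(1+b_1)(1+b_2))=B_{m,n}(1+a_1,1+b_1)+\alpha_{m,n}$ for all $m,n\geq 1$.

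The heart of the argument is a local identity for $B_{k+1,\ell+1}(a_1+a_2+a_1a_2,\ b_1+b_2+b_1b_2)$. Expanding by multilinearity, I would sort the resulting terms according to whether the product $a_1a_2$ appears among the left entries and whether $b_1b_2$ appears among the right entries. The occurrences of $a_1a_2$ are removed by Lemma~\ref{ProductEntry}, and the occurrences of $b_1b_2$ by Lemma~\ref{ProductEntry2} \emph{with the roles of the two pairs interchanged}---this is legitimate because the hypothesis of bi-Boolean independence is symmetric in the two pairs, and in this form Lemma~\ref{ProductEntry2} peels a factor $B_{\bullet}(\dots,b_1)$ off a $b_1b_2$ while leaving a $b_2$ in place. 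Together with the vanishing of mixed B-$(\ell,r)$-cumulants (Theorem~\ref{VanishingEquiv}), the bookkeeping runs parallel to that in the proof of Theorem~\ref{S-Transform}, with one essential difference: the group of terms carrying \emph{both} an $a_1a_2$ among the left entries and a $b_1b_2$ among the right entries---which vanished in Theorem~\ref{S-Transform}---now survives, since after peeling off $B_{\bullet}(a_1)$ on the left and $B_{\bullet}(b_1)$ on the right one is left with a factor $B_{\bullet,\bullet}(a_2,\dots,b_2,\dots)$ that is \emph{not} a mixed cumulant. The outcome should be a two-dimensional convolution identity of the shape
\[
B_{k+1,\ell+1}(a_1+a_2+a_1a_2,\ b_1+b_2+b_1b_2)=B_{k+1,\ell+1}(a_1,b_1)+\sum_{s=0}^{k}\sum_{t=0}^{\ell}\gamma^{a}_{s}\,\gamma^{b}_{t}\,B_{k-s+1,\ell-t+1}(a_2,b_2),
\]
where $\gamma^{a}_{0}=1+B_1(a_1)$ and $\gamma^{a}_{s}=B_{s+1}(a_1)$ for $s\geq 1$ (and $\gamma^{b}$ analogously); the stray $+1$ in $\gamma^{a}_{0}$ arises from the pure-$a_2$ term combining with the $s=0$ part of the doubly-mixed group.

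The last step is the double binomial summation. Substituting the local identity into the first display, the $B_{k+1,\ell+1}(a_1,b_1)$ term sums, by Corollary~\ref{OnePlus}, to $B_{m,n}(1+a_1,1+b_1)$. For the remaining term I would expand $B_{m-i,n-j}(1+a_2,1+b_2)$ on the $\alpha_{m,n}$ side via Corollary~\ref{OnePlus} and each $B_{i+1}(1+a_1)$, $B_{j+1}(1+b_1)$ via the single-variable formula of \cite{P2009}*{Corollary 3.4}; since the $z$- and $w$-summations then decouple, the required equality reduces in each variable to the combinatorial identity $\sum_{k=a}^{N-b}\binom{k}{a}\binom{N-k}{b}=\binom{N+1}{a+b+1}$ already invoked in the proof of Theorem~\ref{T-Transform}. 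Comparing coefficients then yields the stated formula. I expect the main obstacle to be organizing the local identity correctly---in particular recognizing that the doubly-mixed group of terms no longer vanishes and producing the precise coefficients $\gamma^{a}_{s},\gamma^{b}_{t}$---together with the heavier, now genuinely two-dimensional, binomial bookkeeping in the final step.
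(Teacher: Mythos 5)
Your proposal is correct, and I verified its key step: the ``local identity''
\[
B_{k+1,\ell+1}(a_1+a_2+a_1a_2,\,b_1+b_2+b_1b_2)=B_{k+1,\ell+1}(a_1,b_1)+\sum_{s=0}^{k}\sum_{t=0}^{\ell}\gamma^{a}_{s}\gamma^{b}_{t}B_{k-s+1,\ell-t+1}(a_2,b_2)
\]
with $\gamma^{a}_{0}=1+B_1(a_1)$, $\gamma^{a}_{s}=B_{s+1}(a_1)$ for $s\geq 1$ (and analogously for $\gamma^{b}$) does hold: the four groups of terms (no products; only $a_1a_2$; only $b_1b_2$; both) contribute $B_{k+1,\ell+1}(a_1,b_1)+B_{k+1,\ell+1}(a_2,b_2)$, $\sum_{p}B_{p+1}(a_1)B_{k-p+1,\ell+1}(a_2,b_2)$, $\sum_{q}B_{q+1}(b_1)B_{k+1,\ell-q+1}(a_2,b_2)$, and $\sum_{p,q}B_{p+1}(a_1)B_{q+1}(b_1)B_{k-p+1,\ell-q+1}(a_2,b_2)$ respectively, which assemble into exactly the stated $\gamma$-coefficients; the relabelled form of Lemma \ref{ProductEntry2} you invoke (swapping the two pairs, so that a $b_1b_2$ splits off a factor $B_{\bullet}(\dots,b_1)$ leaving $b_2$ in place) is legitimate since bi-Boolean independence is symmetric in the index set, and the decoupled binomial resummation at the end reduces, exactly as you say, to the Vandermonde identity already used in Theorem \ref{T-Transform}. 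Your route differs from the paper's in organization rather than in tools: the paper works from the right-hand side, reusing the coefficient formula established inside the proof of Theorem \ref{S-Transform} (that $\frac{1}{z}\eta_{1+a_1}(z)\widetilde{\eta}_{(1+a_2,1+b_2)}(z,w)$ has coefficients $B_{m,n}(1+a_1+a_2+a_1a_2,1+b_2)$), multiplying by $\frac{1}{w}\eta_{1+b_1}(w)$, and then untangling the result through the auxiliary sums $S$, $T_1$, $T_2$, $T_3$, $T_4$; you instead expand the left-hand joint cumulant directly into a self-contained two-dimensional convolution identity and then match coefficients against the Cauchy product. Your version has the advantage of making explicit where the surviving ``doubly-mixed'' contribution comes from (which in the paper is buried in the $T_3$, $T_4$ bookkeeping), at the cost of redoing some case analysis that the paper imports from Theorem \ref{S-Transform}; both arguments rest on Lemmata \ref{ProductEntry} and \ref{ProductEntry2}, Corollary \ref{OnePlus}, Theorem \ref{VanishingEquiv}, and the same binomial identity.
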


\begin{proof}
Let $\alpha_{m,n}$ denote the coefficient of $z^mw^n$ in $\frac{1}{z}\eta_{1 + a_1}(z)\cdot\widetilde{\eta}_{(1 + a_2, 1 + b_2)}(z, w)\cdot\frac{1}{w}\eta_{1 + b_1}(w)$.
By the proof of Theorem \ref{S-Transform}, for $m, n \geq 1$ the coefficient of $z^mw^n$ in $\frac{1}{z}\eta_{1 + a_1}(z)\cdot\widetilde{\eta}_{(1 + a_2, 1 + b_2)}(z, w)$ is $B_{m, n}(1 + a_1 + a_2 + a_1a_2, 1 + b_2)$. Hence
\[\sum_{m, n \geq 1}\alpha_{m, n}z^mw^n = \left(\sum_{m, n \geq 1}B_{m, n}(1 + a_1 + a_2 + a_1a_2, 1 + b_2)z^mw^n\right)\frac{1}{w}\eta_{1 + b_1}(w).\]
For $m, n \geq 1$, notice
\begin{align*}
\alpha_{m, n} &= \sum_{k = 0}^{n - 1}B_{m, n - k}(1 + a_1 + a_2 + a_1a_2, 1 + b_2)B_{k + 1}(1 + b_1)\\
&= B_{m, n}(1 + a_1 + a_2 + a_1a_2, 1 + b_2)(1 + B_1(b_1))\\
&\quad + \sum_{k = 1}^{n - 1}\left(\sum_{p = 0}^{k - 1}\binom{k - 1}{p}B_{p + 2}(b_1)\right)\left(\sum_{q = 0}^{n - k - 1}\binom{n - k - 1}{q}B_{m, q + 1}(1 + a_1 + a_2 + a_1a_2, b_2)\right).
\end{align*}
By similar arguments to those used in the proof of Theorem \ref{T-Transform}, the above sum can be written as
\[S := \sum_{k = 1}^{n - 1}\binom{n - 1}{k}\sum_{p = 0}^{k - 1}B_{p + 2}(b_1)B_{m, k - p}(1 + a_1 + a_2 + a_1a_2, b_2).\]
Thus by, Lemma \ref{ProductEntry2}, we obtain that
\begin{align*}
S &= \sum_{k = 1}^{n - 1}\binom{n - 1}{k}\sum_{p = 0}^{k - 1}B_{m, k + 1}(1 + a_1 + a_2 + a_1a_2, \dots, 1 + a_1 + a_2 + a_1a_2, b_1, \underbrace{b_1, \dots, b_1}_{p\,\mathrm{times}}, b_1b_2, \underbrace{b_2, \dots, b_2}_{k - p - 1\,\mathrm{times}})\\
&= \sum_{k = 1}^n\binom{n - 1}{k}\sum_{d_j \in \{b_1, b_2, b_1b_2\}}B_{m, k + 1}(1 + a_1 + a_2 + a_1a_2, \dots, 1 + a_1 + a_2 + a_1a_2, b_1, d_1, \dots, d_k)\\
&\quad - \sum_{k = 1}^n\binom{n - 1}{k}\sum_{d_j \in \{b_1, b_2\}}B_{m, k + 1}(1 + a_1 + a_2 + a_1a_2, \dots, 1 + a_1 + a_2 + a_1a_2, b_1, d_1, \dots, d_k)\\
&:= T_1 - T_2.
\end{align*}
However 
\begin{align*}
T_1 &= \sum_{k = 1}^{n - 1}\binom{n - 1}{k}B_{m, k + 1}(1 + a_1 + a_2 + a_1a_2, \dots, 1 + a_1 + a_2 + a_1a_2, b_1, \underbrace{b_1 + b_2 + b_1b_2, \dots, b_1 + b_2 + b_1b_2}_{k\,\mathrm{times}})\\
&= B_{m, n}(1 + a_1 + a_2 + a_1a_2, \dots, 1 + a_1 + a_2 + a_1a_2, 1 + b_1, \underbrace{1 + b_1 + b_2 + b_1b_2, \dots, 1 + b_1 + b_2 + b_1b_2}_{n - 1\,\mathrm{times}})\\
&\quad - B_{m, 1}(1 + a_1 + a_2 + a_1a_2, b_1),
\end{align*}
and, since for $\ell \geq 0$ and $k \geq 1$, we have that
\begin{align*}
&\sum_{d_j \in \{b_1, b_2\}}B_{\ell + 1, k + 1}(a_1 + a_2 + a_1a_2, \dots, a_1 + a_2 + a_1a_2, b_1, d_1, \dots, d_k)\\
&= \sum_{d_j \in \{b_1, b_2\}}\sum_{c_i \in \{a_1, a_2\}}B_{\ell + 1, k + 1}(c_1, \dots, c_{\ell + 1}, b_1, d_1, \dots, d_k)\\
&\quad + \sum_{d_j \in \{b_1, b_2\}}\sum_{\substack{c_i \in \{a_1, a_2, a_1a_2\}\\\text{at least one }c_i = a_1a_2}}B_{\ell + 1, k + 1}(c_1, \dots, c_{\ell + 1}, b_1, d_1, \dots, d_k)\\
&= B_{\ell + 1, k + 1}(a_1, b_1) + \sum_{d_j \in \{b_1, b_2\}}\sum_{p = 0}^\ell B_{\ell + 1, k + 1}(\underbrace{a_1, \dots, a_1}_{p\,\mathrm{times}}, a_1a_2, \underbrace{a_2, \dots, a_2}_{\ell - p\,\mathrm{times}}, b_1, d_1, \dots, d_k)\\
&= B_{\ell + 1, k + 1}(a_1, b_1) + \sum_{d_j \in \{b_1, b_2\}}\sum_{p = 0}^\ell B_{p + 1}(a_1)B_{\ell - p + 1, k + 1}(a_2, \dots, a_2, b_1, d_1, \dots, d_k)\\
&= B_{\ell + 1, k + 1}(a_1, b_1),
\end{align*}
we obtain that
\begin{align*}
T_2 &= \sum_{\ell = 0}^{m - 1}\sum_{k = 1}^{n - 1}\binom{m - 1}{\ell}\binom{n - 1}{k}\sum_{d_j \in \{b_1, b_2\}}B_{\ell + 1, k + 1}(a_1 + a_2 + a_1a_2, \dots, a_1 + a_2 + a_1a_2, b_1, d_1, \dots, d_k) \\
&= \sum_{\ell = 0}^{m - 1}\sum_{k = 1}^{n - 1}\binom{m - 1}{\ell}\binom{n - 1}{k} B_{\ell + 1, k + 1}(a_1, b_1) \\
&= B_{m, n}(1 + a_1, 1 + b_1) - B_{m, 1}(1 + a_1, b_1).
\end{align*}
Hence, as $B_{m, 1}(1 + a_1 + a_1 + a_1a_2, b_1) = B_{m, 1}(1 + a_1, b_1)$,  
\begin{align*}
S &= B_{m, n}(1 + a_1 + a_2 + a_1a_2, \dots, 1 + a_1 + a_2 + a_1a_2, 1 + b_1, \underbrace{1 + b_1 + b_2 + b_1b_2, \dots, 1 + b_1 + b_2 + b_1b_2}_{n - 1\,\mathrm{times}})\\
&\quad - B_{m, n}(1 + a_1, 1 + b_1) \\
&= B_{m, n}(1 + a_1 + a_2 + a_1a_2, 1 + b_1 + b_2 + b_1b_2) - B_{m, n}(1 + a_1, 1 + b_1)\\
&\quad -B_{m, n}(1 + a_1 + a_2 + a_1a_2, \dots, 1 + a_1 + a_2 + a_1a_2, b_2, \underbrace{1 + b_1 + b_2 + b_1b_2, \dots, 1 + b_1 + b_2 + b_1b_2}_{n - 1\,\mathrm{times}})\\
&\quad -B_{m, n}(1 + a_1 + a_2 + a_1a_2, \dots, 1 + a_1 + a_2 + a_1a_2, b_1b_2, \underbrace{1 + b_1 + b_2 + b_1b_2, \dots, 1 + b_1 + b_2 + b_1b_2}_{n - 1\,\mathrm{times}})\\
&:= B_{m, n}(1 + a_1 + a_2 + a_1a_2, 1 + b_1 + b_2 + b_1b_2) - B_{m, n}(1 + a_1, 1 + b_1) - T_3 - T_4.
\end{align*}
By similar calculations as above, it can be verified that
\[T_3 = B_{m, n}(1 + a_1 + a_2 + a_1a_2, 1 + b_2) \qand T_4 = B_1(b_1)B_{m, n}(1 + a_1 + a_2 + a_1a_2, 1 + b_2).\]
Therefore, we obtain that
\[\alpha_{m, n} = B_{m, n}(1 + a_1 + a_2 + a_1a_2, 1 + b_1 + b_2 + b_1b_2) - B_{m, n}(1 + a_1, 1 + b_1)\]
for all $m, n \geq 1$, from which the assertion follows.
\end{proof}

\section{Additive bi-Boolean limit theorems and infinite divisibility}\label{sec:limitthms}

In this section, we study limit theorems and infinite divisibility with respect to the additive bi-Boolean convolution. Essentially all of the results below either can be directly obtained using the techniques developed in \cite{HW2016} for the bi-free case or immediately follow from the results in \cite{GS2016}*{Section 6} using the fact that the additive bi-Boolean convolution is in fact a special case of the additive c-bi-free convolution. Consequently, we shall often omit the details and only present the statements with some remarks.

\subsection{Combinatorial aspects}

Recall from \cite{V2014}*{Definition 2.5} that if $\widehat{a} = ((a_i)_{i \in I}, (a_j)_{j \in J})$ is a two-faced family in a non-commutative probability space $(\A, \varphi)$ then the \textit{joint distribution} $\mu_{\widehat{a}}$ of $\widehat{a}$ is by definition the unital linear functional
\[\mu_{\widehat{a}} := \varphi \circ \tau : \bC\langle Z_k : k \in I \sqcup J\rangle \to \bC\]
where $\tau: \bC\langle Z_k : k \in I \sqcup J\rangle \to \A$ is the unital homomorphism such that $\tau(Z_k) = a_k$ for $k \in I \sqcup J$. Given two bi-Boolean independent two-faced families $\widehat{a} = ((a_i)_{i \in I}, (a_j)_{j \in J})$ and $\widehat{b} = ((b_i)_{i \in I}, (b_j)_{j \in J})$ in $(\A, \varphi)$ with joint distributions $\mu_{\widehat{a}}$ and $\mu_{\widehat{b}}$ respectively, the \textit{additive bi-Boolean convolution} of $\mu_{\widehat{a}}$ and $\mu_{\widehat{b}}$, denoted $\mu_{\widehat{a}} \uplus\uplus \mu_{\widehat{b}}$ is defined to be the joint distribution of the two-faced family
\[\widehat{a} + \widehat{b} := ((a_i + b_i)_{i \in I}, (a_j + b_j)_{j \in J}).\]

A two-faced family $\widehat{a} =((a_i)_{i \in I}, (a_j)_{j \in J})$ in a non-commutative probability space $(\A, \varphi)$ has a \textit{bi-Boolean central limit distribution} (or \textit{centred bi-Boolean Gaussian distribution}) with covariance matrix $C$ if there exists a complex matrix $C = (C_{k, \ell})_{k, \ell \in I \sqcup J}$ such that
\begin{enumerate}[$\qquad(1)$]
\item $\varphi(a_{\alpha(1)}a_{\alpha(2)}) = C_{\alpha(1), \alpha(2)}$ for all $\alpha: \{1, 2\} \to I \sqcup J$,

\item $B_{\chi_\alpha}(a_{\alpha(1)}, \dots, a_{\alpha(n)}) = 0$ for all $n \neq 2$ and $\alpha: \{1, \dots, n\} \to I \sqcup J$, where $\chi_\alpha: \{1, \dots, n\} \to \{\ell, r\}$ such that $\chi_\alpha(k) = \ell$ if $\alpha(k) \in I$ and $\chi_\alpha(k) = r$ if $\alpha(k) \in J$ for $1 \leq k \leq n$.
\end{enumerate}
The algebraic bi-Boolean central limit theorem immediately follows from \cite{GS2016}*{Theorem 6.2} with the given sequence being bi-Boolean independent and the limiting distribution being a centred bi-Boolean Gaussian distribution. Similarly, it is easy to see that a Kac/Loeve type theorem also holds in the bi-Boolean setting (see \cite{S2016-2}*{Theorem 3.2} for the bi-free version) and a general bi-Boolean limit theorem can be deduced from \cite{GS2016}*{Theorem 6.5}, which we record as follows without proof.

\begin{thm}
For every $N \in \bN$, let $\{\widehat{a}_{N, m} = ((a_{N, m, i})_{i \in I}, (a_{N, m, j})_{j \in J})\}_{m = 1}^N$ be a sequence of bi-Boolean independent, identically distributed two-faced families in a non-commutative probability space $(\A_N, \varphi_N)$. Furthermore, let $\widehat{S}_N = ((S_{N, i})_{i \in I}, (S_{N, j})_{j \in J})$ be the two-faced family in $(\A_N, \varphi_N)$ defined by
\[S_{N, k} = \sum_{m = 1}^Na_{N, m, k}, \quad k \in I \sqcup J.\]
The following assertions are equivalent.
\begin{enumerate}[$\qquad(1)$]
\item There exists a two-faced family $\widehat{s} = ((s_i)_{i \in I}, (s_j)_{j \in J})$ in a non-commutative probability space $(\A, \varphi)$ such that $\widehat{S}_N$ converges in distribution to $\widehat{s}$ as $N \to \infty$.

\item For all $n \geq 1$ and $\alpha: \{1, \dots, n\} \to I \sqcup J$, the limits
\[\lim_{N \to \infty}N\cdot\varphi_N(a_{N, m, \alpha(1)}\cdots a_{N, m, \alpha(n)})\]
exist and are independent of $m$.
\end{enumerate}
Moreover, if these assertions hold, then the B-$(\ell, r)$-cumulants of $\widehat{s}$ are given by
\[B_{\chi_\alpha}(s_{\alpha(1)}, \dots, s_{\alpha(n)}) = \lim_{N \to \infty}N\cdot\varphi_N(a_{N, m, \alpha(1)}\cdots a_{N, m, \alpha(n)})\]
for all $n \geq 1$ and $\alpha: \{1, \dots, n\} \to I \sqcup J$.
\end{thm}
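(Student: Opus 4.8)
The plan is to reduce the theorem to a single structural fact---that the B-$(\ell,r)$-cumulants linearize the additive bi-Boolean convolution---and then to run a finite moment--cumulant bookkeeping over the lattices $\B\I(\chi)$. First I would record that if $\widehat{a}$ and $\widehat{b}$ are bi-Boolean independent two-faced families in a non-commutative probability space $(\A,\varphi)$, then
\[
B_{\chi_\alpha}(a_{\alpha(1)}+b_{\alpha(1)},\dots,a_{\alpha(n)}+b_{\alpha(n)}) = B_{\chi_\alpha}(a_{\alpha(1)},\dots,a_{\alpha(n)}) + B_{\chi_\alpha}(b_{\alpha(1)},\dots,b_{\alpha(n)}),
\]
which is immediate from the multilinearity of the B-$(\ell,r)$-cumulants together with the vanishing of mixed B-$(\ell,r)$-cumulants from Theorem~\ref{VanishingEquiv}: expanding the left-hand side by multilinearity into $2^n$ terms, every term other than the pure-$a$ and the pure-$b$ one has a non-constant index map and hence vanishes. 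Iterating over the $N$ summands gives $B^{(N)}_{\chi_\alpha}(S_{N,\alpha(1)},\dots,S_{N,\alpha(n)}) = N\cdot b^{(N)}_\alpha$, where $b^{(N)}_\alpha := B^{(N)}_{\chi_\alpha}(a_{N,m,\alpha(1)},\dots,a_{N,m,\alpha(n)})$ is a single-summand B-$(\ell,r)$-cumulant with respect to $\varphi_N$ (independent of $m$ by identical distribution, $b^{(N)}_\beta$ defined likewise for any sequence $\beta$). Therefore, by the bi-Boolean moment--cumulant formula \eqref{BB-M-C} applied in $(\A_N,\varphi_N)$,
\[
\varphi_N(S_{N,\alpha(1)}\cdots S_{N,\alpha(n)}) = \sum_{\pi \in \B\I(\chi_\alpha)} N^{|\pi|}\prod_{V \in \pi} b^{(N)}_{\alpha|_V},
\]
where $\alpha|_V$ denotes the restriction of $\alpha$ to the block $V$; applied to a single summand the same formula reads $\varphi_N(a_{N,m,\alpha(1)}\cdots a_{N,m,\alpha(n)}) = \sum_{\sigma \in \B\I(\chi_\alpha)}\prod_{W\in\sigma} b^{(N)}_{\alpha|_W}$.

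For the implication $(2)\Rightarrow(1)$ and the cumulant formula, assume the limits $c_\alpha := \lim_{N\to\infty} N\,\varphi_N(a_{N,m,\alpha(1)}\cdots a_{N,m,\alpha(n)})$ exist for every $n$ and $\alpha$ and are independent of $m$. Möbius inversion on $\B\I(\chi_\alpha)$ gives $b^{(N)}_\alpha = \sum_{\sigma\in\B\I(\chi_\alpha)}\mu_{\B\I}(\sigma,1_{\chi_\alpha})\prod_{W\in\sigma}\varphi_N(a_{N,m,\cdot}|_W)$; multiplying by $N$, the term $\sigma = 1_{\chi_\alpha}$ tends to $c_\alpha$ while every $\sigma$ with $|\sigma|\ge 2$ contributes $\mu_{\B\I}(\sigma,1_{\chi_\alpha})\,N^{1-|\sigma|}\prod_{W}\big(N\varphi_N(a_{N,m,\cdot}|_W)\big)\to 0$, so $N b^{(N)}_\alpha \to c_\alpha$ and in particular $b^{(N)}_\alpha = O(1/N)$. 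Feeding this into the master moment formula, the $\pi = 1_{\chi_\alpha}$ term tends to $c_\alpha$ and every term with $|\pi|\ge 2$ equals $\prod_{V\in\pi}\big(N b^{(N)}_{\alpha|_V}\big) \to \prod_{V\in\pi} c_{\alpha|_V}$, whence
\[
\lim_{N\to\infty}\varphi_N(S_{N,\alpha(1)}\cdots S_{N,\alpha(n)}) = \sum_{\pi\in\B\I(\chi_\alpha)}\prod_{V\in\pi} c_{\alpha|_V}.
\]
Defining $\varphi$ on $\bC\langle Z_k : k\in I\sqcup J\rangle$ by assigning the right-hand side above (with $Z_{\alpha(j)}$ in place of $a_{N,m,\alpha(j)}$) to the monomial $Z_{\alpha(1)}\cdots Z_{\alpha(n)}$ and $\varphi(1)=1$, one gets a non-commutative probability space in which $\widehat{s} := ((Z_i)_{i\in I},(Z_j)_{j\in J})$ satisfies $\widehat{S}_N\to\widehat{s}$ in distribution and, by uniqueness of the B-$(\ell,r)$-cumulants, $B_{\chi_\alpha}(s_{\alpha(1)},\dots,s_{\alpha(n)}) = c_\alpha$, which is the asserted formula.

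For the implication $(1)\Rightarrow(2)$, assume $\widehat{S}_N\to\widehat{s}$ in distribution; independence of $m$ is automatic from identical distribution, so it remains to prove convergence of $N\,\varphi_N(a_{N,1,\alpha(1)}\cdots a_{N,1,\alpha(n)})$, which I would prove jointly with convergence of $N b^{(N)}_\alpha$ by induction on $n$. When $n=1$ both quantities equal $\varphi_N(S_{N,\alpha(1)})$, which converges by hypothesis. For the inductive step, the master moment formula yields $\varphi_N(S_{N,\alpha(1)}\cdots S_{N,\alpha(n)}) = N b^{(N)}_\alpha + \sum_{\pi\ne 1_{\chi_\alpha}}\prod_{V\in\pi}\big(N b^{(N)}_{\alpha|_V}\big)$; the left-hand side converges by hypothesis and every summand on the right converges by the inductive hypothesis (each block $V$ has $|V|<n$), so $N b^{(N)}_\alpha$ converges and hence $b^{(N)}_\alpha = O(1/N)$. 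Then from the single-summand formula, $N\varphi_N(a_{N,1,\alpha(1)}\cdots) = N b^{(N)}_\alpha + N\sum_{\sigma\ne 1_{\chi_\alpha}}\prod_{W\in\sigma} b^{(N)}_{\alpha|_W}$, where each $b^{(N)}_{\alpha|_W} = O(1/N)$ by the inductive hypothesis, so the last summand on the right tends to $0$ and $N\varphi_N(a_{N,1,\alpha(1)}\cdots)$ converges, completing the induction and matching the formula obtained above.

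The one genuinely non-formal ingredient is the additivity of the B-$(\ell,r)$-cumulants, and this is handed to us by Theorem~\ref{VanishingEquiv}; beyond that the argument is entirely a finite, lattice-indexed moment--cumulant bookkeeping, and the point that needs care is keeping the two inductions---on the moments of $\widehat{S}_N$ and on the single-summand cumulants at a given order---synchronized, so that the $O(1/N)$ bounds on lower-order cumulants are available exactly when the moment sums require them.
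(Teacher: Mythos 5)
Your proof is correct, but it is genuinely more than what the paper does: the paper records this theorem \emph{without proof}, remarking only that it ``can be deduced from \cite{GS2016}*{Theorem 6.5}'' --- i.e.\ it specializes the general additive c-bi-free limit theorem, using the fact (Proposition \ref{BiBandCBF} / Theorem \ref{VanishingEquiv}) that bi-Boolean independence is the c-bi-free case in which the second state vanishes on the algebras. You instead give a self-contained combinatorial argument inside this paper's own toolkit: additivity of the B-$(\ell,r)$-cumulants of sums of bi-Boolean independent families (multilinearity plus Theorem \ref{VanishingEquiv}), the moment--cumulant formula over $\B\I(\chi)$ with its M\"obius inversion, the resulting identity $\varphi_N(S_{N,\alpha(1)}\cdots S_{N,\alpha(n)})=\sum_{\pi\in\B\I(\chi_\alpha)}\prod_{V\in\pi}\bigl(N\,b^{(N)}_{\alpha|_V}\bigr)$, and a joint induction on $n$ synchronizing the $O(1/N)$ decay of single-summand cumulants with the convergence of the moments of $\widehat{S}_N$. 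I checked the two delicate points and both are sound: the cross terms in the multilinear expansion vanish because their index map $\omega$ is non-constant, and your induction in $(1)\Rightarrow(2)$ only ever invokes the hypothesis for blocks of size strictly less than $n$, first to extract convergence of $N\,b^{(N)}_\alpha$ from the master formula and then to kill the $\sigma\neq 1_{\chi_\alpha}$ terms ($O(N^{1-|\sigma|})$) in the single-summand formula. Your route buys transparency and independence from the (harder) c-bi-free machinery of \cite{GS2016}, at the cost of length; the paper's citation buys brevity and consistency with its general theme that bi-Boolean results are specializations of c-bi-free ones. One cosmetic remark: since the setting is purely algebraic, your construction of the limit functional $\varphi$ on $\bC\langle Z_k : k\in I\sqcup J\rangle$ from the limiting moments is exactly what is needed (no positivity is claimed or required, which is also consistent with the errata), and the ``moreover'' clause for an arbitrary limit $\widehat{s}$ in assertion $(1)$ follows, as you indicate, because the limiting moments determine the distribution and hence, by uniqueness in the moment--cumulant relation, the B-$(\ell,r)$-cumulants.
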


We now turn our attention to Borel probability measures on $\bR^2$. Let $\delta_{(0, 0)}$ denote the point mass at the origin of $\bR^2$.  Consider first the case of measures with compact supports. Given two such measures $\mu_1$ and $\mu_2$ the \textit{additive bi-Boolean convolution} of $\mu_1$ and $\mu_2$, denoted $\mu_1 \uplus\uplus \mu_2$, is defined to be the measure $\mu$ such that
\[(\mu, \delta_{(0, 0)}) = (\mu_1, \delta_{(0, 0)}) \boxplus\boxplus_{\mathrm{c}} (\mu_2, \delta_{(0, 0)}),\]
where $\boxplus\boxplus_{\mathrm{c}}$ denotes the additive c-bi-free convolution. If $\mu$ is a compactly supported Borel probability measure on $\bR^2$ and $m, n \geq 0$ with $m + n \geq 1$, we denote the $(m, n)^{\mathrm{th}}$ moment of $\mu$ by
\[\bM_{m, n}(\mu) := \int_{\bR^2}s^mt^n\,d\mu(s, t).\]
Moreover, since $\mu$ can be realized as the joint distribution of a self-adjoint commuting pair in a $C^*$-probability space, Lemma \ref{Swapping} implies that the B-$(\ell, r)$-cumulants of $\mu$ are completely determined by cumulants of the form $B_{m, n}(\mu)$, which can be obtained from the moments of $\mu$ by the bi-Boolean moment-cumulant formula. 

For $\lambda > 0$, denote by $D_\lambda\mu$ the dilation of $\mu$ by the factor $\lambda$ (i.e., $D_\lambda\mu(B) = \mu(\lambda^{-1}B)$ for all Borel subsets $B$ of $\bR^2$).  The probabilistic version of the bi-Boolean central limit theorem states that if $\bM_{1, 0}(\mu) = \bM_{0, 1}(\mu) = 0$, $\bM_{2, 0}(\mu) = a$, $\bM_{0, 2}(\mu) = b$, and $\bM_{1, 1}(\mu) = c$, then
\begin{equation}\label{eqn:CentLimit}
\lim_{N \to \infty}\underbrace{D_{1/\sqrt{N}}\mu \uplus\uplus \cdots \uplus\uplus D_{1/\sqrt{N}}\mu}_{N\,\mathrm{times}} = \mu_{(a, b, c)},
\end{equation}
where $\mu_{(a, b, c)}$ has a centred bi-Boolean Gaussian distribution such that the only non-vanishing B-$(\ell, r)$-cumulants are $B_{2, 0}(\mu_{(a, b, c)}) = a$, $B_{0, 2}(\mu_{(a, b, c)}) = b$, and $B_{1, 1}(\mu_{(a, b, c)}) = c$.

In analogy with other types of compound Poisson distributions, given $\lambda \geq 0$ and a compactly supported Borel probability measure $\sigma \neq \delta_{(0, 0)}$ on $\bR^2$, the \textit{compound bi-Boolean Poisson distribution} $\pi_{\lambda, \sigma}^{\uplus\uplus}$ on $\bR^2$ with rate $\lambda$ and jump distribution $\sigma$ is characterized by the requirement that
\[B_{m, n}(\pi_{\lambda, \sigma}^{\uplus\uplus}) = \lambda\cdot\bM_{m, n}(\sigma)
\]
for all $m, n \geq 0$ with $m + n \geq 1$. The following compound bi-Boolean Poisson limit theorem justifies the name of $\pi_{\lambda, \sigma}^{\uplus\uplus}$. The proof easily follows from the additivity of the B-$(\ell, r)$-cumulants and the bi-Boolean moment-cumulant formula (see also \cite{GS2016}*{Theorem 6.11} for the c-bi-free version).

\begin{thm}
Let $\lambda \geq 0$ and let $\sigma \neq \delta_{(0, 0)}$ be a compactly supported Borel probability measure on $\bR^2$. For $N \in \bN$, let $\mu_N = \left(1 - \frac{\lambda}{N}\right)\delta_{(0, 0)} + \frac{\lambda}{N}\sigma$.  Then $\lim_{N \to \infty}\underbrace{\mu_N \uplus\uplus \cdots \uplus\uplus \mu_N}_{N\,\mathrm{times}} = \pi_{\lambda, \sigma}^{\uplus\uplus}$.
\end{thm}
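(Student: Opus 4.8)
The plan is to combine the additivity of the B-$(\ell,r)$-cumulants under the additive bi-Boolean convolution (noted right after the definition of $\eta_{(a,b)}$, and a consequence of the vanishing of mixed B-$(\ell,r)$-cumulants in Theorem \ref{VanishingEquiv}) with a direct estimate of the cumulants of $\mu_N$ read off from the bi-Boolean moment--cumulant formula \eqref{BB-M-C}. Since $\sigma$, and hence each $\mu_N$ and each $N$-fold convolution $\mu_N^{\uplus\uplus N}$, is a compactly supported Borel probability measure on $\bR^2$, it is the joint distribution of a commuting self-adjoint pair in a $C^\ast$-probability space, so by Lemma \ref{Swapping} all of its B-$(\ell,r)$-cumulants are determined by those of the form $B_{k,\ell}(\,\cdot\,)$ and all of its moments by the $\bM_{k,\ell}(\,\cdot\,)$. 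Realising $\mu_N^{\uplus\uplus N}$ as the distribution of a sum of $N$ bi-Boolean independent copies of a pair with distribution $\mu_N$, an immediate induction on $N$ using multilinearity and Theorem \ref{VanishingEquiv} gives $B_{m,n}(\mu_N^{\uplus\uplus N}) = N\cdot B_{m,n}(\mu_N)$ for all $m,n\geq 0$ with $m+n\geq 1$.

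The next step is to estimate $B_{m,n}(\mu_N)$. Because $\bM_{k,\ell}(\delta_{(0,0)}) = 0$ whenever $k+\ell\geq 1$, we have $\bM_{k,\ell}(\mu_N) = \frac{\lambda}{N}\bM_{k,\ell}(\sigma)$ for all $k+\ell\geq 1$. Inverting \eqref{BB-M-C} over $\B\I(\chi_{m,n})$ by the M\"obius function $\mu_{\B\I}$ writes $B_{m,n}(\mu_N)$ as $\bM_{m,n}(\mu_N)$ (the $\sigma = 1_{\chi_{m,n}}$ term) plus a sum over partitions $\sigma\neq 1_{\chi_{m,n}}$, each contributing $\mu_{\B\I}(\sigma,1_{\chi_{m,n}})\prod_{V\in\sigma}\bM_{p_V,q_V}(\mu_N)$, where $p_V$ and $q_V$ count the left and right indices in the block $V$ (that the $\varphi$-factor over a $\chi_{m,n}$-interval block is a genuine moment $\bM_{p_V,q_V}$ uses that left indices precede right ones within such a block). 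Every $\sigma\neq 1_{\chi_{m,n}}$ has at least two blocks, so its summand is a product of at least two factors of size $O(N^{-1})$, hence $O(N^{-2})$; as there are only finitely many such $\sigma$, this yields
\[
N\cdot B_{m,n}(\mu_N) = N\cdot\bM_{m,n}(\mu_N) + O(N^{-1}) = \lambda\,\bM_{m,n}(\sigma) + O(N^{-1}) \;\longrightarrow\; \lambda\,\bM_{m,n}(\sigma) = B_{m,n}\!\left(\pi_{\lambda,\sigma}^{\uplus\uplus}\right)
\]
as $N\to\infty$.

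Combining the two facts above, $B_{m,n}(\mu_N^{\uplus\uplus N})\to B_{m,n}(\pi_{\lambda,\sigma}^{\uplus\uplus})$ for every $m,n$; by Lemma \ref{Swapping} this says that every mixed B-$(\ell,r)$-cumulant of $\mu_N^{\uplus\uplus N}$ converges to the corresponding one of $\pi_{\lambda,\sigma}^{\uplus\uplus}$. Applying \eqref{BB-M-C} once more --- each moment being a finite polynomial in cumulants of no larger order --- then gives $\bM_{m,n}(\mu_N^{\uplus\uplus N})\to\bM_{m,n}(\pi_{\lambda,\sigma}^{\uplus\uplus})$ for all $m,n$, i.e. $\mu_N^{\uplus\uplus N}$ converges in distribution to $\pi_{\lambda,\sigma}^{\uplus\uplus}$. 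I do not expect a genuine obstacle here; the only points needing a little care are the reduction (via Lemma \ref{Swapping} and commutativity of the underlying pairs) of ``all B-$(\ell,r)$-cumulants converge'' to the single scalar statement $N\,B_{m,n}(\mu_N)\to\lambda\,\bM_{m,n}(\sigma)$, and the bookkeeping that the non-maximal bi-interval partitions contribute only $O(N^{-2})$ --- both routine.
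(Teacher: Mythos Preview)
Your argument is correct and follows exactly the approach the paper indicates: additivity of the B-$(\ell,r)$-cumulants gives $B_{m,n}(\mu_N^{\uplus\uplus N})=N\,B_{m,n}(\mu_N)$, and the bi-Boolean moment--cumulant formula shows $N\,B_{m,n}(\mu_N)\to\lambda\,\bM_{m,n}(\sigma)$ since the non-full partitions contribute $O(N^{-2})$. One cosmetic remark: you use $\sigma$ for both the jump distribution and a generic bi-interval partition, which is mildly confusing; otherwise the write-up is in line with the paper's (omitted) proof.
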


Before moving on to the analytic aspects, we mention that if $\mu$ is a compactly supported Borel probability measure on $\bR^2$, then the bi-Boolean partial $\eta$-transform $\eta_\mu$ of $\mu$ is naturally defined as the B-$(\ell, r)$-cumulant generating series
\[\eta_\mu(z, w) = \sum_{\substack{m, n \geq 0\\m + n \geq 1}}B_{m, n}(\mu)z^mw^n,\]
which converges absolutely for $|z|$, $|w|$ sufficiently small. However, in view of Theorem \ref{PartialEta} and Remark \ref{PartialSelfEnergy}, we will actually take $E_\mu$ as the linearizing transform with respect to $\uplus\uplus$, which has the advantage of being an analytic function on $(\bC \setminus \bR)^2$.

\subsection{Analytic aspects}

Recall the \textit{Cauchy transform} of a finite Borel measure $\mu$ on $\bR$ is defined by
\[G_\mu(z) = \int_{\bR}\frac{1}{z - s}\,d\mu(s), \quad z \in (\bC \setminus \bR),\]
which is an analytic function and determines the underlying measure uniquely. Note that $G_\mu(\bar{z}) = \overline{G_\mu(z)}$, thus the behaviour of $G_\mu$ on $\bC^{-}$ is determined by $G_\mu$ on $\bC^{+}$. Denote the reciprocal of $G_\mu$ by $F_\mu$ so that $F_\mu(z) = 1/G_\mu(z)$ for $z \in (\bC \setminus \bR)$. For $\alpha, \beta > 0$, the \textit{Stolz angle} and \textit{truncated Stolz angle} are defined by
\[\Gamma_\alpha = \{z = x + iy \in \bC^{+}\,|\,|x| < \alpha y\} \qand \Gamma_{\alpha, \beta} = \{z = x + iy \in \Gamma_\alpha\,|\,y > \beta\},\]
respectively. As shown in \cite{BV1993}, for every $\alpha > 0$, there exists a $\beta = \beta(\mu, \alpha) > 0$ such that the compositional inverse $F_\mu^{-1}$ of $F_\mu$ is defined on $\Gamma_{\alpha, \beta}$. The \textit{free Voiculescu transform} of $\mu$ is defined by
\[\phi_\mu(z) = F_\mu^{-1}(z) - z, \quad z \in \Gamma_{\alpha, \beta},\]
which linearizes the additive free convolution $\boxplus$ in the sense that
\[\phi_{\mu_1 \boxplus \mu_2}(z) = \phi_{\mu_1}(z) + \phi_{\mu_2}(z)\]
on the common domain of the three functions involved. On the other hand, as mentioned in Section \ref{sec:transforms} above, the \textit{self-energy} of $\mu$ is defined by
\[E_\mu(z) = z - F_\mu(z), \quad z \in \bC \setminus \bR,\]
which linearizes the additive Boolean convolution $\uplus$ in the sense that
\[E_{\mu_1 \uplus \mu_2}(z) = E_{\mu_1}(z) + E_{\mu_2}(z).\]

In classical probability theory, an important class of measures occurs in connection with the study of limit theorems; namely the class of infinitely divisible measures. Analogously, a Borel probability measure $\mu$ on $\bR$ is said to be \textit{$\boxplus$-infinitely divisible} (respectively \textit{$\uplus$-infinitely divisible}) if for every $n \in \bN$, there exists a Borel probability measure $\mu_n$ on $\bR$ such that
\[\mu = \underbrace{\mu_n \boxplus \cdots \boxplus \mu_n}_{n\,\mathrm{times}} \quad \left(\mathrm{respectively}\,\mu = \underbrace{\mu_n \uplus \cdots \uplus \mu_n}_{n\,\mathrm{times}}\right).\]
It is well-known that the L\'{e}vy-Hin\v{c}in formula provides a complete charactization of infinitely divisible measures. For the free situation, the most general case was obtained in \cite{BV1993} that a Borel probability measure $\mu$ on $\bR$ is $\boxplus$-infinitely divisible if and only if there exist a real number $\gamma \in \bR$ and a finite positive Borel measure $\sigma$ on $\bR$ such that
\[\phi_\mu(z) = \gamma + \int_{\bR}\frac{1 + sz}{z - s}\,d\sigma(s), \quad z \in \bC \setminus \bR.\]
The pair $(\gamma, \sigma)$ is unique, and every such pair $(\gamma, \sigma)$ determines a $\boxplus$-infinitely divisible measure, which is usually denoted as $\mu_{\boxplus}^{(\gamma, \sigma)}$ to indicate this correspondence. For the Boolean situation, things are much simpler as every self-energy can be written as
\begin{equation}\label{BooleanLH}
E_\mu(z) = \gamma + \int_{\bR}\frac{1 + sz}{z - s}\,d\sigma(s), \quad z \in \bC \setminus \bR
\end{equation}
for $\gamma \in \bR$ a real number and $\sigma$ a finite positive Borel measure on $\bR$, and conversely every such pair $(\gamma, \sigma)$ uniquely determines a Borel probability measure $\mu$ on $\bR$ such that equation \eqref{BooleanLH} holds. We denote $\mu$ by $\mu_{\uplus}^{(\gamma, \sigma)}$ in this case, and it follows that all Borel probability measures on $\bR$ are $\uplus$-infinitely divisible. This bijection between the set of $\uplus$-infinitely divisible/all measures and the set of $\boxplus$-infinitely divisible measures is known as the Boolean Bercovici-Pata bijection, where a more explicit relation between $\mu_{\uplus}^{(\gamma, \sigma)}$ and $\mu_{\boxplus}^{(\gamma, \sigma)}$ appears in \cite{BP1999}*{Theorem 6.3} as follows. Note that we have removed the statement about the additive classical convolution (which is actually one of the main results of \cite{BP1999}) as it is irrelevant here.

\begin{thm}
Fix a sequence $\{\mu_n\}_{n = 1}^\infty$ of Borel probability measures on $\bR$, a sequence $\{k_n\}_{n = 1}^\infty$ of positive integers with $\lim_{n \to \infty}k_n = \infty$, and a pair $(\gamma, \sigma)$ where $\gamma \in \bR$ is a real number and $\sigma$ is a finite positive Borel measure on $\bR$. The following assertions are equivalent:
\begin{enumerate}[$\qquad(1)$]
\item The sequence $\underbrace{\mu_n \boxplus \cdots \boxplus \mu_n}_{k_n\,\mathrm{times}}$ converges weakly to $\mu_{\boxplus}^{(\gamma, \sigma)}$.

\item The sequence $\underbrace{\mu_n \uplus \cdots \uplus \mu_n}_{k_n\,\mathrm{times}}$ converges weakly to $\mu_{\uplus}^{(\gamma, \sigma)}$.

\item The limit
\[\lim_{n \to \infty}k_n\int_{\bR}\frac{s}{1 + s^2}\,d\mu_n(s) = \gamma\]
holds, and the finite positive Borel measures
\[d\sigma_n(s) = k_n\frac{s^2}{1 + s^2}\,d\mu_n(s)\]
converge weakly to $\sigma$.
\end{enumerate}
\end{thm}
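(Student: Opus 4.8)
\smallskip
\noindent\textbf{Proof strategy.} The plan is to rephrase all three assertions in terms of the linearizing transforms --- the self-energy $E$ for $\uplus$ and the Voiculescu transform $\phi$ for $\boxplus$ --- and then to exploit the fact that for an infinitesimal array these two transforms agree to leading order; the equivalence (1) $\Leftrightarrow$ (3) is in essence the free limit theorem for triangular arrays of \cite{BV1993}, so the work is in anchoring (2) to it through $E$. The organizing principle throughout is the classical correspondence (of Nevanlinna / Grommer--Hamburger type) between locally uniform convergence of functions $a_n + \int_{\bR}\frac{1+sz}{z-s}\,d\tau_n(s)$, with $\tau_n$ finite positive of uniformly bounded mass, and convergence of the data $(a_n,\tau_n)$. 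As a preliminary, I would observe that each of (1), (2), (3) already forces $\mu_n \to \delta_0$ weakly: for (3) this is immediate since $k_n\to\infty$ makes $\int\frac{s^2}{1+s^2}\,d\mu_n\to 0$, and for (1), (2) it follows because the limiting measure is a probability measure, forcing $F_{\mu_n}(z)\to z$ locally uniformly on $\bC\setminus\bR$. One then extracts from each of (1)--(3) the a priori bound $\sup_n k_n\int_{\bR}\frac{s^2}{1+s^2}\,d\mu_n < \infty$: under (3) this is just boundedness of the masses $\sigma_n(\bR)$, and under (1)--(2) it comes from evaluating the relevant transform at a single point such as $z=i$. Writing the restriction of $\mu_n$ to the complement of a fixed neighbourhood of $0$ as $\epsilon_n\nu_n$, this bound gives $\epsilon_n = O(1/k_n)$, hence $k_n\epsilon_n^2\to 0$, which is what makes all error terms below uniform in $n$.

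For the Boolean side I would argue directly. Using $E_{\mu_n} = z - 1/G_{\mu_n}$, the identity $G_{\mu_n}(z) = \frac{1-\epsilon_n}{z} + \epsilon_n G_{\nu_n}(z)$, and $zG_{\nu_n}(z)-1 = \int\frac{s}{z-s}\,d\nu_n(s)$, a direct expansion yields, uniformly on $\{\,z : |\Im(z)|\ge\delta\,\}$ for each $\delta>0$,
\[
E_{\mu_n}(z) = \epsilon_n\int_{\bR}\frac{sz}{z-s}\,d\nu_n(s) + O(\epsilon_n^2).
\]
Since $s\mapsto\frac{s}{1+s^2}$ and $s\mapsto\frac{s^2}{1+s^2}$ vanish at the origin, $\epsilon_n\,d\nu_n$ may be replaced by $d\mu_n$ in the integrals against these functions, and the partial-fraction identity $\frac{sz}{z-s} = \frac{s}{1+s^2} + \frac{s^2}{1+s^2}\cdot\frac{1+sz}{z-s}$ rewrites the leading term as $\gamma_n' + \int_{\bR}\frac{1+sz}{z-s}\,d\rho_n(s)$, where $\gamma_n' := \int_{\bR}\frac{s}{1+s^2}\,d\mu_n(s)$ is real and $\rho_n := \frac{s^2}{1+s^2}\,d\mu_n$ is finite positive. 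Multiplying by $k_n$ and using $k_n\epsilon_n^2\to 0$, the function $k_nE_{\mu_n} = E_{\mu_n^{\uplus k_n}}$ converges locally uniformly on $\bC\setminus\bR$ to $\gamma + \int_{\bR}\frac{1+sz}{z-s}\,d\sigma(s) = E_{\mu_{\uplus}^{(\gamma,\sigma)}}$ if and only if $k_n\gamma_n'\to\gamma$ and $k_n\rho_n = \sigma_n\to\sigma$ weakly, by the Nevanlinna continuity quoted above applied to the explicitly Nevanlinna functions $k_n\gamma_n' + \int\frac{1+sz}{z-s}\,d(k_n\rho_n)$ (whose representing measures have uniformly bounded mass by the a priori bound). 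Finally, locally uniform convergence of the self-energies is equivalent to weak convergence of the underlying probability measures --- via $F = z-E$, $G = 1/F$, Stieltjes inversion, and the uniform tightness supplied once more by the mass bound --- which gives (2) $\Leftrightarrow$ (3).

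For (1) $\Leftrightarrow$ (3) I would compare $\phi_{\mu_n}$ with $E_{\mu_n}$: setting $w = F_{\mu_n}^{-1}(z)$ gives $\phi_{\mu_n}(z) = w - z = E_{\mu_n}(w) = E_{\mu_n}(z + E_{\mu_n}(w))$, so from $E_{\mu_n}\to 0$ with uniformly controlled derivative on truncated Stolz angles $\Gamma_{\alpha,\beta}$ --- whose $\beta$ can be chosen independent of $n$ by infinitesimality and the quantitative inverse function estimates of \cite{BV1993} --- one obtains $k_n\phi_{\mu_n} - k_nE_{\mu_n}\to 0$ locally uniformly on $\Gamma_{\alpha,\beta}$. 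Combining this with the characterization in \cite{BV1993} of weak convergence of $\mu_n^{\boxplus k_n}$ by locally uniform convergence of $k_n\phi_{\mu_n}$ to $\phi_{\mu_{\boxplus}^{(\gamma,\sigma)}} = \gamma + \int_{\bR}\frac{1+sz}{z-s}\,d\sigma(s)$, we get (1) $\Leftrightarrow$ (3), and hence the full equivalence.

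The step I expect to be the main obstacle is the analytic bookkeeping that makes everything genuinely uniform in $n$: deriving the a priori mass bound from each of the three assertions, controlling the $O(\epsilon_n^2)$ remainder in the self-energy expansion uniformly on half-planes $\{|\Im(z)|\ge\delta\}$, and guaranteeing that the truncated Stolz angles on which $\phi_{\mu_n}$ is defined do not shrink as $n\to\infty$. Once these uniformities are secured, what remains is the elementary partial-fraction identity above together with the standard correspondence between convergence of such integral representations and weak convergence of the representing measures.
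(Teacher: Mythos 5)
This statement is not proved in the paper at all: it is the Boolean Bercovici--Pata bijection, quoted verbatim (with the classical-convolution part removed) from \cite{BP1999}*{Theorem 6.3} as background for the bi-Boolean analogue, so there is no proof of the authors' own to measure your argument against.

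Taken on its own terms, your proposal is a faithful roadmap of how this theorem is proved in the literature --- infinitesimality of the array, Nevanlinna-type integral representations, the partial-fraction identity $\frac{sz}{z-s}=\frac{s}{1+s^2}+\frac{s^2}{1+s^2}\cdot\frac{1+sz}{z-s}$, the continuity theorem for such representations, and the comparison $k_n\phi_{\mu_n}-k_nE_{\mu_n}\to 0$ on truncated Stolz angles together with the machinery of \cite{BV1993} for the free side --- but it is a sketch rather than a proof precisely at the points you flag yourself. Two of these deserve emphasis. First, the a priori bound $\sup_n k_n\int\frac{s^2}{1+s^2}\,d\mu_n<\infty$ under (1) or (2) is not quite a one-line evaluation at $z=i$: what that evaluation controls is the total mass of the Nevanlinna representing measure of $E_{\mu_n}$ (respectively $\phi_{\mu_n}$), which equals $\int\frac{s^2}{1+s^2}\,d\mu_n$ only up to a correction involving $\bigl(\int\frac{s}{1+s^2}\,d\mu_n\bigr)^2$, so one must also control the real part before the bound, and hence $\epsilon_n=O(1/k_n)$, is legitimate; there is a mild circularity hazard here with your $O(\epsilon_n^2)$ expansion that has to be untangled in the right order. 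Second, on the free side the claim that the truncated Stolz angles $\Gamma_{\alpha,\beta}$ on which $F_{\mu_n}^{-1}$ exists can be chosen with $\beta$ independent of $n$, and that the derivative of $E_{\mu_n}$ is uniformly controlled there, is exactly the quantitative content of \cite{BV1993} that carries the weight of the equivalence $(1)\Leftrightarrow(3)$; citing it is fine for a result this classical, but as written your argument defers to it rather than establishing it. In short: correct strategy, consistent with the known proof, but the uniform estimates that constitute the actual content of \cite{BP1999} are acknowledged rather than supplied.
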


Note that both $\boxplus$ and $\uplus$ can be studied in terms of the additive c-free convolution $\boxplus_{\mathrm{c}}$ which is defined on pairs of Borel probability measures on $\bR$. Given such a pair $(\mu, \nu)$, the \textit{c-free Voiculescu transform} of $(\mu, \nu)$ is defined by
\[\Phi_{(\mu, \nu)}(z) = F_{\nu}^{-1}(z) - F_\mu(F_{\nu}^{-1}(z))\]
on a domain where $F_{\nu}^{-1}$ is defined. Given two pairs $(\mu_1, \nu_1)$ and $(\mu_2, \nu_2)$, their additive c-free convolution is another pair $(\mu, \nu)$ where $\nu = \nu_1 \boxplus \nu_2$ and $\mu$ is the unique measure such that
\[\Phi_{(\mu, \nu)}(z) = \Phi_{(\mu_1, \nu_1)}(z) + \Phi_{(\mu_2, \nu_2)}(z)\]
on the common domain of the three involved functions. It is immediate that
\[\Phi_{(\mu, \mu)} = \phi_\mu \qand \Phi_{(\mu, \delta_0)} = E_\mu.\]
The notion of $\boxplus_{\mathrm{c}}$-infinite divisibility is defined analogously, and a c-free L\'{e}vy-Hin\v{c}in formula was obtained in \cite{K2007}*{Theorem 8.1} in the compactly supported case and extended in full generality in \cite{W2011}*{Theorem 4.1} that given a pair $(\mu, \nu)$ of Borel probability measures on $\bR$ with $\nu$ being $\boxplus$-infinitely divisible, the pair $(\mu, \nu)$ is $\boxplus_{\mathrm{c}}$-infinitely divisible if and if there exists a pair $(\gamma, \sigma)$ similar as above such that
\[\Phi_{(\mu, \nu)}(z) = \gamma + \int_{\bR}\frac{1 + sz}{z - s}\,d\sigma(s), \quad z \in \bC \setminus \bR.\]

Next we discuss measures on $\bR^2$. Recall the \textit{Cauchy transform} of a finite positive Borel measure $\mu$ on $\bR^2$ is defined by
\[G_\mu(z, w) = \int_{\bR^2}\frac{1}{(z - s)(w - t)}\,d\mu(s, t), \quad (z, w) \in (\bC \setminus \bR)^2,\]
and is an analytic function which uniquely determines the underlying measure.  Let $\mu^{(1)}$ and $\mu^{(2)}$ denote the marginal distributions of $\mu$, namely
\[\mu^{(1)}(B) = \mu(B \times \bR) \qand \mu^{(2)}(B) = \mu(\bR \times B)\]
for all Borel subsets $B$ of $\bR$. For $\alpha, \beta > 0$, let $\overline{\Gamma_{\alpha, \beta}} = \{\bar{z}\,|\,z \in \Gamma_{\alpha, \beta}\}$ and set
\[\Omega_{\alpha, \beta} = \left\{(z, w) \in (\bC \setminus \bR)^2\,|\,z, w \in \Gamma_{\alpha, \beta} \cup \overline{\Gamma_{\alpha, \beta}}\right\}.\]
The \textit{bi-free partial Voiculescu transform} of $\mu$ is defined by
\[\phi_\mu(z, w) = \frac{1}{z}\phi_{\mu^{(1)}}(z) + \frac{1}{w}\phi_{\mu^{(2)}}(w) + \widetilde{\phi}_\mu(z, w), \quad (z, w) \in \Omega_{\alpha, \beta},\]
for some $\alpha, \beta > 0$, where $\widetilde{\phi}_\mu$ is the \textit{reduced bi-free partial Voiculescu transform} of $\mu$ defined by
\[\widetilde{\phi}_\mu(z, w) = 1 - \frac{1}{zwG_\mu\left(F_{\mu^{(1)}}^{-1}(z), F_{\mu^{(2)}}^{-1}(w)\right)}.\]

The additive bi-free convolution $\boxplus\boxplus$ is characterized by
\[\phi_{\mu_1 \boxplus\boxplus \mu_2}(z, w) = \phi_{\mu_1}(z, w) + \phi_{\mu_2}(z, w)\]
on the common domain of the three functions involved whenever $\mu_1 \boxplus\boxplus \mu_2$ is defined. This is due to the fact that the operation $\boxplus\boxplus$ is only defined for compactly supported and/or infinitely divisible measures as of now. These restrictions are also in place for the operation $\boxplus\boxplus_{\mathrm{c}}$ in \cite{GS2016}*{Section 6} and the operation $\uplus\uplus$ to be considered below. Consequently a Borel probability measure $\mu$ on $\bR^2$ is said to be \textit{$\boxplus\boxplus$-infinitely divisible} if for every $n \in \bN$ there exists a Borel probability measure $\mu_n$ on $\bR^2$ such that $\phi_\mu = n\phi_{\mu_n}$ on a common domain of the form $\Omega_{\alpha, \beta}$. The notion of $\boxplus\boxplus_{\mathrm{c}}$-infinite divisibility for pairs of measures was similarly defined in \cite{GS2016}*{Definition 6.24} in terms of the corresponding linearizing transforms, and hence $\mu$ is said to be \textit{$\uplus\uplus$-infinitely divisible} if for every $n \in \bN$, $E_\mu = E_{\mu_n}$ (see Definition \ref{defn:partialselfenergy} below) for some $\mu_n$.

As shown in \cites{HW2016, GS2016}, the additive bi-free and c-bi-free limit theorems do not depend on whether or not $\boxplus\boxplus$ and $\boxplus\boxplus_{\mathrm{c}}$ are defined for arbitrary measures. In particular, a bi-free L\'{e}vy-Hin\v{c}in formula was obtained in \cite{HW2016} demonstrating that a Borel probability measure $\mu$ on $\bR^2$ is $\boxplus\boxplus$-infinitely divisible if and only if there exists a unique quintuple $(\gamma_1, \gamma_2, \sigma_1, \sigma_2, \sigma)$, where $\gamma_1, \gamma_2 \in \bR$ are real numbers, $\sigma_1$ and $\sigma_2$ are finite positive Borel measures on $\bR^2$, and $\sigma$ is a finite signed Borel measure on $\bR^2$ such that
\begin{itemize}
\item $\frac{t}{\sqrt{1 + t^2}}\,d\sigma_1(s, t) = \frac{s}{\sqrt{1 + s^2}}\,d\sigma(s, t)$,

\item $\frac{s}{\sqrt{1 + s^2}}\,d\sigma_2(s, t) = \frac{t}{\sqrt{1 + t^2}}\,d\sigma(s, t)$,

\item $|\sigma(\{(0, 0)\})|^2 \leq \sigma_1(\{(0, 0)\})\sigma_2(\{(0, 0)\})$,
\end{itemize}
and the bi-free partial Voiculescu transform $\phi_\mu$ of $\mu$ can be continued analytically to $(\bC \setminus \bR)^2$ via
\begin{equation}\label{BiFreeLH}
\begin{split}
\phi_\mu(z, w) &= \frac{1}{z}\left(\gamma_1 + \int_{\bR^2}\frac{1 + sz}{z - s}\,d\sigma_1(s, t)\right) + \frac{1}{w}\left(\gamma_2 + \int_{\bR^2}\frac{1 + tw}{w - t}\,d\sigma_2(s, t)\right)\\
&\quad + \int_{\bR^2}\frac{\sqrt{1 + s^2}\sqrt{1 + t^2}}{(z - s)(w - t)}\,d\sigma(s, t).
\end{split}
\end{equation}
Moreover, the marginal distributions $\mu^{(1)}$ and $\mu^{(2)}$ of $\mu$ are $\boxplus$-infinitely divisible determined by $\mu^{(1)} = \mu_{\boxplus}^{(\gamma_1, \sigma_1^{(1)})}$ and $\mu^{(2)} = \mu_{\boxplus}^{(\gamma_2, \sigma_2^{(2)})}$. Conversely, every such quintuple $(\gamma_1, \gamma_2, \sigma_1, \sigma_2, \sigma)$ uniquely determines a Borel probability measure $\mu$ on $\bR^2$ such that equation \eqref{BiFreeLH} holds, and we denote $\mu$ by $\mu_{\boxplus\boxplus}^{(\gamma_1, \gamma_2, \sigma_1, \sigma_2, \sigma)}$ in analogy with the free and Boolean situations. Similarly, a c-bi-free L\'{e}vy-Hin\v{c}in formula was obtained in \cite{GS2016}*{Section 6} characterizing $\boxplus\boxplus_{\mathrm{c}}$-infinitely divisible pairs of measures where each such pair consists of two measures $\mu$ and $\nu$ such that $\nu$ is $\boxplus\boxplus$-infinitely divisible and $\mu$ is determined by a quintuple $(\gamma_1, \gamma_2, \sigma_1, \sigma_2, \sigma)$ similar as above.

We now move on to the bi-Boolean situation. As mentioned above, the following function linearizes $\uplus\uplus$ and will be used as the bi-Boolean analogue of $\phi_\mu$.

\begin{defn}\label{defn:partialselfenergy}
Let $\mu$ be a Borel probability measure on $\bR^2$. The \textit{partial self-energy} of $\mu$ is defined by
\[E_\mu(z, w) = \frac{1}{z}E_{\mu^{(1)}}(z) + \frac{1}{w}E_{\mu^{(2)}}(w) + \widetilde{E}_\mu(z, w), \quad (z, w) \in (\bC \setminus \bR)^2\]
where
\[\widetilde{E}_\mu(z, w) = \frac{G_\mu(z, w)}{G_{\mu^{(1)}}(z)G_{\mu^{(2)}}(w)} - 1.\]
The function $\widetilde{E}_\mu$ will be referred to as the \textit{reduced partial self-energy} of $\mu$.
\end{defn}

Recall that in the one-dimensional situation, the self-energy of a measure is equal to the c-free Voiculescu transform of the measure with $\delta_0$. Similarly, if $\Phi_{(\mu, \nu)}$ denotes the c-bi-free partial Voiculescu transform (as defined in \cite{GS2016}*{Definition 6.15}) of a pair of measures $(\mu, \nu)$ on $\bR^2$, then it is easy to check that $E_\mu = \Phi_{(\mu, \delta_{(0, 0)})}$. Consequently, the results in \cite{GS2016}*{Subsections 6.3 to 6.6} immediately imply the following bi-Boolean analogues by taking the second component to be $\delta_{(0, 0)}$, which we record as follows. Note that by $z \to \infty$ \textit{non-tangentially} we mean $|z| \to \infty$ but $z$ stays within a Stolz angle $\Gamma_\alpha$ for some $\alpha > 0$.

To begin, we have the following basic properties of the partial self-energy function which follow from \cite{GS2016}*{Lemma 6.16, Corollary 6.17, and Proposition 6.18}.

\begin{lem}
If $E_\mu: (\bC \setminus \bR)^2 \to \bC$ is the partial self-energy of some Borel probability measure $\mu$ on $\mathbb{R}^2$, then
\[\lim_{|z| \to \infty}E_\mu(z, w) = \frac{1}{w}E_{\mu^{(2)}}(w),\quad \lim_{|w| \to \infty}E_\mu(z, w) = \frac{1}{z}E_{\mu^{(1)}}(z), \qand \lim_{|z|, |w| \to \infty}E_\mu(z, w) = 0\]
non-tangentially.
\end{lem}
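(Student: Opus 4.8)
The plan is to treat the three summands in the definition of $E_\mu$ separately. The two outer terms will be immediate: since $E_{\mu^{(i)}}(z) = z - F_{\mu^{(i)}}(z) = z - 1/G_{\mu^{(i)}}(z)$, one has $\frac{1}{z}E_{\mu^{(i)}}(z) = 1 - \frac{1}{zG_{\mu^{(i)}}(z)}$, so once we know $zG_{\mu^{(i)}}(z) \to 1$ non-tangentially it will follow that $\frac{1}{z}E_{\mu^{(i)}}(z) \to 0$. Hence the crux is to show that the reduced partial self-energy
\[\widetilde{E}_\mu(z, w) = \frac{G_\mu(z, w)}{G_{\mu^{(1)}}(z)G_{\mu^{(2)}}(w)} - 1\]
tends to $0$ in each of the three regimes, and then to assemble the pieces.

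The single estimate I would build everything on is the following: for $z = x + iy$ in a Stolz angle $\Gamma_\alpha$ (and, verbatim, in its conjugate) one has $|z| \le \sqrt{1 + \alpha^2}\,|y|$ while $|z - s| \ge |y|$ for every $s \in \bR$, whence $\bigl|\frac{z}{z - s}\bigr| \le \sqrt{1 + \alpha^2}$ uniformly in $s$ and $\frac{z}{z - s} \to 1$ pointwise as $|z| \to \infty$ inside $\Gamma_\alpha$. Feeding this into the dominated convergence theorem (every measure in sight is finite, and $1/|w - t|$, $1/|z - s|$ are bounded once $w$, $z$ are fixed off $\bR$) would give, non-tangentially,
\[zG_{\mu^{(1)}}(z) = \int_{\bR}\frac{z}{z - s}\,d\mu^{(1)}(s) \to 1, \qquad wG_{\mu^{(2)}}(w) \to 1,\]
\[zG_\mu(z, w) = \int_{\bR^2}\frac{z}{z - s}\cdot\frac{1}{w - t}\,d\mu(s, t) \to G_{\mu^{(2)}}(w)\]
for each fixed $w \in \bC \setminus \bR$ as $|z| \to \infty$, the symmetric statement with $z$ and $w$ interchanged, and $zwG_\mu(z, w) = \int_{\bR^2}\frac{z}{z - s}\cdot\frac{w}{w - t}\,d\mu(s, t) \to 1$ as $|z|, |w| \to \infty$ jointly. (One also uses that $G_{\mu^{(i)}}(\zeta) \neq 0$ for $\zeta \in \bC \setminus \bR$, since $\Im G_{\mu^{(i)}}(\zeta)$ and $\Im \zeta$ have opposite signs, so the quotients below are legitimate.)

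To conclude I would write $\widetilde{E}_\mu(z, w) = \dfrac{zG_\mu(z, w)}{\bigl(zG_{\mu^{(1)}}(z)\bigr)G_{\mu^{(2)}}(w)} - 1$ and let $|z| \to \infty$ non-tangentially with $w$ fixed: the numerator tends to $G_{\mu^{(2)}}(w)$ and the denominator to $1\cdot G_{\mu^{(2)}}(w)$, so $\widetilde{E}_\mu(z, w) \to 0$, and with $\frac{1}{z}E_{\mu^{(1)}}(z) \to 0$ this yields $\lim_{|z| \to \infty}E_\mu(z, w) = \frac{1}{w}E_{\mu^{(2)}}(w)$; the case $|w| \to \infty$ is the same after swapping $z$ and $w$. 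For the joint limit I would use $\widetilde{E}_\mu(z, w) = \dfrac{zwG_\mu(z, w)}{\bigl(zG_{\mu^{(1)}}(z)\bigr)\bigl(wG_{\mu^{(2)}}(w)\bigr)} - 1$, whose numerator and denominator both tend to $1$, so $\widetilde{E}_\mu(z, w) \to 0$; since $\frac{1}{z}E_{\mu^{(1)}}(z)$ and $\frac{1}{w}E_{\mu^{(2)}}(w)$ also tend to $0$ we get $\lim_{|z|, |w| \to \infty}E_\mu(z, w) = 0$. The step that needs the most care — though it is not really an obstacle — is the uniform domination on a Stolz angle and keeping track of which variable is frozen in which regime; everything else is routine. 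It is worth noting the shortcut as well: since $E_\mu = \Phi_{(\mu, \delta_{(0, 0)})}$ as observed above, this lemma is precisely the specialization of \cite{GS2016}*{Lemma 6.16, Corollary 6.17, and Proposition 6.18} to the pair $(\mu, \delta_{(0, 0)})$, which one could simply invoke.
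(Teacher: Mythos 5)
Your argument is correct, but it is not the route the paper takes: in the paper this lemma is stated with no direct proof at all, being obtained by specializing the c-bi-free results of \cite{GS2016} (Lemma 6.16 there) to the pair $(\mu, \delta_{(0,0)})$ via the identity $E_\mu = \Phi_{(\mu, \delta_{(0,0)})}$ --- exactly the ``shortcut'' you mention in your last sentence. What you add is a self-contained, elementary proof: the uniform bound $\bigl|\tfrac{z}{z-s}\bigr| \le \sqrt{1+\alpha^2}$ on a Stolz angle together with dominated convergence gives $zG_{\mu^{(1)}}(z) \to 1$, $zG_\mu(z,w) \to G_{\mu^{(2)}}(w)$ for fixed $w$, and $zwG_\mu(z,w) \to 1$ jointly, and writing $\widetilde{E}_\mu$ with these normalized quantities (using $G_{\mu^{(i)}} \neq 0$ off $\bR$, which you justify by the sign of the imaginary part) makes each of the three limits immediate; the outer terms are handled by $\tfrac{1}{z}E_{\mu^{(1)}}(z) = 1 - \tfrac{1}{zG_{\mu^{(1)}}(z)} \to 0$. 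The bookkeeping of which variable is frozen in which regime is done correctly. The trade-off is the usual one: your direct argument is longer but transparent and independent of the c-bi-free machinery, while the paper's citation is shorter but imports the analytic work already done in \cite{GS2016}; either is acceptable, and your proof would in fact serve as a proof of the quoted ingredients in this special case.
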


\begin{cor}
If $\mu_1$ and $\mu_2$ are two Borel probability measures on $\bR^2$ such that $E_{\mu_1} = E_{\mu_2}$, then $\mu_1 = \mu_2$.
\end{cor}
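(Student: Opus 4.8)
The plan is to peel off the information contained in $E_\mu$ one piece at a time, using the asymptotic behaviour recorded in the preceding lemma together with the injectivity of the Cauchy transform in one and in two variables.

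\textbf{Step 1: recovering the marginals.} Fix $w \in \bC \setminus \bR$ and let $|z| \to \infty$ non-tangentially. By the preceding lemma, $E_{\mu_i}(z, w) \to \frac{1}{w}E_{\mu_i^{(2)}}(w)$ for $i = 1, 2$, so the hypothesis $E_{\mu_1} = E_{\mu_2}$ forces $E_{\mu_1^{(2)}} = E_{\mu_2^{(2)}}$ on $\bC \setminus \bR$. Since $E_\nu(z) = z - F_\nu(z) = z - 1/G_\nu(z)$ for a Borel probability measure $\nu$ on $\bR$, the function $E_{\mu_i^{(2)}}$ determines $F_{\mu_i^{(2)}}$ and hence $G_{\mu_i^{(2)}}$, and a Cauchy transform on $\bR$ uniquely determines its measure; thus $\mu_1^{(2)} = \mu_2^{(2)}$. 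The symmetric argument with $|w| \to \infty$ yields $\mu_1^{(1)} = \mu_2^{(1)}$.

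\textbf{Step 2: reducing to the Cauchy transform.} Now that the marginals are known to coincide, the terms $\frac{1}{z}E_{\mu_i^{(1)}}(z) + \frac{1}{w}E_{\mu_i^{(2)}}(w)$ agree for $i = 1, 2$, so subtracting them from $E_{\mu_1} = E_{\mu_2}$ leaves $\widetilde{E}_{\mu_1}(z, w) = \widetilde{E}_{\mu_2}(z, w)$ on $(\bC \setminus \bR)^2$; by Definition \ref{defn:partialselfenergy} this reads
\[\frac{G_{\mu_1}(z, w)}{G_{\mu_1^{(1)}}(z)\,G_{\mu_1^{(2)}}(w)} = \frac{G_{\mu_2}(z, w)}{G_{\mu_2^{(1)}}(z)\,G_{\mu_2^{(2)}}(w)}.\]
Since $\mu_1^{(1)} = \mu_2^{(1)}$ and $\mu_1^{(2)} = \mu_2^{(2)}$, the two denominators are identical, and they are nowhere zero on $(\bC \setminus \bR)^2$ (for a probability measure $\nu$ on $\bR$ one has $\operatorname{Im} G_\nu(z) \neq 0$ when $\operatorname{Im} z \neq 0$). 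Cancelling, we obtain $G_{\mu_1}(z, w) = G_{\mu_2}(z, w)$ for all $(z, w) \in (\bC \setminus \bR)^2$, and since the two-variable Cauchy transform uniquely determines the underlying measure on $\bR^2$, this forces $\mu_1 = \mu_2$.

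The only step requiring any care is the first one, where one must apply the asymptotic identity along the correct (non-tangential) approach and invoke the one-dimensional fact that the self-energy determines the measure; everything afterwards is formal cancellation followed by the injectivity of the Cauchy transform, so there is no genuine analytic obstacle here — the corollary is essentially a bookkeeping consequence of Definition \ref{defn:partialselfenergy} and the preceding lemma.
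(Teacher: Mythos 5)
Your proof is correct. Note, however, that the paper itself gives no standalone argument for this corollary: it is obtained by specializing \cite{GS2016}*{Corollary 6.17} for the c-bi-free partial Voiculescu transform $\Phi_{(\mu,\nu)}$ to the case $\nu = \delta_{(0,0)}$, using $E_\mu = \Phi_{(\mu,\delta_{(0,0)})}$. What you have written is essentially the natural unpacking of that citation: first use the non-tangential limits of the preceding lemma to isolate $\frac{1}{w}E_{\mu^{(2)}}(w)$ and $\frac{1}{z}E_{\mu^{(1)}}(z)$, recover the marginals from the one-variable self-energy via $E_\nu(z) = z - 1/G_\nu(z)$ and Stieltjes inversion, and then cancel the (nonvanishing) marginal Cauchy transforms in $\widetilde{E}_{\mu_1} = \widetilde{E}_{\mu_2}$ to conclude $G_{\mu_1} = G_{\mu_2}$, whence $\mu_1 = \mu_2$ by the injectivity of the two-variable Cauchy transform recalled at the start of the subsection. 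Your observation that $\operatorname{Im} G_\nu(z) \neq 0$ off $\bR$ correctly justifies the cancellation, and the argument uses nothing about convolutions, so it is unaffected by the errata. The only difference from the paper is that you give a self-contained two-variable argument rather than invoking the c-bi-free machinery, which is a perfectly good (and arguably more transparent) route.
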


\begin{prop}
Let $\{\mu_n\}_{n = 1}^\infty$ be a sequence of Borel probability measures on $\mathbb{R}^2$. The following assertions are equivalent:
\begin{enumerate}[$\qquad(1)$]
\item The sequence $\{\mu_n\}_{n = 1}^\infty$ converges weakly to a Borel probability measure $\mu$ on $\bR^2$.

\item The pointwise limits $\lim_{n \to \infty}E_{\mu_n}(z, w) = E(z, w)$ exist for all $(z, w) \in (\bC \setminus \bR)^2$, and the limit $E_{\mu_n}(z, w) \to 0$ holds uniformly in $n$ as $|z|, |w| \to \infty$ non-tangentially.
\end{enumerate}
Moreover, if these assertions hold, then $E = E_\mu$ on $(\bC \setminus \bR)^2$.
\end{prop}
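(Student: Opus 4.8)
We prove the equivalence by establishing the two implications separately; the uniqueness corollary above will be used to pin down the limit measure in the harder direction $(2)\Rightarrow(1)$.

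We first prove $(1)\Rightarrow(2)$. If $\mu_n\to\mu$ weakly on $\bR^2$, then applying the test functions $(s,t)\mapsto f(s)$ and $(s,t)\mapsto f(t)$ shows $\mu_n^{(i)}\to\mu^{(i)}$ weakly on $\bR$ for $i=1,2$. For each fixed $(z,w)\in(\bC\setminus\bR)^2$ the functions $(s,t)\mapsto\frac{1}{(z-s)(w-t)}$ and $s\mapsto\frac{1}{z-s}$ are bounded and continuous, so weak convergence gives the pointwise limits $G_{\mu_n}(z,w)\to G_\mu(z,w)$ and $G_{\mu_n^{(i)}}(z)\to G_{\mu^{(i)}}(z)$. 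Since $\mathrm{Im}\,G_\nu(z)\neq 0$ for every probability measure $\nu$ on $\bR$ and $z\in\bC\setminus\bR$, the denominators appearing in Definition \ref{defn:partialselfenergy} converge to nonzero limits, and the algebraic expression for $E_{\mu_n}(z,w)$ is continuous in the triple $(G_{\mu_n}(z,w),G_{\mu_n^{(1)}}(z),G_{\mu_n^{(2)}}(w))$ away from the vanishing locus of the last two coordinates; hence $E_{\mu_n}(z,w)\to E_\mu(z,w)$ pointwise. For the uniform decay I would use that the sequence is tight: given $\alpha,\epsilon>0$, choose $R_0$ with $\mu_n([-R_0,R_0]^2)>1-\epsilon$ for all $n$; for $z,w\in\Gamma_\alpha\cup\overline{\Gamma_\alpha}$ one has $|z-s|\geq|\mathrm{Im}\,z|\geq|z|/\sqrt{1+\alpha^2}$, and splitting the defining integrals over $[-R_0,R_0]^2$ and its complement shows $zwG_{\mu_n}(z,w)\to 1$ and $zG_{\mu_n^{(i)}}(z)\to 1$ uniformly in $n$ as $|z|,|w|\to\infty$ non-tangentially (the ``inside'' error depends only on $R_0,z,w$, the ``outside'' error is at most $(1+\alpha^2)\epsilon$). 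Feeding this into $\widetilde{E}_{\mu_n}(z,w)=\frac{zwG_{\mu_n}(z,w)}{zG_{\mu_n^{(1)}}(z)\cdot wG_{\mu_n^{(2)}}(w)}-1$ and $\frac{1}{z}E_{\mu_n^{(1)}}(z)=1-\frac{1}{zG_{\mu_n^{(1)}}(z)}$ gives $E_{\mu_n}(z,w)\to 0$ uniformly in $n$.

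Now we prove $(2)\Rightarrow(1)$, the key point being tightness. For each fixed $n$ the Lemma above gives $\lim_{|z|\to\infty}E_{\mu_n}(z,w_0)=\frac{1}{w_0}E_{\mu_n^{(2)}}(w_0)$ non-tangentially, so letting $|z|\to\infty$ in the hypothesis (``$|E_{\mu_n}(z,w_0)|<\epsilon$ for all $n$ whenever $|z|,|w_0|$ are large inside $\Gamma_\alpha$'') yields $\big|\frac{1}{w_0}E_{\mu_n^{(2)}}(w_0)\big|\leq\epsilon$ for all $n$; thus $\frac{1}{w}E_{\mu_n^{(2)}}(w)\to 0$ uniformly in $n$, and symmetrically for $\mu_n^{(1)}$. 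Taking $w=iy$, from $\frac{1}{iy}E_{\mu_n^{(2)}}(iy)=1-\frac{1}{iyG_{\mu_n^{(2)}}(iy)}$ and the elementary fact that $1/\xi\to 1$ forces $\xi\to 1$, we get $iyG_{\mu_n^{(2)}}(iy)\to 1$ uniformly in $n$; taking real parts, $\mathrm{Re}\,(1-iyG_{\mu_n^{(2)}}(iy))=\int\frac{t^2}{y^2+t^2}\,d\mu_n^{(2)}(t)\geq\frac{1}{2}\mu_n^{(2)}(\{|t|>y\})$, so $\sup_n\mu_n^{(2)}(\{|t|>y\})\to 0$ as $y\to\infty$. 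Hence $\{\mu_n^{(2)}\}$, and symmetrically $\{\mu_n^{(1)}\}$, are tight, so $\{\mu_n\}$ is tight on $\bR^2$. By Prokhorov's theorem every subsequence has a weakly convergent sub-subsequence $\mu_{n_k}\to\nu$; the $(1)\Rightarrow(2)$ argument then gives $E_{\mu_{n_k}}\to E_\nu$ pointwise, while the hypothesis gives $E_{\mu_{n_k}}\to E$, so $E_\nu=E$. As this holds for every subsequential limit, the Corollary above (a Borel probability measure on $\bR^2$ is determined by its partial self-energy) shows all subsequential limits coincide; therefore $\mu_n$ converges weakly to this common limit $\mu$, and $E_\mu=E$ on $(\bC\setminus\bR)^2$, which also gives the final assertion (and the same computation gives it in the $(1)\Rightarrow(2)$ direction).

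The main obstacle is the tightness step in $(2)\Rightarrow(1)$: one has to extract an honest tail bound on the marginals of $\mu_n$ from the purely analytic decay of $E_{\mu_n}$ near infinity. The two delicate points are (i) that the iterated passage to the limit is legitimate, which is handled cleanly by invoking the Lemma above separately for each fixed $n$ so that no uniformity in $n$ is needed at that stage, and (ii) keeping the Cauchy transforms $G_{\mu_n^{(i)}}$ away from $0$ when inverting reciprocals, which follows from $\mathrm{Im}\,G_\nu(z)\neq 0$ together with $|G_\nu(iy)|\leq 1/y$. Alternatively, since the excerpt records the identity $E_\mu=\Phi_{(\mu,\delta_{(0,0)})}$, the entire statement is the specialization of \cite{GS2016}*{Proposition 6.18} to second component $\delta_{(0,0)}$; the self-contained argument above is short enough to be worth giving in full.
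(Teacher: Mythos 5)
Your proof is correct. Note that the paper itself offers no self-contained argument for this proposition: it is recorded (together with the preceding lemma and corollary) as an immediate specialization of \cite{GS2016}*{Proposition 6.18}, via the identity $E_\mu = \Phi_{(\mu, \delta_{(0,0)})}$ --- precisely the alternative you mention in your closing remark. Your direct proof is therefore a genuinely different route within this paper, and it holds up: in $(1)\Rightarrow(2)$ you obtain pointwise convergence from bounded continuous test functions together with the non-vanishing of the marginal Cauchy transforms on $\bC\setminus\bR$, and the uniform decay from tightness of the weakly convergent sequence via the Stolz-angle bound $|z-s|\ge|\Im z|\ge |z|/\sqrt{1+\alpha^2}$; in $(2)\Rightarrow(1)$ the crucial step --- extracting tightness of the marginals from the uniform decay of $E_{\mu_n}$ --- is handled cleanly by letting $|z|\to\infty$ at each fixed $n$ (using the preceding lemma, so no uniformity in $n$ is needed there) to transfer the decay to $\frac{1}{w}E_{\mu_n^{(2)}}(w)$ uniformly in $n$, then reading off the tail bound $\Re\bigl(1-iyG_{\mu_n^{(2)}}(iy)\bigr)\ge\frac12\mu_n^{(2)}(\{|t|>y\})$, and finishing with Prokhorov's theorem and the uniqueness corollary. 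What the paper's citation buys is brevity and consistency with the c-bi-free machinery of \cite{GS2016}; what your argument buys is a proof readable without that machinery, at the cost of about a page. The only blemish is cosmetic: in the uniform-decay estimate the ``outside'' error should carry the extra $+1$ coming from subtracting $1$ from the integrand (e.g.\ $(2+\alpha^2)\epsilon$ rather than $(1+\alpha^2)\epsilon$), which affects nothing.
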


Furthermore, \cite{GS2016}*{Theorem 6.19, Proposition 6.20, Theorems 6.23 and 6.25} imply the following additive bi-Boolean limit theorems.

\begin{thm}\label{LimitThm}
Let $\{\mu_n\}_{n = 1}^\infty$ be a sequence of Borel probability measures on $\bR^2$ and $\{k_n\}_{n = 1}^\infty$ be a sequence of positive integers with $\lim_{n \to \infty}k_n = \infty$. Assume the sequences $\{[\mu_n^{(1)}]^{\uplus k_n}\}_{n = 1}^\infty$ and $\{[\mu_n^{(2)}]^{\uplus k_n}\}_{n = 1}^\infty$ converge weakly to $\mu_{\uplus}^{(\gamma_1, \sigma_1)}$ and $\mu_{\uplus}^{(\gamma_2, \sigma_2)}$ respectively, where $\gamma_1, \gamma_2 \in \bR$ are real numbers, and $\sigma_1, \sigma_2$ are finite positive Borel measures on $\mathbb{R}$. The following assertions are equivalent.
\begin{enumerate}[$\qquad(1)$]
\item The pointwise limits
\[\lim_{n \to \infty}k_nE_{\mu_n}(z, w) = E(z, w)\]
exist for all $(z, w) \in (\bC \setminus \bR)^2$.

\item The pointwise limits
\[\lim_{n \to \infty}k_n\int_{\bR^2}\frac{st}{(z - s)(w - t)}\,d\mu_n(s, t) = \widetilde{E}(z, w)\]
exist for all $(z, w) \in (\mathbb{C} \setminus \mathbb{R})^2$.

\item The finite signed Borel measures
\[d\widetilde{\sigma}_n(s, t) = k_n\frac{st}{\sqrt{1 + s^2}\sqrt{1 + t^2}}\,d\mu_n(s, t)\]
converge weakly to a finite signed Borel measure $\sigma$ on $\mathbb{R}^2$.
\end{enumerate}
Moreover, if these assertions hold, then the function $\widetilde{E}$ from assertion $(2)$ has a unique integral representation
\[\widetilde{E}(z, w) = \int_{\mathbb{R}^2}\frac{\sqrt{1 + s^2}\sqrt{1 + t^2}}{(z - s)(w - t)}\,d\sigma(s, t),\]
and the function $E$ from assertion $(1)$ can be written as
\[E(z, w) = \frac{1}{z}E_{\mu_{\uplus}^{(\gamma_1, \sigma_1)}}(z) + \frac{1}{w}E_{\mu_{\uplus}^{(\gamma_2, \sigma_2)}}(w) + \widetilde{E}(z, w)\]
for all $(z, w) \in (\mathbb{C} \setminus \mathbb{R})^2$.
\end{thm}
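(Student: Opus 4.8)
The plan is to deduce Theorem \ref{LimitThm} from the c-bi-free limit theorems of \cite{GS2016} by specializing the second measure in each pair to $\delta_{(0, 0)}$. First I would record the identifications that make this work. For any Borel probability measure $\mu$ on $\bR^2$ one has $F_{\delta_0}(z) = z$, hence $F_{\delta_0}^{-1} = \mathrm{id}$, so the c-bi-free partial Voiculescu transform collapses to the partial self-energy, $\Phi_{(\mu, \delta_{(0, 0)})} = E_\mu$, and its reduced part collapses to $\widetilde{E}_\mu$ via $G_{\delta_{(0, 0)}}(z, w) = \tfrac{1}{zw}$; likewise in one variable $\Phi_{(\nu, \delta_0)} = E_\nu$, so that the c-free convolution power of $\mu_n^{(i)}$ relative to the pair $(\mu_n^{(i)}, \delta_0)$ is exactly the Boolean power $[\mu_n^{(i)}]^{\uplus k_n}$. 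Consequently the hypotheses here---weak convergence of $\{[\mu_n^{(1)}]^{\uplus k_n}\}$ and $\{[\mu_n^{(2)}]^{\uplus k_n}\}$ to $\mu_{\uplus}^{(\gamma_1, \sigma_1)}$ and $\mu_{\uplus}^{(\gamma_2, \sigma_2)}$---are precisely the hypotheses of the corresponding c-bi-free result applied to the pairs $(\mu_n, \delta_{(0, 0)})$, whose second components are trivially $\boxplus\boxplus$-infinitely divisible with $[\delta_{(0, 0)}]^{\boxplus\boxplus k_n} = \delta_{(0, 0)}$.

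Next I would match the three assertions term by term with \cite{GS2016}*{Theorems 6.23 and 6.25}. Assertion $(1)$ is the convergence of $k_n\Phi_{(\mu_n, \delta_{(0, 0)})}$, which is statement $(1)$ there under the identification $\Phi = E$. In assertion $(2)$, multiplying and dividing the integrand $\frac{st}{(z - s)(w - t)}$ by $\sqrt{1 + s^2}\sqrt{1 + t^2}$ rewrites the integral as one against the signed measures $\widetilde{\sigma}_n$ of assertion $(3)$; since the kernel $\frac{\sqrt{1 + s^2}\sqrt{1 + t^2}}{(z - s)(w - t)}$ is bounded, continuous, and vanishing at infinity on $\bR^2$ for each fixed $(z, w) \in (\bC \setminus \bR)^2$, the Helly-type compactness and uniqueness argument for Cauchy--Stieltjes transforms already carried out in \cite{GS2016} gives $(2) \Leftrightarrow (3)$ together with the integral representation of the limit $\widetilde{E}$. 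Finally $(1) \Leftrightarrow (2)$ follows by subtracting the marginal contributions $\tfrac{1}{z}E_{\mu_n^{(1)}}(z)$ and $\tfrac{1}{w}E_{\mu_n^{(2)}}(w)$---whose $k_n$-multiples converge by the one-dimensional Boolean Bercovici--Pata theorem recalled above---from the decomposition $E_{\mu_n} = \tfrac{1}{z}E_{\mu_n^{(1)}} + \tfrac{1}{w}E_{\mu_n^{(2)}} + \widetilde{E}_{\mu_n}$ and the analogous decomposition of the limit $E$.

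The ``moreover'' part then drops out of the same correspondence: the integral representation of $\widetilde{E}$ and the formula for $E$ are the $\delta_{(0, 0)}$-specializations of the corresponding representations in \cite{GS2016}, the marginal limits being supplied by the standing hypothesis. The step I expect to require the most care is checking that nothing in the \cite{GS2016} arguments degenerates when $\nu = \delta_{(0, 0)}$: because $F_{\delta_0}^{-1} = \mathrm{id}$, the transform $\Phi_{(\mu_n, \delta_{(0, 0)})}$ is in fact defined on all of $(\bC \setminus \bR)^2$ rather than only on a region $\Omega_{\alpha, \beta}$, so the uniform-in-$n$ non-tangential decay hypotheses appearing in the c-bi-free proofs must be re-read in that enlarged setting---but this is exactly the content of the basic properties of $E_\mu$ recorded in the lemma and proposition above, so the theorem follows as a direct corollary.
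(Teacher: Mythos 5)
Your proposal takes essentially the same route as the paper: the paper proves this theorem precisely by noting $E_\mu = \Phi_{(\mu, \delta_{(0,0)})}$ (and, in one variable, $E_\nu = \Phi_{(\nu,\delta_0)}$, so the hypotheses on the Boolean powers of the marginals match) and then specializing the c-bi-free limit theorems of \cite{GS2016}*{Subsections 6.3--6.6} to the pairs $(\mu_n, \delta_{(0,0)})$, which is exactly your argument, including the rewriting of assertion $(2)$ against the signed measures $\widetilde{\sigma}_n$. The only minor slip is the claim that the kernel $\frac{\sqrt{1+s^2}\sqrt{1+t^2}}{(z-s)(w-t)}$ vanishes at infinity---it is bounded and continuous but tends to nonzero limits as $|s|,|t| \to \infty$---which does not affect the appeal to the compactness and uniqueness arguments already carried out in \cite{GS2016}.
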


\begin{prop}
Let $\{\mu_n\}_{n = 1}^\infty$ and $\{k_n\}_{n = 1}^\infty$ be sequences of measures and positive integers satisfying the assumptions of Theorem \ref{LimitThm}. Assume furthermore that each $\mu_n$ is compactly supported.  Then the sequence $\{\mu_n^{\uplus\uplus k_n}\}_{n = 1}^\infty$ converges weakly to a Borel probability measure on $\mathbb{R}^2$ if and only if the sequences $\{[\mu_n^{(1)}]^{\uplus k_n}\}_{n = 1}^\infty$, $\{[\mu_n^{(2)}]^{\uplus k_n}\}_{n = 1}^\infty$, and $\{\widetilde{\sigma}_n\}_{n = 1}^\infty$ are weakly convergent, where $\{\widetilde{\sigma}_n\}_{n = 1}^\infty$ is defined as in assertion $(3)$ of Theorem \ref{LimitThm}. Moreover, if $\{\mu_n^{\uplus\uplus k_n}\}_{n = 1}^\infty$, $\{[\mu_n^{(1)}]^{\uplus k_n}\}_{n = 1}^\infty$, $\{[\mu_n^{(2)}]^{\uplus k_n}\}_{n = 1}^\infty$, and $\{\widetilde{\sigma}_n\}_{n = 1}^\infty$ converge weakly to $\mu$, $\mu_{\uplus}^{(\gamma_1, \sigma_1)}$, $\mu_{\uplus}^{(\gamma_2, \sigma_2)}$, and $\sigma$ respectively, then $\mu^{(1)} = \mu_{\uplus}^{(\gamma_1, \sigma_1)}$, $\mu^{(2)} = \mu_{\uplus}^{(\gamma_2, \sigma_2)}$, and
\[G_\mu(z, w) = G_{\mu^{(1)}}(z)G_{\mu^{(2)}}(w)\left(G_{\sigma'}(z, w) + 1\right), \quad (z, w) \in (\bC \setminus \bR)^2,\]
where $d\sigma'(s, t) = \sqrt{1 + s^2}\sqrt{1 + t^2}\,d\sigma(s, t)$.
\end{prop}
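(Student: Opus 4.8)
The plan is to obtain this as the bi-Boolean specialization of \cite{GS2016}*{Proposition 6.20}, exploiting that $E_\nu = \Phi_{(\nu, \delta_{(0,0)})}$ for every Borel probability measure $\nu$ on $\bR^2$; equivalently, one may argue directly from Theorem \ref{LimitThm}, the preceding Proposition, and the one-dimensional self-energy calculus. I sketch the direct route.

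First I would record two structural facts. Since $\uplus\uplus$ is the restriction of $\boxplus\boxplus_{\mathrm c}$ to pairs whose second component is $\delta_{(0,0)}$ and $E_\nu = \Phi_{(\nu, \delta_{(0,0)})}$, the partial self-energy linearizes $\uplus\uplus$, so $E_{\mu_n^{\uplus\uplus k_n}} = k_n E_{\mu_n}$ and likewise $E_{[\mu_n^{(j)}]^{\uplus k_n}} = k_n E_{\mu_n^{(j)}}$ for $j = 1, 2$. Moreover, bi-Boolean independence of $(a_1, b_1)$ and $(a_2, b_2)$ forces $\{a_1, a_2\}$ and $\{b_1, b_2\}$ to be Boolean independent, so the marginals of $\mu_n^{\uplus\uplus k_n}$ are exactly $[\mu_n^{(1)}]^{\uplus k_n}$ and $[\mu_n^{(2)}]^{\uplus k_n}$; thus the standing hypotheses of Theorem \ref{LimitThm} say precisely that these marginal sequences converge weakly to $\mu_{\uplus}^{(\gamma_1, \sigma_1)}$ and $\mu_{\uplus}^{(\gamma_2, \sigma_2)}$, while compact support of each $\mu_n$ ensures that $\mu_n^{\uplus\uplus k_n}$ is a well-defined compactly supported measure.

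Next I would apply the preceding Proposition to $\{\mu_n^{\uplus\uplus k_n}\}$: it converges weakly to some $\mu$ on $\bR^2$ if and only if the pointwise limits $\lim_{n \to \infty} k_n E_{\mu_n}(z, w)$ exist on $(\bC \setminus \bR)^2$ and $k_n E_{\mu_n}(z, w) \to 0$ uniformly in $n$ as $|z|, |w| \to \infty$ non-tangentially. Using
\[k_n E_{\mu_n}(z, w) = \frac{1}{z}\,k_n E_{\mu_n^{(1)}}(z) + \frac{1}{w}\,k_n E_{\mu_n^{(2)}}(w) + k_n \widetilde{E}_{\mu_n}(z, w),\]
the first two terms converge pointwise with the required uniform decay by the hypotheses on the marginals and the one-dimensional theory of $\uplus$, so weak convergence of $\{\mu_n^{\uplus\uplus k_n}\}$ is equivalent to the pointwise convergence of $k_n \widetilde{E}_{\mu_n}(z, w)$ on $(\bC \setminus \bR)^2$; since $k_n \widetilde{E}_{\mu_n}$ and $k_n \int_{\bR^2}\frac{st}{(z-s)(w-t)}\,d\mu_n$ share the same limit, the equivalence of $(1)$ and $(3)$ in Theorem \ref{LimitThm} identifies this with the weak convergence of $\{\widetilde{\sigma}_n\}$, which is the asserted equivalence.

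Finally, granting all four convergences, weak convergence $\mu_n^{\uplus\uplus k_n} \to \mu$ passes to the marginals, giving $\mu^{(1)} = \mu_{\uplus}^{(\gamma_1, \sigma_1)}$ and $\mu^{(2)} = \mu_{\uplus}^{(\gamma_2, \sigma_2)}$. Comparing $E_\mu$ expanded via Definition \ref{defn:partialselfenergy} with the limit $E$ expanded via the last display of Theorem \ref{LimitThm}, one reads off $\widetilde{E}_\mu(z, w) = \int_{\bR^2}\frac{\sqrt{1+s^2}\sqrt{1+t^2}}{(z-s)(w-t)}\,d\sigma(s,t) = G_{\sigma'}(z, w)$ with $d\sigma'(s, t) = \sqrt{1+s^2}\sqrt{1+t^2}\,d\sigma(s, t)$; unravelling $\widetilde{E}_\mu(z, w) = \frac{G_\mu(z, w)}{G_{\mu^{(1)}}(z)G_{\mu^{(2)}}(w)} - 1$ then yields $G_\mu(z, w) = G_{\mu^{(1)}}(z)G_{\mu^{(2)}}(w)(G_{\sigma'}(z, w) + 1)$. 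The one genuinely delicate point is the uniform-decay bookkeeping needed both to invoke the preceding Proposition and to split the limit across the three pieces of $E_{\mu_n}$ — this is what compact support buys and what \cite{GS2016}*{Section 6} carries out, so in the end the proposition follows from \cite{GS2016}*{Proposition 6.20} by the specialization indicated at the start.
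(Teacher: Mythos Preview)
Your proposal is correct and matches the paper's approach exactly: the paper states this proposition without a standalone proof, deriving it (together with the neighbouring results) by specializing \cite{GS2016}*{Proposition 6.20} to the bi-Boolean case via $E_\nu = \Phi_{(\nu,\delta_{(0,0)})}$, which is precisely what you do at the start and end of your argument. The direct sketch you add in between is a reasonable elaboration of that specialization, and your flagging of the uniform-decay bookkeeping as the one delicate point (deferred to \cite{GS2016}) is accurate.
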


\begin{thm}
Let $\{\mu_n\}_{n = 1}^\infty$ and $\{k_n\}_{n = 1}^\infty$ be sequences of measures and positive integers satisfying the assumptions of Theorem \ref{LimitThm}. If the pointwise limits $\lim_{n \to \infty}k_nE_{\mu_n}(z, w) = E(z, w)$ exist for all $(z, w) \in (\mathbb{C} \setminus \mathbb{R})^2$, then there exists a unique Borel probability measure $\mu$ on $\mathbb{R}^2$ such that $E = E_\mu$ on $(\mathbb{C} \setminus \mathbb{R})^2$.
\end{thm}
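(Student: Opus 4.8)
The plan is to deduce the statement from the c-bi-free theory of \cite{GS2016} by taking the second measure of a pair to be a point mass. Recall from the discussion following Definition \ref{defn:partialselfenergy} that the partial self-energy coincides with the c-bi-free partial Voiculescu transform applied to a pair whose second component is $\delta_{(0, 0)}$, i.e.\ $E_\nu = \Phi_{(\nu, \delta_{(0, 0)})}$ for every Borel probability measure $\nu$ on $\bR^2$. Applying \cite{GS2016}*{Theorem 6.25} to the pairs $(\mu_n, \delta_{(0, 0)})$ together with the integers $k_n$ — the second component being $\delta_{(0, 0)}$ throughout, all hypotheses of that theorem concerning it are trivially met, while $k_n E_{\mu_n} = k_n \Phi_{(\mu_n, \delta_{(0, 0)})} \to E$ supplies the remaining hypothesis, which matches the setting of Theorem \ref{LimitThm} — yields a unique pair $(\mu, \delta_{(0, 0)})$ of Borel probability measures on $\bR^2$ with $\Phi_{(\mu, \delta_{(0, 0)})} = E$, that is, with $E_\mu = E$. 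Uniqueness of $\mu$ is the uniqueness corollary recorded above.

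For a self-contained argument one would proceed directly. By Theorem \ref{LimitThm} the hypothesis forces the existence of a finite signed Borel measure $\sigma$ on $\bR^2$ (the weak limit of the $\widetilde{\sigma}_n$) with
\[
E(z, w) = \frac{1}{z}E_{\mu_{\uplus}^{(\gamma_1, \sigma_1)}}(z) + \frac{1}{w}E_{\mu_{\uplus}^{(\gamma_2, \sigma_2)}}(w) + \int_{\bR^2}\frac{\sqrt{1 + s^2}\sqrt{1 + t^2}}{(z - s)(w - t)}\,d\sigma(s, t).
\]
Set $\mu^{(1)} := \mu_{\uplus}^{(\gamma_1, \sigma_1)}$ and $\mu^{(2)} := \mu_{\uplus}^{(\gamma_2, \sigma_2)}$, which exist by the Boolean L\'{e}vy-Hin\v{c}in formula \eqref{BooleanLH}, put $d\sigma'(s, t) = \sqrt{1 + s^2}\sqrt{1 + t^2}\,d\sigma(s, t)$, and define
\[
G(z, w) := G_{\mu^{(1)}}(z)\,G_{\mu^{(2)}}(w)\left(1 + G_{\sigma'}(z, w)\right), \qquad (z, w) \in (\bC \setminus \bR)^2.
\]
A substitution into Definition \ref{defn:partialselfenergy} shows that, once $G$ is known to be the two-variable Cauchy transform $G_\mu$ of a Borel probability measure $\mu$ on $\bR^2$, one has $\widetilde{E}_\mu = G_{\sigma'}$ and hence $E_\mu = E$; and such a $\mu$ is unique by the uniqueness corollary recorded above. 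Thus the whole problem collapses to showing that $G$ is the two-variable Cauchy transform of a Borel probability measure on $\bR^2$.

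This last point is the main obstacle, as it cannot be verified purely formally and requires an approximation argument. The plan is to truncate the L\'{e}vy-Hin\v{c}in data: replace $\sigma$ by its restriction $\sigma_N$ to the ball of radius $N$ in $\bR^2$ and $(\gamma_i, \sigma_i)$ by compactly supported approximants, so that the resulting data arises as the limiting data, in the sense of Theorem \ref{LimitThm}, of some sequence of compactly supported measures and positive integers. The compactly-supported limit theorem recorded above then produces compactly supported Borel probability measures $\mu_N$ on $\bR^2$ whose Cauchy transforms are precisely $G_{\mu_N^{(1)}}(z)\,G_{\mu_N^{(2)}}(w)\left(1 + G_{\sigma'_N}(z, w)\right)$. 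One then checks that $E_{\mu_N} \to E$ pointwise on $(\bC \setminus \bR)^2$ and, using the Stolz-angle estimates of \cite{BV1993} together with the uniform bound on the total variations $\|\sigma_N\|$, that $E_{\mu_N}(z, w) \to 0$ uniformly in $N$ as $|z|, |w| \to \infty$ non-tangentially. The convergence criterion for partial self-energies recorded above then yields a Borel probability measure $\mu$ on $\bR^2$ with $\mu_N \to \mu$ weakly and $E_\mu = \lim_N E_{\mu_N} = E$. The delicate step throughout is the uniform non-tangential decay at infinity, which is exactly where the integrability encoded in the L\'{e}vy-Hin\v{c}in data — the finiteness of $\sigma_1$, $\sigma_2$, and $\sigma$ — is used.
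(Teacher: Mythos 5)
Your first paragraph is exactly the paper's own proof: the paper deduces this statement (and the surrounding limit theorems) from the c-bi-free results of \cite{GS2016}*{Subsections 6.3 to 6.6} by observing that $E_\mu = \Phi_{(\mu, \delta_{(0, 0)})}$ and taking the second component of every pair to be $\delta_{(0, 0)}$, so your reduction matches the intended argument (the in-order correspondence of the cited results suggests the relevant one is \cite{GS2016}*{Theorem 6.23} rather than Theorem 6.25, a harmless mis-citation). The supplementary ``self-contained'' truncation sketch is not part of the paper's proof and is left incomplete (the positivity of the limiting object and the uniform non-tangential decay are only asserted, and these are precisely the delicate points), but since the reduction already suffices, the proposal is in essence the paper's argument.
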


\begin{thm}
Let $\mu$ be a Borel probability measure on $\mathbb{R}^2$ with partial self-energy $E_\mu$. The measure $\mu$ is $\uplus\uplus$-infinitely divisible if and only if there exist a sequence $\{\mu_n\}_{n = 1}^\infty$ of Borel probability measures on $\mathbb{R}^2$ and a sequence $\{k_n\}_{n = 1}^\infty$ of positive integers with $\lim_{n \to \infty}k_n = \infty$ such that the sequences $\{[\mu_n^{(1)}]^{\uplus k_n}\}_{n = 1}^\infty$ and $\{[\mu_n^{(2)}]^{\uplus k_n}\}_{n = 1}^\infty$ converge weakly to $\mu^{(1)}$ and $\mu^{(2)}$ respectively, and $\lim_{n \to \infty}k_nE_{\mu_n} = E_\mu$ on $(\mathbb{C} \setminus \mathbb{R})^2$.
\end{thm}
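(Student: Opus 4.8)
The plan is to establish the two implications separately, working throughout at the level of the partial self-energy, since (unwinding Definition \ref{defn:partialselfenergy} together with the definition of $\uplus\uplus$-infinite divisibility) the measure $\mu$ is $\uplus\uplus$-infinitely divisible exactly when for every $n \in \bN$ there is a Borel probability measure $\mu_n$ on $\bR^2$ with $E_\mu = nE_{\mu_n}$. Of the two implications the forward one is essentially immediate, and the reverse one is a standard ``grouping of a triangular array'' argument built on Theorem \ref{LimitThm} and the theorem immediately preceding the present statement.

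For the forward implication, assume $\mu$ is $\uplus\uplus$-infinitely divisible, take $k_n = n$, and let $\mu_n$ be a Borel probability measure on $\bR^2$ with $E_\mu = nE_{\mu_n}$. Sending $|z| \to \infty$ and then $|w| \to \infty$ non-tangentially in this identity and invoking the lemma on the asymptotics of the partial self-energy, one gets $E_{\mu^{(2)}} = nE_{\mu_n^{(2)}}$ and $E_{\mu^{(1)}} = nE_{\mu_n^{(1)}}$ as one-variable self-energies. Since in the one-dimensional Boolean theory the self-energy linearizes $\uplus$ and determines its measure \cite{SW1997}, this forces $\mu^{(i)} = [\mu_n^{(i)}]^{\uplus n}$ on the nose for $i = 1, 2$; hence the sequences $\{[\mu_n^{(i)}]^{\uplus k_n}\}_{n = 1}^\infty$ are constant and converge weakly to $\mu^{(i)}$, and $k_nE_{\mu_n} = E_\mu$ converges (trivially) to $E_\mu$. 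This produces the required sequences.

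For the reverse implication, suppose $\{\mu_n\}_{n=1}^\infty$ and $\{k_n\}_{n=1}^\infty$ are as in the statement; fixing $p \in \bN$, I will construct a Borel probability measure $\nu_p$ on $\bR^2$ with $E_\mu = pE_{\nu_p}$. Discarding the finitely many $n$ with $k_n < p$ and writing $k_n' = \lfloor k_n/p\rfloor$, we have $k_n' \to \infty$ and $k_n'/k_n \to 1/p$, whence $k_n'E_{\mu_n} = (k_n'/k_n)\,(k_nE_{\mu_n}) \to \tfrac1p E_\mu$ pointwise on $(\bC \setminus \bR)^2$. On the marginals, the weak convergence $[\mu_n^{(i)}]^{\uplus k_n} \to \mu^{(i)}$ is equivalent, by the one-dimensional Boolean continuity theorem (the $\bR$-analogue of the Proposition preceding Theorem \ref{LimitThm}; see also \cite{BP1999}), to $k_nE_{\mu_n^{(i)}} \to E_{\mu^{(i)}}$, so $k_n'E_{\mu_n^{(i)}} \to \tfrac1p E_{\mu^{(i)}}$; as every probability measure on $\bR$ is $\uplus$-infinitely divisible, $\tfrac1p E_{\mu^{(i)}}$ is the self-energy of some probability measure $\lambda_p^{(i)}$ (necessarily with $[\lambda_p^{(i)}]^{\uplus p} = \mu^{(i)}$), and running the equivalence backwards gives $[\mu_n^{(i)}]^{\uplus k_n'} \to \lambda_p^{(i)}$ weakly. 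Thus $\{\mu_n\}_{n=1}^\infty$ together with $\{k_n'\}_{n=1}^\infty$ satisfies the hypotheses of Theorem \ref{LimitThm} and the rescaled self-energies $k_n'E_{\mu_n}$ converge pointwise on $(\bC \setminus \bR)^2$ to $\tfrac1p E_\mu$; the theorem immediately preceding the present statement then yields a (unique) Borel probability measure $\nu_p$ with $E_{\nu_p} = \tfrac1p E_\mu$, i.e. $E_\mu = pE_{\nu_p}$. As $p$ was arbitrary, $\mu$ is $\uplus\uplus$-infinitely divisible.

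The step I expect to carry the weight is the passage, in both directions, between weak convergence of Boolean convolution powers and pointwise convergence of rescaled self-energies, both along the given rate $k_n$ and along the subsampled rate $k_n'$: this is the one-dimensional Boolean continuity theorem, whose content is the rigidity and continuity of the self-energy under weak convergence. It is worth noting, however, that in keeping with the rest of this section the statement can instead be deduced in one line: the identity $E_\mu = \Phi_{(\mu, \delta_{(0,0)})}$ recorded above shows that the present theorem is exactly the specialization of the c-bi-free infinite divisibility theorem \cite{GS2016}*{Theorem 6.25} obtained by taking the second component of each pair to be $\delta_{(0,0)}$, and I would present that reduction as the formal proof, with the self-contained argument above included as a remark.
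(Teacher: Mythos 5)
Your proposal is correct and takes the same route as the paper: the paper proves this theorem precisely by observing $E_\mu = \Phi_{(\mu, \delta_{(0,0)})}$ and specializing the c-bi-free infinite divisibility result \cite{GS2016}*{Theorem 6.25} (together with Theorems 6.19, 6.23 and Proposition 6.20) to pairs whose second component is $\delta_{(0,0)}$, which is exactly the reduction you designate as the formal proof. Your supplementary self-contained argument is also essentially sound; just note that running the one-dimensional Boolean continuity theorem backwards for the marginals along $k_n' = \lfloor k_n/p \rfloor$ requires the uniform non-tangential decay of the self-energies, which is inherited from the original sequence because $k_n'/k_n \leq 1$.
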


Consequently, every $\uplus\uplus$-infinitely divisible measure $\mu$ has a partial self-energy of the form
\[E_\mu(z, w) = \frac{1}{z}E_{\mu_{\uplus}^{(\gamma_1, \sigma_1)}}(z) + \frac{1}{w}E_{\mu_{\uplus}^{(\gamma_2, \sigma_2)}}(w) + \int_{\mathbb{R}^2}\frac{\sqrt{1 + s^2}\sqrt{1 + t^2}}{(z - s)(w - t)}\,d\sigma(s, t), \quad (z, w) \in (\bC \setminus \bR)^2,\]
where $\mu^{(1)} = \mu_{\uplus}^{(\gamma_1, \sigma_1)}$ and $\mu^{(2)} = \mu_{\uplus}^{(\gamma_2, \sigma_2)}$. Note that if $\widehat{\sigma}_1$ is a finite positive Borel measure on $\bR^2$ such that $\widehat{\sigma}_1^{(1)} = \sigma_1$, then
\[\frac{1}{z}E_{\mu_{\uplus}^{(\gamma_1, \sigma_1)}}(z) = \frac{1}{z}\left(\gamma_1 + \int_{\bR^2}\frac{1 + sz}{z - s}\,d\widehat{\sigma}_1(s, t)\right).\]
Clearly, such an extension from $\sigma_1$ to $\widehat{\sigma}_1$ is not unique. However, if we impose the additional requirement that
\[\frac{t}{\sqrt{1 + t^2}}\,d\widehat{\sigma}_1(s, t) = \frac{s}{\sqrt{1 + s^2}}\,d\sigma(s, t),\]
then it follows from \cite{HW2016}*{Lemma 3.10} that $\widehat{\sigma}_1$ is unique. Similarly, there is a unique finite positive Borel measure $\widehat{\sigma}_2$ on $\bR^2$ such that $\widehat{\sigma}_2^{(2)} = \sigma_2$ and
\[\frac{s}{\sqrt{1 + s^2}}\,d\widehat{\sigma}_2(s, t) = \frac{t}{\sqrt{1 + t^2}}\,d\sigma(s, t).\]
Replacing $\sigma_1$ and $\sigma_2$ by their unique extensions to $\bR^2$ with the above conditions, every $\uplus\uplus$-infinitely divisible measure $\mu$ has a unique quintuple $(\gamma_1, \gamma_2, \sigma_1, \sigma_2, \sigma)$ associated to it such that
\begin{equation}\label{BiBooleanLH}
\begin{split}
E_\mu(z, w) &= \frac{1}{z}\left(\gamma_1 + \int_{\bR^2}\frac{1 + sz}{z - s}\,d\sigma_1(s, t)\right) + \frac{1}{w}\left(\gamma_2 + \int_{\bR^2}\frac{1 + tw}{w - t}\,d\sigma_2(s, t)\right)\\
&\quad + \int_{\bR^2}\frac{\sqrt{1 + s^2}\sqrt{1 + t^2}}{(z - s)(w - t)}\,d\sigma(s, t), \quad (z, w) \in (\bC \setminus \bR)^2.
\end{split}
\end{equation}
Conversely, it follows from \cite{GS2016}*{Proposition 6.27} that every such quintuple $(\gamma_1, \gamma_2, \sigma_1, \sigma_2, \sigma)$ uniquely determines a Borel probability measure $\mu$ on $\bR^2$ such that equation \eqref{BiBooleanLH} holds. Therefore, it makes sense to refer to equation \eqref{BiBooleanLH} as the \textit{bi-Boolean L\'{e}vy-Hin\v{c}in representation} of the $\uplus\uplus$-infinitely divisible measure $\mu$, and we denote $\mu$ by $\mu_{\uplus\uplus}^{(\gamma_1, \gamma_2, \sigma_1, \sigma_2, \sigma)}$ to indicate this correspondence.

\begin{exam}
All product measures on $\bR^2$ are $\uplus\uplus$-infinitely divisible. Indeed, if $\mu$ is a Borel probability measure on $\bR^2$ such that $\mu = \mu^{(1)} \otimes \mu^{(2)}$, then as $\mu^{(1)}$ and $\mu^{(2)}$ are $\uplus$-infinitely divisible and as $\widetilde{E}_\mu \equiv 0$, there exist $(\gamma_1, \sigma_1)$ and $(\gamma_2, \sigma_2)$ such that
\[
E_\mu(z, w) = \frac{1}{z}\left(\gamma_1 + \int_{\bR}\frac{1 + sz}{z - s}\,d\sigma_1(s)\right) + \frac{1}{w}\left(\gamma_2 + \int_{\bR}\frac{1 + tw}{w - t}\,d\sigma_2(t)\right), \quad (z, w) \in (\bC \setminus \bR)^2.
\]
In this case, it is easy to see that $E_\mu/n$ is the partial self-energy of the measure $\mu_{\uplus}^{(\gamma_1/n, \sigma_1/n)} \otimes \mu_{\uplus}^{(\gamma_2/n, \sigma_2/n)}$ for $n \geq 1$.

In general, more examples of $\uplus\uplus$-infinitely divisible measures can be obtained from the $\boxplus\boxplus$-infinitely divisible ones (e.g., the bi-free central and Poisson limits from \cite{HW2016}) via the two-dimensional Bercovici-Pata bijection (Theorem \ref{TDBP}) below. For instance, if $\mu$ denotes the bi-Boolean Gaussian distribution $\mu_{(1, 1, c)}$ from equation \eqref{eqn:CentLimit} above, then
\[E_\mu(z, w) = \frac{1}{z^2} + \frac{1}{w^2} + \frac{c}{zw}, \quad (z, w) \in (\bC \setminus \bR)^2.\]
It is known (see \cite{SW1997}*{Section 3}) that $\mu^{(1)} = \mu^{(2)}$ is the Bernoulli distribution $\frac{1}{2}(\delta_{-1} + \delta_1)$ with Cauchy transform $G_{\mu^{(j)}}(z) = \frac{z}{z^2 - 1}$, from which we obtain
\[G_\mu(z, w) = \frac{c + zw}{(z^2 - 1)(w^2 - 1)}, \quad (z, w) \in (\bC \setminus \bR)^2,\]
and the Stieltjes inversion formula (see, e.g., \cite{HW2016}*{Section 2}) gives
\begin{align*}
d\mu(s, t) &= -\frac{1}{4\pi^2}\lim_{\ep \to 0^+}[G_\mu(s + i\ep, t + i\ep) - G_\mu(s + i\ep, t - i\ep) - G_\mu(s - i\ep, t + i\ep) + G_\mu(s - i\ep, t - i\ep)]dsdt\\
&= \frac{1}{\pi^2}\lim_{\ep \to 0^+}\left[\frac{\ep^2[4cst + (s^2 + \ep^2 + 1)(t^2 + \ep^2 + 1)]}{[s^4 + 2s^2(\ep^2 - 1) + \ep^2(\ep^2 + 2) + 1][t^4 + 2t^2(\ep^2 - 1) + \ep^2(\ep^2 + 2) + 1]}\right]dsdt.
\end{align*}
The above formula shows that the measure $\mu$ vanishes outside of the square $\{(s, t) \in \bR^2\,:\,|s| = 1, |t| = 1\}$, and the uncorrelated case (i.e., $c = 0$) corresponds to $\mu = \frac{1}{2}(\delta_{-1} + \delta_1) \otimes \frac{1}{2}(\delta_{-1} + \delta_{1})$.
\end{exam}

\subsection{Some Bercovici-Pata type bijections}

The discussion in the previous subsection together with results in \cites{GS2016, HW2016} imply some two-dimensional Bercovici-Pata type bijections, which are summarized as follows. Note that we implicitly assume the additive convolutions are well-defined, which is the case at least for compactly supported and/or infinitely divisible measures. The limiting distributions, on the other hand, are allowed to have unbounded supports.

\begin{thm}\label{TDBP}
Fix a sequence $\{\mu_n\}_{n = 1}^\infty$ of Borel probability measures on $\mathbb{R}^2$, a sequence $\{k_n\}_{n = 1}^\infty$ of positive integers with $\lim_{n \to \infty}k_n = \infty$, a quintuple $(\gamma_1, \gamma_2, \sigma_1, \sigma_2, \sigma)$, where $\gamma_1, \gamma_2 \in \bR$ are real numbers, $\sigma_1$ and $\sigma_2$ are finite positive Borel measures on $\mathbb{R}^2$, and $\sigma$ is a finite signed Borel measure on $\mathbb{R}^2$ such that
\begin{itemize}
\item $\frac{t}{\sqrt{1 + t^2}}\,d\sigma_1(s, t) = \frac{s}{\sqrt{1 + s^2}}\,d\sigma(s, t)$,

\item $\frac{s}{\sqrt{1 + s^2}}\,d\sigma_2(s, t) = \frac{t}{\sqrt{1 + t^2}}\,d\sigma(s, t)$,

\item $|\sigma(\{(0, 0)\})|^2 \leq \sigma_1(\{(0, 0)\})\sigma_2(\{(0, 0)\})$.
\end{itemize}
The following assertions are equivalent.
\begin{enumerate}[$\qquad(1)$]
\item The sequence $\underbrace{\mu_n \boxplus\boxplus \cdots \boxplus\boxplus \mu_n}_{k_n\,\mathrm{times}}$ converges weakly to $\mu_{\boxplus\boxplus}^{(\gamma_1, \gamma_2, \sigma_1, \sigma_2, \sigma)}$.

\item The sequence $\underbrace{\mu_n \uplus\uplus \cdots \uplus\uplus \mu_n}_{k_n\,\mathrm{times}}$ converges weakly to $\mu_{\uplus\uplus}^{(\gamma_1, \gamma_2, \sigma_1, \sigma_2, \sigma)}$.

\item The limits
\[\lim_{n \to \infty}k_n\int_{\mathbb{R}^2}\frac{s}{1 + s^2}\,d\mu_n(s, t) = \gamma_1 \qand \lim_{n \to \infty}k_n\int_{\mathbb{R}^2}\frac{t}{1 + t^2}\,d\mu_n(s, t) = \gamma_2\]
hold, the finite positive Borel measures
\[d\sigma_{n}^{(1)}(s) = k_n\frac{s^2}{1 + s^2}\,d\mu_n^{(1)}(s) \qand d\sigma_{n}^{(2)}(t) = k_n\frac{t^2}{1 + t^2}\,d\mu_n^{(2)}(t)\]
converge weakly to $\sigma_1^{(1)}$ and $\sigma_2^{(2)}$ respectively, and the finite signed Borel measures
\[d\widetilde{\sigma}_n(s, t) = k_n\frac{st}{\sqrt{1 + s^2}\sqrt{1 + t^2}}\,d\mu_n(s, t)\]
converge weakly to $\sigma$.
\end{enumerate}
Moreover, if these assertions hold, then
\[\phi_{\mu_{\boxplus\boxplus}^{(\gamma_1, \gamma_2, \sigma_1, \sigma_2, \sigma)}}(z, w) = E_{\mu_{\uplus\uplus}^{(\gamma_1, \gamma_2, \sigma_1, \sigma_2, \sigma)}}(z, w)\]
for all $(z, w) \in (\bC \setminus \bR)^2$.
\end{thm}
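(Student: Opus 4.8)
The plan is to separate the equivalence into the two halves (1) $\Leftrightarrow$ (3) and (2) $\Leftrightarrow$ (3), each obtained by decoupling the marginal behaviour (handled by a one-dimensional Bercovici--Pata bijection) from the off-diagonal behaviour (handled by a two-dimensional limit theorem), and then to deduce the displayed identity by comparing \eqref{BiFreeLH} and \eqref{BiBooleanLH} directly. The bulk of the argument is assembly of results already recorded above: the one-dimensional free and Boolean Bercovici--Pata bijections of \cite{BP1999}, the bi-free limit theory of \cite{HW2016}, the bi-Boolean limit theorems above (in particular Theorem \ref{LimitThm} and the surrounding propositions), and the L\'{e}vy--Hin\v{c}in representations \eqref{BiFreeLH} and \eqref{BiBooleanLH}.

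For (2) $\Leftrightarrow$ (3) I would first record that the marginals of a bi-Boolean convolution are the Boolean convolutions of the marginals, so $(\mu_n^{\uplus\uplus k_n})^{(i)} = [\mu_n^{(i)}]^{\uplus k_n}$ for $i = 1, 2$; hence weak convergence of $\mu_n^{\uplus\uplus k_n}$ forces weak convergence of these marginal sequences. By the one-dimensional Boolean Bercovici--Pata bijection, the conditions on $\gamma_1, \gamma_2$ and on $\sigma_n^{(1)}, \sigma_n^{(2)}$ in assertion (3) are equivalent to $[\mu_n^{(1)}]^{\uplus k_n} \to \mu_{\uplus}^{(\gamma_1, \sigma_1^{(1)})}$ and $[\mu_n^{(2)}]^{\uplus k_n} \to \mu_{\uplus}^{(\gamma_2, \sigma_2^{(2)})}$ weakly. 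Granting these marginal limits puts us in the hypotheses of Theorem \ref{LimitThm} (whose $\bR$-valued measures play the role of our $\sigma_1^{(1)}, \sigma_2^{(2)}$), so the equivalence of its assertions (2) and (3) turns the remaining condition of assertion (3) above --- weak convergence of $\widetilde{\sigma}_n$ to $\sigma$ --- into the existence of the pointwise limit $\lim_n k_n E_{\mu_n}$; the uniqueness corollary ($E_{\mu_1} = E_{\mu_2}$ implies $\mu_1 = \mu_2$) together with the weak-convergence-via-$E$ proposition then identify the weak limit $\mu$ of $\mu_n^{\uplus\uplus k_n}$ as the measure whose partial self-energy is the function displayed in the ``Moreover'' clause of Theorem \ref{LimitThm}. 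Rewriting that function after replacing the $\bR$-valued $\sigma_1^{(1)}, \sigma_2^{(2)}$ by their unique $\bR^2$-extensions $\sigma_1, \sigma_2$ fixed just before \eqref{BiBooleanLH} produces exactly the right-hand side of \eqref{BiBooleanLH}, so $\mu = \mu_{\uplus\uplus}^{(\gamma_1, \gamma_2, \sigma_1, \sigma_2, \sigma)}$, completing (2) $\Leftrightarrow$ (3).

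The half (1) $\Leftrightarrow$ (3) is obtained in precisely the same fashion from the bi-free side: the one-dimensional free Bercovici--Pata bijection handles the marginals $[\mu_n^{(i)}]^{\boxplus k_n}$, and the reduced bi-free partial Voiculescu transform limit theory of \cite{HW2016} converts the $\widetilde{\sigma}_n$ condition into weak convergence of $\mu_n^{\boxplus\boxplus k_n}$, with limit the measure whose bi-free partial Voiculescu transform is the right-hand side of \eqref{BiFreeLH}, namely $\mu_{\boxplus\boxplus}^{(\gamma_1, \gamma_2, \sigma_1, \sigma_2, \sigma)}$ (equivalently, this is the $\varphi = \psi$ specialization of the c-bi-free Bercovici--Pata bijection of \cite{GS2016}*{Section 6}). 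Chaining the two halves through (3) yields (1) $\Leftrightarrow$ (2) $\Leftrightarrow$ (3). For the displayed identity, note that by construction $\mu_{\boxplus\boxplus}^{(\gamma_1, \gamma_2, \sigma_1, \sigma_2, \sigma)}$ and $\mu_{\uplus\uplus}^{(\gamma_1, \gamma_2, \sigma_1, \sigma_2, \sigma)}$ are the measures whose bi-free partial Voiculescu transform and partial self-energy, respectively, are given on $(\bC \setminus \bR)^2$ by the right-hand sides of \eqref{BiFreeLH} and \eqref{BiBooleanLH}; these right-hand sides are the same explicit function, so $\phi_{\mu_{\boxplus\boxplus}^{(\gamma_1, \gamma_2, \sigma_1, \sigma_2, \sigma)}} = E_{\mu_{\uplus\uplus}^{(\gamma_1, \gamma_2, \sigma_1, \sigma_2, \sigma)}}$ on $(\bC \setminus \bR)^2$.

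I expect the only real obstacle to be bookkeeping rather than conceptual: one must keep careful track of the passage between the $\bR$-valued $\sigma_i^{(i)}$ of the one-dimensional/marginal theory and the $\bR^2$-valued $\sigma_i$ of the quintuple, and verify the uniform non-tangential decay hypotheses required to invoke the weak-convergence-via-$E$ proposition (these are supplied by the one-dimensional theory for the marginals together with the weak convergence of $\widetilde{\sigma}_n$), while also confirming throughout that the convolutions appearing are defined in the sense assumed in this subsection.
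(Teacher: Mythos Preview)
Your proposal is correct and follows essentially the same approach as the paper: the paper's proof simply cites \cite{HW2016} for (1) $\Leftrightarrow$ (3) and invokes the one-dimensional Bercovici--Pata bijection together with the results of the preceding subsection (Theorem \ref{LimitThm} and its companions) for (2) $\Leftrightarrow$ (3). Your write-up is a faithful unpacking of exactly these citations, including the observation that the displayed identity follows because the right-hand sides of \eqref{BiFreeLH} and \eqref{BiBooleanLH} coincide.
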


\begin{proof}
The equivalence of assertions $(1)$ and $(3)$ was achieved in \cite{HW2016}, and the equivalence of assertions $(2)$ and $(3)$ follow from the one-dimensional Bercovici-Pata bijection and the results in the previous subsection.
\end{proof}

If in addition $\{\nu_n\}_{n = 1}^\infty$ is another sequence of Borel probability measures on $\bR^2$ such that the sequence
\[\underbrace{\nu_n \boxplus\boxplus \cdots \boxplus\boxplus \nu_n}_{k_n\,\mathrm{times}}\]
converges weakly to some $\boxplus\boxplus$-infinitely divisible measure $\nu_{\boxplus\boxplus}^{(\gamma_1', \gamma_2', \sigma_1', \sigma_2', \sigma')}$, then the results in \cite{GS2016}*{Section 6} imply that assertion $(2)$ of Theorem \ref{TDBP} is equivalent to the weak convergence of the sequence
\[\underbrace{(\mu_n, \nu_n) \boxplus\boxplus_{\mathrm{c}} \cdots \boxplus\boxplus_{\mathrm{c}} (\mu_n, \nu_n)}_{k_n\,\mathrm{times}}\]
to $(\mu, \nu_{\boxplus\boxplus}^{(\gamma_1', \gamma_2', \sigma_1', \sigma_2', \sigma')})$ for some Borel probability measure $\mu$ on $\bR^2$. In this case, the pair $(\mu, \nu_{\boxplus\boxplus}^{(\gamma_1', \gamma_2', \sigma_1', \sigma_2', \sigma')})$ is $\boxplus\boxplus_{\mathrm{c}}$-infinitely divisible and
\[\Phi_{(\mu, \nu_{\boxplus\boxplus}^{(\gamma_1', \gamma_2', \sigma_1', \sigma_2', \sigma')})}(z, w) = E_{\mu_{\uplus\uplus}^{(\gamma_1, \gamma_2, \sigma_1, \sigma_2, \sigma)}}(z, w)\]
for all $(z, w) \in (\bC \setminus \bR)^2$.

We conclude this section with an example/counterexample. As mentioned above, all Borel probability measures on $\bR$ are $\uplus$-infinitely divisible. On the other hand, given a Borel probability measure $\mu$ on $\bR^2$, due to the (relatively) simple form of $E_\mu$ (compared to $\phi_\mu$ for example) as given in Definition \ref{defn:partialselfenergy}, it is tempting to hypothesize that all Borel probability measures on $\bR^2$ are $\uplus\uplus$-infinitely divisible. However, this is not true as illustrated by the following example. Consequently, the bi-Boolean L\'{e}vy-Hin\v{c}in formula \eqref{BiBooleanLH} best characterizes the class of $\uplus\uplus$-infinitely divisible Borel probability measures on $\bR^2$.

\begin{exam}
Let $\mu$ be the probability measure on $\bR^2$ defined by
\[\mu = \frac{1}{2}\delta_{(1, 0)} + \frac{1}{2}\delta_{(0, 1)}.\]
Then for $p, q \geq 0$, the $(p, q)^{\mathrm{th}}$ moment of $\mu$ is given by
\[\bM_{p, q}(\mu) = \int_{\bR^2}s^pt^q\,d\mu(s, t) = \begin{cases}
1 & \text{if } p = q = 0\\
\frac{1}{2} & \text{if } (p, q) \in \{(k, 0), (0, k)\,\mid\,k \geq 1\}\\
0 & \text{otherwise}
\end{cases}.\]
Using the bi-Boolean moment-cumulant formula \eqref{BB-M-C}, it is easy to compute some of the low order B-$(\ell, r)$-cumulants, which are given as follows:
\[B_{1, 0}(\mu) = B_{0, 1}(\mu) = \frac{1}{2}, \quad B_{2, 0}(\mu) = B_{0, 2}(\mu) = \frac{1}{4}, \quad B_{1, 1}(\mu) = -\frac{1}{4},\]
\[B_{2, 1}(\mu) = B_{1, 2}(\mu) = -\frac{1}{8}, \qand B_{2, 2}(\mu) = -\frac{1}{16}.\]

If $\mu$ is $\uplus\uplus$-infinitely divisible, then for every $n \in \bN$, there exists a probability measure $\mu_n$ on $\bR^2$ such that $E_\mu = nE_{\mu_n}$, and hence
\[B_{p, q}(\mu_n) = \frac{1}{n}B_{p, q}(\mu)\]
for all $p, q \geq 0$. Consequently, we have that
\begin{align*}
\bM_{2, 2}(\mu_n) &= B_{2, 2}(\mu_n) + B_{2, 1}(\mu_n)B_{0, 1}(\mu_n) + B_{1, 0}(\mu_n)B_{1, 2}(\mu_n) + B_{2, 0}(\mu_n)B_{0, 2}(\mu_n)\\
& \quad + B_{2, 0}(\mu_n)B_{1, 0}(\mu_n)^2 + B_{1, 0}(\mu_n)B_{1, 1}(\mu_n)B_{0, 1}(\mu_n) + B_{1, 0}(\mu_n)^2B_{0, 2}(\mu_n) + B_{1, 0}(\mu_n)^2B_{0, 1}(\mu_n)^2\\
&= -\frac{1}{16n} - \frac{1}{16n^2} - \frac{1}{16n^2} + \frac{1}{16n^2} + \frac{1}{16n^3} - \frac{1}{16n^3} + \frac{1}{16n^3} + \frac{1}{16n^4}\\
&= -\frac{1}{16n} - \frac{1}{16n^2} + \frac{1}{16n^3} + \frac{1}{16n^4}
\end{align*}
which is negative for $n > 1$, contradicting the assumption that $\mu_n$ is a positive measure.
\end{exam}

\section{Additive bi-Fermi convolution}\label{sec:bifermi}

In this section, we consider another special case of the additive c-bi-free convolution.

\subsection{Additive Fermi convolution}

In \cite{O2002}, Oravecz defined the additive Fermi convolution using the additive Boolean convolution with shifts, and it was shown in \cite{O2004} that the two convolutions are minimal in a certain sense in terms of the corresponding combinatorics. For simplicity, we assume all measures are compactly supported Borel probability measures so that all moments are finite. Given two such measures $\mu_1$ and $\mu_2$ on $\bR$ let $\widetilde{\mu}_j$ denote the zero-mean shift of $\mu_j$; that is, 
\[d\widetilde{\mu}_j(s) = d\mu_j(s + \bM_1(\mu_j)).
\]
The \textit{additive Fermi convolution} of $\mu_1$ and $\mu_2$, denoted $\mu_1 \bullet \mu_2$, is defined by
\[d\mu_1 \bullet \mu_2(s) = d\mu(s - (\bM_1(\mu_1) + \bM_1(\mu_2))),\]
where $\mu = \widetilde{\mu}_1 \uplus \widetilde{\mu}_2$ is the additive Boolean convolution of $\widetilde{\mu}_1$ and $\widetilde{\mu}_2$. Equivalently, $\mu_1 \bullet \mu_2$ can be defined in terms of the additive c-free convolution as
\[(\mu_1 \bullet \mu_2, \delta_{\bM_1(\mu_1) + \bM_1(\mu_2)}) = (\mu_1, \delta_{\bM_1(\mu_1)}) \boxplus_{\mathrm{c}} (\mu_2, \delta_{\bM_1(\mu_2)}).\]

On the combinatorial side, the \textit{Fermi cumulants} of a given compactly supported Borel probability measure $\mu$ on $\bR$ is the sequence $\{\gamma_n(\mu)\}_{n \geq 1}$ defined by
\[\bM_n(\mu) = \sum_{\pi \in \A\I(n)}\left(\prod_{V \in \pi}\gamma_{|V|}(\mu)\right)\]
for all $n \geq 1$, where $\A\I(n)$ denotes the set of \textit{almost interval partitions} of $\{1, \dots, n\}$; that is, if $\pi \in \A\I(n)$, then $\pi$ does not contain inner blocks other than singletons. As shown in \cite{O2002}*{Corollary 2.1},
\[\gamma_n(\mu_1 \bullet \mu_2) = \gamma_n(\mu_1) + \gamma_n(\mu_2)\]
for all $n \geq 1$, which justifies the name `Fermi cumulants'. On the analytic side, the linearizing transform with respect to $\bullet$ is
\[H_\mu(z) = \sum_{n \geq 1}\gamma_n(\mu)z^n\]
and it follows from \cite{O2002}*{Proposition 2.1} that
\[H_\mu(z) = \bM_1(\mu)z + \eta_{\widetilde{\mu}}(z),\]
where $\widetilde{\mu}$ denotes the zero-mean shift of $\mu$.

\subsection{Bi-Fermi cumulants and transform}

In view of the relation between the additive Fermi and c-free convolutions, we define the additive bi-Fermi convolution as follows. Recall that for $\mu$ a compactly supported Borel probability measure on $\bR^2$ and $m, n \geq 0$ with $m + n \geq 1$, we denote the $(m, n)^{\mathrm{th}}$ moment of $\mu$ by $\bM_{m, n}(\mu)$.

\begin{defn}
Let $\mu_1$ and $\mu_2$ be compactly supported Borel probability measures on $\bR^2$. The \textit{additive bi-Fermi convolution} of $\mu_1$ and $\mu_2$, denoted $\mu_1 \bullet\bullet \mu_2$, is defined by
\[(\mu_1 \bullet\bullet \mu_2, \delta_{(\bM_{1, 0}(\mu_1) + \bM_{1, 0}(\mu_2), \bM_{0, 1}(\mu_1) + \bM_{0, 1}(\mu_2))}) = (\mu_1, \delta_{(\bM_{1, 0}(\mu_1), \bM_{0, 1}(\mu_1))}) \boxplus\boxplus_{\mathrm{c}} (\mu_2, \delta_{(\bM_{1, 0}(\mu_2), \bM_{0, 1}(\mu_2))}),\]
where $\boxplus\boxplus_{\mathrm{c}}$ denotes the additive c-bi-free convolution.
\end{defn}

Like the additive Fermi convolution, $\mu_1 \bullet\bullet \mu_2$ can be equivalently defined using the additive bi-Boolean convolution as follows: Let
\[d\widetilde{\mu}_j(s, t) = d\mu_j(s + \bM_{1, 0}(\mu_j), t + \bM_{0, 1}(\mu_j))\]
for $j = 1, 2$, then $\mu_1 \bullet\bullet \mu_2$ is defined by
\[d\mu_1 \bullet\bullet \mu_2(s, t) = d\mu(s - (\bM_{1, 0}(\mu_1) + \bM_{1, 0}(\mu_2)), t - (\bM_{0, 1}(\mu_1) + \bM_{0, 1}(\mu_2))),\]
where $\mu = \widetilde{\mu}_1 \uplus\uplus \widetilde{\mu}_2$ is the additive bi-Boolean convolution of $\widetilde{\mu}_1$ and $\widetilde{\mu}_2$. As will be seen below, it is more convenient to work with this definition in terms of $\uplus\uplus$, but we choose to define $\bullet\bullet$ in terms of $\boxplus\boxplus_{\mathrm{c}}$ because it does not involve any shifts, and $\uplus\uplus$ is itself a special case of $\boxplus\boxplus_{\mathrm{c}}$.

To define the cumulants corresponding to the additive bi-Fermi convolution, we introduce the following set of partitions.

\begin{defn}
Let $n \geq 1$ and $\chi: \{1, \dots, n\} \to \{\ell, r\}$. A partition $\pi$ of $\{1, \dots, n\}$ is said to be an \textit{almost bi-interval partition} (with respect to $\chi$) if $s_\chi^{-1}\cdot\pi \in \A\I(n)$. Equivalently, $\pi$ is an almost bi-interval partition if $\pi$ does not contain interior blocks other than singletons. The set of almost bi-interval partitions is denoted by $\A\B\I(\chi)$. 
\end{defn}

Note that both $0_\chi$ and $1_\chi$ are elements of $\A\B\I(\chi)$, but $\A\B\I(\chi)$ is not a lattice (with respect to the partial order $\leq$ of refinement) for the same reason that $\A\I(n)$ is not a lattice. Moreover, if $V$ is a block of $\pi$, then denote
\[V_\ell := V \cap \chi^{-1}(\{\ell\}) \qand V_r := V \cap \chi^{-1}(\{r\}).\]
Finally, recall that for $m, n \geq 0$ with $m + n \geq 1$, $\chi_{m, n}: \{1, \dots, m + n\} \to \{\ell, r\}$ is the map such that $\chi_{m, n}(k) = \ell$ if $k \leq m$ and $\chi_{m, n}(k) = r$ if $k > m$.

\begin{defn}\label{BiFermiCumulants}
Let $\mu$ be a compactly supported Borel probability measure on $\bR^2$. The \textit{bi-Fermi cumulants} of $\mu$ is the sequence $\{\gamma_{m, n}(\mu)\}_{m, n \geq 0, m + n \geq 1}$ defined by
\[\bM_{m, n}(\mu) = \sum_{\pi \in \A\B\I(\chi_{m, n})}\left(\prod_{V \in \pi}\gamma_{|V_\ell|, |V_r|}(\mu)\right)\]
for all $m, n \geq 0$ with $m + n \geq 1$.
\end{defn}

Note that if $\mu$ has zero-mean marginal distributions then
\[\bM_{m, n}(\mu) = \sum_{\pi \in \B\I^*(\chi_{m, n})}\left(\prod_{V \in \pi}\gamma_{|V_\ell|, |V_r|}(\mu)\right),\]
where $\B\I^*(\chi_{m, n})$ denotes the set of bi-interval partitions with respect to $\chi_{m, n}$ without any singletons. On the other hand, we have that
\[\bM_{m, n}(\mu) = \sum_{\pi \in \B\I^*(\chi_{m, n})}\left(\prod_{V \in \pi}B_{|V_\ell|, |V_r|}(\mu)\right),\]
where $\{B_{m, n}(\mu)\}_{m, n \geq 0, m + n \geq 1}$ denotes the set of B-$(\ell, r)$-cumulants of $\mu$. Hence, in this case, $\gamma_{m, n}(\mu) = B_{m, n}(\mu)$ for all $m, n \geq 0$ with $m + n \geq 1$.

Note that as shown in \cite{O2002}*{Proposition 2.1}, we have that $\gamma_{m, 0}(\mu) = \gamma_{m, 0}(\widetilde{\mu})$ and $\gamma_{0, n}(\mu) = \gamma_{0, n}(\widetilde{\mu})$ for all $m, n \geq 2$. Therefore, the following shows that for all $m, n \geq 0$ with $m + n \geq 2$, we have that $\gamma_{m, n}(\mu) = B_{m, n}(\widetilde{\mu})$, while $\gamma_{1, 0}(\mu) = \bM_{1, 0}(\mu)$, $\gamma_{0, 1}(\mu) = \bM_{0, 1}(\mu)$, and $B_{1, 0}(\widetilde{\mu}) = B_{0, 1}(\widetilde{\mu}) = 0$.

\begin{prop}\label{ZeroMeanShift}
Let $\mu$ be a compactly supported Borel probability measure on $\bR^2$ and define $\widetilde{\mu}$ by
\[d\widetilde{\mu}(s, t) = d\mu(s + \bM_{1, 0}(\mu), t + \bM_{0, 1}(\mu)).\]
Then $\gamma_{m, n}(\mu) = \gamma_{m, n}(\widetilde{\mu})$ for all $m, n \geq 1$.
\end{prop}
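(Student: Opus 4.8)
The plan is to prove $\gamma_{m,n}(\mu)=\gamma_{m,n}(\widetilde{\mu})$ by strong induction on $m+n$ (note $m,n\geq 1$ forces $m+n\geq 2$), using only the defining bi-Fermi moment-cumulant formula of Definition~\ref{BiFermiCumulants} together with Oravecz's one-dimensional result \cite{O2002}*{Proposition 2.1} applied to the marginals. Put $c_1=\bM_{1,0}(\mu)$ and $c_2=\bM_{0,1}(\mu)$; as $\widetilde{\mu}$ has zero-mean marginals, $\gamma_{1,0}(\widetilde{\mu})=\bM_{1,0}(\widetilde{\mu})=0$ and $\gamma_{0,1}(\widetilde{\mu})=0$. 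For the base case $m=n=1$, the only elements of $\A\B\I(\chi_{1,1})$ are $0_{\chi_{1,1}}$ and $1_{\chi_{1,1}}$, so $\gamma_{1,1}(\mu)=\bM_{1,1}(\mu)-c_1c_2$ while $\gamma_{1,1}(\widetilde{\mu})=\bM_{1,1}(\widetilde{\mu})=\bM_{1,1}(\mu)-c_1c_2$.

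For the inductive step, isolating $1_{\chi_{m,n}}$ in the moment-cumulant formula gives, for $\nu\in\{\mu,\widetilde{\mu}\}$,
\[
\gamma_{m,n}(\nu)=\bM_{m,n}(\nu)-\sum_{\substack{\pi\in\A\B\I(\chi_{m,n})\\ \pi\neq 1_{\chi_{m,n}}}}\ \prod_{V\in\pi}\gamma_{|V_\ell|,|V_r|}(\nu).
\]
In the sum for $\widetilde{\mu}$, every $\pi$ with a singleton block contributes a factor $\gamma_{1,0}(\widetilde{\mu})=0$ or $\gamma_{0,1}(\widetilde{\mu})=0$ and drops out; in the remaining partitions each block has cardinality between $2$ and $m+n-1$, so each factor equals the corresponding one for $\mu$ — by the inductive hypothesis if the block is mixed, and by Oravecz's one-dimensional result applied to a marginal (since $\gamma_{k,0}$ and $\gamma_{0,k}$ of a measure are the one-dimensional Fermi cumulants of its marginals) if it is pure of cardinality at least $2$. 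Subtracting the two identities, using $\bM_{m,n}(\mu)=\sum_{\pi\in\A\B\I(\chi_{m,n})}\prod_{V\in\pi}\gamma_{|V_\ell|,|V_r|}(\mu)$ and the fact that $1_{\chi_{m,n}}$ has no singleton block, the inductive step collapses to the single identity
\[
\bM_{m,n}(\widetilde{\mu})=\sum_{\substack{\pi\in\A\B\I(\chi_{m,n})\\ \pi\text{ has no singleton block}}}\ \prod_{V\in\pi}\gamma_{|V_\ell|,|V_r|}(\mu).
\]

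To prove this identity I would expand the left-hand side by the translation rule $\bM_{m,n}(\widetilde{\mu})=\sum_{0\le i\le m,\,0\le j\le n}\binom{m}{i}\binom{n}{j}(-c_1)^{m-i}(-c_2)^{n-j}\bM_{i,j}(\mu)$ and then expand each $\bM_{i,j}(\mu)$ by the bi-Fermi moment-cumulant formula. A resulting term is indexed by a choice of $i$ of the left and $j$ of the right positions to keep, weighted by $(-c_1)^{m-i}(-c_2)^{n-j}$, together with a partition $\sigma\in\A\B\I$ of those kept positions; to such data I attach $\pi\in\A\B\I(\chi_{m,n})$ by adjoining a singleton block for every discarded position. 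One checks that deleting singleton blocks from an almost bi-interval partition again yields one, so this map is well defined, and a given $\pi$ arises exactly from the choices whose kept set contains every non-singleton block of $\pi$ together with an arbitrary subset of the singleton blocks of $\pi$ (with $\sigma$ the restriction of $\pi$). Summing the terms attached to a fixed $\pi$ having $k_\ell$ left and $k_r$ right singleton blocks produces the extra factor
\[
\Bigl(\sum_{p=0}^{k_\ell}\binom{k_\ell}{p}c_1^{p}(-c_1)^{k_\ell-p}\Bigr)\Bigl(\sum_{q=0}^{k_r}\binom{k_r}{q}c_2^{q}(-c_2)^{k_r-q}\Bigr)=(c_1-c_1)^{k_\ell}(c_2-c_2)^{k_r},
\]
which is $0$ unless $k_\ell=k_r=0$ and $1$ otherwise; hence only the singleton-free partitions survive, with coefficient $\prod_{V\in\pi}\gamma_{|V_\ell|,|V_r|}(\mu)$, which is the asserted identity.

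The delicate (though routine) point is this last one: checking that restricting an almost bi-interval partition by deleting singleton blocks again gives one, so that the correspondence above is a genuine bijection onto the relevant index set, and correctly pairing those singletons with the binomial weights $-c_1$ and $-c_2$ coming from the left and right parts of the shift. Everything else is bookkeeping, and the whole argument is a faithful two-variable lift of Oravecz's proof of \cite{O2002}*{Proposition 2.1}.
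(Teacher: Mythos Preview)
Your proof is correct and follows essentially the same approach as the paper: both arguments rest on the one-dimensional result of Oravecz for the pure left/right cumulants, the translation identity relating the moments of $\mu$ and $\widetilde{\mu}$, the combinatorial observation that an almost bi-interval partition decomposes into a choice of singleton positions together with a bi-interval partition (without singletons) on the complement, and an induction on $m+n$. The only organizational difference is that the paper compares two expansions of $\bM_{m,n}(\mu)$ (one via the singleton-separation of the bi-Fermi formula in terms of $\gamma(\mu)$, one via the shift formula in terms of $\gamma(\widetilde{\mu})$) and matches them term by term inductively, whereas you expand $\bM_{m,n}(\widetilde{\mu})$ in terms of $\gamma(\mu)$ and carry out the binomial cancellation over the singleton blocks explicitly to isolate the singleton-free partitions; this is the same bijection and the same cancellation, just written out rather than left implicit.
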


\begin{proof}
For $m, n \geq 1$, we have that
\begin{align*}
\bM_{m, n}(\mu) &= \sum_{\pi \in \A\B\I(\chi_{m, n})}\left(\prod_{V \in \pi}\gamma_{|V_\ell|, |V_r|}(\mu)\right) \\
&= \sum_{p = 0}^m\sum_{q = 0}^n\binom{m}{m - p}\binom{n}{n - q}\bM_{1, 0}(\mu)^{m - p}\bM_{0, 1}(\mu)^{n - q}\sum_{\pi \in \B\I^*(\chi_{p, q})}\left(\prod_{V \in \pi}\gamma_{|V_\ell|, |V_r|}(\mu)\right)
\end{align*}
by Definition \ref{BiFermiCumulants} whereas, since $\widetilde{\mu}$ has zero-mean marginal distributions, we have that
\[\bM_{m, n}(\widetilde{\mu}) = \sum_{\pi \in \B\I^*(\chi_{m, n})}\left(\prod_{V \in \pi}\gamma_{|V_\ell|, |V_r|}(\widetilde{\mu})\right).\]

On the other hand, we have that
\begin{align*}
\bM_{m, n}(\mu) &= \int_{\bR^2}s^mt^n\,d\mu(s, t)\\
&= \int_{\bR^2}s^mt^n\,d\widetilde{\mu}(s - \bM_{1, 0}(\mu), t - \bM_{0, 1}(\mu))\\
&= \int_{\bR^2}(x + \bM_{1, 0}(\mu))^m(y + \bM_{0, 1}(\mu))^n\,d\widetilde{\mu}(x, y)\\
&= \int_{\bR^2}\sum_{p = 0}^m\binom{m}{p}x^p\bM_{1, 0}(\mu)^{m - p}\sum_{q = 0}^n\binom{n}{q}y^q\bM_{0, 1}(\mu)^{n - q}\,d\widetilde{\mu}(s, t)\\
&= \sum_{p = 0}^m\sum_{q = 0}^n\binom{m}{p}\binom{n}{q}\bM_{1, 0}(\mu)^{m - p}\bM_{0, 1}(\mu)^{n - q}\bM_{p, q}(\widetilde{\mu})\\
&= \sum_{p = 0}^m\sum_{q = 0}^n\binom{m}{p}\binom{n}{q}\bM_{1, 0}(\mu)^{m - p}\bM_{0, 1}(\mu)^{n - q}\sum_{\pi \in \B\I^*(\chi_{p, q})}\left(\prod_{V \in \pi}\gamma_{|V_\ell|, |V_r|}(\widetilde{\mu})\right).
\end{align*}
Thus the result follows by induction.
\end{proof}

\begin{cor}
Let $\mu_1$ and $\mu_2$ be compactly supported Borel probability measures on $\bR^2$, we have that
\[\gamma_{m, n}(\mu_1 \bullet\bullet \mu_2) = \gamma_{m, n}(\mu_1) + \gamma_{m, n}(\mu_2)\]
for all $m, n \geq 0$ with $m + n \geq 1$.
\end{cor}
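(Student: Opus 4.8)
The plan is to reduce the claim to the additivity of the B-$(\ell, r)$-cumulants under $\uplus\uplus$, passing between $\gamma_{m, n}$ and $B_{m, n}$ by means of the zero-mean shift. First I would record the elementary observation that $\widetilde{\mu}_1 \uplus\uplus \widetilde{\mu}_2$ is precisely the zero-mean shift of $\mu_1 \bullet\bullet \mu_2$. Indeed, $\uplus\uplus$ preserves the property of having zero-mean marginal distributions, since $B_{1, 0}$ and $B_{0, 1}$ are additive under $\uplus\uplus$ (by the linearization of the bi-Boolean partial $\eta$-transform recorded after its definition, equivalently Theorem \ref{VanishingEquiv}) and $B_{1, 0}(\widetilde{\mu}_j) = \bM_{1, 0}(\widetilde{\mu}_j) = 0$; meanwhile $\mu_1 \bullet\bullet \mu_2$ is by construction the translate of $\widetilde{\mu}_1 \uplus\uplus \widetilde{\mu}_2$ by $(\bM_{1, 0}(\mu_1) + \bM_{1, 0}(\mu_2), \bM_{0, 1}(\mu_1) + \bM_{0, 1}(\mu_2))$. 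Comparing means then gives $\bM_{1, 0}(\mu_1 \bullet\bullet \mu_2) = \bM_{1, 0}(\mu_1) + \bM_{1, 0}(\mu_2)$ and likewise for $\bM_{0, 1}$, which already settles the cases $(m, n) = (1, 0)$ and $(0, 1)$ because $\gamma_{1, 0} = \bM_{1, 0}$ and $\gamma_{0, 1} = \bM_{0, 1}$ by definition.

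For $m + n \geq 2$, write $\nu_j = \widetilde{\mu}_j$ and $\nu = \nu_1 \uplus\uplus \nu_2$, which by the previous paragraph is the zero-mean shift of $\mu_1 \bullet\bullet \mu_2$. Since each $\nu_j$ — and hence $\nu$ — has zero-mean marginal distributions, the discussion following Definition \ref{BiFermiCumulants} identifies bi-Fermi cumulants with B-$(\ell, r)$-cumulants: $\gamma_{m, n}(\nu_j) = B_{m, n}(\nu_j)$ and $\gamma_{m, n}(\nu) = B_{m, n}(\nu)$ for all $m + n \geq 1$. Realizing $\nu_1$ and $\nu_2$ by bi-Boolean independent self-adjoint commuting pairs in a $C^*$-probability space whose sum has distribution $\nu$, the coefficientwise form of the $\eta$-transform linearization (equivalently the vanishing of mixed B-$(\ell, r)$-cumulants) gives $B_{m, n}(\nu) = B_{m, n}(\nu_1) + B_{m, n}(\nu_2)$, hence $\gamma_{m, n}(\nu) = \gamma_{m, n}(\nu_1) + \gamma_{m, n}(\nu_2)$.

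Finally I would undo the shift. By Proposition \ref{ZeroMeanShift} for $m, n \geq 1$, together with the relations $\gamma_{m, 0}(\mu) = \gamma_{m, 0}(\widetilde{\mu})$ and $\gamma_{0, n}(\mu) = \gamma_{0, n}(\widetilde{\mu})$ for $m, n \geq 2$ recalled from \cite{O2002}*{Proposition 2.1}, we have $\gamma_{m, n}(\nu_j) = \gamma_{m, n}(\mu_j)$ and $\gamma_{m, n}(\nu) = \gamma_{m, n}(\mu_1 \bullet\bullet \mu_2)$ for all $m + n \geq 2$; combining with the previous paragraph yields $\gamma_{m, n}(\mu_1 \bullet\bullet \mu_2) = \gamma_{m, n}(\mu_1) + \gamma_{m, n}(\mu_2)$. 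For the pure-left and pure-right cases $(m, 0)$ and $(0, n)$ one may alternatively bypass this and invoke Oravecz's additivity of the one-dimensional Fermi cumulants \cite{O2002}*{Corollary 2.1}, using that the marginals of $\mu_1 \bullet\bullet \mu_2$ are $\mu_1^{(1)} \bullet \mu_2^{(1)}$ and $\mu_1^{(2)} \bullet \mu_2^{(2)}$. There is no deep obstacle here; the only point needing care is the bookkeeping of shifts — in particular verifying that $\widetilde{\mu}_1 \uplus\uplus \widetilde{\mu}_2$ is the zero-mean shift of $\mu_1 \bullet\bullet \mu_2$, so that the identification $\gamma = B$ is legitimate for the convolved measure, and checking that $\uplus\uplus$ preserves the zero-mean-marginal property.
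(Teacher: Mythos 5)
Your argument is correct and is essentially the paper's intended one: the corollary is meant to follow from the definition of $\bullet\bullet$ as a shifted $\uplus\uplus$, the identification $\gamma_{m,n}(\mu)=B_{m,n}(\widetilde{\mu})$ for $m+n\geq 2$ together with $\gamma_{1,0}=\bM_{1,0}$ and $\gamma_{0,1}=\bM_{0,1}$, and the additivity of the B-$(\ell,r)$-cumulants under $\uplus\uplus$; your verification that $\widetilde{\mu}_1\uplus\uplus\widetilde{\mu}_2$ is exactly the zero-mean shift of $\mu_1\bullet\bullet\mu_2$ is precisely the bookkeeping the paper leaves implicit. The one caveat is your invocation of a realization by bi-Boolean independent commuting pairs in a $C^*$-probability space: as the errata notes, the bi-Boolean product of states need not be positive, but an algebraic non-commutative probability space realization (or simply the definition of $\uplus\uplus$ via the c-bi-free convolution together with Theorem \ref{VanishingEquiv}) suffices for the cumulant additivity, so the argument stands with that adjustment.
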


In view of the above corollary, the linearizing transform with respect to the additive bi-Fermi convolution is defined as follows.

\begin{defn}
Let $\mu$ be a compactly supported Borel probability measure on $\bR^2$. The \textit{bi-Fermi transform} of $\mu$ is defined by
\[H_\mu(z, w) = \sum_{\substack{m, n \geq 0\\m + n \geq 1}}\gamma_{m, n}(\mu)z^mw^n.\]
\end{defn}

\begin{prop}
Let $\mu$ be a compactly supported Borel probability measure on $\bR^2$. The bi-Fermi transform $H_\mu$ of $\mu$ is given by
\[H_\mu(z, w) = H_{\mu^{(1)}}(z) + H_{\mu^{(2)}}(w) + \frac{G_\mu(1/z + \bM_{1, 0}(\mu), 1/w + \bM_{0, 1}(\mu))}{G_{\mu^{(1)}}(1/z + \bM_{1, 0}(\mu))G_{\mu^{(2)}}(1/w + \bM_{0, 1}(\mu))} - 1\]
for $(z, w) \in (\bC \setminus \bR)^2$.
\end{prop}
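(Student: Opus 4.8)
The plan is to reduce the bi-Fermi transform to the one-variable Fermi transforms of the marginals together with the reduced bi-Boolean partial $\eta$-transform of the zero-mean shift $\widetilde{\mu}$, and then to invoke Theorem \ref{PartialEta}. Throughout, write $m_1 = \bM_{1, 0}(\mu)$ and $m_2 = \bM_{0, 1}(\mu)$, and recall that $\widetilde{\mu}$ is defined by $d\widetilde{\mu}(s, t) = d\mu(s + m_1, t + m_2)$.

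First I would split
\[
H_\mu(z, w) = \sum_{m \geq 1}\gamma_{m, 0}(\mu)z^m + \sum_{n \geq 1}\gamma_{0, n}(\mu)w^n + \sum_{m, n \geq 1}\gamma_{m, n}(\mu)z^mw^n.
\]
When $\chi_{m, 0}$ is constant, $s_{\chi_{m, 0}}$ is the identity, so $\A\B\I(\chi_{m, 0}) = \A\I(m)$ and the defining relation in Definition \ref{BiFermiCumulants} for $\bM_{m, 0}(\mu) = \bM_m(\mu^{(1)})$ becomes precisely the recursion defining the one-variable Fermi cumulants of $\mu^{(1)}$; hence $\gamma_{m, 0}(\mu) = \gamma_m(\mu^{(1)})$ and the first sum equals $H_{\mu^{(1)}}(z)$, and symmetrically the second sum equals $H_{\mu^{(2)}}(w)$. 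For the mixed part, Proposition \ref{ZeroMeanShift} gives $\gamma_{m, n}(\mu) = \gamma_{m, n}(\widetilde{\mu})$ for all $m, n \geq 1$, and since $\widetilde{\mu}$ has zero-mean marginal distributions the discussion following Definition \ref{BiFermiCumulants} identifies $\gamma_{m, n}(\widetilde{\mu}) = B_{m, n}(\widetilde{\mu})$ for all $m, n$ with $m + n \geq 1$. Realizing $\widetilde{\mu}$ as the joint distribution of a commuting pair $(a, b)$ of bounded self-adjoint operators in a $C^*$-probability space --- possible because $\widetilde{\mu}$ is compactly supported --- we get $B_{m, n}(\widetilde{\mu}) = B_{m, n}(a, b)$, whence the mixed part is the reduced bi-Boolean partial $\eta$-transform $\widetilde{\eta}_{\widetilde{\mu}}(z, w) = \widetilde{\eta}_{(a, b)}(z, w)$ of $\widetilde{\mu}$. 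Thus $H_\mu(z, w) = H_{\mu^{(1)}}(z) + H_{\mu^{(2)}}(w) + \widetilde{\eta}_{\widetilde{\mu}}(z, w)$.

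Next I would apply Theorem \ref{PartialEta} to the pair $(a, b)$. Since $\widetilde{\eta}_{(a, b)} = \eta_{(a, b)} - \eta_a - \eta_b$ by definition, that theorem collapses to
\[
\widetilde{\eta}_{\widetilde{\mu}}(z, w) = \frac{G_{\widetilde{\mu}}(1/z, 1/w)}{G_{\widetilde{\mu}^{(1)}}(1/z)\,G_{\widetilde{\mu}^{(2)}}(1/w)} - 1.
\]
Finally I would undo the shift: the definition of $\widetilde{\mu}$ exhibits it as the law of $(X_1 - m_1, X_2 - m_2)$ when $(X_1, X_2) \sim \mu$, so a change of variables in the integral defining the Cauchy transform yields $G_{\widetilde{\mu}}(u, v) = G_\mu(u + m_1, v + m_2)$, and likewise $G_{\widetilde{\mu}^{(1)}}(u) = G_{\mu^{(1)}}(u + m_1)$ and $G_{\widetilde{\mu}^{(2)}}(v) = G_{\mu^{(2)}}(v + m_2)$. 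Substituting $u = 1/z$ and $v = 1/w$ and combining with the previous two displays gives exactly the asserted formula for $H_\mu(z, w)$.

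The steps are routine; the only points needing a modicum of care are the bookkeeping of the translation inside the Cauchy transforms (its sign and direction) and the justification that $\widetilde{\mu}$ may be realized by a commuting pair of bounded self-adjoint operators so that Theorem \ref{PartialEta} and the definition of $\widetilde{\eta}$ apply --- both standard for compactly supported measures on $\bR^2$. All the power series occurring converge for $|z|, |w|$ sufficiently small, so the manipulations are valid as identities of analytic functions near the origin.
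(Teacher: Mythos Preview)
Your proof is correct and follows essentially the same route as the paper: split $H_\mu$ into the two marginal Fermi transforms plus the mixed part, use Proposition~\ref{ZeroMeanShift} and the zero-mean identification $\gamma_{m,n}(\widetilde{\mu}) = B_{m,n}(\widetilde{\mu})$ to recognize the mixed part as $\widetilde{\eta}_{\widetilde{\mu}}$, apply Theorem~\ref{PartialEta}, and then translate the Cauchy transforms back to $\mu$. Your write-up is in fact slightly more explicit than the paper's in justifying the marginal identifications and the realization of $\widetilde{\mu}$ by a commuting self-adjoint pair.
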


\begin{proof}
Let $\mu^{(1)}$ and $\mu^{(2)}$ be the marginal distributions of $\mu$, we have that
\[H_\mu(z, w) = H_{\mu^{(1)}}(z) + H_{\mu^{(2)}}(w) + \sum_{m, n \geq 1}\gamma_{m, n}(\mu)z^mw^n.\]
Moreover, with $\widetilde{\mu}$ as defined in Proposition \ref{ZeroMeanShift}, we have by Theorem \ref{PartialEta}that
\begin{align*}
\sum_{m, n \geq 1}\gamma_{m, n}(\mu)z^mw^n = \sum_{m, n \geq 1}\gamma_{m, n}(\widetilde{\mu})z^mw^n &= \sum_{m, n \geq 1}B_{m, n}(\widetilde{\mu})z^mw^n \\
&= \frac{G_{\widetilde{\mu}}(1/z, 1/w)}{G_{\widetilde{\mu}^{(1)}}(z)G_{\widetilde{\mu}^{(2)}}(w)} - 1\\
&= \frac{G_\mu(1/z + \bM_{1, 0}(\mu), 1/w + \bM_{0, 1}(\mu))}{G_{\mu^{(1)}}(1/z + \bM_{1, 0}(\mu))G_{\mu^{(2)}}(1/w + \bM_{0, 1}(\mu))} - 1
\end{align*}
as claimed.
\end{proof}

\subsection{Limit theorems}

In terms of limit theorems, if the measures have zero-mean marginal distributions, then the additive bi-Fermi convolution coincides with the additive bi-Boolean convolution, and thus the bi-Fermi central limit theorem and centred bi-Fermi Gaussian distributions are same as the bi-Boolean ones. For Poisson type limit theorems, recall that a compactly supported Borel probability measure $\pi_{\lambda, \sigma}^{\uplus\uplus}$ on $\bR^2$ is said to have a compound bi-Boolean Poisson distribution with rate $\lambda \geq 0$ and jump distribution $\sigma \neq \delta_{(0, 0)}$ if
\[B_{m, n}(\pi_{\lambda, \sigma}^{\uplus\uplus}) = \lambda\cdot\bM_{m, n}(\sigma)\]
for all $m, n \geq 0$ with $m + n \geq 1$. Analogously, a compound bi-Fermi Poisson distribution $\pi_{\lambda, \sigma}^{\bullet\bullet}$ with rate $\lambda \geq 0$ and jump distribution $\sigma \neq \delta_{(0, 0)}$ is characterized by the requirement that
\[\gamma_{m, n}(\pi_{\lambda, \sigma}^{\bullet\bullet}) = \lambda\cdot\bM_{m, n}(\sigma)\]
for all $m, n \geq 0$ with $m + n \geq 1$. For simplicity, let $\sigma = \delta_{(1, 1)}$ so that
\[\eta_{\pi_{\lambda, \delta_{(1, 1)}}^{\uplus\uplus}}(z, w) = H_{\pi_{\lambda, \delta_{(1, 1)}}^{\bullet\bullet}}(z, w) = \frac{\lambda z}{1 - z} + \frac{\lambda w}{1 - w} + \frac{\lambda zw}{(1 - z)(1 - w)}.\]
Therefore
\[\frac{G_{\pi_{\lambda, \delta_{(1, 1)}}^{\uplus\uplus}}(1/z, 1/w)}{G_{(\pi_{\lambda, \delta_{(1, 1)}}^{\uplus\uplus})^{(1)}}(1/z)G_{(\pi_{\lambda, \delta_{(1, 1)}}^{\uplus\uplus})^{(2)}}(1/w)}  = \frac{G_{\pi_{\lambda, \delta_{(1, 1)}}^{\bullet\bullet}}(1/z + \lambda, 1/w + \lambda)}{G_{(\pi_{\lambda, \delta_{(1, 1)}}^{\bullet\bullet})^{(1)}}(1/z + \lambda)G_{(\pi_{\lambda, \delta_{(1, 1)}}^{\bullet\bullet})^{(2)}}(1/w + \lambda)} = \frac{\lambda zw}{(1 - z)(1 - w)} +1.\]
Hence, the distributions of $\pi_{\lambda, \delta_{(1, 1)}}^{\uplus\uplus}$ and $\pi_{\lambda, \delta_{(1, 1)}}^{\bullet\bullet}$ are quite different since
\[G_{\pi_{\lambda, \delta_{(1, 1)}}^{\uplus\uplus}}(z, w) = \frac{\lambda + (z - 1)(w - 1)}{zw(z - 1 - \lambda)(w - 1 - \lambda)},\]
whereas
\[G_{\pi_{\lambda, \delta_{(1, 1)}}^{\bullet\bullet}}(z, w) = \frac{\lambda + (z - 1 - \lambda)(w - 1 - \lambda)}{((z - \lambda)^2 - z)((w - \lambda)^2 - w)}.\]

Using the additivity of the bi-Fermi cumulants, the following compound bi-Fermi Poisson limit theorem can be easily obtained.

\begin{thm}
Let $\lambda \geq 0$ and let $\sigma \neq \delta_{(0, 0)}$ be a compactly supported Borel probability measure on $\bR^2$. For $N \in \bN$, let $\mu_N = \left(1 - \frac{\lambda}{N}\right)\delta_{(0, 0)} + \frac{\lambda}{N}\sigma$.  Then $\lim_{N \to \infty}\underbrace{\mu_N \bullet\bullet \cdots \bullet\bullet \mu_N}_{N\,\mathrm{times}} = \pi_{\lambda, \sigma}^{\bullet\bullet}$.
\end{thm}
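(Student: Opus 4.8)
The plan is to carry out the proof entirely at the level of bi-Fermi cumulants and then transfer the conclusion to weak convergence through moments. Write $\nu_N$ for the $N$-fold additive bi-Fermi convolution $\mu_N \bullet\bullet \cdots \bullet\bullet \mu_N$. By the additivity of the bi-Fermi cumulants (the corollary following Proposition~\ref{ZeroMeanShift}), $\gamma_{m, n}(\nu_N) = N\gamma_{m, n}(\mu_N)$ for all $m, n \geq 0$ with $m + n \geq 1$. On the other hand, Definition~\ref{BiFermiCumulants} expresses $\bM_{m, n}$ as a universal polynomial in the $\gamma_{p, q}$ with $p \leq m$ and $q \leq n$, and inverting this formula recursively in $m + n$ (where $\gamma_{m, n}$ occurs linearly with coefficient $1$) expresses each $\gamma_{m, n}$ as a universal polynomial in the $\bM_{p, q}$ with $p \leq m$ and $q \leq n$. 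Consequently it suffices to prove that $\lim_{N \to \infty} N\gamma_{m, n}(\mu_N) = \lambda\bM_{m, n}(\sigma) = \gamma_{m, n}(\pi_{\lambda, \sigma}^{\bullet\bullet})$ for every $m, n \geq 0$ with $m + n \geq 1$; it will then follow that $\bM_{m, n}(\nu_N) \to \bM_{m, n}(\pi_{\lambda, \sigma}^{\bullet\bullet})$ for all $m, n$.

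For the cumulant limit, note that since $\bM_{m, n}(\delta_{(0, 0)}) = 0$ whenever $m + n \geq 1$, one has $\bM_{m, n}(\mu_N) = \frac{\lambda}{N}\bM_{m, n}(\sigma)$ for all such $(m, n)$. Inverting Definition~\ref{BiFermiCumulants},
\[\gamma_{m, n}(\mu_N) = \bM_{m, n}(\mu_N) - \sum_{\substack{\pi \in \A\B\I(\chi_{m, n})\\\pi \neq 1_{\chi_{m, n}}}}\prod_{V \in \pi}\gamma_{|V_\ell|, |V_r|}(\mu_N).\]
I would then prove by induction on $m + n$ that $\gamma_{m, n}(\mu_N) = \frac{\lambda}{N}\bM_{m, n}(\sigma) + O(1/N^2)$, whence $N\gamma_{m, n}(\mu_N) \to \lambda\bM_{m, n}(\sigma)$. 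The base case $m + n = 1$ is immediate since $\gamma_{1, 0}(\mu_N) = \bM_{1, 0}(\mu_N)$ and $\gamma_{0, 1}(\mu_N) = \bM_{0, 1}(\mu_N)$ hold exactly. For the inductive step, every $\pi \neq 1_{\chi_{m, n}}$ in $\A\B\I(\chi_{m, n})$ has at least two blocks $V$, each nonempty with $|V_\ell| + |V_r| = |V| < m + n$, so by the inductive hypothesis $\gamma_{|V_\ell|, |V_r|}(\mu_N) = O(1/N)$; as there are only finitely many such $\pi$, the sum above is $O(1/N^2)$, and therefore $\gamma_{m, n}(\mu_N) = \bM_{m, n}(\mu_N) + O(1/N^2) = \frac{\lambda}{N}\bM_{m, n}(\sigma) + O(1/N^2)$.

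Combining these two steps gives $\gamma_{m, n}(\nu_N) \to \gamma_{m, n}(\pi_{\lambda, \sigma}^{\bullet\bullet})$ and hence $\bM_{m, n}(\nu_N) \to \bM_{m, n}(\pi_{\lambda, \sigma}^{\bullet\bullet})$ for every $m, n$. To upgrade this to weak convergence $\nu_N \to \pi_{\lambda, \sigma}^{\bullet\bullet}$, I would first observe that $\pi_{\lambda, \sigma}^{\bullet\bullet}$ is compactly supported: passing to its zero-mean shift via Proposition~\ref{ZeroMeanShift}, that shifted measure has B-$(\ell, r)$-cumulants controlled in absolute value by $\lambda\bM_{p, q}(\sigma)$, and feeding these into the bi-interval moment--cumulant formula \eqref{BB-M-C}—a sum over the merely exponentially many bi-interval partitions—produces a moment sequence bounded by $C^{m + n}$ for some $C > 0$, so the shift, and hence $\pi_{\lambda, \sigma}^{\bullet\bullet}$ itself, is determined by its moments. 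Since the convergence of $\bM_{2, 0}(\nu_N)$ and $\bM_{0, 2}(\nu_N)$ furnishes tightness of $\{\nu_N\}$, convergence of all moments then yields weak convergence. The argument is almost entirely formal; the two points deserving care are the uniformity of the $O(1/N^2)$ bound in the inductive estimate (straightforward, since the correction term is a finite sum of products of at least two cumulants) and the moment-determinacy of the limit, which is the main—though still minor—obstacle and is handled as just described (alternatively one may invoke the compact support of the c-bi-free compound Poisson distribution, of which $\pi_{\lambda, \sigma}^{\bullet\bullet}$ is a shift).
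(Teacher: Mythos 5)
Your cumulant-level argument is exactly the proof the paper has in mind (and leaves to the reader): by the corollary to Proposition~\ref{ZeroMeanShift}, $\gamma_{m,n}(\nu_N) = N\gamma_{m,n}(\mu_N)$; since $\bM_{m,n}(\mu_N) = \frac{\lambda}{N}\bM_{m,n}(\sigma)$ for $m+n \geq 1$, your induction on $m+n$ via the inversion of Definition~\ref{BiFermiCumulants} (every $\pi \neq 1_{\chi_{m,n}}$ in $\A\B\I(\chi_{m,n})$ has at least two blocks, each contributing an $O(1/N)$ factor) correctly gives $N\gamma_{m,n}(\mu_N) \to \lambda\bM_{m,n}(\sigma) = \gamma_{m,n}(\pi_{\lambda,\sigma}^{\bullet\bullet})$, and hence convergence of all bi-Fermi cumulants and all moments. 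Up to this point the proposal is correct and coincides with the paper's route.

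The final step, upgrading to weak convergence of probability measures, is where there is a genuine gap. Both the $N$-fold bi-Fermi convolution $\nu_N$ and the limit object $\pi_{\lambda,\sigma}^{\bullet\bullet}$ are defined only through the bi-Boolean/c-bi-free machinery (via shifts), i.e.\ as moment sequences, and — as the errata appended to this paper shows — the bi-Boolean convolution of two probability measures need not be a positive measure, and the compound bi-Boolean Poisson ``distribution'' need not be a probability measure either; since $\bullet\bullet$ and $\pi_{\lambda,\sigma}^{\bullet\bullet}$ are obtained from $\uplus\uplus$ and $\pi_{\lambda,\sigma}^{\uplus\uplus}$ by mean shifts, the same failures occur here. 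Consequently your tightness argument for $\{\nu_N\}$ (Chebyshev-type bounds require positivity), your claim that $\pi_{\lambda,\sigma}^{\bullet\bullet}$ is compactly supported (the exponential moment bound forces compact support only for a \emph{positive} measure), and the alternative appeal to compact support of a ``c-bi-free compound Poisson distribution'' are all unjustified in general. The convergence asserted in the theorem must therefore be read as convergence in moments (equivalently, of all bi-Fermi cumulants), which is precisely what the first part of your argument establishes; the weak-convergence upgrade should be deleted, or restricted to the special situations in which all the objects involved are genuinely probability measures.
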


\section{Connection with bi-free independence}\label{sec:bifreeness}

In this section, we extend some of the results in \cite{BN2008-1} to pairs of algebras. Due to similar lattice structures, most of the combinatorial arguments for $\N\C(n)$ immediately generalize to $\B\N\C(\chi)$. In that which follows, a general two-faced family will be denoted as $\widehat{a} = ((a_i)_{i \in I}, (a_j)_{j \in J})$, and an arbitrary $n$-tuple of elements from $\widehat{a}$ is recorded by a map $\alpha: \{1, \dots, n\} \to I \sqcup J$ so that $\alpha$ corresponds to $(a_{\alpha(1)}, \dots, a_{\alpha(n)})$. In this case, the corresponding map $\chi_\alpha: \{1, \dots, n\} \to \{\ell, r\}$ is defined by $\chi_\alpha(k) = \ell$ if $\alpha(k) \in I$ and $\chi_\alpha(k) = r$ if $\alpha(k) \in J$ for $1 \leq k \leq n$.

\subsection{A two-faced extension of the maps $\mathbb{B}$ and $\mathrm{Reta}$}

In \cite{BN2008-1}, Belinschi and Nica introduced two bijections, $\mathbb{B}$ and $\mathrm{Reta}$, where $\mathbb{B}$ is a multi-variable analogue of the Boolean Bercovici-Pata bijection and $\mathrm{Reta}$ is a bijection which converts the free $\R$-transform to the Boolean $\eta$-transform. Various properties of $\mathbb{B}$ and $\mathrm{Reta}$ were proved including an explicit formula describing $\mathrm{Reta}$. The goal of this subsection is to extend these maps to the two-faced setting.

Denote by $\D_{\alg}(I \sqcup J)$ the set of all joint distributions of two-faced families with left index set $I$ and right index set $J$, which is naturally identified as the set of unital linear functionals from $\bC\langle Z_k : k \in I \sqcup J\rangle$ to $\bC$. On the other hand, denote by $\bC_0\langle\!\langle z_k : k \in I \sqcup J\rangle\!\rangle$ the set of series with complex coefficients in the non-commuting indeterminates $\{z_k\}_{k \in I \sqcup J}$ with vanishing constant term.

\begin{defn}\label{MultiTransforms}
Let $\widehat{a} = ((a_i)_{i \in I}, (a_j)_{j \in J})$ be a two-faced family in a non-commutative probability space $(\A, \varphi)$. We define the following series in $\bC_0\langle\!\langle z_k : k \in I \sqcup J\rangle\!\rangle$.
\begin{enumerate}[$\qquad(1)$]
\item The \textit{moment series} of $\widehat{a}$ is
\[M_{\widehat{a}}= \sum_{n \geq 1}\sum_{\alpha: \{1, \dots, n\} \to I \sqcup J}\varphi(a_{\alpha(1)}\cdots a_{\alpha(n)})z_{\alpha(1)}\cdots z_{\alpha(n)}.\]

\item The \textit{bi-free $\R$-transform} of $\widehat{a}$ is
\[\R_{\widehat{a}}= \sum_{n \geq 1}\sum_{\alpha: \{1, \dots, n\} \to I \sqcup J}\kappa_{\chi_\alpha}(a_{\alpha(1)}, \dots, a_{\alpha(n)})z_{\alpha(1)}\cdots z_{\alpha(n)}.\]

\item The \textit{bi-Boolean $\eta$-transform} of $\widehat{a}$ is
\[\eta_{\widehat{a}}= \sum_{n \geq 1}\sum_{\alpha: \{1, \dots, n\} \to I \sqcup J}B_{\chi_\alpha}(a_{\alpha(1)}, \dots, a_{\alpha(n)})z_{\alpha(1)}\cdots z_{\alpha(n)}.\]
\end{enumerate}
\end{defn}

Note given a series $f \in \mathbb{C}_0\langle\!\langle z_k : k \in I \sqcup J\rangle\!\rangle$, one can always find some $\widehat{a}$ in $(\A, \varphi)$ such that $M_{\widehat{a}} = f$ (or $\R_{\widehat{a}} = f$ or $\eta_{\widehat{a}} = f$) by taking $\A = \bC\langle Z_k : k \in I \sqcup J\rangle$, $a_k = Z_k$, and define $\varphi(a_{\alpha(1)}\cdots a_{\alpha(n)})$ (respectively $\kappa_{\chi_{\alpha}}(a_{\alpha(1)},\ldots,  a_{\alpha(n)})$, respectively $B_{\chi_{\alpha}}(a_{\alpha(1)},\ldots,  a_{\alpha(n)})$)  to be the coefficient of $z_{\alpha(1)}\cdots z_{\alpha(n)}$ in $f$. Consequently, the maps
\[M, \R, \eta: \D_\alg(I \sqcup J) \to \mathbb{C}_0\langle\!\langle z_k : k \in I \sqcup J\rangle\!\rangle\]
are bijections.

\begin{defn}
Let $I$ and $J$ be disjoint index sets. The maps $\bB$ and $\bReta$ are bijections
\[\bB: \D_\alg(I \sqcup J) \to \D_\alg(I \sqcup J) \qand \bReta: \mathbb{C}_0\langle\!\langle z_k : k \in I \sqcup J\rangle\!\rangle \to \mathbb{C}_0\langle\!\langle z_k : k \in I \sqcup J\rangle\!\rangle\]
defined by $\bB = \R^{-1} \circ \eta$ and $\bReta = \eta \circ \R^{-1}$.
\end{defn}

Following \cite{BN2008-1}*{Definition 3.2}, if $f \in \mathbb{C}_0\langle\!\langle z_k : k \in I \sqcup J\rangle\!\rangle$, then $\Cf_{(\alpha(1), \dots, \alpha(n))}(f)$ denotes the coefficient of $z_{\alpha(1)}\cdots z_{\alpha(n)}$ in $f$. More generally, if $\pi$ is a partition of $\{1, \dots, n\}$, then $\Cf_{(\alpha(1), \dots, \alpha(n)); \pi}(f)$ denotes the product
\[\prod_{V \in \pi}\Cf_{(\alpha(1), \dots, \alpha(n))|_V}(f),\]
which is in general not a coefficient in $f$. In deriving an explicit formula relating the coefficients of $f$ to those of $\bReta(f)$ for $f \in \mathbb{C}_0\langle\!\langle z_k : k \in I \sqcup J\rangle\!\rangle$, the following partial order on bi-non-crossing partitions will be used.

\begin{defn}
Let $n \geq 1$ and $\chi: \{1, \dots, n\} \to \{\ell, r\}$. For two bi-non-crossing partitions $\sigma$ and $\pi$ in $\B\N\C(\chi)$, we write $\sigma \ll_\chi \pi$ to mean that $\sigma \leq \pi$ and, in addition, for every block $V$ of $\pi$, there exists a block $W$ of $\sigma$ such that $\min_{\prec_\chi}(V), \max_{\prec_\chi}(V) \in W$.
\end{defn}

Note that if $\chi: \{1, \dots, n\} \to \{\ell, r\}$ is constant, then $\sigma, \pi \in \N\C(n)$ and $\ll_\chi$ is exactly the partial order $\ll$ inroduced in \cite{BN2008-1}*{Definition 2.5}.

\begin{prop}\label{bRetaFormula}
Let $f$ and $g$ be series in $\mathbb{C}_0\langle\!\langle z_k : k \in I \sqcup J\rangle\!\rangle$ such that $\bReta(f) = g$.
\begin{enumerate}[$\qquad(1)$]
\item For all $n \geq 1$ and $\alpha: \{1, \dots, n\} \to I \sqcup J$, we have that
\[\Cf_{(\alpha(1), \dots, \alpha(n))}(g) = \sum_{\substack{\pi \in \B\N\C(\chi_\alpha)\\\pi \ll_{\chi_\alpha} 1_{\chi_\alpha}}}\Cf_{(\alpha(1), \dots, \alpha(n)); \pi}(f).\]

\item For all $n \geq 1$ and $\alpha: \{1, \dots, n\} \to I \sqcup J$, we have that
\[\Cf_{(\alpha(1), \dots, \alpha(n))}(f) = \sum_{\substack{\pi \in \B\N\C(\chi_\alpha)\\\pi \ll_{\chi_\alpha} 1_{\chi_\alpha}}}(-1)^{|\pi| - 1}\Cf_{(\alpha(1), \dots, \alpha(n)); \pi}(g).\]
\end{enumerate}
\end{prop}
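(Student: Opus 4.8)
The cleanest approach is to reduce everything to the single-variable results of Belinschi--Nica \cite{BN2008-1} via the combinatorial structure of $\mathrm{b}\mathbb{B}\mathrm{NC}(\chi_\alpha)$, exploiting the fact that $s_{\chi_\alpha}^{-1}$ carries $\mathrm{b}\mathbb{B}\mathrm{NC}(\chi_\alpha)$ onto $\mathrm{NC}(n)$ and carries $\ll_{\chi_\alpha}$ onto $\ll$. Concretely, we first observe that the series $M_{\widehat{a}}$, $\R_{\widehat{a}}$, $\eta_{\widehat{a}}$ are built from coefficients indexed by $\alpha$, and for a fixed $\alpha$ the combinatorics only involves $\chi_\alpha$ and the relabelling $s_{\chi_\alpha}$; in particular the bi-free moment-$(\ell,r)$-cumulant formula $\varphi(a_{\alpha(1)}\cdots a_{\alpha(n)}) = \sum_{\pi \in \mathrm{b}\mathbb{B}\mathrm{NC}(\chi_\alpha)} \kappa_\pi$ and the bi-Boolean moment-cumulant formula \eqref{BB-M-C} are obtained from the classical non-crossing and interval formulas by the order isomorphism $\pi \mapsto s_{\chi_\alpha}^{-1}\cdot\pi$. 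So the relation between $\R$-coefficients and $\eta$-coefficients of a two-faced family, for a fixed index string $\alpha$, is \emph{formally identical} to the relation between $\R$-coefficients and $\eta$-coefficients in the single-variable (or $\N\C$) setting of \cite{BN2008-1}, with $\N\C(n)$ replaced by $\mathrm{b}\mathbb{B}\mathrm{NC}(\chi_\alpha)$ and $\ll$ by $\ll_{\chi_\alpha}$.

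**Key steps, in order.** First I would make precise the dictionary: given $f \in \mathbb{C}_0\langle\!\langle z_k\rangle\!\rangle$, realize $f = M_{\widehat a}$ for $\widehat a$ the universal family on $\bC\langle Z_k\rangle$ as in the remark after Definition~\ref{MultiTransforms}, and similarly $\R^{-1}(f) = M_{\widehat b}$, $\eta^{-1}(g)$ etc., so that the statement becomes: if $\widehat a$ has $\R$-transform $f$ and bi-Boolean $\eta$-transform $g$, then part (1) holds. Second, I would write down the bi-free moment-cumulant formula and the bi-Boolean moment-cumulant formula \eqref{BB-M-C} for the fixed string $\alpha$, and then invoke the single-variable computation of \cite{BN2008-1}*{Theorem 3.9 (and Proposition 3.6)} which expresses $B_{\chi_\alpha}$-cumulants in terms of $\kappa_{\chi_\alpha}$-cumulants, summed over $\pi \ll_{\chi_\alpha} 1_{\chi_\alpha}$ with $\Cf_{\cdots;\pi}$-type products. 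The translation is justified because $s_{\chi_\alpha}^{-1}$ is a lattice isomorphism $\mathrm{b}\mathbb{B}\mathrm{NC}(\chi_\alpha) \to \N\C(n)$ sending $\ll_{\chi_\alpha}$ to $\ll$, and because $\Cf_{(\alpha(1),\dots,\alpha(n));\pi}(f) = \prod_{V \in \pi}\Cf_{(\alpha(1),\dots,\alpha(n))|_V}(f)$ is block-multiplicative in exactly the same way $\kappa_\pi$ and $B_\pi$ factor over blocks. Part (2) is then the Möbius-type inversion of part (1): since $\mu_{\mathrm{b}\mathbb{B}\mathrm{NC}}(\sigma,\pi) = \mu_{\N\C}(s_{\chi_\alpha}^{-1}\cdot\sigma, s_{\chi_\alpha}^{-1}\cdot\pi)$ and the factor $(-1)^{|\pi|-1}$ matches the Möbius function on the relevant sublattice, inverting the triangular system in part (1) yields part (2); alternatively one reads (2) off from \cite{BN2008-1}*{Theorem 3.9} as well, again by the same dictionary.

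**The main obstacle.** The real content is not the final inversion but verifying cleanly that the single-variable identity relating $\R$- and $\eta$-coefficients over $\N\C(n)$ survives the passage to $\mathrm{b}\mathbb{B}\mathrm{NC}(\chi_\alpha)$ — i.e. that the only thing \cite{BN2008-1} uses about $\N\C(n)$ in its proof is its lattice structure together with the multiplicativity of cumulants over blocks, and that these are precisely the features shared by $\mathrm{b}\mathbb{B}\mathrm{NC}(\chi_\alpha)$ via $s_{\chi_\alpha}$. Once one grants (as the excerpt repeatedly does, citing \cite{CNS2015-1}) that ``due to similar lattice structures'' the $\N\C$ combinatorics lifts verbatim, the proof is a short translation argument; the delicate point to state carefully is that the summation set $\{\pi \ll_{\chi_\alpha} 1_{\chi_\alpha}\}$ is the image under $s_{\chi_\alpha}$ of $\{\pi \ll 1_n\}$, which is immediate from the definition of $\ll_{\chi_\alpha}$ and the fact noted before Proposition~\ref{bRetaFormula} that $\ll_{\chi}$ restricts to $\ll$ when $\chi$ is constant. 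I would therefore structure the proof as: (i) reduce to fixed $\alpha$; (ii) transport via $s_{\chi_\alpha}$ to the classical $\N\C(n)$ picture; (iii) quote the Belinschi--Nica formula; (iv) transport back; (v) obtain (2) by inversion.
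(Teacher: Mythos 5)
Your proposal is correct and is essentially the paper's argument: the paper's proof simply states that the result follows from \cite{BN2008-1}*{Proposition 3.9} ``once permutations are applied,'' i.e.\ exactly your transport of the coefficient identities through the order isomorphism $\pi \mapsto s_{\chi_\alpha}^{-1}\cdot\pi$ carrying $\B\N\C(\chi_\alpha)$ onto $\N\C(n)$ and $\ll_{\chi_\alpha}$ onto $\ll$, followed by the inversion for part $(2)$. Your write-up just makes explicit the dictionary (fixed $\alpha$, block-multiplicativity of $\Cf_{\,\cdot\,;\pi}$, compatibility of the partial orders) that the paper leaves implicit.
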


\begin{proof}
The proof is identical to that in \cite{BN2008-1}*{Proposition 3.9} once permutations are applied and thus is omitted.
\end{proof}

Returning to the transforms introduced in Definition \ref{MultiTransforms}, note that the operations $\boxplus\boxplus$ and $\uplus\uplus$ can be defined on $\D_\alg(I \sqcup J)$ by the requirements that for $\mu, \nu \in \D_\alg(I \sqcup J)$, $\mu \boxplus\boxplus \nu$ and $\mu \uplus\uplus \nu$ are the unique elements of $\D_\alg(I \sqcup J)$ such that $\R_{\mu \boxplus\boxplus \nu} = \R_{\mu} + \R_{\nu}$ and $\eta_{\mu \uplus\uplus \nu} = \eta_{\mu} + \eta_{\nu}$. Using Proposition \ref{bRetaFormula}, a mutli-variable version of the equivalence of assertions $(1)$ and $(2)$ in Theorem \ref{TDBP} can be easily obtained in the current algebraic framework. The proof is nearly identical to the one-sided case considered in \cite{BN2008-1}*{Section 5}, and hence omitted.

\begin{prop}
Let $\{\mu_n\}_{n \geq 1}$ be a sequence in $\D_{\alg}(I \sqcup J)$ and $\{k_n\}_{n \geq 1}$ be a sequence of positive integers with $\lim_{n \to \infty}k_n = \infty$. The following assertions are equivalent.
\begin{enumerate}[$\qquad(1)$]
\item The sequence $\underbrace{\mu_n \boxplus\boxplus \cdots \boxplus\boxplus \mu_n}_{k_n\,\mathrm{times}}$ converges in moments to some $\mu \in \D_{\alg}(I \sqcup J)$.

\item The sequence $\underbrace{\mu_n \uplus\uplus \cdots \uplus\uplus \mu_n}_{k_n\,\mathrm{times}}$ converges in moments to some $\nu \in \D_{\alg}(I \sqcup J)$.
\end{enumerate}
Moreover, if these assertions hold, then $\mu = \bB(\nu)$.
\end{prop}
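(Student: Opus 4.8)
The plan is to transfer the whole question to the level of coefficients of the formal power series $\R_{\mu_n}$ and $\eta_{\mu_n}$, using that $\boxplus\boxplus$ is linearized by $\R$ and $\uplus\uplus$ by $\eta$ on $\D_{\alg}(I \sqcup J)$, together with the combinatorial description of $\bReta$ from Proposition \ref{bRetaFormula}. First I would record the elementary fact that, for an \emph{arbitrary} sequence $\{\lambda_n\}$ in $\D_{\alg}(I \sqcup J)$, convergence in moments is equivalent to the convergence of every coefficient of $\R_{\lambda_n}$, and also to the convergence of every coefficient of $\eta_{\lambda_n}$; moreover, when these limits exist they are exactly the corresponding coefficients of $\R_\lambda$ (resp.\ $\eta_\lambda$) of the limit $\lambda$. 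This is the usual triangularity argument: the bi-free moment-cumulant formula (Definition \ref{MultiTransforms}) writes $\varphi(a_{\alpha(1)}\cdots a_{\alpha(n)})$ as $\kappa_{\chi_\alpha}(a_{\alpha(1)},\dots,a_{\alpha(n)})$ plus a sum over $\pi \in \B\N\C(\chi_\alpha)$ with $\pi \neq 1_{\chi_\alpha}$ of products of cumulants attached to blocks of size $< n$, and the bi-Boolean formula \eqref{BB-M-C} does the same with $B_{\chi_\alpha}$ and $\B\I(\chi_\alpha)$, so an induction on $n$ finishes it. Applying this to $\lambda_n = \mu_n^{\boxplus\boxplus k_n}$ together with $\R_{\mu_n^{\boxplus\boxplus k_n}} = k_n\R_{\mu_n}$, assertion $(1)$ becomes: for every $n$ and every $\alpha \colon \{1,\dots,n\} \to I \sqcup J$ the limit $\lim_n k_n\Cf_{(\alpha(1),\dots,\alpha(n))}(\R_{\mu_n})$ exists; likewise, via $\eta_{\mu_n^{\uplus\uplus k_n}} = k_n\eta_{\mu_n}$, assertion $(2)$ becomes: for every such $\alpha$ the limit $\lim_n k_n\Cf_{(\alpha(1),\dots,\alpha(n))}(\eta_{\mu_n})$ exists.

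The heart of the argument is to show that these two families of limits exist simultaneously and, when they do, agree. Since $\eta_{\mu_n} = \bReta(\R_{\mu_n})$, Proposition \ref{bRetaFormula}(1) gives
\[
k_n\Cf_{(\alpha(1),\dots,\alpha(n))}(\eta_{\mu_n}) = \sum_{\substack{\pi \in \B\N\C(\chi_\alpha)\\\pi \ll_{\chi_\alpha} 1_{\chi_\alpha}}} k_n^{\,1 - |\pi|}\prod_{V \in \pi}\left(k_n\Cf_{(\alpha(1),\dots,\alpha(n))|_V}(\R_{\mu_n})\right).
\]
If all the limits in $(1)$ exist, then each factor $k_n\Cf_{(\alpha(1),\dots,\alpha(n))|_V}(\R_{\mu_n})$ converges, hence so does each product over $V \in \pi$; since $\pi = 1_{\chi_\alpha}$ is the unique partition occurring with $|\pi| = 1$ and every other $\pi$ in the sum has $|\pi| \geq 2$, so that $k_n^{\,1 - |\pi|} \to 0$ as $k_n \to \infty$, the whole expression converges to the $\pi = 1_{\chi_\alpha}$ term, namely $\lim_n k_n\Cf_{(\alpha(1),\dots,\alpha(n))}(\R_{\mu_n})$. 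Running the identical computation with Proposition \ref{bRetaFormula}(2) in place of \ref{bRetaFormula}(1) shows, conversely, that convergence of the limits in $(2)$ forces convergence of the limits in $(1)$ to the same values.

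Combining the two reductions yields $(1) \Leftrightarrow (2)$. For the final assertion, assume $(1)$ (hence $(2)$) holds; then by the above, for every $\alpha$ we have $\Cf_{(\alpha(1),\dots,\alpha(n))}(\R_\mu) = \lim_n k_n\Cf_{(\alpha(1),\dots,\alpha(n))}(\R_{\mu_n}) = \lim_n k_n\Cf_{(\alpha(1),\dots,\alpha(n))}(\eta_{\mu_n}) = \Cf_{(\alpha(1),\dots,\alpha(n))}(\eta_\nu)$, so $\R_\mu = \eta_\nu$ as elements of $\mathbb{C}_0\langle\!\langle z_k : k \in I \sqcup J\rangle\!\rangle$, and therefore $\mu = \R^{-1}(\eta_\nu) = \bB(\nu)$. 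I do not anticipate a serious obstacle here: the only point requiring care is keeping track that $1_{\chi_\alpha}$ is the dominant term in the $\bReta$-expansion while every other term carries a strictly positive power of $1/k_n$; the remainder is the standard triangularity bookkeeping, just as in \cite{BN2008-1}*{Section 5}.
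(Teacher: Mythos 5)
Your argument is correct and is exactly the route the paper intends: the paper omits the proof, citing that it is nearly identical to the one-sided argument of \cite{BN2008-1}*{Section 5}, which is precisely your scheme of translating moment convergence into convergence of the rescaled coefficients $k_n\Cf_{(\alpha(1),\dots,\alpha(n))}(\R_{\mu_n})$ and $k_n\Cf_{(\alpha(1),\dots,\alpha(n))}(\eta_{\mu_n})$ and then using Proposition \ref{bRetaFormula} with the observation that every $\pi \neq 1_{\chi_\alpha}$ contributes a factor $k_n^{1-|\pi|} \to 0$. The bookkeeping (triangularity of the moment-cumulant formulas, $\R_{\mu_n^{\boxplus\boxplus k_n}} = k_n\R_{\mu_n}$, $\eta_{\mu_n^{\uplus\uplus k_n}} = k_n\eta_{\mu_n}$, and the identification $\R_\mu = \eta_\nu$ giving $\mu = \bB(\nu)$) is all in order.
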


\subsection{The twisted multiplicative bi-free convolution}

The second main result of \cite{BN2008-1} concerns a special property of $\mathbb{B}$; namely that $\mathbb{B}$ is a homomorphism with respect to the multiplicative free convolution $\boxtimes$. It is thus natural to expect that a similar property holds for $\bB$. However, it turns out that $\bB$ is not a homomorphism with respect to the `usual' multiplicative bi-free convolution $\boxtimes\boxtimes$ in the literature (see \cites{V2016, S2016-1}) but with respect to the one in \cites{CNS2015-1, CNS2015-2}. Consequently, we consider the following operation.

\begin{defn}
Let $\widehat{a} = ((a_i)_{i \in I}, (a_j)_{j \in J})$ and $\widehat{b} = ((b_i)_{i \in I}, (b_j)_{j \in J})$ be bi-free two-faced families in a non-commutative probability space $(\A, \varphi)$ with joint distributions $\mu_{\widehat{a}}$ and $\mu_{\widehat{b}}$ respectively. The \textit{twisted multiplicative bi-free convolution} of $\mu_{\widehat{a}}$ and $\mu_{\widehat{b}}$, denoted $\mu_{\widehat{a}} \widetilde{\boxtimes\boxtimes} \mu_{\widehat{b}}$, is defined to be the joint distribution of the two-faced family $((a_ib_i)_{i \in I}, (b_ja_j)_{j \in J})$.
\end{defn}

Note that the usual multiplicative bi-free convolution $\mu_{\widehat{a}} \boxtimes\boxtimes \mu_{\widehat{b}}$ of $\mu_{\widehat{a}}$ and $\mu_{\widehat{b}}$ is defined to be the joint distribution of the two-faced family $((a_ib_i)_{i \in I}, (a_jb_j)_{j \in J})$. On the other hand, this twisted multiplicative bi-free convolution is not new, and has been previously considered in \cites{CNS2015-1, CNS2015-2} for the following reason. In \cite{NS1996}, the operation of \textit{boxed convolution} $\freestar$ was defined using the Kreweras complementation map $K_{\N\C}$ on the lattice of non-crossing partitions, which plays an important role in proving the mentioned property of $\mathbb{B}$. On the other hand, the \textit{bi-non-crossing Kreweras complementation map} $K_{\B\N\C}$ on the lattice of bi-non-crossing partitions was defined in \cite{CNS2015-1}*{Definition 5.1.1} by
\[K_{\B\N\C}(\pi) = s_\chi\cdot K_{\N\C}(s_\chi^{-1}\cdot\pi)\]
for $\chi: \{1, \dots, n\} \to \{\ell, r\}$ and $\pi \in \B\N\C(\chi)$. As shown in \cite{CNS2015-2}*{Proposition 9.2.1}, the map $K_{\B\N\C}$ can be used to compute the twisted multiplicative bi-free convolution as follows. If $\widehat{a} = ((a_i)_{i \in I}, (a_j)_{j \in J})$ and $\widehat{b} = ((b_i)_{i \in I}, (b_j)_{j \in J})$ are bi-free, then
\begin{equation}\label{Kreweras}
\kappa_{\chi_\alpha}(c_{\alpha(1)}, \dots, c_{\alpha(n)}) = \sum_{\pi \in \B\N\C(\chi_\alpha)}\kappa_{\pi}(a_{\alpha(1)}, \dots, a_{\alpha(n)})\cdot\kappa_{K_{\B\N\C}(\pi)}(b_{\alpha(1)}, \dots, b_{\alpha(n)})
\end{equation}
for $\alpha: \{1, \dots, n\} \to I \sqcup J$, where $c_{\alpha(k)} = a_{\alpha(k)}b_{\alpha(k)}$ if $\alpha(k) \in I$ and $c_{\alpha(k)} = b_{\alpha(k)}a_{\alpha(k)}$ if $\alpha(k) \in J$ for $1 \leq k \leq n$. This motivates us to define following operation on $\bC_0\langle\!\langle z_k : k \in I \sqcup J\rangle\!\rangle$.

\begin{defn}
Let $f$ and $g$ be series in $\bC_0\langle\!\langle z_k : k \in I \sqcup J\rangle\!\rangle$. The series $f\,\widetilde{\freestar}\,g$ in $\mathbb{C}_0\langle\!\langle z_k : k \in I \sqcup J\rangle\!\rangle$ is defined by
\[\Cf_{(\alpha(1), \dots, \alpha(n))}(f\,\widetilde{\freestar}\,g) = \sum_{\pi \in \B\N\C(\chi_\alpha)}\Cf_{(\alpha(1), \dots, \alpha(n)); \pi}(f)\cdot\mathrm{Cf}_{(\alpha(1), \dots, \alpha(n)); K_{\B\N\C}(\pi)}(g)\]
for all $n \geq 1$ and $\alpha: \{1, \dots, n\} \to I \sqcup J$.
\end{defn}

Note that if $\mu, \nu \in \D_\alg(I \sqcup J)$, then
\[\R_{\mu \widetilde{\boxtimes\boxtimes} \nu} = \R_{\mu}\,\widetilde{\freestar}\,\R_{\nu}\]
by equation \eqref{Kreweras}. Moreover, if $\pi \leq \rho \in \B\N\C(\chi)$, then the \textit{relative bi-non-crossing Kreweras complement} of $\pi$ in $\rho$, denoted $K_{\B\N\C; \rho}(\pi)$, is defined by
\[K_{\B\N\C; \rho}(\pi) = \{K_{\B\N\C}(\pi|_V)\}_{V \in \rho},\]
where for every block $V$ of $\rho$, $K_{\B\N\C}(\pi|_V)$ is the bi-non-crossing Kreweras complement of $\pi|V$ (which is a bi-non-crossing partition with respect to $\chi|_V$). Under this notion, equation \eqref{Kreweras} can be generalized to
\[\kappa_{\rho}(c_{\alpha(1)}, \dots, c_{\alpha(n)}) = \sum_{\substack{\pi \in \B\N\C(\chi_\alpha)\\\pi \leq \rho}}\kappa_{\pi}(a_{\alpha(1)}, \dots, a_{\alpha(n)})\cdot\kappa_{K_{\B\N\C; \rho}(\pi)}(b_{\alpha(1)}, \dots, b_{\alpha(n)})\]
for $\alpha: \{1, \dots, n\} \to I \sqcup J$ and $\rho \in \B\N\C(\chi_\alpha)$. We are now ready to present the main result of this subsection whose proof trivially follows that of \cite{BN2008-1}*{Theorem 7.2} with a permutation.

\begin{thm}\label{BN2008-1Thm7.2}
For two series $f$ and $g$ in $\bC_0\langle\!\langle z_k : k \in I \sqcup J\rangle\!\rangle$, we have that
\[\bReta(f\,\widetilde{\freestar}\,g) = \bReta(f)\,\widetilde{\freestar}\,\bReta(g).\]
\end{thm}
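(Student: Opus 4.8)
The plan is to mimic the proof of \cite{BN2008-1}*{Theorem 7.2} verbatim, transporting every statement about $\N\C(n)$ to $\B\N\C(\chi)$ via the permutations $s_\chi$, as has been done for all the preceding results in this section. The first step is to reduce $\bReta$ to $\bB$: since $\bReta = \eta \circ \R^{-1}$ and $\bB = \R^{-1}\circ\eta$, one has the conjugation relation $\bReta = \R \circ \bB \circ \R^{-1}$, so the identity $\bReta(f\,\widetilde{\freestar}\,g) = \bReta(f)\,\widetilde{\freestar}\,\bReta(g)$ is equivalent, after applying $\R^{-1}$ and using $\R_{\mu\widetilde{\boxtimes\boxtimes}\nu} = \R_\mu\,\widetilde{\freestar}\,\R_\nu$, to the assertion that $\bB$ is a homomorphism for $\widetilde{\boxtimes\boxtimes}$, i.e. $\bB(\mu\,\widetilde{\boxtimes\boxtimes}\,\nu) = \bB(\mu)\,\widetilde{\boxtimes\boxtimes}\,\bB(\nu)$ for $\mu,\nu\in\D_\alg(I\sqcup J)$. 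This is the exact bi-version of the reduction in \cite{BN2008-1}.

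The combinatorial heart is then a cumulant computation. Realizing $\mu = \mu_{\widehat a}$ and $\nu = \mu_{\widehat b}$ with $\widehat a, \widehat b$ bi-free, the generalized Kreweras formula
\[
\kappa_{\rho}(c_{\alpha(1)}, \dots, c_{\alpha(n)}) = \sum_{\substack{\pi \in \B\N\C(\chi_\alpha)\\ \pi \leq \rho}}\kappa_{\pi}(a_{\alpha(1)}, \dots, a_{\alpha(n)})\cdot\kappa_{K_{\B\N\C; \rho}(\pi)}(b_{\alpha(1)}, \dots, b_{\alpha(n)})
\]
expresses the $(\ell,r)$-cumulants of $\widehat c$ (the $\widetilde{\boxtimes\boxtimes}$-product) in terms of those of $\widehat a$ and $\widehat b$. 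Applying $\bReta$ on the level of coefficients via Proposition \ref{bRetaFormula}(1), one expands $\Cf(g)$ for $g = \bReta(\R_{\widehat a}\,\widetilde{\freestar}\,\R_{\widehat b})$ as a double sum over $\pi \ll_{\chi_\alpha} 1_{\chi_\alpha}$ and then over $\sigma \le \pi$ coming from the Kreweras formula; the goal is to rearrange this into $\Cf_{(\dots);\tau}$-products matching $\bReta(\R_{\widehat a})\,\widetilde{\freestar}\,\bReta(\R_{\widehat b})$. The key structural input, exactly as in \cite{BN2008-1}, is the behaviour of the bi-non-crossing Kreweras complement under the interval-type order $\ll_\chi$: one needs that $\pi \ll_\chi \rho$ iff the blocks of $K_{\B\N\C}$ restricted appropriately are "lateral" in the complementary sense, and that $K_{\B\N\C}$ intertwines $\ll_\chi$ with the relevant refinement order. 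Since $K_{\B\N\C}(\pi) = s_\chi \cdot K_{\N\C}(s_\chi^{-1}\cdot\pi)$ and $\ll_{\chi}$ pulls back to $\ll$ under $s_\chi^{-1}$ (as noted after the definition of $\ll_\chi$), every such lemma is an immediate transport of its $\N\C$-counterpart in \cite{BN2008-1}*{Section 7}, so no genuinely new combinatorial identity is required.

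Concretely, the steps I would carry out are: (i) state the conjugation $\bReta = \R\circ\bB\circ\R^{-1}$ and reduce to $\bB$ being a $\widetilde{\boxtimes\boxtimes}$-homomorphism; (ii) record that $\ll_{\chi}$ transports to $\ll$ under $s_\chi$ and that $K_{\B\N\C}$ transports to $K_{\N\C}$, so the partition-theoretic lemmas of \cite{BN2008-1}*{Section 7} (the compatibility of Kreweras complementation with $\ll$ and with relative complements) hold verbatim in $\B\N\C(\chi)$; (iii) run the coefficient computation of \cite{BN2008-1}*{Theorem 7.2}, using Proposition \ref{bRetaFormula} and equation \eqref{Kreweras} in place of their one-sided analogues, to conclude. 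I expect the main obstacle to be purely bookkeeping: verifying that the interplay between the two partial orders ($\le$ from refinement and $\ll_{\chi}$) and the Kreweras map still yields a clean bijective rearrangement of the double sums once the $s_\chi$-permutations are inserted — but because $s_\chi$ is an order isomorphism $(\B\N\C(\chi),\le)\to(\N\C(n),\le)$ intertwining both $\ll_\chi\mapsto\ll$ and $K_{\B\N\C}\mapsto K_{\N\C}$, this obstacle is only apparent, and the proof reduces to citing \cite{BN2008-1}*{Theorem 7.2} with a permutation, exactly as the paper indicates.
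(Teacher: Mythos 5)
Your proposal is correct and follows essentially the same route as the paper, which proves the theorem by transporting the argument of \cite{BN2008-1}*{Theorem 7.2} to $\B\N\C(\chi)$ via the permutation $s_\chi$ (using that $s_\chi$ intertwines $\leq$, $\ll_\chi$ with $\ll$, and $K_{\B\N\C}$ with $K_{\N\C}$), exactly as in your steps (ii)--(iii). Your preliminary reduction to the homomorphism property of $\bB$ is a harmless reformulation (it is equivalent since $\R$ is a bijection) and does not change the combinatorial core of the argument.
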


\begin{cor}
For any two distributions $\mu$ and $\nu$ in $\D_{\alg}(I \sqcup J)$, $\bB(\mu \widetilde{\boxtimes\boxtimes} \nu) = \bB(\mu) \widetilde{\boxtimes\boxtimes} \bB(\nu)$.
\end{cor}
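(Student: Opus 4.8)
The plan is to obtain the corollary as a formal consequence of Theorem \ref{BN2008-1Thm7.2}, by transporting the identity $\bReta(f\,\widetilde{\freestar}\,g) = \bReta(f)\,\widetilde{\freestar}\,\bReta(g)$ through the defining relations among the bijections $M$, $\R$, $\eta$, $\bB$, $\bReta$. First I would record two reformulations of the definitions $\bB = \R^{-1}\circ\eta$ and $\bReta = \eta\circ\R^{-1}$: since $\R$ and $\eta$ are bijections from $\D_\alg(I\sqcup J)$ onto $\mathbb{C}_0\langle\!\langle z_k : k \in I\sqcup J\rangle\!\rangle$, we have $\R\circ\bB = \eta$ and $\bReta\circ\R = \eta$. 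In particular $\R_{\bB(\mu)} = \eta_\mu$ for every $\mu\in\D_\alg(I\sqcup J)$, and $\eta_\rho = \bReta(\R_\rho)$ for every $\rho\in\D_\alg(I\sqcup J)$.

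Next, because $\R$ is injective, it suffices to check that $\bB(\mu\,\widetilde{\boxtimes\boxtimes}\,\nu)$ and $\bB(\mu)\,\widetilde{\boxtimes\boxtimes}\,\bB(\nu)$ have the same bi-free $\R$-transform. Applying $\R\circ\bB=\eta$ to the left side gives $\R_{\bB(\mu\,\widetilde{\boxtimes\boxtimes}\,\nu)} = \eta_{\mu\,\widetilde{\boxtimes\boxtimes}\,\nu}$. For the right side I would use the identity $\R_{\rho_1\,\widetilde{\boxtimes\boxtimes}\,\rho_2} = \R_{\rho_1}\,\widetilde{\freestar}\,\R_{\rho_2}$ (which the excerpt records as a consequence of equation \eqref{Kreweras}), together with $\R_{\bB(\mu)}=\eta_\mu$ and $\R_{\bB(\nu)}=\eta_\nu$, to get $\R_{\bB(\mu)\,\widetilde{\boxtimes\boxtimes}\,\bB(\nu)} = \eta_\mu\,\widetilde{\freestar}\,\eta_\nu$. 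Thus the corollary is reduced to the single identity $\eta_{\mu\,\widetilde{\boxtimes\boxtimes}\,\nu} = \eta_\mu\,\widetilde{\freestar}\,\eta_\nu$.

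Finally I would prove that identity: apply $\bReta\circ\R = \eta$ to $\mu\,\widetilde{\boxtimes\boxtimes}\,\nu$ and then the same $\R$-transform formula for $\widetilde{\boxtimes\boxtimes}$ to obtain $\eta_{\mu\,\widetilde{\boxtimes\boxtimes}\,\nu} = \bReta(\R_{\mu\,\widetilde{\boxtimes\boxtimes}\,\nu}) = \bReta(\R_\mu\,\widetilde{\freestar}\,\R_\nu)$; then Theorem \ref{BN2008-1Thm7.2} rewrites the right-hand side as $\bReta(\R_\mu)\,\widetilde{\freestar}\,\bReta(\R_\nu)$, which equals $\eta_\mu\,\widetilde{\freestar}\,\eta_\nu$ by $\bReta\circ\R=\eta$ once more. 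This finishes the argument.

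I do not expect a genuine obstacle at this stage: the corollary is a diagram chase, and all the combinatorial content sits in Theorem \ref{BN2008-1Thm7.2} (inherited, up to the permutation $s_\chi$, from \cite{BN2008-1}*{Theorem 7.2}), which encodes the compatibility of the bi-non-crossing Kreweras complement $K_{\B\N\C}$ with the partial order $\ll_{\chi_\alpha}$ and the coefficient formula of Proposition \ref{bRetaFormula}. The only point I would not skip is a sentence verifying that $\widetilde{\boxtimes\boxtimes}$ is a well-defined binary operation on $\D_\alg(I\sqcup J)$ --- that a bi-free pair $(\widehat{a},\widehat{b})$ with prescribed marginal joint distributions exists and that the joint distribution of $((a_ib_i)_{i\in I},(b_ja_j)_{j\in J})$ depends only on $\mu_{\widehat{a}}$ and $\mu_{\widehat{b}}$ --- which is immediate from the bi-free product construction and the vanishing of mixed $(\ell,r)$-cumulants.
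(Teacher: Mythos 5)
Your proposal is correct and is essentially the paper's own proof: the paper establishes the same chain $\R_{\bB(\mu \widetilde{\boxtimes\boxtimes} \nu)} = \eta_{\mu \widetilde{\boxtimes\boxtimes} \nu} = \bReta(\R_\mu\,\widetilde{\freestar}\,\R_\nu) = \bReta(\R_\mu)\,\widetilde{\freestar}\,\bReta(\R_\nu) = \eta_\mu\,\widetilde{\freestar}\,\eta_\nu = \R_{\bB(\mu)\,\widetilde{\boxtimes\boxtimes}\,\bB(\nu)}$ and concludes by injectivity of $\R$, exactly as you do. Your closing remark on the well-definedness of $\widetilde{\boxtimes\boxtimes}$ is a harmless addition the paper leaves implicit.
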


\begin{proof}
By Theorem \ref{BN2008-1Thm7.2}, we have that
\begin{align*}
\R_{\bB(\mu \widetilde{\boxtimes\boxtimes} \nu)} &= \eta_{\mu \widetilde{\boxtimes\boxtimes} \nu}\\
&= \bReta(\R_{\mu \widetilde{\boxtimes\boxtimes} \nu})\\
&= \bReta(\R_\mu\,\widetilde{\freestar}\,\R_\nu)\\
&= \bReta(\R_\mu)\,\widetilde{\freestar}\,\bReta(\R_\nu)\\
&= \eta_\mu\,\widetilde{\freestar}\,\eta_\nu\\
&= \R_{\bB(\mu)}\,\widetilde{\freestar}\,\R_{\bB(\nu)}\\
&= \R_{\bB(\mu) \widetilde{\boxtimes\boxtimes} \bB(\nu)}.
\end{align*}
Since $\bB(\mu \widetilde{\boxtimes\boxtimes} \nu)$ and $\bB(\mu) \widetilde{\boxtimes\boxtimes} \bB(\nu)$ have the same bi-free $\R$-transform, the result follows.
\end{proof}

By comparing the first equality with the fifth equality, we see that the bi-Boolean $\eta$-transform also turns $\widetilde{\boxtimes\boxtimes}$ to $\widetilde{\freestar}$, which is not surprising since the Boolean $\eta$-transform behaves similarly when it comes to $\boxtimes$ and $\freestar$ (see \cite{BN2008-1}*{Theorem 2'}).

\section{Bi-Boolean independence with amalgamation}\label{sec:op-valued}

We conclude this paper with an extension of bi-Boolean independence to the amalgamated setting over an arbitrary algebra.

\subsection{Definition and review of bi-free independence with amalgamation}

As the theory of bi-free independence with amalgamation is well-developed in \cite{CNS2015-2}, the same structures can be used with some slight modifications to fit the current framework. Throughout the rest, $\B$ denotes a unital algebra over $\bC$. Recall first from \cite{CNS2015-2}*{Definition 3.2.1} that in order to discuss independence for pairs of algebras over $\B$, the usual notion of a non-commutative probability space $(\A, \varphi)$ should be replaced by a \textit{$\B$-$\B$-non-commutative probability space}, which is a triple $(\A, \bE, \varepsilon)$ where $\A$ is a unital algebra over $\bC$, $\varepsilon: \B \otimes \B^{\mathrm{op}} \to \A$ is a unital homomorphism such that $\varepsilon|_{\B \otimes 1_\B}$ and $\varepsilon|_{1_\B \otimes \B^{\mathrm{op}}}$ are injective, and $\bE: \A \to \B$ is a unital linear map such that
\[\bE(\varepsilon(b_1 \otimes b_2)Z) = b_1\bE(Z)b_2 \qand \bE(Z\varepsilon(b \otimes 1_\B)) = \bE(Z\varepsilon(1_\B \otimes b))\]
for all $b_1, b_2, b \in \B$ and $Z \in \A$. Moreover, the unital subalgebras $\A_\ell$ and $\A_r$ of $\A$ defined by
\begin{align*}
\A_\ell &= \{Z \in \A\,|\,Z\varepsilon(1_\B \otimes b) = \varepsilon(1_\B \otimes b)Z\,\text{ for all }\,b \in \B\}\\
\A_r &= \{Z \in \A\,|\,Z\varepsilon(b \otimes 1_\B) = \varepsilon(b \otimes 1_\B)Z\,\text{ for all }\,b \in \B\}
\end{align*}
will be called the \textit{left and right algebras} of $\A$ respectively. For notational simplicity, we will write $L_b$ and $R_b$ instead of $\varepsilon(b \otimes 1_\B)$ and $\varepsilon(1_\B \otimes b)$ and refer to them as \textit{left and right $\B$-operators} respectively.

As demonstrated in \cite{CNS2015-2}*{Theorem 3.2.4}, $\B$-$\B$-non-commutative probability spaces are the correct objects to study because every such space can be concretely represented as linear operators on a $\B$-$\B$-bimodule with a specified $\B$-vector state in an expectation-preserving way. We refer to \cite{CNS2015-2}*{Section 3} for more details. Furthermore, in order to discuss the corresponding moment and cumulant functions, one needs the notion of operator-valued bi-multiplicative functions.

\begin{defn}\label{BiMulti}
Let $(\A, \bE, \varepsilon)$ be a $\B$-$\B$-non-commutative probability space and let
\[\Phi: \bigcup_{n \geq 1}\bigcup_{\chi: \{1, \dots, n\} \to \{\ell, r\}}\B\N\C(\chi) \times \A_{\chi(1)} \times \cdots \times \A_{\chi(n)} \to \B\]
be a function that is linear in each $\A_{\chi(j)}$. It is said that $\Phi$ is \textit{operator-valued bi-multiplicative} if for every $\chi: \{1, \dots, n\} \to \{\ell, r\}$, $Z_j \in \A_{\chi(j)}$, $b \in \B$, and $\pi \in \B\N\C(\chi)$, the following four conditions hold.
\begin{enumerate}[$\qquad(1)$]
\item Let
\[q = \max\{j \in \{1, \dots, n\}\,|\,\chi(j) \neq \chi(n)\}.\]
If $\chi(n) = \ell$, then
\[\Phi_{1_\chi}(Z_1, \dots, Z_{n - 1}, Z_nL_b) = \begin{cases}
\Phi_{1_\chi}(Z_1, \dots, Z_{q - 1}, Z_qR_b, Z_{q + 1}, \dots, Z_n) &\text{if } q \neq -\infty\\
\Phi_{1_\chi}(Z_1, \dots, Z_{n - 1}, Z_n)b &\text{if } q = -\infty
\end{cases}.\]
If $\chi(n) = r$, then
\[\Phi_{1_\chi}(Z_1, \dots, Z_{n - 1}, Z_nR_b) = \begin{cases}
\Phi_{1_\chi}(Z_1, \dots, Z_{q - 1}, Z_qL_b, Z_{q + 1}, \dots, Z_n) &\text{if } q \neq -\infty\\
b\Phi_{1_\chi}(Z_1, \dots, Z_{n - 1}, Z_n) &\text{if } q = -\infty
\end{cases}.\]

\item Let $p \in \{1, \dots, n\}$, and let
\[q = \max\{j \in \{1, \dots, n\}\,|\,\chi(j) = \chi(p), j < p\}.\]
If $\chi(p) = \ell$, then
\[\Phi_{1_\chi}(Z_1, \dots, Z_{p - 1}, L_bZ_p, Z_{p + 1}, \dots, Z_n) = \begin{cases}
\Phi_{1_\chi}(Z_1, \dots, Z_{q - 1}, Z_qL_b, Z_{q + 1}, \dots, Z_n) &\text{if } q \neq -\infty\\
b\Phi_{1_\chi}(Z_1, Z_2, \dots, Z_n) &\text{if } q = -\infty
\end{cases}.\]
If $\chi(p) = r$, then
\[\Phi_{1_\chi}(Z_1, \dots, Z_{p - 1}, R_bZ_p, Z_{p + 1}, \dots, Z_n) = \begin{cases}
\Phi_{1_\chi}(Z_1, \dots, Z_{q - 1}, Z_qR_b, Z_{q + 1}, \dots, Z_n) &\text{if } q \neq -\infty\\
\Phi_{1_\chi}(Z_1, Z_2, \dots, Z_n)b &\text{if } q = -\infty
\end{cases}.\]

\item Suppose that $V_1, \dots, V_m$ are $\chi$-intervals ordered by $\prec_\chi$ which partition $\{1, \dots, n\}$, each a union of blocks of $\pi$. Then
\[\Phi_\pi(Z_1, \dots, Z_n) = \Phi_{\pi|_{V_1}}((Z_1, \dots, Z_n)|_{V_1})\cdots\Phi_{\pi|_{V_m}}((Z_1, \dots, Z_n)|_{V_m}).\]

\item Suppose that $V$ and $W$ partition $\{1, \dots, n\}$, each a union of blocks of $\pi$, $V$ is a $\chi$-interval, and
\[\min_{\prec_\chi}(\{1, \dots, n\}), \max_{\prec_\chi}(\{1, \dots, n\}) \in W.\]
Let
\[p = \max_{\prec_\chi}\left(\left\{j \in W\,|\,j \prec_\chi \min_{\prec_\chi}(V)\right\}\right) \qand q = \min_{\prec_\chi}\left(\left\{j \in W\,|\,\max_{\prec_\chi}(V) \prec_\chi j\right\}\right).\]
Then
\begin{align*}
\Phi_\pi(Z_1, \dots, Z_n) &= \begin{cases}
\Phi_{\pi|_{W}}\left(\left(Z_1, \dots, Z_{p - 1}, Z_pL_{\Phi_{\pi|_{V}}\left((Z_1, \dots, Z_n)|_{V}\right)}, Z_{p + 1}, \dots, Z_n\right)|_{W}\right) &\text{if } \chi(p) = \ell\\
\Phi_{\pi|_{W}}\left(\left(Z_1, \dots, Z_{p - 1}, R_{\Phi_{\pi|_{V}}\left((Z_1, \dots, Z_n)|_{V}\right)}Z_p, Z_{p + 1}, \dots, Z_n\right)|_{W}\right) &\text{if } \chi(p) = r
\end{cases}\\
&= \begin{cases}
\Phi_{\pi|_{W}}\left(\left(Z_1, \dots, Z_{q - 1}, L_{\Phi_{\pi|_{V}}\left((Z_1, \dots, Z_n)|_{V}\right)}Z_q, Z_{q + 1}, \dots, Z_n\right)|_{W}\right) &\text{if } \chi(q) = \ell\\
\Phi_{\pi|_{W}}\left(\left(Z_1, \dots, Z_{q - 1}, Z_qR_{\Phi_{\pi|_{V}}\left((Z_1, \dots, Z_n)|_{V}\right)}, Z_{q + 1}, \dots, Z_n\right)|_{W}\right) &\text{if } \chi(q) = r
\end{cases}.
\end{align*}
\end{enumerate}
\end{defn}

Given an operator-valued bi-multiplicative function, conditions $(1)$ to $(4)$ above allow one to move $\B$-operators around and completely determine values on all bi-non-crossing partitions based on full non-crossing partitions.

For bi-Boolean independence, since the sublattice of bi-interval partitions is used instead, the corresponding functions need only satisfy some weaker conditions.

\begin{defn}
Let $(\A, \bE, \varepsilon)$ be a $\B$-$\B$-non-commutative probability space and let
\[\Phi: \bigcup_{n \geq 1}\bigcup_{\chi: \{1, \dots, n\} \to \{\ell, r\}}\B\I(\chi) \times \A_{\chi(1)} \times \cdots \times \A_{\chi(n)} \to \B\]
be a function that is linear in each $\A_{\chi(j)}$. We say that $\Phi$ is \textit{operator-valued b-bi-multiplicative} if conditions $(1)$ to $(3)$ of Definition \ref{BiMulti} are satisfied for every $\chi: \{1, \dots, n\} \to \{\ell, r\}$, $Z_j \in \A_{\chi(j)}$, $b \in \B$, and $\pi \in \B\I(\chi)$.
\end{defn}

Note that condition $(4)$ of Definition \ref{BiMulti} is irrelevant here because if $W$ is a union of blocks of a bi-interval partition $\pi$ containing $\min_{\prec_\chi}(\{1, \dots, n\})$ and $\max_{\prec_\chi}(\{1, \dots, n\})$, then $V$ does not exist as $W$ must be $\pi$. Moreover, as every operator-valued bi-multiplicative function is automatically operator-valued b-bi-multiplicative, the notion of bi-Boolean independence over $\B$ can be introduced using the \textit{operator-valued bi-free moment function}, which is defined in \cite{CNS2015-2}*{Definition 5.1.3} as the operator-valued bi-multiplicative function
\[\E: \bigcup_{n \geq 1}\bigcup_{\chi: \{1, \dots, n\} \to \{\ell, r\}}\B\N\C(\chi) \times \A_{\chi(1)} \times \cdots \times \A_{\chi(n)} \to \B\]
such that
\[\E_{1_\chi}(Z_1, \dots, Z_n) = \bE(Z_1\cdots Z_n)\]
for every $\chi: \{1, \dots, n\} \to \{\ell, r\}$ and $Z_j \in \A_{\chi(j)}$.  (Note this really is the second operator-valued moment function from the operator-valued c-bi-free construction in \cite{GS2016-2} using the same ideas as in Proposition \ref{BiBandCBF}.)

\begin{defn}
Let $(\A, \bE, \ep)$ be a $\B$-$\B$-non-commutative probability space.
\begin{enumerate}[$\qquad(1)$]
\item A \textit{non-unital pair of $\B$-algebras} in $(\A, \bE, \ep)$ is an ordered pair $(\C_\ell, \C_r)$ of non-unital subalgebras of $\A$ such that
\[\ep(\B \otimes 1_{\B}) \subset \C_\ell \subset \A_\ell \qand \ep(1_{\B} \otimes \B^{\mathrm{op}}) \subset \C_r \subset \A_r.\]

\item A family $\{(\A_{k, \ell}, \A_{k, r})\}_{k \in K}$ of non-unital pairs of $\B$-algebras in $(\A, \bE, \ep)$ is said to be \textit{bi-Boolean independent with amalgamation over $\B$} if for all $n \geq 1$, $\chi: \{1, \dots, n\} \to \{\ell, r\}$, $\omega: \{1, \dots, n\} \to K$, and $Z_1, \dots, Z_n \in \A$ with $Z_j \in \A_{\omega(j), \chi(j)}$, we have that
\[\bE(Z_1\cdots Z_n) = \E_{\pi_{\omega, \chi}}(Z_1, \dots, Z_n),\]
where $\pi_{\omega, \chi}$ is the partition of $\{1, \dots, n\}$ induced by $\chi$ and $\omega$ as described in Definition \ref{Partition}.
\end{enumerate}
\end{defn}

As with the scalar-valued case, by taking $\chi: \{1, \dots, n\} \to \{\ell, r\}$ to be constant and $\omega: \{1, \dots, n\} \to K$ such that $\omega(1) \neq \cdots \neq \omega(n)$, the families $\{\A_{k, \ell}\}_{k \in K}$ and $\{\A_{k, r}\}_{k \in K}$ are both Boolean independent over $\B$ if $\{(\A_{k, \ell}, \A_{k, r})\}_{k \in K}$ is bi-Boolean independent over $\B$.

\subsection{Operator-valued bi-Boolean cumulants}

It is now straightforward to define the operator-valued bi-Boolean cumulant function via convolution over the lattice of bi-interval partitions and check that it has the vanishing property.

\begin{defn}
Let $(\A, \bE, \ep)$ be a $\B$-$\B$-non-commutative probability space and let $\E$ be the operator-value bi-free moment function on $\A$. The \textit{operator-valued bi-Boolean cumulant function} on $\A$ is the function
\[B: \bigcup_{n \geq 1}\bigcup_{\chi: \{1, \dots, n\} \to \{\ell, r\}}\B\I(\chi) \times \A_{\chi(1)} \times \cdots \times \A_{\chi(n)} \to \B\]
defined by
\[B_\pi(Z_1, \dots, Z_n) = \sum_{\substack{\sigma \in \B\I(\chi)\\\sigma \leq \pi}}\E_\sigma(Z_1, \dots, Z_n)\mu_{\B\I}(\sigma, \pi)\]
for all $n \geq 1$, $\chi: \{1, \dots, n\} \to \{\ell, r\}$, $\pi \in \B\I(\chi)$, and $Z_j \in \A_{\chi(j)}$.
\end{defn}

Using the (bi-interval) M\"{o}bius inversion, an equivalent formulation of the above formula is
\[\E_\pi(Z_1, \dots, Z_n) = \sum_{\substack{\sigma \in \B\I(\chi)\\\sigma \leq \pi}}B_\sigma(Z_1, \dots, Z_n)\]
for all $n \geq 1$, $\chi: \{1, \dots, n\} \to \{\ell, r\}$, $\pi \in \B\I(\chi)$, and $Z_j \in \A_{\chi(j)}$.

Since $\E$ is operator-valued bi-multiplicative and $\mu_{\B\I}$ is multiplicative, a routine verification similar to (and simpler than) the proof of \cite{CNS2015-2}*{Theorem 6.2.1} shows that $B$ is operator-valued b-bi-multiplicative. Moreover, it is also easy to check that the operator-valued version of Theorem \ref{VanishingEquiv} holds.

\begin{thm}\label{OpVVanishingEquiv}
A family $\{(\A_{k, \ell}, \A_{k, r})\}_{k \in K}$ of non-unital pairs of $\B$-algebras in a $\B$-$\B$-non-commutative probability space $(\A, \bE, \ep)$ is bi-Boolean independent over $\B$ if and only if for all $n \geq 2$, $\chi: \{1, \dots, n\} \to \{\ell, r\}$, $\omega: \{1, \dots, n\} \to K$, and $Z_j \in \A_{\omega(j), \chi(j)}$, we have that
\[B_{1_\chi}(Z_1, \dots, Z_n) = 0\]
whenever $\omega$ is not constant.
\end{thm}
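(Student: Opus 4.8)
The plan is to mimic the self-contained second proof of Theorem \ref{VanishingEquiv}, upgrading each scalar factorisation of the form $\varphi_\sigma=\prod_V\varphi(\cdots)$ to the operator-valued factorisation supplied by b-bi-multiplicativity. Two preliminary observations carry the whole argument. First, by Definition \ref{Partition}, if $\sigma\in\B\I(\chi)$ satisfies $\sigma\le\omega$ then $\sigma\le\pi_{\omega,\chi}$, and moreover $\pi_{\omega,\chi}\ne 1_\chi$ whenever $\omega$ is not constant. Second, since $\E$ is operator-valued bi-multiplicative and $\mu_{\B\I}$ is multiplicative, the cumulant function $B$ is operator-valued b-bi-multiplicative (the routine verification mentioned just before the statement); hence for $\sigma\in\B\I(\chi)$ with blocks $W_1\prec_\chi\cdots\prec_\chi W_m$, condition $(3)$ of Definition \ref{BiMulti} gives $B_\sigma(Z_1,\dots,Z_n)=B_{1_{\chi|_{W_1}}}((Z_1,\dots,Z_n)|_{W_1})\cdots B_{1_{\chi|_{W_m}}}((Z_1,\dots,Z_n)|_{W_m})$ as a product in $\B$ (the order coming from $\prec_\chi$), and similarly for $\E_\sigma$. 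In particular $B_\sigma=0$ as soon as one block $W_i$ has $B_{1_{\chi|_{W_i}}}((Z_1,\dots,Z_n)|_{W_i})=0$.

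For the implication ``vanishing mixed cumulants $\Rightarrow$ bi-Boolean independent over $\B$'', I would start from the operator-valued moment-cumulant formula $\bE(Z_1\cdots Z_n)=\E_{1_\chi}(Z_1,\dots,Z_n)=\sum_{\sigma\in\B\I(\chi)}B_\sigma(Z_1,\dots,Z_n)$. Any $\sigma$ not refining $\omega$ has a block of size $\ge 2$ on which $\omega$ is non-constant, so by the hypothesis and the factorisation above it contributes $B_\sigma=0$; the surviving indices are exactly those with $\sigma\le\omega$, hence, by the first observation, those with $\sigma\le\pi_{\omega,\chi}$. Therefore $\bE(Z_1\cdots Z_n)=\sum_{\sigma\le\pi_{\omega,\chi}}B_\sigma(Z_1,\dots,Z_n)=\E_{\pi_{\omega,\chi}}(Z_1,\dots,Z_n)$, which is precisely the defining identity of bi-Boolean independence over $\B$.

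For the converse I would induct on $n$. The case $n=2$ with $\omega$ non-constant is immediate: $\pi_{\omega,\chi}=0_\chi$, so $B_{1_\chi}(Z_1,Z_2)=\E_{1_\chi}(Z_1,Z_2)-\E_{0_\chi}(Z_1,Z_2)=\bE(Z_1Z_2)-\bE(Z_1)\bE(Z_2)=0$ by bi-Boolean independence. For $n\ge 3$ and $\omega$ non-constant, peel off the top term, $\bE(Z_1\cdots Z_n)=B_{1_\chi}(Z_1,\dots,Z_n)+\sum_{\sigma\ne 1_\chi}B_\sigma(Z_1,\dots,Z_n)$. Every $\sigma\ne 1_\chi$ has all blocks of size $<n$, so by the inductive hypothesis together with b-bi-multiplicativity the term $B_\sigma$ vanishes unless $\omega$ is constant on each block of $\sigma$, that is, unless $\sigma\le\omega$; and since $\pi_{\omega,\chi}\ne 1_\chi$ such $\sigma$ are automatically $\ne 1_\chi$ and satisfy $\sigma\le\pi_{\omega,\chi}$. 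Hence $\sum_{\sigma\ne 1_\chi}B_\sigma(Z_1,\dots,Z_n)=\sum_{\sigma\le\pi_{\omega,\chi}}B_\sigma(Z_1,\dots,Z_n)=\E_{\pi_{\omega,\chi}}(Z_1,\dots,Z_n)$, which equals $\bE(Z_1\cdots Z_n)$ by bi-Boolean independence; comparing the two expressions forces $B_{1_\chi}(Z_1,\dots,Z_n)=0$, completing the induction.

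The only step that is not purely formal is the one already anticipated in the text: verifying that $B$ is operator-valued b-bi-multiplicative, so that the block factorisation of $B_\sigma$ and $\E_\sigma$ is legitimate inside the noncommutative algebra $\B$ with the factors taken in $\prec_\chi$-order; this is a direct check modelled on, and simpler than, \cite{CNS2015-2}*{Theorem 6.2.1}. As an alternative route one could avoid the direct computation altogether by unitising $\A$ and invoking the operator-valued c-bi-free theory of \cite{GS2016-2}, taking the auxiliary $\B$-valued functional to be supported on the copy of $\B$ exactly as in Proposition \ref{BiBandCBF}; however, the combinatorial argument above is shorter and keeps this section self-contained.
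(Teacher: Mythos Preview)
Your proposal is correct and follows essentially the same approach as the paper, which simply refers back to the second proof of Theorem \ref{VanishingEquiv} (M\"obius inversion for the forward direction, induction for the converse) and notes that the operator-valued factorisations go through by b-bi-multiplicativity. You are in fact more explicit than the paper about why the block factorisation of $B_\sigma$ and $\E_\sigma$ in $\B$ is legitimate, and your mention of the alternative c-bi-free route parallels the first proof of Theorem \ref{VanishingEquiv}, which the paper does not restate here.
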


\begin{proof}
The proof is completely identical to the second proof of Theorem \ref{VanishingEquiv} for the scalar-valued case where the fact that vanishing of mixed operator-valued bi-Boolean cumulants implies bi-Boolean independence over $\B$ follows from a calculation with M\"{o}bius inversion and the converse follows from an induction argument.
\end{proof}

Moreover, using operator-valued b-bi-multiplicativity, Proposition \ref{ScalarEntry} can be generalized as follows.

\begin{prop}
Let $(\A, \bE, \varepsilon)$ be a $\B$-$\B$-non-commutative probability space, $\chi: \{1, \dots, n\} \to \{\ell, r\}$ with $n \geq 2$, and $Z_j \in \A_{\chi(j)}$. If there exist $q \in \{1, \dots, n\}$ and $b \in \B$ such that $Z_q = L_b$ if $\chi(q) = \ell$ or $Z_q = R_b$ if $\chi(q) = r$, then $B_{1_\chi}(Z_1, \dots, Z_n) = 0$ if $q \in \{\min_{\prec_\chi}(\{1, \dots, n\}), \max_{\prec_\chi}(\{1, \dots, n\})\}$ and otherwise
\begin{align*}
B_{1_\chi}(Z_1, \dots, Z_n) =
B_{1_{\chi|_{\setminus q}}}(Z_1, \dots, Z_{p - 1}, Z_pZ_q, Z_{p + 1}, \dots, Z_{q - 1}, Z_{q + 1}, \dots, Z_n)
\end{align*}
where $p = \max\{j \in \{1, \dots, n\}\,|\,\chi(j) = \chi(q), j < q\}$.
\end{prop}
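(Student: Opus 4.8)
The plan is to imitate, step by step, the induction on $n$ in the proof of Proposition~\ref{ScalarEntry}, replacing each scalar manipulation that deletes a unit entry from a word by its operator-valued counterpart. The ingredients are: the operator-valued moment--cumulant relation
\[
\bE(Z_1\cdots Z_n)=\E_{1_\chi}(Z_1,\dots,Z_n)=\sum_{\pi\in\B\I(\chi)}B_\pi(Z_1,\dots,Z_n);
\]
the factorization of $B_\pi$, for $\pi\in\B\I(\chi)$, as the $\prec_\chi$-ordered product of the factors $B_{\chi|_V}$ over the blocks $V$ of $\pi$ (condition~$(3)$ of Definition~\ref{BiMulti} for the operator-valued b-bi-multiplicative function $B$); the commutation relations "$\A_\ell$ commutes with every right $\B$-operator" and "$\A_r$ commutes with every left $\B$-operator"; the identity $\bE(ZL_c)=\bE(ZR_c)$; and the singleton evaluation $\bE(L_c)=c=\bE(R_c)$. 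I will take $\chi(q)=\ell$, so $Z_q=L_b$; the case $\chi(q)=r$ is symmetric. The base case $n=2$ is immediate, as then $q$ is necessarily a $\prec_\chi$-extremum and $B_{1_\chi}(Z_1,Z_2)=\bE(Z_1Z_2)-\bE(Z_1)\bE(Z_2)$.

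\emph{The endpoint cases.} For $q\in\{\min_{\prec_\chi}(\{1,\dots,n\}),\max_{\prec_\chi}(\{1,\dots,n\})\}$, one observes in each sub-case that every position strictly between $q$ and the relevant end of the word $Z_1\cdots Z_n$ carries the shade opposite to $\chi(q)$, so that $L_b$ --- after first being replaced by $R_b$ via $\bE(ZL_c)=\bE(ZR_c)$ when $q$ is the integer-last position --- commutes to that end and is extracted, giving $\bE(Z_1\cdots Z_n)=b\,\bE(\widehat{Z})$ or $\bE(\widehat{Z})\,b$, with $\widehat{Z}$ the word with $Z_q$ deleted. In $\bE(Z_1\cdots Z_n)=B_{1_\chi}(Z_1,\dots,Z_n)+\sum_{\pi\ne 1_\chi}B_\pi$, each $\pi\ne 1_\chi$ in which $q$ lies in a block of size $\ge 2$ contributes $0$, since $q$ is then a $\prec_\chi$-extremum of a strictly shorter block and the inductive hypothesis annihilates that block's factor; so only the partitions with $\{q\}$ a block remain, and for these the factor $\bE(L_b)=b$ occupies the $\prec_\chi$-extreme slot of the product, so their sum is precisely $b\,\bE(\widehat{Z})$ (resp.\ $\bE(\widehat{Z})\,b$). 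Comparison forces $B_{1_\chi}(Z_1,\dots,Z_n)=0$.

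\emph{The interior case.} Here $q$ is neither extremum, and $p=\max\{j<q:\chi(j)=\ell\}$ is exactly the $\prec_\chi$-predecessor of $q$, so $Z_{p+1},\dots,Z_{q-1}$ are all right variables; hence $L_b$ commutes leftward past them and
\[
Z_1\cdots Z_n=Z_1\cdots Z_{p-1}\,(Z_pL_b)\,Z_{p+1}\cdots Z_{q-1}Z_{q+1}\cdots Z_n=:W,
\]
an $(n-1)$-letter word with shade sequence $\chi|_{\setminus q}$. Apply the moment--cumulant relation to both $Z_1\cdots Z_n$ and $W$. For $\pi\in\B\I(\chi)$ with $\pi\ne 1_\chi$, examine the block $U$ containing $q$ (automatically $|U|<n$): if $q$ is a $\prec_\chi$-extremum of $U$ and $|U|\ge 2$ the factor $B_{\chi|_U}$ vanishes by the endpoint case; otherwise $q$ is interior to $U$ or $U=\{q\}$, and then either the inductive hypothesis applied to $U$ rewrites $B_{\chi|_U}$ with $L_b$ absorbed into $Z_p$ (note $p\in U$, as $U$ is a $\prec_\chi$-interval containing $q$ and an element below it), or, when $U=\{q\}$, condition~$(1)$ of Definition~\ref{BiMulti} for $B$ moves the singleton factor $b=\bE(L_b)$ onto $Z_p$ inside the block of $\pi$ immediately preceding $\{q\}$. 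In each surviving case $B_\pi(Z_1,\dots,Z_n)=B_{\Psi(\pi)}(W)$, where $\Psi$ deletes $q$ from its block; a combinatorial check (as in the scalar proof) shows $\Psi$ is a bijection from the surviving partitions onto $\B\I(\chi|_{\setminus q})$ carrying $1_\chi$ to $1_{\chi|_{\setminus q}}$. Hence
\[
\sum_{\pi\ne 1_\chi}B_\pi(Z_1,\dots,Z_n)=\sum_{\rho\ne 1_{\chi|_{\setminus q}}}B_\rho(W)=\bE(W)-B_{1_{\chi|_{\setminus q}}}(W),
\]
and as $\bE(Z_1\cdots Z_n)=\bE(W)$, subtraction gives $B_{1_\chi}(Z_1,\dots,Z_n)=B_{1_{\chi|_{\setminus q}}}(W)$, i.e.\ the claimed formula (recalling $Z_pL_b=Z_pZ_q$).

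The main obstacle is the bookkeeping in the interior case: one must check that the $\B$-valued factors of $B_\pi=B_{\chi|_{V_1}}\cdots B_{\chi|_{V_m}}$ --- a non-commutative product, $\B$ being an arbitrary algebra and the blocks $\prec_\chi$-ordered --- reassemble exactly into $B_{\Psi(\pi)}(W)$ after $L_b$ is absorbed or the factor $b$ relocated, and that $\Psi$ exhausts $\B\I(\chi|_{\setminus q})$. This is where operator-valued b-bi-multiplicativity, not just multilinearity, is essential; the remaining steps merely transcribe the scalar argument.
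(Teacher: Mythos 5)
Your proposal is correct and is essentially the proof the paper intends: the paper gives no written argument for this proposition, stating only that Proposition \ref{ScalarEntry} generalizes ``using operator-valued b-bi-multiplicativity,'' and your induction is precisely that adaptation of the scalar proof, with the one genuinely new point (that the factor $b=\bE(L_b)$, resp.\ $\bE(R_b)$, must be absorbed into the block containing $p$ via conditions $(1)$--$(3)$ of b-bi-multiplicativity and the $\prec_\chi$-ordered block factorization, rather than simply deleted) handled correctly. The bookkeeping differences from the scalar write-up (grouping partitions by the block containing $q$ and the deletion bijection $\Psi$, instead of by the block containing the global $\prec_\chi$-minimum) are cosmetic.
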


\subsection{The operator-valued bi-Boolean partial $\eta$-transform}

In this subsection, we develop an operator-valued bi-Boolean partial $\eta$-transform and prove the operator-valued version of Theorem \ref{PartialEta}. The proof is combinatorial in nature following the techniques developed in \cite{S2016-2}*{Section 7}. The same techniques were also used in \cite{S2016}*{Section 5} and \cite{GS2016-2}*{Section 8} in deriving a functional equation for the operator-valued bi-free/c-bi-free partial $\R$-transform. However, these transforms are functions of three variables instead of two variables due to the fact that when one sums over bi-non-crossing partitions with the same block that contains both left and right indices, a $\B$-operator is created corresponding to the `bottom part' of the diagram below the common block and operator-valued bi-multiplicativity does not allow the $\B$-operator to escape. For bi-interval partitions, such `bottom part' does not exist due to the property that if $V$ is a block of a bi-interval partition that contains both a left index $s$ and a right index $t$, then $V$ contains all indices $j$ such that $s \prec_\chi j \prec_\chi t$. Consequently, the operator-valued bi-Boolean partial $\eta$-transform is a function of two-variables: one for a left $\B$-operator and one for a right $\B$-operator.

In that which follows, we require the additional assumption that $\B$ is a Banach algebra and all operators are elements of a \textit{Banach $\B$-$\B$-non-commutative probability space}, which is a $\B$-$\B$-non-commutative probability space $(\A, \bE, \varepsilon)$ such that $\A$ and $\B$ are Banach algebras, and $\bE$, $\varepsilon|_{\B \otimes 1_\B}$, and $\varepsilon|_{1_\B \otimes \B^{\mathrm{op}}}$ are bounded. To begin, let $Z_\ell \in \A_\ell$, $Z_r \in \A_r$, $b, d \in \B$, and consider the following series:
\begin{align*}
M_{Z_\ell}^\ell(b) &= 1 + \sum_{m \geq 1}\bE((L_bZ_\ell)^m)& & & M_{Z_r}^r(d) &= 1 + \sum_{n \geq 1}\bE((R_dZ_r)^n)\\
\eta_{Z_\ell}^\ell(b) &= 1 + \sum_{m \geq 1}B_{1_{\chi_{m, 0}}}(\underbrace{L_bZ_\ell, \dots, L_bZ_\ell}_{m\,\mathrm{times}}) & & & \eta_{Z_r}^r(d) &= 1 + \sum_{n \geq 1}B_{1_{\chi_{0, n}}}(\underbrace{R_dZ_r, \dots, R_dZ_r}_{n\,\mathrm{times}}).
\end{align*}

By similar arguments as in \cite{S2016}*{Remark 5.2}, all of the series above converge absolutely for $b, d$ near $0$. It is known (see, e.g., \cite{P2009}*{Section 4}) that $M_{Z_\ell}^\ell(b)$ and $M_{Z_r}^r(d)$ are invertible and that
\[\eta_{Z_\ell}^\ell(b) = 1 - M_{Z_\ell}^\ell(b)^{-1} \qand \eta_{Z_r}^r(d) = 1 - M_{Z_r}^r(d)^{-1}\]
for $b, d \in \B$ sufficiently small. In addition, consider the following two-variable series:
\begin{align*}
M_{(Z_\ell, Z_r)}(b, d) &= 1 + \sum_{\substack{m, n \geq 0\\m + n \geq 1}}\bE((L_bZ_\ell)^m(R_dZ_r)^n),\\
\eta_{(Z_\ell, Z_r)}(b, d) &= \sum_{\substack{m, n \geq 0\\m + n \geq 1}}B_{1_{\chi_{m, n}}}(\underbrace{L_bZ_\ell, \dots, L_bZ_\ell}_{m\,\mathrm{times}}, \underbrace{R_dZ_r, \dots, R_dZ_r}_{n\,\mathrm{times}}),
\end{align*}
which converge absolutely for $b, d$ near $0$ by similar arguments. Note that Theorem \ref{OpVVanishingEquiv} implies $\eta_{(Z_\ell, Z_r)}$ is the operator-valued analogue of $\eta_{(a, b)}$ for $(a, b)$ a (scalar-valued) pair in a non-commutative probability space. The goal of this subsection is to find a formula relating $\eta_{(Z_\ell, Z_r)}$ to $M_{(Z_\ell, Z_r)}$.

\begin{thm}\label{OpVPartialEta}
Let $(Z_\ell, Z_r)$ be an operator-valued two-faced pair in a Banach $\B$-$\B$-non-commutative probability space $(\A, \bE, \ep)$. The operator-valued bi-Boolean partial $\eta$-transform $\eta_{(Z_\ell, Z_r)}$ of $(Z_\ell, Z_r)$ is given by
\[\eta_{(Z_\ell, Z_r)}(b, d) = \eta_{Z_\ell}^\ell(b) + \eta_{Z_r}^r(d) + M_{Z_\ell}^\ell(b)^{-1}M_{(Z_\ell, Z_r)}(b, d)M_{Z_r}^r(d)^{-1} - 1\]
for $b, d \in \B$ sufficiently small.
\end{thm}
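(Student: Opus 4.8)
The plan is to reproduce, in the bi-interval setting, the combinatorial argument of \cite{S2016-2}*{Section 7} (compare also \cite{S2016}*{Section 5} and \cite{GS2016-2}*{Section 8}), exploiting the simplification already emphasized above: for bi-interval partitions no $\B$-operator becomes ``trapped'' beneath a block meeting both the left and right nodes, so only two variables are needed. First I would write $M_{(Z_\ell, Z_r)}(b, d) = \sum_{m, n \geq 0}\bE\big((L_bZ_\ell)^m(R_dZ_r)^n\big)$, note that $L_bZ_\ell \in \A_\ell$ and $R_dZ_r \in \A_r$ so that these are legitimate arguments of the moment and cumulant functions, and apply the operator-valued bi-Boolean moment-cumulant formula $\bE\big((L_bZ_\ell)^m(R_dZ_r)^n\big) = \E_{1_{\chi_{m, n}}}(\cdots) = \sum_{\pi \in \B\I(\chi_{m, n})}B_\pi(\cdots)$ to each summand. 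The $m = 0$ summands contribute precisely $\sum_{n \geq 0}\bE\big((R_dZ_r)^n\big) = M_{Z_r}^r(d)$.

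For $m \geq 1$ the crux is to decompose each $\pi \in \B\I(\chi_{m, n})$ according to the block $C$ of $\pi$ containing the last left node $m$. Since $m$ and $m + n$ are $\prec_{\chi_{m, n}}$-adjacent and $C$ is a $\chi$-interval, $C$ consists of a suffix $\{k, \dots, m\}$ of the left nodes, of size $p := m - k + 1 \geq 1$, together with a $\prec_\chi$-prefix of the right nodes, of size $q \geq 0$. The nodes lying $\prec_\chi$-before $C$ form an all-left interval partition $\pi'$ of $\{1, \dots, k - 1\}$, and those lying $\prec_\chi$-after $C$ form an all-right bi-interval partition $\pi''$; applying condition $(3)$ of Definition \ref{BiMulti} (operator-valued b-bi-multiplicativity of $B$) to the three $\chi$-interval chunks ``before $C$'', ``$C$'', ``after $C$'' yields
\[
B_\pi(\cdots) = B_{\pi'}(L_bZ_\ell, \dots)\,B_{1_{\chi_{p, q}}}(L_bZ_\ell, \dots, R_dZ_r, \dots)\,B_{\pi''}(R_dZ_r, \dots).
\]
The assignment $\pi \mapsto (\pi', C, \pi'')$ is a bijection onto all such triples once $p \geq 1$ and $q \geq 0$ range freely, including the edge cases $k = 1$ (so $\pi' = \emptyset$) and $q = 0$ (so $C$ is all-left).

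Summing over all $\pi$ and over $m \geq 1$, $n \geq 0$, the $\pi'$-sum reassembles $\sum_{a \geq 0}\bE\big((L_bZ_\ell)^a\big) = M_{Z_\ell}^\ell(b)$, the $\pi''$-sum reassembles $M_{Z_r}^r(d)$, and the central-block sum is exactly $\eta_{(Z_\ell, Z_r)}(b, d)$ with its purely-right part $\eta_{Z_r}^r(d)$ removed. Hence
\[
M_{(Z_\ell, Z_r)}(b, d) = M_{Z_r}^r(d) + M_{Z_\ell}^\ell(b)\big(\eta_{(Z_\ell, Z_r)}(b, d) - \eta_{Z_r}^r(d)\big)M_{Z_r}^r(d).
\]
Since $M_{Z_\ell}^\ell(b)$ and $M_{Z_r}^r(d)$ are invertible for $b, d$ sufficiently small, left-multiplying by $M_{Z_\ell}^\ell(b)^{-1}$, right-multiplying by $M_{Z_r}^r(d)^{-1}$, and substituting the single-variable relation $\eta_{Z_\ell}^\ell(b) = 1 - M_{Z_\ell}^\ell(b)^{-1}$ recalled above rearranges this into the asserted identity; absolute convergence of every series for $b, d$ near $0$, noted before the statement, legitimizes the termwise manipulations.

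I expect the main obstacle to be the bookkeeping in the central-block decomposition: verifying that $\pi \mapsto (\pi', C, \pi'')$ is a genuine bijection handling the edge cases uniformly, that condition $(3)$ of Definition \ref{BiMulti} applies verbatim with the chunks oriented as above, that $\chi|_C$ and the restriction of $\chi$ to the right nodes after $C$ are again of the form $\chi_{p, q}$ and $\chi_{0, n - q}$ relative to $\prec_\chi$, and that condition $(4)$ of Definition \ref{BiMulti} is vacuous here (no block of a bi-interval partition strictly encloses another). The intermediate identity $M_{(Z_\ell, Z_r)} = M_{Z_r}^r + M_{Z_\ell}^\ell(\eta_{(Z_\ell, Z_r)} - \eta_{Z_r}^r)M_{Z_r}^r$ is where all the work sits; the final rearrangement is purely formal.
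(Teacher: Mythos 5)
Your proposal is correct and follows essentially the same route as the paper's proof: expand the mixed moments over $\B\I(\chi_{m,n})$, isolate the unique $\chi$-interval block meeting the left--right boundary, factor it out with operator-valued b-bi-multiplicativity into left moments, a central cumulant, and right moments, and resum using absolute convergence and the invertibility of $M_{Z_\ell}^\ell(b)$ and $M_{Z_r}^r(d)$. Your bookkeeping (organizing by the block containing the last left node, yielding $M_{(Z_\ell, Z_r)} = M_{Z_r}^r + M_{Z_\ell}^\ell\,(\eta_{(Z_\ell, Z_r)} - \eta_{Z_r}^r)\,M_{Z_r}^r$) differs only cosmetically from the paper's (splitting off the partitions in which no block mixes left and right indices, yielding $M_{(Z_\ell, Z_r)} = M_{Z_\ell}^\ell\,(1 + \eta_{(Z_\ell, Z_r)} - \eta_{Z_\ell}^\ell - \eta_{Z_r}^r)\,M_{Z_r}^r$), the two intermediate identities being equivalent via $\eta_{Z_\ell}^\ell(b) = 1 - M_{Z_\ell}^\ell(b)^{-1}$.
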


\begin{proof}
For $\chi: \{1, \dots, n\} \to \{\ell, r\}$, let $\B\I_{\mathrm{vs}}(\chi)$ denote the set of partitions in $\B\I(\chi)$ such that no block contains both left and right indices. For $m, n \geq 1$, using operator-valued b-bi-multiplicative properties, we have that
\begin{align*}
\bE((L_bZ_\ell)^m(R_dZ_r)^n) &= \sum_{\substack{\pi \in \B\I(\chi_{m, n})\\\pi \in \B\I_{\mathrm{vs}}(\chi_{m, n})}}B_\pi(\underbrace{L_bZ_\ell, \dots, L_bZ_\ell}_{m\,\mathrm{times}}, \underbrace{R_dZ_r, \dots, R_dZ_r}_{n\,\mathrm{times}})\\
&\quad + \sum_{\substack{\pi \in \B\I(\chi_{m, n})\\\pi \notin \B\I_{\mathrm{vs}}(\chi_{m, n})}}B_\pi(\underbrace{L_bZ_\ell, \dots, L_bZ_\ell}_{m\,\mathrm{times}}, \underbrace{R_dZ_r, \dots, R_dZ_r}_{n\,\mathrm{times}})\\
&= \bE((L_bZ_\ell)^m)\bE((R_dZ_r)^n) + \Theta_{m, n}(b, d),
\end{align*}
where $\Theta_{m, n}(b, d)$ denotes the sum
\[\sum_{\substack{\pi \in \B\I(\chi_{m, n})\\\pi \notin \B\I_{\mathrm{vs}}(\chi_{m, n})}}B_\pi(\underbrace{L_bZ_\ell, \dots, L_bZ_\ell}_{m\,\mathrm{times}}, \underbrace{R_dZ_r, \dots, R_dZ_r}_{n\,\mathrm{times}}).\]

For every partition $\pi \in \B\I(\chi_{m, n}) \setminus \B\I_{\mathrm{vs}}(\chi_{m, n})$, there is exactly one block of $\pi$ with both left and right indices, let $V_\pi$ denote this block. Moreover, since $\pi$ is a bi-interval partition, we have $j \in V_\pi$ for all $\min_{\prec_\chi}(V_\pi) \prec_\chi j \prec_\chi \max_{\prec_\chi}(V_\pi)$. Rearrange the sum in $\Theta_{m, n}(b, d)$ (which may be done as it converges absolutely) by first choosing $s \in \{1, \dots, m\}$, $t \in \{1, \dots, n\}$, and then summing over all $\pi \in \B\I(\chi_{m, n}) \setminus \B\I_{\mathrm{vs}}(\chi_{m, n})$ such that $V_\pi = \{s, \dots, m, m + t, \dots, m + n\}$, we obtain
\[\Theta_{m, n}(b, d) = \sum_{s = 1}^m\sum_{t = 1}^n\sum_{\substack{\pi \in \B\I(\chi_{m, n})\\\pi \notin \B\I_{\mathrm{vs}}(\chi_{m, n})\\V_\pi = \{s, \dots, m, m + t, \dots, m + n\}}}B_\pi(\underbrace{L_bZ_\ell, \dots, L_bZ_\ell}_{m\,\mathrm{times}}, \underbrace{R_dZ_r, \dots, R_dZ_r}_{n\,\mathrm{times}}).\]
Furthermore, using operator-valued b-bi-multiplicative properties, the right-most sum is
\[\bE((L_bZ_\ell)^{s - 1})B_{1_{\chi_{m - (s - 1), n - (t - 1)}}}(\underbrace{L_bZ_\ell, \dots, L_bZ_\ell}_{m - (s - 1)\,\mathrm{times}}, \underbrace{R_dZ_r, \dots, R_dZ_r}_{n - (t - 1)\,\mathrm{times}})\bE((R_dZ_r)^{t - 1}).\]
Consequently, we obtain $\Theta_{m, n}(b, d)$ equals
\[\sum_{s = 1}^m\sum_{t = 1}^n\bE((L_bZ_\ell)^{m - s})B_{1_{\chi_{s, t}}}(\underbrace{L_bZ_\ell, \dots, L_bZ_\ell}_{s\,\mathrm{times}}, \underbrace{R_dZ_r, \dots, R_dZ_r}_{t\,\mathrm{times}})\bE((R_dZ_r)^{n - t}).\]
By choosing $b, d \in \B$ sufficiently small so that $M_{Z_\ell}^{\ell}(b)$ and $M_{Z_r}^r(d)$ are invertible, and by summing over the above expression over all $m, n \geq 1$, we have that
\begin{align*}
\sum_{m, n \geq 1}\Theta_{m, n}(b, d) &= M_{Z_\ell}^{\ell}(b)\sum_{s, t \geq 1}B_{1_{\chi_{s, t}}}(\underbrace{L_bZ_\ell, \dots, L_bZ_\ell}_{s\,\mathrm{times}}, \underbrace{R_dZ_r, \dots, R_dZ_r}_{t\,\mathrm{times}})M_{Z_r}^r(d)\\
&= M_{Z_\ell}^{\ell}(b)(\eta_{(Z_\ell, Z_r)}(b, d) - \eta_{Z_\ell}^\ell(b) - \eta_{Z_r}^r(d))M_{Z_r}^r(d).
\end{align*}

On the other hand, expanding $M_{(Z_\ell, Z_r)}(b, d)$ using the fact it converges absolutely produces
\begin{align*}
M_{(Z_\ell, Z_r)}(b, d) &= 1 + \sum_{m \geq 1}\bE((L_bZ_\ell)^m) + \sum_{n \geq 1}\bE((R_dZ_r)^n) + \sum_{m, n \geq 1}\bE((L_bZ_\ell)^m(R_dZ_r)^n)\\
&= 1 + \sum_{m \geq 1}\bE((L_bZ_\ell)^m) + \sum_{n \geq 1}\bE((R_dZ_r)^n) + \sum_{m, n \geq 1}\bE((L_bZ_\ell)^m)\bE((R_dZ_r)^n) + \sum_{m, n \geq 1}\Theta_{m, n}(b, d)\\
&= M_{Z_\ell}^\ell(b)M_{Z_r}^r(d) + \sum_{m, n \geq 1}\Theta_{m, n}(b, d)\\
&= M_{Z_\ell}^{\ell}(b)(1 + \eta_{(Z_\ell, Z_r)}(b, d) - \eta_{Z_\ell}^\ell(b) - \eta_{Z_r}^r(d))M_{Z_r}^r(d),
\end{align*}
and the result follows.
\end{proof}

\subsection{Some final remarks}

In this final subsection, we briefly discuss how some of the known results also hold in the operator-valued bi-Boolean setting. As with Section \ref{sec:limitthms}, most of the details are omitted as the statements and proofs are essentially the same.

In \cite{CNS2015-2}*{Theorem 10.2.1}, it was demonstrated that if all left $\B$-algebras commute with all right $\B$-algebras, then under certain circumstances bi-free independence over $\B$ can be deduced from free independence over $\B$ of either the left $\B$-algebras or the right $\B$-algebras. The quantitative realizations of the arguments were proved in \cite{S2016}*{Lemmata 2.16 and 2.17} and shown to be useful in proving \cite{S2016}*{Theorem 3.2}. As the c-bi-free, operator-valued c-bi-free, and bi-Boolean analogues of these results are available in \cite{GS2016}*{Section 4}, \cite{GS2016-2}*{Section 7}, and Section \ref{sec:transforms} above respectively, it can be shown without any difficulty that similar results also hold in the framework of the current section.

Furthermore, like any other non-commutative probability theory, whenever a notion of independence (scalar-valued or operator-valued) is defined, a number of limit theorems can be obtained for various purposes. Since we do not have any immediate applications of the operator-valued bi-Boolean limit theorems available at the moment, we will simply mention that the bi-Boolean analogues of the operator-valued bi-free/c-bi-free limit theorems in \cite{GS2016-2}*{Section 9} can be easily proved by the same techniques.

\section*{Errata to ``Bi-Boolean Independence for Pairs of Algebras" - Joint with Takahiro Hasebe}

As it was realized during the study of \cite{GHS2017}, an essential assumption was omitted in many results of \cite{GS2016}: namely, the results hold provided the conditional bi-free product of states (unital positive linear functionals) is again a state.  When studying bi-Boolean independence above (a specific instance of conditional bi-free independence), this assumption was not verified.  In fact, this assumption cannot be verified as it is false.

To see this, suppose $(a_1, b_1)$ and $(a_2, b_2)$ are two pairs of commuting elements that are bi-Boolean independent with respect to $\varphi$ and whose distributions with respect to $\varphi$ are probability measures. 
For the distribution of $(a_1+a_2,b_1+b_2)$ to be a probability measure on $\mathbb R^2$, it is necessary that  for every $n \in \bN$ and every polynomial $p(x, y) = \sum^n_{k,m = 0} c_{k,m} x^k y^m$ where $c_{k,m} \in \bC$ we have 
\[
0 \leq \varphi(p(a_1+a_2, b_1+b_2)^*p(a_1+a_2, b_1+b_2)) = \sum^n_{i_1, i_2, j_1, j_2 = 0} c_{i_1, i_2} \overline{c_{j_1, j_2}} \varphi((a_1+a_2)^{i_1+j_1} (b_1+b_2)^{i_2+j_2}).
\]
Therefore, for a fixed $n$ we consider the $(n+1)^2 \times (n+1)^2$ matrix 
\[
X_n = [\varphi((a_1+a_2)^{i_1+j_1} (b_1+b_2)^{i_2+j_2})]
\]
where the rows are indexed (starting at 0 up to $n$) by the pairs $(i_1, i_2)$ and the columns are indexed by the pairs $(j_1, j_2)$.  Then
\[
\varphi(p(a_1+a_2, b_1+b_2)^*p(a_1+a_2, b_1+b_2)) = \langle X_n\vec{v}, \vec{v}\rangle, 
\]
where $\vec{v}$ is a vector of the $c_{k,m}$'s. For example the matrix $X_1$ is given by 
\[
X_1 = \begin{bmatrix}
1 & \varphi(a_1+a_2) & \varphi(b_1 + b_2) & \varphi((a_1+a_2)(b_1+b_2)) \\
\varphi(a_1 + a_2) & \varphi((a_1+a_2)^2) & \varphi((a_1+a_2)(b_1+b_2)) & \varphi((a_1+a_2)^2(b_1+b_2)) \\
\varphi(b_1 + b_2) & \varphi((a_1+a_2)(b_1+b_2)) & \varphi((b_1 + b_2)^2)  & \varphi((a_1+a_2)(b_1+b_2)^2) \\
 \varphi((a_1+a_2)(b_1+b_2)) &  \varphi((a_1+a_2)^2(b_1+b_2)) &  \varphi((a_1+a_2)(b_1+b_2)^2) &  \varphi((a_1+a_2)^2(b_1+b_2)^2)
\end{bmatrix}. 
\]

Assume that $(a_1,b_1)$ and $(a_2,b_2)$ both have the probability distribution $\mu = \frac{1}{2}(\delta_{(0,1)}+\delta_{(1,0)})$. Notice that 
\[
\int_{\bR^2} x^m y^n \, d\mu(x,y) = \begin{cases}
1 & \text{if }m= n=0, \\
\frac{1}{2} & \text{if }(m,n) \in \{(k, 0), (0, k) \, \mid \, k \in \bN\}, \\
0 & \text{otherwise}. 
\end{cases}
\]
Then one can verify using the definition of bi-Boolean independence in \cite{GS2017} to obtain 
\[
X_1 = \begin{bmatrix}
1 & 1 & 1 & \frac{1}{2} \\
1 & \frac{3}{2} & \frac{1}{2} & \frac{3}{4} \\
1 & \frac{1}{2} & \frac{3}{2} & \frac{3}{4} \\
\frac{1}{2} & \frac{3}{4} & \frac{3}{4} & \frac{9}{8}
\end{bmatrix}.
\]
One can check that $\det(X_1) = - \frac{1}{8}$.  This implies that $X_1$ is not positive semidefinite and thus the distribution of $(a_1+a_2, b_1+ b_2)$ is not a probability measure. This also shows that bi-Boolean product of states is not a state in general and the general non-existence of conditionally bi-free convolution of probability measures.

Furthermore, the compound bi-Boolean Poisson distribution defined in \cite{GS2017} is not a probability measure in general.  Suppose that $(a,b)$ is a pair of elements in a non-commutative space such that 
its bi-Boolean cumulants are given by 
$$
B_{m,n}(a,b) := \int_{\mathbb R^2}s^m t^n\, d\tau(s,t),
$$ 
where $\tau = 3\delta_{(1,1)}+3 \delta_{(-1,1)}+3\delta_{(1,-1)}$. Then $B_{m,n}(a,b)=3[(-1)^m + (-1)^n +1]$ for $m,n \geq0, (m,n) \neq (0,0)$. The two-variable $E$-transform is related to $B_{m,n}(a,b)$ via 
$$
E_{(a,b)}(z,w) = \sum_{\substack{ m,n=0 \\ (m,n)\neq(0,0)}}^\infty \frac{B_{m,n}(a,b)}{z^m w^n}. 
$$
The Boolean cumulant generating functions of marginals are 
\begin{align*}
E_a(1/z) &= \sum_{m=1}^\infty B_{m,0}(a,b) z^m = \sum_{n=1}^\infty B_{0,n}(a,b) z^n = E_b(1/z) 
\end{align*}
and so the Cauchy transforms of marginals are 
\begin{align*}
G_a(1/z) = z/(1-E_a(1/z)) = z + 3 z^2 + 18z^3 + \cdots=G_b(1/z). 
\end{align*}
Then
\begin{align*}
G_{(a,b)}(1/z,1/w) &= G_a(1/z) G_b(1/w)\left(1+ \sum_{m,n\geq1} B_{m,n}(a,b) z^m w^n\right) \\
&= z w+ 3z^2 w + 3 z w^2 + 18 z^3 w + 6 z^2 w^2 + 18 z w^3 + 48z^3w^2 + 48 z^2 w^3 + 324z^3w^3+ \cdots.   
\end{align*}
Denote by $M_{m,n}$ the coefficient of $z^{m+1} w^{n+1}$ in the above expression of $G_{(a,b)}(1/z,1/w)$. The determinant of $4 \times 4$ matrix $(M_{m_1+ m_2,n_1+n_2})$, with the 4 pairs $\{(m_1, m_2): m_1, m_2 =0,1\}$ indexing the column and 4 pairs $\{(n_1,n_2):n_1,n_2=0,1\}$ indexing the row, is 
$$
\left|\begin{matrix} 
1 & 3 & 3 & 6 \\
3 & 18 & 6 & 48 \\ 
3 & 6 & 18 & 48 \\
6 & 18 & 18 & 324
\end{matrix}\right|
= -864 <0. 
$$
Hence $M_{m,n}$ are not moments of a probability measure on $\mathbb R^2$.

Due to the bi-Boolean examples given above, the conditional bi-free product of states need not be a state and the conditional bi-free convolution of two positive probability measures need not be a probability measure.  Hence the results of Sections 6.2-6.6 of \cite{GS2016} are false as stated.  The results in these later sections should be stated only for conditional bi-free products where conditional bi-free product of states is a state.  Currently, the only non-trivial example known of when this is true is when the conditional bi-free product state is the bi-free product state.  Thus a natural question would be to determine all conditional bi-free products of states that produce states.

Due to the bi-Boolean examples given above, the bi-Boolean product of states need not be a state and the bi-Boolean convolution of two positive probability measures need not be a probability measure.  In particular, the results of Sections 5 and 6 of this paper for probability measures are false as stated, modulo those in Section 5.1 where the notion of probability measure is replaced with an arbitrary distribution.  As was demonstrated via Example 5.13, there existed a probability measure that was not bi-Boolean infinitely divisible inside the collection of probability measures as one of the asymptotic moments involved would have to be negative.  Due to the above, it is natural to ask whether the bi-Boolean (and, more generally, conditional bi-free) convolution of finite signed measures is a finite signed measure.

\end{document}